\numberwithin{figure}{section}   
\numberwithin{table}{section}   
\numberwithin{equation}{section}   
\theoremstyle{plain}
\newtheorem{thm}{Theorem}[section]
\newtheorem{lem}[thm]{Lemma}
\newtheorem{prop}[thm]{Proposition}
\newtheorem{obs}[thm]{Observation}
\newtheorem{conj}[thm]{Conjecture}
\newtheorem{quest}[thm]{Question}
\theoremstyle{definition}
\newtheorem{ex}[thm]{Example}
\newtheorem{rem}[thm]{Remark}
\definecolor{red}{rgb}{.8,0,0}
\def\red{\color{red}}
\definecolor{bblu}{rgb}{0,0,1}
\def\mtx#1{\begin{bmatrix} #1 \end{bmatrix}}
\newcommand{\R}{{\mathbb R}}
\newcommand{\C}{{\mathbb C}}
\newcommand{\Rnn}{\R^{n\times n}}
\newcommand{\re}{\operatorname{Re}}
\newcommand{\tr}{\operatorname{tr}}
\newcommand{\diam}{\operatorname{diam}}
\newcommand{\W}{\operatorname{W}} %%%%%%%New
\newcommand{\dig}{\Gamma} %%%%%NEW
\newcommand{\cp}{\, \Box\,} %%%%%NEW
\newcommand\lexp{\,{\textcircled{\scriptsize{L}}} \,}
\newcommand{\bone}{\mathbb{1}} % double-struck 1
\newcommand{\bv}{{\bf v}}
\newcommand{\bw}{{\bf w}}
\newcommand{\bz}{{\bf z}}
\newcommand{\bx}{{\bf x}}
\newcommand{\by}{{\bf y}}
\newcommand{\bd}{{\bf d}}
\newcommand{\bzero}{{\bf 0}}
\newcommand{\dsone}{\ensuremath{\mathds{1}}} % double-struck 1
\newcommand{\A}{\mathcal A}
\newcommand{\nL}{\mathcal L}
\newcommand{\D}{\mathcal D}
\newcommand{\DL}{\D^L}
\newcommand{\DQ}{{\D^Q}}
\newcommand{\nDL}{\D^{\nL}}
\newcommand{\lam}{\lambda}
\newcommand{\mult}{\operatorname{mult}}
\newcommand{\proj}{\operatorname{proj}}
\newcommand{\spec}{\operatorname{spec}}
\newcommand{\dspec}{\operatorname{spec}_{\D}}
\newcommand{\dLspec}{\operatorname{spec}_{\DL}}
\newcommand{\dQspec}{\operatorname{spec}_{\DQ}}
\newcommand{\ndLspec}{\operatorname{spec}_{\nDL}}
\newcommand{\dev}{\partial}
\newcommand{\dlev}{\partial^L}
\newcommand{\dqev}{\partial^Q}
\newcommand{\ndlev}{\partial^{\nL}}
\newcommand{\sgn}{\operatorname{sgn}}
\newcommand{\ol}{\overline}
\newcommand{\ds}{\displaystyle}
\newcommand{\ms}{\medskip}
\newcommand{\bit}{\begin{itemize}}
\newcommand{\eit}{\end{itemize}}
\newcommand{\ben}{\begin{enumerate}}
\newcommand{\een}{\end{enumerate}}
\newcommand{\beq}{\begin{equation}}
\newcommand{\eeq}{\end{equation}}
\newcommand{\bea}{\begin{eqnarray*}}
\newcommand{\eea}{\end{eqnarray*}}
\newcommand{\bpf}{\begin{proof}}
\newcommand{\epf}{\end{proof}}
\newcommand{\x}{\times}
\newcommand{\lc}{\left\lceil}
\newcommand{\rc}{\right\rceil}
\newcommand{\lp}{\!\left(}
\newcommand{\rp}{\right)}
\newcommand{\lsb}{\left\{}
\newcommand{\rsb}{\right\}}
\begin{document}
% \linenumbers

\title{Spectra of variants of distance matrices of graphs and digraphs: a survey}
\author{Leslie Hogben\thanks{Department of Mathematics, Iowa State University,
Ames, IA 50011, USA and American Institute of Mathematics, 600 E. Brokaw Road, San Jose, CA 95112, USA
(hogben@aimath.org).}\and Carolyn Reinhart\thanks{Department of Mathematics, Iowa State University,
Ames, IA 50011, USA
(reinh196@iastate.edu).}\and} 

\maketitle

\begin{abstract} 
 Distance matrices of graphs were introduced by Graham and Pollack in 1971 to study a problem in communications. Since then, there has been extensive research on the distance matrices of graphs -- a 2014 survey by Aouchiche and Hansen on spectra of distance matrices of graphs lists more than 150 references. In the last ten years, variants such as the distance Laplacian, the distance signless Laplacian, and the normalized distance Laplacian matrix of a graph have been studied. After a brief description of the early history of the distance matrix and its motivating problem, this survey focuses on comparing and contrasting techniques and results for the four types of distance matrices.  Digraphs are treated separately after the discussion of graphs, including discussion of similarities and differences between graphs and digraphs.  New results are presented that complement existing results, including results for some the  matrices on unimodality of characteristic polynomials for graphs, preservation of parameters by cospectrality for graphs, and bounds on spectral radii for digraphs. \end{abstract}

\noindent\textbf{Keywords.}
distance matrix, distance signless Laplacian, distance Laplacian, normalized distance Laplacian

\noindent\textbf{AMS subject classifications.} 05C50, 05C12, 05C31, 15A18

\section{Introduction}\label{sintro}

The study of spectral graph theory from a mathematical perspective began in the 1950s and separately in quantum chemistry in 1931 \cite{CRS97}. The first papers considered the adjacency matrix (which provides a natural description of a graph as a matrix) and the study of its spectrum (multiset of eigenvalues); formal definitions of this and other matrices associated with a graph are given below.  Various applications led to the study of additional matrices associated with the graph, including the Laplacian, signless Laplacian, and normalized Laplacian. Entire books have appeared on spectral graph theory, including \cite{CRS97}, \cite{BH}, \cite{B14}, and \cite{N18}.

Distance matrices were introduced by Graham and Pollack in \cite{GP71} to study the problem of routing messages through  circuits; this problem is discussed in more detail in Section \ref{ss:D-loop}.  
There has been a lot of research on distance matrices themselves (Aouchiche and Hansen's survey of results through 2014 in \cite{AH14} is 85 pages and contains more than 150 references).  The concept of distances in a graph has been used in applications much longer.  For example, the Wiener index, a graph parameter readily computed from the distance matrix, was introduced in 1947 in chemical graph theory by Wiener in \cite{W47} (where it is called the path number and used to determine boiling points).

More recently,  several   variants of the distance matrix that parallel the variants of the adjacency matrix have been defined and studied: Aouchiche and Hansen introduced the distance signless Laplacian and distance Laplacian in \cite{AH14} and Reinhart introduced the normalized distance Laplacian in \cite{R20}.  The focus of this survey  is to compare and contrast results and techniques for four matrices: the distance matrix, the distance signless Laplacian, the  distance Laplacian, and the normalized distance Laplacian of a graph. Additionally, some new results are presented to fill in gaps in the literature. With the exception of the discussion of the historical background and motivation in Section \ref{ss:D-loop} (where only the distance matrix is discussed), we address all four matrices and organize this article by topic; e.g., results concerning cospectrality are presented in Section \ref{s:cospec} for these four matrices.  Since a graph must be connected for all  distances to be finite, we assume all graphs discussed are connected. Digraphs are handled separately in Section \ref{s:dig}.

We begin in Section \ref{s:D-loop-GL} by describing Graham and Pollak's motivation for studying the distance matrix and early results, including their addressing scheme for loop switching and the unimodality conjecture. We prove that the sequence of coefficients of the distance signless Laplacian and normalized distance Laplacian are log-concave and the sequence of  their absolute value is unimodal. In Section \ref{s:method}, techniques for computing spectra are discussed, including the use of twin vertices, the relationship of the distance matrix to the adjacency matrix for graphs with low diameter, matrix products, and other linear algebraic techniques. We prove a new result regarding how twin vertices can be used to determine the spectra of a graph in Section \ref{ss:quot}.

Well-studied classes of graphs, including strongly regular, distance regular, and transmission regular graphs, are discussed in Section \ref{s:SRG}.  In Section \ref{s:fam}, we provide the spectra of several well-known families of graphs. We apply our result about twin vertices to determine the spectrum of a star graph with an added edge for the normalized distance Laplacian. The spectral radii of the distance matrix and its variants are discussed in Section \ref{s:rho}, including the extremal values and graphs which achieve these values. We establish that the spectral radii of $\D$, $\DQ$, and $\DL$ are edge monotonically strictly decreasing (they was previously known to be edge monotonically decreasing). Furthermore, we prove bounds on the distance matrix in terms of the transmission and show the bounds are tight if and only if the graph is transmission regular.

In Section \ref{s:cospec}, we discuss known results regarding the number of graphs with a cospectral mate, graphs determined by their spectra, parameters preserved by cospectrality, and cospectral constructions. We provide new examples that show several parameters are not preserved by various distance matrices and we show  that transmission regular graphs that are distance cospectral must have the same transmission and Wiener index. Finally, in Section \ref{s:dig}, we provide an overview of results for digraphs, many of which mirror known results for graphs. We also establish bounds on the spectral radius of the distance Laplacian and normalized distance Laplacian of digraphs.

Next we provide precise definitions of terms used throughout.  Let $G$ be a graph with vertex set $V(G)=\{v_1,\dots,v_n\}$ and edge set $E(G)$ (an {\em edge} is a set of two distinct vertices); the edge $\{v_i,v_j\}$ is often denoted by $v_iv_j$.  The number of vertices is the {\em order} of $G$. For a graph $G$ (but not a digraph), all the matrices associated with $G$ that we discuss are real and symmetric and so all the eigenvalues are real. The {\em adjacency matrix} of $G$, denoted by  $\A(G)=[a_{ij}]$, is the $n \times n$ matrix with $a_{ij}=1$ if  $v_i v_j\in E(G)$, and $a_{ij}=0$ if $v_i v_j\not\in E(G)$.  %Since $\A(G)$ is nonnegative, its {\em spectral radius} $\rho$  (i.e., eigenvalue of largest magnitude) is positive.  Furthermore, $\rho$ is simple because $\A(G)$ is irreducible (since $G$ is connected). The eigenvalues if $\A(G)$ are denoted by $\alpha_1\le\alpha_2\le\dots\le\alpha_{n-1}<\alpha_n=\rho$. 
 The {\em Laplacian matrix} of $G$ is $L(G)=D(G) - \A(G)$, where  $D(G)$ is the diagonal matrix having the $i$th diagonal entry equal to the {\em degree} of the vertex $v_i$ (i.e., the number of edges incident with $v_i$). % The Laplacian matrix is {\em positive semidefinite}, meaning all eigenvalues are nonnegative. 
 The matrix  $Q(G)=D(G) + \A(G)$ is called the {\em signless Laplacian matrix}  $G$.  For a connected graph $G$ of order at least two, the normalized Laplacian is  $\nL(G)=\sqrt{D(G)}^{-1}L(G)\sqrt{D(G)}^{-1}$. %Definitions of matrices associated with digraphs are presented in Section \ref{s:dig}.
 
For $v_i,v_j\in V(G)$, the \emph{distance} between  $v_i$ and $v_j$, denoted by  $d(v_i,v_j)$, is the minimum length (number of edges) in a path starting at  $v_i$ and ending at $v_j$ (or vice versa). The {\em distance matrix} is  $\D(G)=[d(v_i,v_j)]$, i.e.,  the $n \times n$ matrix with $(i,j)$ entry equal to $d(v_i,v_j)$ \cite{GP71}. 
The {\em transmission} of vertex $v_i$ is  $t(v_i) =  \sum_{j=1}^n d(v_i,v_j)$.   
The {\em distance signless Laplacian} matrix and the {\em distance Laplacian} matrix are defined by  $\DQ(G) = T(G)  + \D(G)$  and $\DL(G) = T(G) - \D(G)$, where $T(G)$  is the diagonal matrix with $t(v_i)$  as the $i$-th diagonal entry \cite{AH13}.    The normalized distance Laplacian matrix is defined by $\nDL(G)=\sqrt{T(G)}^{-1}\DL(G)\sqrt{T(G)}^{-1}$ \cite{R20}. These matrices can be denoted by $\D,\DL,\DQ,\nDL$ when the intended graph is clear and $\D^*$ will be used  to denote a matrix that is one of (a subset of) these four matrices. 

The fact that $\DL(G), \DQ(G)$, and $\nDL(G)$ are {\em positive semidefinite} (meaning all eigenvalues are nonnegative) is well known and is discussed further in Section \ref{ss:PF}.  Observe that $\DQ(G)$ is positive and  $\D(G)$ is nonnegative and irreducible.  %Thus their {\em spectral radii} $\rho(\D(G))$  (i.e., eigenvalue of largest magnitude) is positive.  Furthermore, $\rho(\D(G))$ is a simple eigenvalue because $\D(G)$ is ; this fact is well known and 
Thus Perron-Frobenius theory applies, especially to the study of the {\em spectral radius}   (i.e., the largest magnitude of an eigenvalue, denoted by  $\rho(A)$ for  $A\in\Rnn$); this is discussed further in Section \ref{ss:PF}.

The eigenvalues of $\D$ are denoted by $\dev_1\le\dev_2\le\dots\le\dev_{n-1}<\dev_n=\rho(\D)$,  the eigenvalues of $\DQ$ are denoted by $\dqev_1\le\dqev_2\le\dots\le\dqev_{n-1}< \dqev_n=\rho(\DQ)$, the eigenvalues of $\DL$ are denoted by $0=\dlev_1<\dlev_2\le\dots\le\dlev_{n-1}\le \dlev_n=\rho(\DL)$, %\,\footnote{Note that in  \cite{AH13}  the eigenvalues of $\DQ$ and $\DL$ are numbered in decreasing order.} 
and the eigenvalues of $\nDL$ are denoted by $0=\ndlev_1<\ndlev_2\le\dots\le\ndlev_{n-1}\le \ndlev_n=\rho(\nDL)$.  Throughout the paper, we number these eigenvalues in increasing  (i.e., non-decreasing) order unless otherwise stated; in the literature, eigenvalues of real symmetric matrices are almost always labeled in order, but whether increasing or decreasing varies.

A (connected) graph is {\em transmission regular} or {\em $t$-transmission regular}  if every vertex has transmission $t$. In this case, the common value of the transmission of a vertex is denoted by $t(G)$. If a graph is transmission regular, then the distance matrix, distance signless Laplacian, distance Laplacian, normalized distance Laplacian are all  equivalent in the sense that any one can be derived from another by translation and scaling, so the eigenvalues of any one can be readily computed from those of another. %Then the adjacency matrix, Laplacian,  signless Laplacian, and normalized  Laplacian are all  equivalent in the sense that any one can be derived from another by translation ad scaling, so the eigenvalues of any one can be readily computed from those of another.  Similarly, the distance matrix, distance signless Laplacian, distance Laplacian, normalized distance Laplacian are all  equivalent.  
Specifically, if $G$ is $t$-transmission regular, then $\DQ(G)=t I_n+\D(G)$,  $\DL(G)=t I_n-\D(G), $ and $\nDL(G)=\frac 1 t\DL(G)$, so 
\beq\label{eq:tr-equiv}\dqev_i=t +\dev_{i},\ \dlev_i=t -\dev_{n+1-i}, \mbox{ and } \ndlev_i=1-\frac{\dev_{n+1-i}}{t }.\eeq   %A graph is {\em regular} or {\em $r$-regular} if every vertex has degree $r$.

 The {\em Wiener index} $W(G)$, which was introduced in chemistry  \cite{W47},
is  the sum of all distances between unordered pairs of vertices in $G$.  Observe that $W(G)$  is half the sum of all the entries in $\D(G)$, or equivalently,
\[W(G)=\frac 1 2 \sum_{i=1}^n t(v_i)=\frac 1 2 \tr \DL =\frac 1 2 \sum_{i=1}^n \dlev_i \mbox{ and } W(G)=\frac 1 2 \tr \DQ =\frac 1 2 \sum_{i=1}^n \dqev_i,\]
where $\tr M$ denotes the  trace of the matrix $M$.

The complete graph is the graph with all possible edges, i.e., $uv\in E(G)$ for all $u,v\in V(G)$. In a graph $G$, a path is a sequence of vertices $(v_{i_1},v_{i_2},\dots,v_{i_k})$ in which all vertices are unique and consecutive vertices are adjacent. A cycle is a sequence of vertices $(v_{i_1},v_{i_2},\dots,v_{i_k},v_{i_1})$ in which only the first/last vertex is repeated  and consecutive vertices are adjacent. For $n\ge 1$, the path graph $P_n$ is a graph with vertex set $V(P_n) = \{v_1, \dots, v_n\}$ and edge set $E(P_n) = \{v_1v_2, v_2v_3,\dots,v_{n-1}v_n\}$. For $n\ge 3$, a cycle graph $C_n$ is a graph with vertex set $V(C_n) = \{v_1, \dots, v_n\}$ and edge set $E(C_n) = \{v_1v_2, v_2v_3,\dots,v_{n-1}v_n,v_nv_1\}$. A {\em forest} is a graph that does not have cycles and a {\em tree} is a connected forest. The diameter of $G$, denoted $\diam(G)$, is the maximum distance between any two vertices in the graph.

%The following definitions and notations are standard and will be used throughout. 
Let $A$ be a $n\times n$ matrix. The {\em  characteristic polynomial of $A$} is $p_{A}(x)=\det(xI_n-A)$.   The {\em algebraic multiplicity} $\mult_A(z)$ of a number $z\in\C$ with respect to $A$ is the number of times $(x-z)$ appears as a factor in  $p_A(x)$ and its {\em geometric multiplicity} is the dimension of the eigenspace %$ES_M(z)$ 
 of $A$ relative to $z$. An eigenvalue is {\em simple} if its algebraic multiplicity is 1. %($\mult_A(z)=\gmult_A(z)=0$ if $z$ is not an eigenvalue of $M$).   
 The {\em spectrum} of $A$, denoted by $\spec(A)$,  is the multiset whose elements are the $n$ (complex) eigenvalues of $A$ (i.e., the number of times each eigenvalue appears in $\spec(A)$ is its algebraic multiplicity).  The eigenvalues are often denoted by $\lam_1,\lam_2,\dots, \lam_n$ (no order implied) and the spectrum is often written as $\spec(A)=\{\mu_1^{(m_1)},\dots,\mu_q^{(m_q)}\}$ where $\mu_1,\dots, \mu_q$ are the distinct eigenvalues of $M$ and $m_1,\dots,m_q$ are the (algebraic) multiplicities.
 
 In analogy with the generalized characteristic polynomial, Reinhart \cite{R20} introduced the distance generalized characteristic polynomial
 $\phi^{\D}(\lambda,r,G)=\det(\lambda I_n -\D(G)+rT(G))$ and showed that with the appropriate choices of parameters, it can provide the characteristic polynomials of $\D(G), \DQ(G), \DL(G)$, and $\nDL(G)$.

  A matrix $M=[m_{ij}]$ is symmetric if $m_{ij}=m_{ji}$ for all $i,j\in [n]$. For an $n\x n$ real symmetric matrix $M$, the eigenvalues of $M$ are real and the algebraic multiplicity and the geometric multiplicity are equal for each eigenvalue. A symmetric matrix $M$ has an  basis of eigenvectors and can be diagonalized, i.e. there exists an invertible matrix $S$ and a diagonal matrix $D$ such that $S^{-1}MS=D$.
Of particular interest here are the characteristic polynomials and spectra associated with $\D(G), \DL(G), \DQ(G),$ and $\nDL(G)$ for a graph $G$. In the case of graphs, these matrices are symmetric. However, the analogously defined matrices for digraphs need not be symmetric (see Section \ref{s:dig}).

 For an $n\times n$ matrix $A$, $A[X|Y]$ is the {\em submatrix} of $A$ with rows indexed by $X\subseteq [n]$ and columns indexed by $Y\subseteq [n]$. A {\em block matrix} is a matrix that can be viewed as being made up of submatrices defined by a partition of $[n]$. A matrix $A$ is {\em irreducible} if there does not exist a permutation matrix $P$ such that $P^{-1}AP$ is a block upper triangular matrix that has at least two blocks. A {\em nonnegative} matrix is a matrix whose entries are all nonnegative real numbers and a {\em positive} matrix is a matrix whose entries are all positive real numbers. The strongest results of Perron-Frobenius Theory are for positive matrices and irreducible nonnegative matrices, as discussed in Section \ref{ss:PF}. 
 
 Throughout the paper, we let $I$ or $I_n$ be the $n\times n$ identity matrix  and $J$ or $J_n$ be the $n\times n$ matrix of all ones. 
 The Kronecker product of matrices the $n\times n$ matrix $A=[a_{ij}]$ and the $n'\times n'$ matrix $A'$ is the $nn'\x nn'$ matrix $A\otimes A'=\begin{bmatrix}a_{11}A' &\dots& a_{1n}A'\\
 \vdots & \ddots & \vdots\\
 a_{n1}A' &\dots& a_{nn}MA\\
\end{bmatrix}$.

%%%%%%%%%%%%%%%%%%%%%%%%%%%
\section{Motivation for the distance matrix and early work}\label{s:D-loop-GL} % Leslie

The distance matrix of a graph was introduced by Graham and Pollak in \cite{GP71} to study the issue of routing messages or data between computers. That paper inspired much additional work on the distance matrix and more recently on variants such as the distance Laplacian matrix, leading  to many different research directions.
 In this section we discuss the motivating problem and early research on distance matrices.
%Graham and Lov\'asz \cite{GL78} studied the coefficients of the distance characteristic polynomial, including establishing a formula for the coefficients when the graph is a tree, and asked a variety of interesting questions that have been studied for years.

\subsection{Loop-switching and the distance matrix}\label{ss:D-loop}
As described by Graham and Pollak, for telephone calls in the late 60s and early 70s, it was reasonable to assume that the duration of the call was much longer than the time needed to find a circuit to connect the callers, so the circuit was normally established first. However such a model seems less suitable for transmitting information between computers.  They attribute to J.R.~Pierce a ``loop-switching'' model for a network as a sequence of connected one-way loops, including many small local loops, larger regional loops, and giant national loops.  The message will not have a pre-arranged route but at each junction must be able to readily determine whether to switch loops. %This determination is made possible by using an addressing scheme. 
Note that a configuration of  loops can be modeled as a graph, where each loop is a vertex, and two vertices are adjacent if and only if the corresponding loops intersect (see Figure \ref{f:loop}).

  \begin{figure}[!h]
\begin{center}
\scalebox{.4}{\includegraphics{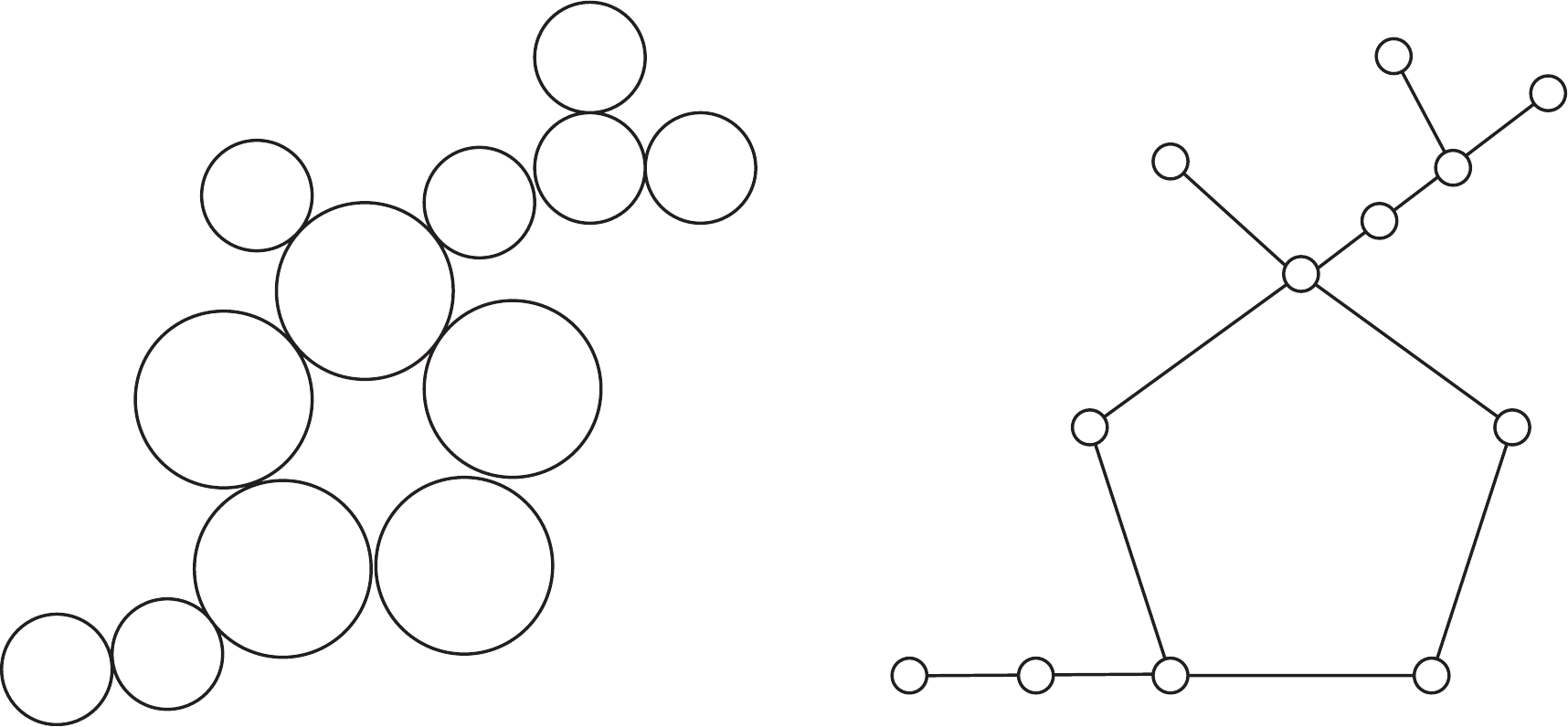}}
\caption{A loop configuration and its graph.  \label{f:loop}}
\vspace{-5pt}
\end{center}
\end{figure}

 Graham and Pollak proposed an addressing scheme to enable the message  to move efficiently  from its origin to its destination by switching when such change reduces the  `discrepancy' between the current address and the destination address. Perhaps the most natural such scheme would be to assign each loop a sequence of 0s and 1s as an address and to use the {\em Hamming distance}, i.e., the number of digits that differ, as a measure of discrepancy.  However this leads to difficulties (when, for example, the graph is a 3-cycle) and instead each loop/vertex $v$ is assigned an address  sequence $a(v)=a_1\dots a_r$ where $a_i\in\{0,1,*\}$ with $*$ neutral.  The {\em distance}  between addresses $a(v)$ and $a(v')=a'_1\dots a'_r$ is $d(a(v),a(v'))=|\{i:\{a_i,a'_i\}=\{0,1\}\}|$. The addresses are to be assigned such that $d(a(v),a(v'))=d(v,v')$, and it is not hard to see that this can always be done if $r$ is large enough.  This naturally raises question of the minimum value of $r$ needed for such an addressing scheme for graph $G$, denoted by $N(G)$.
 \bit
 \item[] For graphs $G$ of order $n$, is there an upper bound on $N(G)$ in terms of $n$?
 \eit
 
 Graham and Pollak proved that $N(G)\le \diam(G)(n-1)$ and presented an algorithm that will always produce a valid addressing scheme.  Their algorithm produced an address of length no more than $n-1$ for every graph of order $n$ to which they applied it  and they conjectured that $N(G)\le n-1$ \cite{GP71}; Winkler later established this conjecture:  %(Graham and Pollak also suggest that despite their mathematical interest in determining $N(G)$, in practice it is desirable to choose longer addresses if that simplifies the decision making at a junction.) 
 
 \begin{thm}{\rm \cite{W83}} If $G$ is a graph of order $n\ge 2$, then $N(G)\le n-1$.
 \end{thm}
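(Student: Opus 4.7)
The plan is to prove this by strong induction on $n$, with the base case $n=2$ handled immediately by the assignment $a(v_1)=0,\ a(v_2)=1$. For the inductive step, I would first reformulate an addressing as a matrix decomposition. Each coordinate $i$ of an addressing determines a set $S_i=\{v:a_i(v)=1\}$ (with the $*$-entries being those vertices not on either side of the $0/1$ split at coordinate $i$), and the cut matrix $M_{S_i}$ defined by $(M_{S_i})_{uv}=1$ iff exactly one of $u,v$ lies in $S_i$ and has neither entry equal to $*$ at coordinate $i$. A valid addressing of length $r$ corresponds precisely to a decomposition $\D(G)=\sum_{i=1}^r M_{S_i}$ (with multiplicities), so the theorem reduces to showing that $\D(G)$ admits such a decomposition with at most $n-1$ terms.

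Next, I would choose a vertex $v\in V(G)$ to peel off. Natural candidates are a non-cut vertex (so $G-v$ is connected), a vertex of maximum eccentricity, or an endpoint of a BFS ordering. By the inductive hypothesis applied to $G-v$, there exist sets $S_1,\dots,S_{n-2}\subseteq V(G)\setminus\{v\}$ with $\D(G-v)=\sum_{i=1}^{n-2} M_{S_i}$. The goal is to extend these sets (by possibly adding $v$ to some of them) and to introduce one final cut $S_{n-1}$ so that the resulting decomposition equals $\D(G)$. Equivalently, one must arrange that for every $u\neq v$, the number of indices $i\le n-2$ for which exactly one of $u,v$ lies in $S_i$, plus the indicator of $u\in S_{n-1}\triangle\{v\}$, equals $d_G(v,u)$, while simultaneously the entries corresponding to pairs $u,w\neq v$ sum correctly to $d_G(u,w)$.

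The main obstacle is that distances in $G-v$ can differ from those in $G$: a shortest $u$--$w$ path in $G$ may pass through $v$, so the inductive decomposition computes $d_{G-v}(u,w)\ge d_G(u,w)$ rather than $d_G(u,w)$. Thus the construction must choose $v$ and the extensions $S_i\mapsto S_i\cup\{v\}$ (or not) with enough care that these ``discrepancies" between $d_G$ and $d_{G-v}$ are exactly the corrections produced by flipping sides at $v$ in the first $n-2$ cuts, with the single new cut $S_{n-1}$ absorbing only the remaining $v$--$u$ mismatches. I expect that making this possible requires either strengthening the inductive hypothesis—asserting the existence of a decomposition with a distinguished ``free" coordinate that can be shifted by a prescribed $\pm1$ pattern, or with $S_i$'s arising from an isometric structure such as a BFS layering—or exploiting a carefully chosen extremal $v$ (for instance, a simplicial or peripheral vertex) whose removal preserves all pairwise distances among the remaining vertices.

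Because the inductive construction must satisfy $n(n-1)/2$ linear equations while affording only $n-1$ new binary parameters per step, the combinatorial tightness is the heart of the matter; this is precisely why Graham and Pollak's natural algorithm attained only $\mathrm{diam}(G)(n-1)$ and why Winkler's sharpening requires a global argument rather than a purely local greedy extension.
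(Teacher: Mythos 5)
This theorem is only cited in the survey (to Winkler's 1983 paper on the squashed cube conjecture); no proof is given there, so the comparison must be against your argument on its own terms. Your opening reformulation is essentially right: an addressing of length $r$ is the same thing as writing $\D(G)$ as a sum of $r$ matrices, each the adjacency matrix of a complete bipartite graph between two disjoint vertex subsets (the $0$-side and the $1$-side of that coordinate, with $*$ meaning ``in neither part''). But the inductive step is not merely incomplete --- as described, it cannot work. Each cut matrix is entrywise nonnegative, and adding $v$ to some of the sets $S_i$ changes those matrices only in the row and column of $v$; the entries between two old vertices $u,w$ are untouched. Hence after extending the decomposition of $\D(G-v)$ and appending one new cut, the $(u,w)$ entry of the sum is $d_{G-v}(u,w)$ plus a nonnegative correction. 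Since vertex deletion can only increase distances, this equals $d_G(u,w)$ only if deleting $v$ preserves \emph{every} pairwise distance among the remaining vertices. Such a vertex need not exist: in $C_n$ for $n\ge 4$, deleting any vertex strictly increases the distance between its two neighbours (from $2$ to $n-2$), so no choice of $v$, no pattern of additions of $v$ to old cuts, and no single new cut can repair the overshoot. You flag this obstacle yourself, but the proposed remedies (a ``free'' shiftable coordinate, an isometric BFS layering, a peripheral or simplicial $v$) are left as hopes rather than constructions, and the counting remark at the end concedes that the local degrees of freedom are insufficient.

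For the record, Winkler's actual proof is not an induction of this kind: it is a direct, global construction that fixes an ordering $v_1,\dots,v_n$ of the vertices and defines the $i$th symbol of the address of $u$ (for $1\le i\le n-1$) explicitly from comparisons among the quantities $d(u,v_i)$, $d(u,v_n)$, and $d(v_i,v_n)$, followed by a delicate case analysis verifying that the number of $\{0,1\}$-clashes between $a(u)$ and $a(w)$ equals $d(u,w)$. The peel-off-a-vertex induction does work for trees (where a leaf can always be deleted isometrically, recovering Graham and Pollak's $N(T)=n-1$), which is probably why it is tempting here; the failure of isometric deletion in general graphs is exactly the point at which the general theorem becomes hard.
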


Graham and Pollak \cite{GP71} showed this bound is tight by establishing the value of $N(G)$ for several families of graphs.

 \begin{thm}{\rm \cite{GP71}} %Let $G$ is a graph of order $n$, then $N(G)\le n-1$.
 $\null$
 \bit
 \item For $n\ge 2$, $N(K_n)=n-1$
 \item For $n\ge 3$, $N(C_n)=n-1$ if $n$ is odd and $N(C_n)=\frac n 2$ if $n$ is even.
 \item If $t$ is a tree of order $n\ge 2$, then $N(T)=n-1$.
 \eit
 \end{thm}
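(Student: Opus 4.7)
The plan is to prove each of the three parts by sandwiching $N(G)$ between matching upper and lower bounds, using a common spectral tool for the lower bounds. That tool is the Graham--Pollak inequality $N(G)\ge n_-(\D(G))$, where $n_-(\D(G))$ denotes the number of negative eigenvalues of $\D(G)$. To establish this inequality, given any valid addressing of length $r$, I would encode each address $a(v_i)$ by a pair of 0/1 vectors $x_i,y_i\in\R^r$ (indicators of the positions carrying $1$ and $0$ respectively); the identity
\[
d(a(v_i),a(v_j))=x_i^T y_j+y_i^T x_j
\]
gives the symmetric factorization $\D(G)=X^TY+Y^TX$ with $r\times n$ matrices $X,Y$. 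Substituting $S=X+Y$ and $T=X-Y$ rewrites this as $\D(G)=\tfrac{1}{2}(S^TS-T^TT)$, a difference of two positive semidefinite matrices of rank at most $r$, and Sylvester's law of inertia then yields $n_-(\D(G))\le r$.

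For part (i), $\D(K_n)=J_n-I_n$ has spectrum $\{(n-1)^{(1)},(-1)^{(n-1)}\}$, so $n_-(\D(K_n))=n-1$ and the spectral bound gives $N(K_n)\ge n-1$. For the matching upper bound I would exhibit the explicit length-$(n-1)$ addressing $a(v_1)=0^{n-1}$ and $a(v_i)=*^{i-2}\,1\,0^{n-i}$ for $2\le i\le n$; a short case check verifies $d(a(v_i),a(v_j))=1$ for every $i\ne j$, so $N(K_n)\le n-1$ as well.

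For part (ii), I would compute $\spec(\D(C_n))$ via the discrete Fourier transform, exploiting that $\D(C_n)$ is circulant. When $n$ is odd the calculation yields exactly one positive eigenvalue and $n-1$ negative ones, so the spectral bound together with Winkler's bound force $N(C_n)=n-1$. When $n$ is even the same calculation gives $n_-(\D(C_n))=n/2$; for the matching upper bound I would use a Hamming-type addressing in which vertex $v_i$ is labelled by the 0/1 string of length $n/2$ whose $j$-th coordinate records on which side of the $j$-th antipodal diameter $v_i$ lies. For part (iii), I would prove the stronger inertia statement that $\D(T)$ has inertia $(1,0,n-1)$, which gives $N(T)\ge n-1$ at once, while Winkler's bound supplies the reverse inequality. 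The inertia is established by induction on $n$: removing a leaf $v$ with unique neighbour $u$ and performing the row/column operation \emph{(row $v$) $-$ (row $u$)} (and symmetrically on the columns) produces a bordered form whose nontrivial principal block is $\D(T-v)$, and a Haynsworth-type inertia-additivity argument on the resulting border contributes exactly one extra negative eigenvalue per inductive step.

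The explicit constructions for $K_n$ and even $C_n$ are essentially routine, and the circulant eigenvalue computation for $C_n$ reduces to standard trigonometric identities, so the main obstacle I expect is the inductive inertia computation for trees. There one must ensure not merely that the Schur complement stays nonsingular, but that no eigenvalue changes sign as the leaf is added; the standard remedy is to combine Haynsworth's inertia-additivity formula with the classical Graham--Pollak determinant identity $\det\D(T)=(-1)^{n-1}(n-1)2^{n-2}$, which pins down the sign of the new determinant and hence rules out accidental zero crossings in the transition from $\D(T-v)$ to $\D(T)$.
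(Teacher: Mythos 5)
The survey states this theorem only with a citation to Graham and Pollak and gives no proof, so there is no internal argument to compare against; judged on its own, your proposal is correct and follows what is essentially the classical route: match the Witsenhausen eigenvalue lower bound $N(G)\ge n_-(G)$ (recorded separately in this survey) against explicit addressings. Your derivation of that bound via $\D=X^TY+Y^TX=\tfrac12(S^TS-T^TT)$ is right, although the last step is not literally Sylvester's law of inertia; the clean statement is that $\D$ is positive semidefinite on the $(n-r)$-dimensional subspace $\ker T$, so its negative eigenspace meets $\ker T$ trivially and has dimension at most $r$. The $K_n$ addressing checks out, the inertia counts for $\D(C_n)$ (one positive and $n-1$ negative eigenvalues for odd $n$; one positive, $n/2$ negative and $n/2-1$ zero for even $n$) agree with the cycle spectra listed in Section 5, and the antipodal-cut (partial cube) addressing of the even cycle is a genuine isometric Hamming embedding of length $n/2$. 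For trees, the leaf-deletion congruence does produce $\mtx{\D(T-v) & \bone\\ \bone^T & -2}$, and Haynsworth additivity together with the determinant identity gives the $1\times 1$ Schur complement $\det\D(T)/\det\D(T-v)=-2(n-1)/(n-2)<0$, so the induction closes with exactly one new negative eigenvalue. Two small points to fix. First, your triple $(1,0,n-1)$ clashes with the survey's convention $(n_+,n_-,n_0)$, under which it would read as $n_-=0$; you mean inertia $(1,n-1,0)$. Second, invoking Winkler's 1983 theorem for the upper bounds in the odd-cycle and tree cases is logically valid but anachronistic and heavier than necessary: a tree embeds isometrically in the Hamming cube of dimension $n-1$ by indexing coordinates by edges and recording which edges lie on the path from a fixed root, and Graham and Pollak likewise exhibit an explicit $(n-1)$-addressing for odd cycles, so both upper bounds can be obtained constructively.
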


 For a graph $G$ of order $n$, let  $n_+(G)$ (respectively, $n_-(G)$) denote the number of positive (respectively, negative) eigenvalues of the distance matrix $\D(G)$, so the {\em inertia} of $\D(G)$ is the triple $(n_+(G),n_-(G),n-n_+(G)-n_-(G))$. Graham and Pollak established the next result, which they attribute to H.S. Witsenhausen.
 
 \begin{thm}{\rm\cite{GP71}} For a graph $G$, $N(G)\ge \max\{n_+(G),n_-(G)\}$.
 \end{thm}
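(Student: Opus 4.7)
The plan is to extract from any valid addressing scheme a rank decomposition of $\D(G)$ into rank-$\le 2$ symmetric summands, each contributing at most one positive and one negative eigenvalue, and then apply subadditivity of the inertia under matrix sums.

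First, suppose $a: V(G) \to \{0,1,*\}^r$ is an addressing scheme realizing the distance, so $d(a(v),a(v'))=d(v,v')$ for all $v,v'\in V(G)$, with $r=N(G)$. For each coordinate $i\in\{1,\dots,r\}$, partition $V(G)$ into three (possibly empty) sets $A_i,B_i,C_i$ consisting of the vertices whose $i$th address digit is $0$, $1$, and $*$, respectively. Define the $n\times n$ symmetric matrix $M_i$ by $(M_i)_{v,v'}=1$ if $\{a_i(v),a_i(v')\}=\{0,1\}$ and $(M_i)_{v,v'}=0$ otherwise. By the definition of distance between addresses, $\D(G)=\sum_{i=1}^r M_i$.

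Next I would observe that each $M_i$ has rank at most $2$. Indeed, if $\chi_{A_i},\chi_{B_i}\in\R^n$ denote the characteristic vectors of $A_i$ and $B_i$, then
\[
M_i = \chi_{A_i}\chi_{B_i}^T + \chi_{B_i}\chi_{A_i}^T,
\]
which is a symmetric matrix supported on the $2$-dimensional space spanned by $\chi_{A_i}$ and $\chi_{B_i}$. A direct check (or diagonalization on this $2$-dimensional invariant subspace) shows that the nonzero eigenvalues of $M_i$ are $\pm\sqrt{|A_i|\,|B_i|}$, so $n_+(M_i)\le 1$ and $n_-(M_i)\le 1$.

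Finally I would invoke the standard subadditivity inequalities for the inertia of sums of Hermitian matrices (a consequence of Weyl's inequalities): for symmetric matrices $X,Y$, $n_\pm(X+Y)\le n_\pm(X)+n_\pm(Y)$. Applying this inductively to $\D(G)=\sum_{i=1}^r M_i$ yields
\[
n_+(G)=n_+(\D(G))\le \sum_{i=1}^r n_+(M_i)\le r \quad\text{and}\quad n_-(G)\le r,
\]
so $N(G)=r\ge \max\{n_+(G),n_-(G)\}$, as desired.

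The only nontrivial step is recognizing that each ``coordinate matrix'' $M_i$ has the explicit rank-$2$ form $\chi_{A_i}\chi_{B_i}^T+\chi_{B_i}\chi_{A_i}^T$; once that is in hand, the conclusion is immediate from Weyl-type subadditivity of inertia. I do not anticipate any obstacle beyond verifying this decomposition carefully, since the additive structure of the Hamming-type distance on $\{0,1,*\}^r$ precisely matches the coordinate-wise sum of these rank-$2$ summands.
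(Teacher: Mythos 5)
Your proof is correct. The survey states this result without proof (it only cites Graham--Pollak, who attribute the argument to Witsenhausen), and your argument is precisely that classical one: writing $\D(G)=\sum_{i=1}^r\bigl(\chi_{A_i}\chi_{B_i}^T+\chi_{B_i}\chi_{A_i}^T\bigr)$, noting each summand has exactly the nonzero eigenvalues $\pm\sqrt{|A_i|\,|B_i|}$ (so at most one of each sign), and applying subadditivity of the inertia under sums of symmetric matrices. All steps check out, including the disjointness of $A_i$ and $B_i$ that makes the rank-$2$ decomposition and the eigenvalue computation valid.
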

 
 The seminal paper of Graham and Lov\'asz \cite{GL78} made a conjecture regarding the coefficients of the distance characteristic polynomial that was resolved only recently (this is discussed in the next section).  They also asked, 

 \bit
 \item[] Is there a graph $G$ for which $n_+(G)>n_-(G)$?
 \eit
 since all the examples for which the inertia had been determined at the time satisfied $n_+(G)\le n_-(G)$.
  Azarija exhibited a family of strongly regular graphs $G$ for which $n_+(G)>n_-(G)$ (see Theorem \ref{t:opt}).

%The initial work  of Graham and Pollack concerning  trees led to numerous additional results.

Graham and Pollak also established the value of the determinant of the distance matrix of a tree.

\begin{thm}{\rm \cite{GP71}} If $T$ is a graph of order $n\ge 2$, then $\det \D(T)=(-1)^{n-1}(n-1)2^{n-2}$.  Furthermore, $n_+(T)=1$ and $n_-(T)=n-1$.
 \end{thm}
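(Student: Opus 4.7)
The plan is to induct on the order $n$, establishing the determinant formula and the inertia simultaneously via a single congruence reduction. The base case $n = 2$ is the direct computation: $\D(P_2) = J_2 - I_2$ has eigenvalues $\pm 1$ (so $n_+ = n_- = 1$) and determinant $-1 = (-1)^{1}\cdot 1\cdot 2^{0}$. For the inductive step with $n \ge 3$, select a leaf $v_n$ of $T$ with unique neighbor $v_{n-1}$ and set $T' = T - v_n$, a tree of order $n-1$. Because $d(v_n, v_j) = d(v_{n-1}, v_j) + 1$ for every $j \ne n$, the row operation $R_n \leftarrow R_n - R_{n-1}$ together with the symmetric column operation $C_n \leftarrow C_n - C_{n-1}$ realizes a congruence $\D(T) \mapsto E\,\D(T)\,E^T$, with $E = I_n - e_n e_{n-1}^T$ unit lower triangular, producing
\[
M_T \;=\; \begin{bmatrix} \D(T') & \mathbf{1}_{n-1} \\ \mathbf{1}_{n-1}^T & -2 \end{bmatrix}.
\]
By Sylvester's law of inertia and multiplicativity of $\det$, the matrices $\D(T)$ and $M_T$ share the same inertia and determinant, so it suffices to analyze $M_T$.

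The next ingredient I would establish is the following lemma, which I would prove by a parallel induction on $n$: for every tree $T$ of order $n \ge 2$,
\[
\D(T)\,\tau_T \;=\; (n-1)\,\mathbf{1}_n, \qquad \text{where } \tau_T(j) = 2 - \deg_T(v_j).
\]
The handshake lemma gives $\mathbf{1}_n^T \tau_T = 2n - 2|E(T)| = 2$. The inductive step compares $\tau_T$ with $\tau_{T'}$ (which agree except that the entry at position $n-1$ decreases by $1$ and a new entry $1$ is appended at position $n$), invoking $d(v_n, v_j) = d(v_{n-1}, v_j) + 1$ once more, and treating the index $i = n$ separately. I expect this bookkeeping to be the main technical obstacle of the whole argument, though the individual identities are elementary.

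Applying the lemma to $T'$ yields $\D(T')^{-1} \mathbf{1}_{n-1} = \tau_{T'}/(n-2)$ and hence $\mathbf{1}_{n-1}^T \D(T')^{-1}\mathbf{1}_{n-1} = 2/(n-2)$. A standard Schur-complement congruence then reduces $M_T$ further to the block diagonal form
\[
\begin{bmatrix} \D(T') & 0 \\ 0 & -2 - \tfrac{2}{n-2} \end{bmatrix} \;=\; \begin{bmatrix} \D(T') & 0 \\ 0 & -\tfrac{2(n-1)}{n-2} \end{bmatrix}.
\]
The induction hypothesis $\det \D(T') = (-1)^{n-2}(n-2)\,2^{n-3}$ combined with multiplicativity yields
\[
\det \D(T) \;=\; (-1)^{n-2}(n-2)\,2^{n-3} \cdot \left(-\tfrac{2(n-1)}{n-2}\right) \;=\; (-1)^{n-1}(n-1)\,2^{n-2},
\]
and Sylvester's law applied to the block diagonal form gives inertia $(1, n-2, 0) + (0, 1, 0) = (1, n-1, 0)$, yielding $n_+(T) = 1$ and $n_-(T) = n-1$ and completing the induction.
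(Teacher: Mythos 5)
The paper states this theorem as a cited result from Graham and Pollak and gives no proof of its own, so there is nothing internal to compare against; your argument must stand on its own, and it does. I checked each step: the leaf row/column operation is a genuine unimodular congruence $E\,\D(T)\,E^T$ with $E=I_n-e_ne_{n-1}^T$, and it does produce the bordered matrix with border vector $\mathbf{1}_{n-1}$ and corner $-2$ (the $(n,n)$ entry becomes $0-1-1=-2$ after both operations). The auxiliary identity $\D(T)\tau_T=(n-1)\mathbf{1}_n$ with $\tau_T(j)=2-\deg_T(v_j)$ is a true and classical fact, and the inductive bookkeeping you flag as the main obstacle does go through: for rows $i\le n-1$ the terms $-d(v_i,v_{n-1})$ and $+d(v_i,v_{n-1})$ cancel leaving $(n-2)+1$, and for row $n$ one gets $(n-2)+\sum_{j\le n-1}\tau_T(j)=(n-2)+1$ using $\mathbf{1}^T\tau_T=2$. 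The Schur complement $-2-\mathbf{1}^T\D(T')^{-1}\mathbf{1}=-2(n-1)/(n-2)$ is correct (and $\D(T')$ is invertible by the induction hypothesis, which you use implicitly and should perhaps say explicitly), giving both the determinant recursion and the inertia count $(1,n-2,0)+(0,1,0)$. This is essentially the standard leaf-deletion proof, but packaging the determinant and the inertia into a single chain of congruences is an efficient touch: Graham and Pollak's original treatment obtains the determinant by a recursion and handles the signature separately, whereas your Sylvester-based reduction delivers both at once. The only improvement I would ask for is to actually write out the short induction for the lemma rather than describing it, since the whole argument rests on it.
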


The tree-determinant result was extended to the characteristic polynomial for trees (as described in the next section) and to arbitrary graphs for the determinant. Graham, Hoffman, and Hosoya showed that the determinant the distance matrix of a graph depends only on the determinants and cofactors of its blocks.   A {\em block} of a graph  is a subgraph that has no cut vertices and is maximal with respect to this property.  A graph is the union of its blocks.  Observe that in a tree all blocks have order two, and a tree of order $n$ has $n-1$ blocks. The next result was proved for strongly connected directed graphs, but since we have yet not defined these terms we state it for (connected) graphs.

 \begin{thm}{\rm \cite{GHH77}} Let $G$ be a graph with blocks $G_1, G_2,\dots,G_k$. Then 
 \[\det \D(G)=\ds\sum_{i=1}^k\det(\D(G_i))\prod_{j\ne i}cof(\D(G_j))  \]
 where $cof(M)$ is the sum of all the cofactors of $M$. \end{thm}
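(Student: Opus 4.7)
My plan is to prove the formula by induction on the number of blocks $k$, proving simultaneously the companion identity
\[\text{cof}(\D(G))=\prod_{i=1}^{k}\text{cof}(\D(G_i)),\]
which turns out to be necessary for the induction to close. The base case $k=1$ is trivial (a single block $G=G_1$ makes both identities tautologies, using the convention that the cofactor sum of the empty matrix is $1$).

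For the inductive step, I would choose a \emph{pendant block} $G_{k+1}$, that is, a block containing at most one cut vertex $v$ of $G$. Let $G'$ denote the union of the remaining $k$ blocks; then $G$ is obtained from $G'$ and $G_{k+1}$ by identifying them along $v$. The key structural observation is that every path between a vertex $u\in V(G')\setminus\{v\}$ and a vertex $w\in V(G_{k+1})\setminus\{v\}$ must pass through $v$, so
\[d_G(u,w)=d_{G'}(u,v)+d_{G_{k+1}}(v,w).\]
Ordering the vertices of $G$ as $V(G')$ followed by $V(G_{k+1})\setminus\{v\}$, I would write $\D(G)$ as a $2\times 2$ block matrix whose off-diagonal block has the rank-two form $\mathbf{a}\mathbf{1}^T+\mathbf{1}\mathbf{b}^T$, where $\mathbf{a}$ is the $v$-column of $\D(G')$ and $\mathbf{b}$ is the $v$-row of $\D(G_{k+1})$ restricted to $V(G_{k+1})\setminus\{v\}$.

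The heart of the argument is the two-block identity
\[\det\D(G)=\det\D(G')\cdot\text{cof}(\D(G_{k+1}))+\text{cof}(\D(G'))\cdot\det\D(G_{k+1}),\qquad \text{cof}(\D(G))=\text{cof}(\D(G'))\cdot\text{cof}(\D(G_{k+1})).\]
To establish these, I would perform the column operation that subtracts the $v$-column from every column indexed by $V(G_{k+1})\setminus\{v\}$, and the symmetric row operation. These preserve the determinant and exploit the rank-two structure: the top-right block collapses to $\mathbf{1}\mathbf{b}^T$, the bottom-left block to $\mathbf{b}\mathbf{1}^T$, and the bottom-right block becomes a correction of $B$ by rank-two terms involving $\mathbf{b}$. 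Then I would apply multilinearity of the determinant in the rows/columns corresponding to $V(G_{k+1})\setminus\{v\}$, splitting each such row into its constant part (coming from $\mathbf{b}\mathbf{1}^T$) and its variable part. The constant parts produce, via Laplace/cofactor expansion along the appropriate block, sums of cofactors of $\D(G')$ and $\D(G_{k+1})$, while the variable parts reassemble into full distance matrices; together they yield the two claimed identities. A parallel argument on the bordered matrix used to define $\text{cof}(\D(G))$ gives the companion identity.

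Finally, I would apply the inductive hypothesis to $G'$ (which has $k$ blocks) to expand $\det\D(G')$ and $\text{cof}(\D(G'))$ in terms of the $G_i$ for $i\le k$, then substitute into the two-block identity and collect terms to recover the stated formula. The main obstacle is step three: the bookkeeping in the block-matrix manipulation, especially verifying that the sum-of-cofactors of the correction matrix $B-\mathbf{b}\mathbf{1}^T-\mathbf{1}\mathbf{b}^T$ decomposes as predicted, which is essentially the only nontrivial algebra in the proof. The rest is clean induction once the two-block identities are in hand.
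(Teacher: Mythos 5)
The survey states this result without proof (it is quoted from \cite{GHH77}), so there is no in-paper argument to compare against; judged on its own terms, your proposal is correct and is in substance the original Graham--Hoffman--Hosoya argument. The two-block identities you isolate,
\[\det\D(G)=\det\D(G')\operatorname{cof}(\D(G_{k+1}))+\operatorname{cof}(\D(G'))\det\D(G_{k+1}),\qquad \operatorname{cof}(\D(G))=\operatorname{cof}(\D(G'))\operatorname{cof}(\D(G_{k+1})),\]
are exactly the right key lemma: they make $\operatorname{cof}$ multiplicative and $\det$ behave like a derivation over blocks, and the induction then closes immediately (the existence of a pendant block is the standard leaf-of-the-block-tree fact, and distances inside $G'$ and inside $G_{k+1}$ are unaffected by the gluing, so the inductive hypothesis genuinely applies to $G'$). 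Your structural computation is also right: after subtracting the $v$-column from the columns of $V(G_{k+1})\setminus\{v\}$ and the $v$-row from the corresponding rows, $\D(G)$ becomes $\left[\begin{smallmatrix} A & \mathbf{1}\mathbf{b}^T\\ \mathbf{b}\mathbf{1}^T & C\end{smallmatrix}\right]$ with $A=\D(G')$ and $C=B'-\mathbf{b}\mathbf{1}^T-\mathbf{1}\mathbf{b}^T$, and one checks that $\det C=\operatorname{cof}(\D(G_{k+1}))$ and $\det\D(G_{k+1})=-\mathbf{b}^T\operatorname{adj}(C)\,\mathbf{b}$, from which the first identity drops out of the rank-two update of $A\oplus C$.

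Two cautions on the part you flag as the nontrivial algebra. First, $\operatorname{cof}$ is \emph{not} invariant under your row/column operations (they conjugate $\D(G)$ by a unimodular $E$ with $E^{-1}\mathbf{1}\ne\mathbf{1}$), so the second identity cannot be read off the reduced matrix directly; you must, as you gesture at, run the whole computation on the bordered matrix or, more cleanly, on $\D(G)+xJ$, using $\det(M+xJ)=\det M+x\operatorname{cof}(M)$ and the observation that the operations kill the $xJ$ contribution everywhere except the $G'$ corner. Comparing the coefficients of $x^0$ and $x^1$ then delivers both identities at once, which is tidier than raw multilinearity row by row. Second, if you phrase the computation via Schur complements or $C^{-1}$ you are implicitly assuming invertibility of intermediate matrices; either argue by multilinearity throughout or note that the identities are polynomial in the entries and conclude by density. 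Neither point is a gap in the strategy, only in the bookkeeping you deferred.
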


\subsection{Trees and  the Graham and Lov\'asz Unimodality Conjecture}\label{ss:D-GL}
%\cite{GL78}

%Let $G$ be a  graph  on $n$ vertices.  
The {\em distance characteristic polynomial of $G$} or {\em distance polynomial of $G$} is $p_{\D}(x)=\det(xI_n-\D(G))$. 
In much of the initial work, including \cite{EGG76, GL78}  the polynomial studied is $\Delta_G(x)=\det(\D(G)-xI_n)=(-1)^np_{\D}(x)$ where  $n$ is the order of the graph $G$ (in these papers, $\Delta_G(x)$ was called the distance characteristic polynomial of $G$).  The  coefficient of $x^k$ in $\Delta_G(x)$ is denoted  by $\delta_k(G)$ %or $\delta_k$ 
\cite{GL78}. Thus the coefficient of $x^k$ in the distance polynomial of $G$ is $(-1)^n\delta_k(G)$.  
Edelberg, Garey and Graham computed some coefficients of $\Delta_G(x)$ and determined the sign of each coefficient $\delta_k$.  
 \begin{thm}\label{t:egg}{\rm\cite{EGG76}} For a tree $T$ on $n$ vertices,
 \[\sgn(\delta_k(T))= \begin{cases}
(-1)^n & \mbox{for }k=n \\
0 & \mbox{for }k=n\\
(-1)^{n-1} & \mbox{for } 0\le k\le n-2
\end{cases}. \]
 \end{thm}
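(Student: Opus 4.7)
The plan is to use Graham and Pollak's inertia theorem (stated earlier in the excerpt) to factor the distance polynomial and then read off the coefficients directly. Since $n_+(T)=1$ and $n_-(T)=n-1$, the eigenvalues of $\D(T)$ consist of one positive number $\mu$ together with $-\nu_1,\dots,-\nu_{n-1}$ where each $\nu_i>0$. Because the diagonal of $\D(T)$ is zero, $\tr \D(T)=0$, which forces $\mu=\nu_1+\cdots+\nu_{n-1}$. Let $e_j$ denote the $j$-th elementary symmetric polynomial in $\nu_1,\dots,\nu_{n-1}$, so $e_0=1$, $e_j>0$ for $1\le j\le n-1$, and $\mu=e_1$; adopt the convention $e_n:=0$. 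Then
\[ p_{\D}(x)=(x-e_1)\prod_{i=1}^{n-1}(x+\nu_i)=(x-e_1)\sum_{j=0}^{n-1}e_{n-1-j}\,x^{\,j}. \]

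The two extreme cases are immediate: the leading coefficient of $p_{\D}$ is $1$, giving $\delta_n(T)=(-1)^n$ (since $\Delta_T=(-1)^n p_{\D}$), and the coefficient of $x^{n-1}$ in $p_{\D}$ equals $-\tr \D(T)=0$. For $0\le k\le n-2$, distributing yields the coefficient of $x^k$ in $p_{\D}$ as $e_{n-k}-e_1\,e_{n-1-k}$, so $\delta_k(T)=(-1)^n\bigl(e_{n-k}-e_1\,e_{n-1-k}\bigr)$; therefore $\sgn(\delta_k(T))=(-1)^{n-1}$ is equivalent to $e_{n-k}-e_1\,e_{n-1-k}<0$. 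Writing $j=n-1-k$, this reduces to the strict inequality $e_{j+1}<e_1\,e_j$ for $1\le j\le n-1$.

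The nontrivial step, and the one I expect to carry essentially all of the content, is this inequality on elementary symmetric polynomials of positive reals. I would prove it by a direct counting identity: expanding $e_1\,e_j=\bigl(\sum_i\nu_i\bigr)\bigl(\sum_{|S|=j}\prod_{i\in S}\nu_i\bigr)$ and splitting the double sum according to whether $i\in S$ yields
\[ e_1\,e_j=(j+1)\,e_{j+1}+\sum_{i=1}^{n-1}\nu_i^{\,2}\,e_{j-1}\bigl(\nu\setminus\{\nu_i\}\bigr), \]
since each $(j{+}1)$-subset $T$ is counted once for every $i\in T$. For $j=n-1$ the claim is immediate as $e_n=0$, and for $1\le j\le n-2$ the term $(j+1)e_{j+1}\ge 2e_{j+1}>e_{j+1}$ together with the nonnegative correction sum gives $e_1\,e_j>e_{j+1}$ strictly. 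This confirms the sign of $\delta_k(T)$ in the remaining case $0\le k\le n-2$ and completes the proof.
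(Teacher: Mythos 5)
Your proof is correct, and it necessarily takes a different route from the paper, because the paper states this result without proof, citing \cite{EGG76}; the original argument of Edelberg, Garey, and Graham proceeds by explicit combinatorial computation of the coefficients for trees, whereas you derive the signs purely from linear-algebraic data. Concretely, you use only two facts: the Graham--Pollak inertia result $n_+(T)=1$, $n_-(T)=n-1$ (stated earlier in the paper), and $\tr\D(T)=0$. Your factorization $p_{\D}(x)=(x-e_1)\prod_i(x+\nu_i)$, the reduction to $e_{j+1}<e_1e_j$, and the counting identity $e_1e_j=(j+1)e_{j+1}+\sum_i\nu_i^2\,e_{j-1}(\nu\setminus\{\nu_i\})$ are all correct (I checked the split over $i\in S$ versus $i\notin S$, and the endpoint $j=n-1$ is handled by the convention $e_n=0$, while $j\le n-2$ uses $e_{j+1}>0$). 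What this buys: the argument is short, self-contained modulo the inertia theorem, and actually proves the stronger statement that \emph{any} connected graph $G$ with zero-diagonal distance matrix satisfying $n_+(G)=1$ and $n_-(G)=n-1$ has $\sgn(\delta_k(G))=(-1)^{n-1}$ for $0\le k\le n-2$ --- a fact relevant to the paper's later discussion of graphs with exactly one positive distance eigenvalue. What it gives up is the finer information in the original approach (explicit formulas for the $\delta_k$ in terms of subforest counts, which feed into the Graham--Lov\'asz results). Two small remarks: the case split in the theorem as printed lists $k=n$ twice, and the second line should read $k=n-1$ (your proof correctly assigns the zero coefficient to $x^{n-1}$ via the trace); and your argument, like the cited inertia theorem, requires $n\ge 2$.
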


As a consequence,    $(-1)^{n-1}\delta_k(T) > 0$ for $0\le k\le n-2$, as noted in \cite{GL78}, so the coefficient of $x^k$ in $p_{\D(T)}(x)$ is negative for $0\le k\le n-2$.
 Graham and Lov\'asz extended the work of Edelberg, Garey, and Graham, showing that the coefficients of the distance characteristic polynomial of a tree depend only on the number of occurrences of subforests of the tree. Let $N_F(T)$ denote the number of occurrences of $F$ in $T$ (with $N_F(T)=1$ if $F$ has order zero).

 \begin{thm}{\rm\cite{GL78}} For a tree $T$ on $n\ge 2$ vertices, 
 \[\delta_k(T)=(-1)^{n-1}2^{n-k-2}\sum_FA^{(k)}_F N_F(T)\]
 where $F$ ranges over forests having $k-1,k,k+1$ edges and no isolated vertices and $A^{(k)}_F$ is an integer that depends on the number of occurrences of various paths in $F$ but does not depend on $T$.
 \end{thm}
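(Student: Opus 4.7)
The plan is to start from the principal-minor expansion of the characteristic polynomial,
\[\delta_k(T) = (-1)^k \sum_{\substack{S \subseteq V(T)\\|S|=n-k}} \det\bigl(\D(T)[S|S]\bigr),\]
and expand each $\det(\D(T)[S|S])$ by the Leibniz formula. Because the diagonal entries of $\D$ vanish, only derangements $\sigma$ of $S$ contribute. I then rewrite each off-diagonal entry using the tree identity
\[d_T(u,v) = \sum_{e \in E(T)} \dsone[e \in P_T(u,v)],\]
which expresses $d_T(u,v)$ as a sum of indicators over edges of the unique $u$--$v$ path, and distribute the resulting product of sums.

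The outcome is a multisum over triples $(S,\sigma,\phi)$, where $\sigma$ is a derangement on $S$ and $\phi$ is an assignment $i \mapsto \phi(i) \in E(T)$ satisfying $\phi(i) \in P_T(v_i, v_{\sigma(i)})$. Collecting the chosen edges yields a spanning subforest $F \subseteq T$ with no isolated vertices. Regrouping the sum by $F$ gives
\[\delta_k(T) = (-1)^k \sum_{F \subseteq T} W_F^{(k)}(T),\]
where $W_F^{(k)}(T) = \sum_{(S,\sigma,\phi) \mapsto F} \sgn(\sigma)$. The two central tasks are then (a) to show $W_F^{(k)}(T)$ depends only on the isomorphism type of $F$, so that it factors as $c_F \cdot N_F(T)$; and (b) to show $c_F = 0$ unless $|E(F)| \in \{k-1, k, k+1\}$. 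The overall prefactor $(-1)^{n-1} 2^{n-k-2}$ is then pulled out of the signs and edge counts, leaving the residual $A_F^{(k)}$, which one verifies depends only on path counts inside $F$.

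For (a), I would analyze each cycle of $\sigma$ separately: a cycle $(i_1\,i_2\,\cdots\,i_\ell)$ together with its chosen edges $\phi(i_1),\dots,\phi(i_\ell)$ defines a closed walk through the tree, and uniqueness of tree paths should force each path $P_T(v_{i_j}, v_{i_{j+1}})$ to stay within $F$ once the $\phi(i_j)$ are fixed. Hence the local contribution depends only on the induced substructure of $F$. For (b), a sign-reversing involution---toggling the role of a pendant edge of $F$ against a neighboring edge choice---should pair off triples whose forest size lies outside the stated window. The factor $2^{n-k-2}$ and the overall sign $(-1)^{n-1}$ match the base case $k=0$, $F=P_2$, $N_{P_2}(T)=n-1$, recovering Graham--Pollak's $\det \D(T) = (-1)^{n-1}(n-1)2^{n-2}$.

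The principal obstacle is establishing (a): proving that the weighted count is genuinely a local invariant of $F$. A priori, different embeddings of $F$ in $T$ might contribute differently through paths that wander outside $F$, and ruling this out requires careful exploitation of acyclicity. Identifying the correct sign-reversing involution for (b) is a secondary challenge; I expect it to leverage the presence of pendant edges or degree-one vertices in oversized forests to produce the cancellation.
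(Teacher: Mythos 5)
This theorem is stated in the survey as a cited result from \cite{GL78}; the paper contains no proof of it, so your attempt can only be measured against the original Graham--Lov\'asz argument. Measured that way, your plan has a genuine gap at its central claim (a), and the claim is in fact false as you have formulated it. Take $k=n-2$, so $|S|=2$ and $\det(\D[S|S])=-d(u,v)^2$ for $S=\{u,v\}$. Expanding $d(u,v)^2=\sum_{e,f\in P_T(u,v)}1$ and collecting the triples $(S,\sigma,\phi)$ whose chosen edges form the single-edge forest $F=\{e\}$ forces $\phi(u)=\phi(v)=e$, so $W_{\{e\}}^{(n-2)}(T)=-n_1(e)\,n_2(e)$, where $n_1(e),n_2(e)$ are the orders of the two components of $T-e$. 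This is $-3$ for a pendant edge of $P_4$ and $-4$ for its middle edge, so the weight attached to a copy of $F$ depends on its embedding in $T$, not on its isomorphism type, and it cannot factor as $c_F\cdot N_F(T)$. Your proposed mechanism for (a) --- that uniqueness of tree paths forces $P_T(v_{i_j},v_{i_{j+1}})$ to stay inside $F$ --- is not correct: the constraint $\phi(i)\in P_T(v_i,v_{\sigma(i)})$ only pins one edge of each path, and the endpoints $v_i\in S$ may lie arbitrarily far from $F$. Any repair must regroup contributions \emph{across} different forests, which is exactly the hard combinatorial content you have deferred.

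There is also a bookkeeping mismatch that signals the indexing object is wrong: your forest of chosen edges has at most $|S|=n-k$ edges, while the theorem's forests have $k-1$, $k$, or $k+1$ edges. For $k=n-2$ your construction only ever produces forests with at most two edges, whereas the theorem sums over forests with $n-3$, $n-2$, or $n-1$ edges; for $k=0$ the situation reverses. So no sign-reversing involution that merely prunes oversized forests can bridge the two sides. The route actually taken in \cite{GL78} is different in kind: one first proves the closed form $\D(T)^{-1}=-\tfrac12 L(T)+\tfrac{1}{2(n-1)}\tau\tau^{T}$, where $L(T)$ is the Laplacian and $\tau_i=2-\deg(v_i)$, writes the coefficients of $p_{\D}$ as sums of principal minors of $\D(T)^{-1}$ scaled by $\det\D(T)=(-1)^{n-1}(n-1)2^{n-2}$, and then invokes the all-minors Matrix-Tree theorem to convert the Laplacian minors into counts of subforests. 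The locality in $F$ --- the fact that the coefficient of $N_F(T)$ does not depend on $T$ --- is delivered by the Matrix-Tree theorem, not by a direct cancellation in the Leibniz expansion of $\D$ itself. As it stands, your proposal does not close either of its two announced obstacles, and obstacle (a) is refuted by the example above.
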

 
 For a graph $G$ of order $n$ and $0\le k\le n-2$, define $ d_k(G)= |\delta_k(G)|/2^{n-k-2}$. The numbers $d_k(G)$ are called the {\em normalized coefficients}.
A sequence $a_0,a_1,a_2,\ldots, a_n$ of real numbers is {\em unimodal} if there is a $k$ such that $a_{i-1}\leq a_{i}$ for $i\leq k$ and $a_{i}\geq a_{i+1}$ for $i\geq k$.  Graham and Lov\'asz made 
the following statement  in \cite{GL78}, which has come to called the {\em Graham-Lov\'asz Conjecture}: 

\begin{quote}{\em  It appears that in fact for each tree $T$, the quantities $(-1)^{n-1}\delta_k(T)/2^{n-k-2}$ are {unimodal}  with the maximum value occurring for $k=\big\lfloor \frac n 2\big\rfloor$.  We see no way to prove this, however.
}\end{quote}

The conjecture can be be restated as follows.
 \begin{conj} [Graham-Lov\'asz]\label{GLconj}
For a tree $T$ of order $n\ge 3$, the sequence of normalized coefficients  $d_0(T),\dots,d_{n-2}(T)$ is unimodal and  the peak occurs at $\big\lfloor \frac n 2\big\rfloor$.\end{conj}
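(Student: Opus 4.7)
The plan is to split Conjecture~\ref{GLconj} into two parts: first, unimodality of the positive sequence $(d_k(T))_{k=0}^{n-2}$, and second, that the peak is attained at $k=\lfloor n/2\rfloor$. For the unimodality part, I would aim to establish the stronger log-concavity $d_k(T)^2\ge d_{k-1}(T)\,d_{k+1}(T)$, which combined with positivity (Theorem~\ref{t:egg}) immediately forces unimodality. The main structural input is the Graham-Pollak inertia result: $\D(T)$ has exactly one positive eigenvalue $\lambda_1=\rho(\D(T))$ and $n-1$ negative eigenvalues $-\mu_2,\dots,-\mu_n$ with $\mu_i>0$, so
\[
p_{\D}(x) = (x-\lambda_1)\,q(x), \qquad q(x) = \prod_{i=2}^{n}(x+\mu_i),
\]
where $q(x)$ is a real-rooted polynomial with nonnegative coefficients equal to the elementary symmetric polynomials $e_{n-1-j}(\mu_2,\dots,\mu_n)$. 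Newton's inequalities then yield log-concavity of the normalized elementary symmetric polynomials $e_j/\binom{n-1}{j}$, and this is the analytic engine to exploit.

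My first concrete step would be to equate coefficients in $p_{\D}(x)=(x-\lambda_1)q(x)$, which together with Theorem~\ref{t:egg} gives the identity $d_k(T)=(\lambda_1 e_{n-1-k}-e_{n-k})/2^{n-k-2}$ for $0\le k\le n-2$. The second step is to transfer Newton log-concavity from $(e_j)$ to the combinations $\lambda_1 e_{j-1}-e_j$: expanding $d_k^2-d_{k-1}d_{k+1}$ reduces to a sum of three terms, two of which are nonnegative by the basic $e_j^2\ge e_{j-1}e_{j+1}$, and the third has the form $\lambda_1(e_{j-1}e_j-e_{j+1}e_{j-2})$, which should be controllable using that $\lambda_1=\mu_2+\cdots+\mu_n$ (from $\tr\D(T)=0$) and that $\lambda_1$ dominates each $\mu_i$ by Perron-Frobenius. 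In parallel, I would verify both unimodality and the peak location on canonical tree families (paths $P_n$, stars $K_{1,n-1}$, and small caterpillars), both as a sanity check and as potential base cases for induction.

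The hard part will be pinning down the peak location at exactly $\lfloor n/2\rfloor$. Newton's inequality forces the mode of $(e_j)$ to lie near the middle, but the subtraction in $\lambda_1 e_{n-1-k}-e_{n-k}$ can shift the peak by one, so this interaction must be quantified using the trace identities $\lambda_1=\sum_{i\ge 2}\mu_i$ and $\sum_{i\ge 2}\mu_i^2=\tr(\D(T)^2)-\lambda_1^2$. An alternative combinatorial route goes through the Graham-Lov\'asz subforest expansion $d_k(T)=\sum_F A^{(k)}_F N_F(T)$, but because the $A^{(k)}_F$ are signed, a direct combinatorial proof of $d_{k+1}(T)\ge d_k(T)$ for $k<\lfloor n/2\rfloor$ would require a sign-reversing involution or a clever grouping of forest types, which I expect to be delicate. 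As a fallback, leaf-deletion induction on $n$ is natural: removing a leaf shifts $n$ by one and $\lfloor n/2\rfloor$ by at most one, so one can hope to track the evolution of each $d_k(T)$ and argue that the peak migrates accordingly.
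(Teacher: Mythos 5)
There is a fundamental problem here that no amount of technical work can fix: the statement you are trying to prove is a \emph{conjecture} in the paper, not a theorem, and half of it is known to be \emph{false}. The paper records (immediately after Conjecture \ref{GLconj}) that Collins \cite{C89} disproved the peak-location claim in 1985: for the path $P_n$ the peak of the normalized coefficient sequence occurs at approximately $\left(1-\frac{1}{\sqrt 5}\right)n \approx 0.553\,n$, which differs from $\big\lfloor \frac n 2\big\rfloor$ for all sufficiently large $n$. Ironically, your own plan contains the seed of this discovery --- you propose to ``verify both unimodality and the peak location on canonical tree families (paths $P_n$, stars $K_{1,n-1}$, \dots)'' as a sanity check; carrying that out for paths would show the conjecture cannot be proved as stated. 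Any argument purporting to pin the peak at exactly $\big\lfloor \frac n 2\big\rfloor$ for all trees must therefore contain an error, and the difficulties you anticipate in ``quantifying the interaction'' that shifts the peak are not merely technical: the shift is real and the peak genuinely moves. The surviving open question is the Collins--Shor refinement (Conjecture \ref{CSconj}), which only asks that the peak lie between $\big\lfloor \frac n 2\big\rfloor$ and $\left\lceil \left(1-\frac 1 {\sqrt 5}\right)n\right\rceil$; even that remains unresolved, with the best known bounds from \cite{GRWC15-uni} being $\left\lfloor\frac{n-2}{1+d}\right\rfloor$ and $\left\lceil\frac{2}{3}n\right\rceil$.

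The unimodality half of your plan is sound in outline and is essentially how the literature handles it: \cite{GRWC15-uni} establishes log-concavity of the coefficient sequence of $p_{\D}(x)$ for trees using real-rootedness of the characteristic polynomial together with Newton-type inequalities, and then deduces unimodality of the sequences $|\delta_0(T)|,\dots,|\delta_{n-2}(T)|$ and $d_0(T),\dots,d_{n-2}(T)$ from the constant sign of the coefficients (Theorem \ref{t:egg}). One caution even there: passing from log-concavity of $(|\delta_k|)$ to log-concavity of the normalized sequence $(d_k)$ with $d_k = |\delta_k|/2^{n-k-2}$ rescales each ratio test by a factor of $4$, so that step needs the explicit verification given in \cite{GRWC15-uni} rather than being automatic. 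If you want a statement you can actually prove, target the unimodality claims of that theorem, or the weaker peak-location bounds, and drop the exact peak at $\big\lfloor \frac n 2\big\rfloor$ entirely.
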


 The  location of the peak as stated in Conjecture \ref{GLconj}  was disproved by Collins in 1985.\footnote{Despite use of the term {\em coefficient}  throughout \cite{C89}, the sequence discussed there  is $d_k(T)$, not $\delta_k(T)$.}

\begin{thm}{\rm \cite{C89}} For both stars and paths the sequence $d_0(T),\dots,d_{n-2}(T)$ is unimodal, but for paths the peak is at approximately $\left(1-\frac 1 {\sqrt 5}\right)n$ (for stars it is  at $\left\lfloor \frac n 2\right\rfloor$). 
\end{thm}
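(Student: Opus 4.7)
The plan is to handle the two families separately, since in each case the coefficient sequence $(d_k)$ can be brought to essentially closed form by direct computation.

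For the star $K_{1,n-1}$ I would first diagonalize $\D(K_{1,n-1})$ explicitly. Placing the center vertex first,
\[
\D(K_{1,n-1})=\begin{pmatrix} 0 & \mathbf{1}^{T} \\ \mathbf{1} & 2(J-I)\end{pmatrix}
\]
with $J,I$ of size $n-1$. Any vector $(0,\bv)^{T}$ with $\bv\perp\mathbf{1}$ is an eigenvector for $-2$, so $-2$ has multiplicity $n-2$; the remaining two eigenvalues come from the $2\times 2$ reduction on the span of $(1,\mathbf{0})^{T}$ and $(0,\mathbf{1})^{T}$. This gives the factorization
\[
p_{\D}(x)=(x+2)^{n-2}\bigl(x^{2}-2(n-2)x-(n-1)\bigr).
\]
Expanding $(x+2)^{n-2}$ by the binomial theorem writes each $\delta_{k}$ as an explicit combination of $\binom{n-2}{k-2}$, $\binom{n-2}{k-1}$, and $\binom{n-2}{k}$, with signs agreeing with Theorem \ref{t:egg}, so $d_{k}(K_{1,n-1})=|\delta_{k}|/2^{n-k-2}$ is completely explicit. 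Unimodality and the peak at $\lfloor n/2\rfloor$ then follow by examining the ratio $d_{k+1}/d_{k}$: its dominant term behaves like $\binom{n-2}{k+1}/\binom{n-2}{k}=(n-2-k)/(k+1)$, which crosses $1$ exactly once, near $k=(n-2)/2$.

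For the path $P_{n}$ I would use the Graham-Lov\'asz identity quoted above, which writes $\delta_{k}(P_{n})$ as a weighted sum of counts $N_{F}(P_{n})$ over forests $F$ with $k-1$, $k$, or $k+1$ edges and no isolated vertex. Every subgraph of $P_{n}$ without isolated vertices is a disjoint union of subpaths, so for $F=P_{\ell_{1}}\cup\dots\cup P_{\ell_{r}}$ the count $N_{F}(P_{n})$ is a standard stars-and-bars enumeration of how many ways $r$ pairwise disjoint intervals of prescribed lengths can be placed inside $\{1,\dots,n\}$. Summing these counts against the Graham-Lov\'asz constants $A_{F}^{(k)}$ should collapse $d_{k}(P_{n})$ to a closed-form expression dominated by binomial coefficients of the form $\binom{n-k-j}{k+c}$; such ``Fibonacci binomials'' are expected because packing edges into a path is governed by the Fibonacci recursion.

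The harder half is locating the peak. I would study the ratio $d_{k+1}(P_{n})/d_{k}(P_{n})$, set $\alpha=k/n$, and let $n\to\infty$; the equation $\lim d_{k+1}/d_{k}=1$ should reduce to a polynomial equation in $\alpha$ whose only relevant root is $\alpha=1-1/\sqrt{5}$, with $\sqrt{5}$ entering through the characteristic equation $x^{2}=x+1$ of the underlying Fibonacci recursion. Unimodality for finite $n$ would then follow from monotonicity of $d_{k+1}/d_{k}$ in $k$, which I expect to verify using the log-concavity of the binomial coefficients appearing in the closed form. The main technical obstacle is reducing the Graham-Lov\'asz sum to a single tractable formula and then controlling the gap between the continuum critical point $\alpha n$ and the integer $k$ where $(d_{k})$ actually peaks.
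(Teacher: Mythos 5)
First, note that the paper does not prove this statement: it is quoted from Collins \cite{C89}, so there is no internal proof to compare against; your proposal has to stand on its own.

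Your treatment of the star is essentially complete and correct. The eigenvector computation gives $p_{\D}(K_{1,n-1})(x)=(x+2)^{n-2}\bigl(x^{2}-2(n-2)x-(n-1)\bigr)$, from which $d_k=\bigl|4\tbinom{n-2}{k-2}-4(n-2)\tbinom{n-2}{k-1}-(n-1)\tbinom{n-2}{k}\bigr|$ is explicit, and a ratio analysis does locate the peak at $\lfloor n/2\rfloor$. One caution: you argue from the ``dominant term'' $4(n-2)\tbinom{n-2}{k-1}$ alone, but to conclude unimodality of the full sequence you must control all three terms simultaneously (e.g., by showing $d_{k+1}/d_k$ is monotone, or that the exact ratio crosses $1$ only once); the lower-order terms do not change the answer here, but the step needs to be written out rather than asserted.

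The path case, however, contains a genuine gap: the entire argument rests on a closed form for $d_k(P_n)$ that you never derive. You say the Graham--Lov\'asz sum ``should collapse'' to an expression dominated by Fibonacci-type binomials, that the limiting ratio equation ``should reduce'' to a polynomial with root $\alpha=1-1/\sqrt{5}$, and that monotonicity of $d_{k+1}/d_k$ is something you ``expect to verify.'' Each of these is precisely the content of the theorem for paths, not a routine consequence of the setup. In particular, the Graham--Lov\'asz constants $A_F^{(k)}$ are themselves signed combinations depending on the path-decomposition of $F$, so summing them against the stars-and-bars counts $N_F(P_n)$ involves substantial cancellation; without carrying out that reduction you have no formula to analyze, and hence neither unimodality nor the location of the peak is established. (As a sanity check on where $\sqrt{5}$ enters: a binomial of the shape $\tbinom{2n-k+c}{k+c'}$ has ratio equation $4(1-\alpha)^2=\alpha(2-\alpha)$, i.e.\ $5\alpha^2-10\alpha+4=0$, giving $\alpha=1-1/\sqrt{5}$; so your target is plausible, but the derivation of that binomial form from the coefficient formula is the missing core of the proof.) A further unaddressed point is the passage from the continuum critical point $\alpha n$ to the statement about the integer peak for finite $n$, which requires a quantitative, not just asymptotic, handle on the ratio.
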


 Collins attributes the next version of the conjecture to Peter Shor: 

\begin{conj}{\rm \cite{C89}}\label{CSconj}\label{peakconj} The normalized coefficients  of the distance characteristic polynomial for any tree $T$ with $n\ge 3$ vertices are unimodal  with peak between $\big\lfloor \frac n 2\big\rfloor$ and $\left\lceil \left(1-\frac 1 {\sqrt 5}\right)n\right\rceil$.
\end{conj}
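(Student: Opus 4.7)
My plan is to address the two parts of the conjecture separately: first unimodality, then the location of the peak. For the setup, Theorem \ref{t:egg} (Edelberg--Garey--Graham) pins down the sign of every $\delta_k(T)$ with $0\le k\le n-2$, so $d_k(T) = (-1)^{n-1}\delta_k(T)/2^{n-k-2}$ is a nonnegative integer, and the Graham--Lov\'asz formula rewrites it as $d_k(T)=\sum_F A_F^{(k)}N_F(T)$ with $F$ ranging over forests having $k-1$, $k$, or $k+1$ edges. All work will be done in this form, where tree-dependence has been separated from the universal coefficients $A_F^{(k)}$.

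For unimodality I would try to prove the stronger statement $d_k(T)^2\ge d_{k-1}(T)d_{k+1}(T)$. The natural starting point is that $\Delta_T(x)$ has only real roots (as $\D(T)$ is real symmetric), so Newton's inequalities yield log-concavity of the sequence $|\delta_k(T)|/\binom{n}{k}$. To convert this to log-concavity in the $2^{n-k-2}$ normalization, one would need to exploit the very special inertia of $\D(T)$, namely one positive and $n-1$ negative eigenvalues, by Graham--Pollak. A promising line is to factor out the Perron root, writing $\Delta_T(x)=(x-\rho)q(x)$, and apply Newton's inequalities to $q(x)$, whose roots are all negative; one then has to translate log-concavity of the coefficients of $q(x)$ into log-concavity of $d_k(T)$, a bookkeeping step involving $\rho$ and the row-sum structure of $\D(T)$.

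For the peak, the two thresholds $\lfloor n/2\rfloor$ and $\lceil(1-1/\sqrt{5})n\rceil$ are, by Collins's theorem, exactly the peak locations of the star and the path. Heuristically, as a tree elongates from a star to a path its peak should drift rightward from the former to the latter. I would make this precise by introducing a local move on trees, for example a Kelmans-type transformation that redirects a pendant subtree between two vertices, and using the Graham--Lov\'asz formula to show that each such move shifts the peak monotonically. Since the space of trees on $n$ vertices is connected under such moves with the star and the path at the two extremes, this would squeeze the peak of any intermediate tree between those two values.

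The main obstacle is the unimodality half. Even with $\D(T)$ real symmetric, the mixed signs of its spectrum obstruct a direct use of Newton's inequalities in the natural $2^{n-k-2}$ normalization, and the ratio $\binom{n}{k}/2^{n-k-2}$ is itself far from log-concave, so the binomially normalized version does not directly imply what is needed. Extracting the Perron factor is the most plausible route but requires a fine understanding of how $\rho$ interacts with the coefficient sequence. As a fallback, I would first verify the conjecture on structured subfamilies such as caterpillars, brooms, and double stars by direct computation through the Graham--Lov\'asz formula and induction on the number of leaves, both to build confidence in the statement and to identify which feature of $T$ controls the peak location.
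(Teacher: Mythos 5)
The statement you are trying to prove is Conjecture \ref{peakconj}: the paper does not prove it, and it remains open. Only half of it is a theorem. The unimodality of the normalized coefficients was established in \cite{GRWC15-uni}, but the claimed peak location between $\big\lfloor \frac n 2\big\rfloor$ and $\left\lceil \left(1-\frac 1 {\sqrt 5}\right)n\right\rceil$ is not known; the best bounds recorded in the paper are an upper bound of $\lc\frac{2}{3}n\rc$ and a lower bound of $\left\lfloor\frac{n-2}{1+d}\right\rfloor$, both strictly weaker than what the conjecture asserts. So your proposal should be judged as a research plan, and as such it has one fixable detour and one genuine gap.

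On unimodality, your worry about the normalization is a red herring, and the Perron-factor extraction is unnecessary. The route actually used (see the discussion around Theorem \ref{t:nonneg-unimod} and \cite{GRWC15-uni}) is: the coefficient sequence of the characteristic polynomial of a real symmetric matrix is log-concave \emph{without} binomial normalization; by Theorem \ref{t:egg} the coefficients $\delta_0(T),\dots,\delta_{n-2}(T)$ all have the same sign, so $|\delta_0(T)|,\dots,|\delta_{n-2}(T)|$ is a positive log-concave sequence, hence unimodal; and since $d_k(T)=|\delta_k(T)|\cdot 2^{k-(n-2)}$ is the product of a log-concave sequence with a geometric (log-linear) one, the $d_k(T)$ are again log-concave and unimodal. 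No analysis of the ratio $\binom{n}{k}/2^{n-k-2}$ or of the spectral structure of $\D(T)$ beyond symmetry and the sign pattern is needed.

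On the peak location, your proposed argument has no substance yet: you would need to (i) exhibit a family of local moves connecting every tree to both the star and the path, (ii) prove that each move shifts the peak of the normalized coefficient sequence monotonically in a consistent direction, and (iii) confirm that the star and the path are the extremal configurations for that ordering. Step (ii) is the entire difficulty --- the coefficients $A^{(k)}_F$ in the Graham--Lov\'asz formula are not explicit enough to control how a single edge slide moves the argmax of the sequence, and there is no known monotonicity of this kind. Absent that, the peak-location half of Conjecture \ref{peakconj} stays exactly where the paper leaves it: conjectural, supported by the star and path computations of \cite{C89} and by the partial bounds of \cite{GRWC15-uni}. Your fallback of verifying structured subfamilies is reasonable evidence-gathering, but it is not a proof.
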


For the rest of this section, the order of a graph is assumed to be at least three (any sequence $a_0$ is trivially unimodal and the peak location is 0). 
The unimodality of the normalized coefficients was established in 2015 using results concerning coefficients of polynomials having all roots real and log concavity.  A sequence $a_0,a_1,a_2,\ldots, a_n$ of real numbers  is {\em log-concave} if $a_j^2\geq a_{j-1}a_{j+1}$ for all $j=1,\dots,  n-1$. 

\begin{thm}{\rm \cite{GRWC15-uni}} Let $T$ be  a tree  of order $n\ge 3$.
\bit
\item The  coefficient sequence  of the distance characteristic polynomial  of $T$, $p_{\D}(x)$, is log-concave.
\item The sequence $|\delta_0(T)|,\dots,|\delta_{n-2}(T)|$ of absolute values  of coefficients of the distance characteristic polynomial  is log-concave and unimodal.
\item The sequence $d_0(T),\dots,d_{n-2}(T)$ of normalized coefficients  of the distance characteristic polynomial    is log-concave and unimodal.
\eit
\end{thm}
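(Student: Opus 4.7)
The plan is to deduce all three statements from the classical Newton inequalities applied to $p_{\D}(x)$, which has only real zeros because $\D(T)$ is a real symmetric matrix.

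First, I would write $p_{\D}(x)=\sum_{k=0}^n c_kx^k$ and invoke Newton's inequalities --- valid for any polynomial with only real roots, regardless of coefficient signs --- in the form
\[
c_k^2\ \ge\ \frac{(k+1)(n-k+1)}{k(n-k)}\,c_{k-1}\,c_{k+1}\qquad (1\le k\le n-1),
\]
where the Newton factor is strictly greater than $1$. Next I would read off the sign pattern of the $c_k$'s from Theorem~\ref{t:egg}: since $\Delta_T(x)=(-1)^np_{\D}(x)$, one has $c_k=(-1)^n\delta_k(T)$, so $c_k<0$ for $0\le k\le n-2$, $c_{n-1}=0$, and $c_n=1$. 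Consequently, for $1\le k\le n-3$ the product $c_{k-1}c_{k+1}$ is strictly positive, and Newton's inequality (with its factor exceeding $1$) yields $c_k^2\ge c_{k-1}c_{k+1}$. At $k=n-2$ the right-hand side is zero, and at $k=n-1$ it is negative, so the log-concavity inequality $c_k^2\ge c_{k-1}c_{k+1}$ holds at every index $1\le k\le n-1$, establishing part~(1).

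Part~(2) will then follow at once: for $1\le k\le n-3$ the three coefficients $c_{k-1},c_k,c_{k+1}$ share the same sign, so $c_k^2\ge c_{k-1}c_{k+1}$ is equivalent to $|\delta_k(T)|^2\ge |\delta_{k-1}(T)|\,|\delta_{k+1}(T)|$, giving log-concavity of the positive sequence $|\delta_0(T)|,\dots,|\delta_{n-2}(T)|$; unimodality of a positive log-concave sequence is standard, as its ratios of consecutive terms are monotone. For part~(3), the identity $2^{2(n-k-2)}=2^{(n-k-1)+(n-k-3)}$ shows that rescaling by $2^{n-k-2}$ preserves log-concavity exactly, so the normalized sequence $d_0(T),\dots,d_{n-2}(T)$ inherits both log-concavity and unimodality from the sequence $|\delta_k(T)|$.

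There is no substantial obstacle in this approach. The subtle point that deserves emphasis is the interplay between Newton's inequality for a polynomial with \emph{mixed}-sign coefficients and the Edelberg--Garey--Graham sign pattern: it is precisely the fact that the Newton factor strictly exceeds $1$ at every interior index, coupled with the three consecutive same-sign coefficients $c_{k-1},c_k,c_{k+1}$ for $1\le k\le n-3$, that converts the (a priori sign-sensitive) Newton bound into ordinary log-concavity. An alternative route would factor $p_{\D}(x)=(x-\dev_n)\,\tilde r(x)$ using the Graham--Pollak inertia result for trees and then analyze the linear combination $\dev_n\tilde r_k-\tilde r_{k-1}$ coefficient by coefficient, but that ends up requiring more delicate manipulations involving the vanishing-trace identity $\dev_n=\tilde r_{n-2}/\tilde r_{n-1}$, so I would prefer the direct Newton approach.
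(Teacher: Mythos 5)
Your proposal is correct and follows essentially the same route as the source: the log-concavity of the coefficient sequence of a real-rooted (characteristic) polynomial via Newton's inequalities, combined with the Edelberg--Garey--Graham sign pattern $\sgn(\delta_k(T))=(-1)^{n-1}$ for $0\le k\le n-2$ and $\delta_{n-1}(T)=0$ to pass to absolute values, and the standard fact that a positive log-concave sequence is unimodal. This is the same method the survey itself uses for the positive semidefinite analogue in Theorem~\ref{t:nonneg-unimod}, so there is nothing further to add.
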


Bounds on the location of the peak for trees were presented (that paper also includes a more refined upper bound than the one listed next that depends on the structure of the tree).

\begin{thm}{\rm \cite{GRWC15-uni}} 
Let $T$ be a tree on $n \ge 3$ vertices with diameter $d$. The peak location of the normalized coefficients $d_0(T),d_1(T),\ldots ,d_{n-2}(T)$  is at most $\lc\frac{2}{3}n\rc$  and is at least $\left\lfloor\frac{n-2}{1+d}\right\rfloor$.
\end{thm}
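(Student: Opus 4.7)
Plan. Let $T$ be a tree on $n \ge 3$ vertices with $\diam(T) = d$. The starting point for both bounds is the Graham--Lov\'asz expansion
$$d_k(T) \;=\; \sum_F A_F^{(k)}\, N_F(T),$$
where $F$ ranges over forests with no isolated vertices and with $k-1$, $k$, or $k+1$ edges, and $A_F^{(k)} \in \mathbb{Z}$ depends only on $F$. By Theorem \ref{t:egg} the entries $d_0(T),\dots,d_{n-2}(T)$ are strictly positive, and by the previous theorem the sequence is log-concave. A standard consequence of log-concavity of a positive sequence is that the ratios $d_{k+1}(T)/d_k(T)$ are non-increasing in $k$; in particular, once the sequence stops increasing it stays non-increasing, so the peak location equals the largest $k$ with $d_k(T) \le d_{k+1}(T)$. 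This turns each of the two bounds into the production of a single index where the appropriate one-step comparison holds.

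Upper bound. To show the peak is at most $\lceil \tfrac{2n}{3}\rceil$, I would set $k^{*} = \lceil \tfrac{2n}{3}\rceil$ and prove $d_{k^{*}}(T) \ge d_{k^{*}+1}(T)$ directly from the Graham--Lov\'asz formula. For $k$ this large, any forest $F$ contributing to either side uses at least $k^{*}-1$ of the $n-1$ edges of $T$, which forces $F$ to be close to a spanning subforest of $T$; only a short list of combinatorial types (few components, each a long path) remains, and $A_F^{(k)}$ for these types can be written down explicitly from the formulas in \cite{GL78}. The inequality should then reduce to an arithmetic comparison of these leading contributions, with the factor $\tfrac{2}{3}$ arising as the threshold where the ``spanning-type'' forests start to dominate.

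Lower bound. By the same log-concavity principle it suffices to show $d_k(T) \le d_{k+1}(T)$ for every $k < \lfloor \tfrac{n-2}{1+d}\rfloor$. Here the diameter enters through the observation that $N_F(T) = 0$ whenever $F$ contains a path on more than $d+1$ vertices, so each component of a contributing $F$ has order at most $d+1$ and hence at most $d$ edges. A forest with $k+1$ edges and no isolated vertices therefore has at least $\lceil (k+1)/d\rceil$ components, occupying at least $(k+1) + \lceil (k+1)/d\rceil$ vertices, and the condition $k < \lfloor \tfrac{n-2}{1+d}\rfloor$ is exactly what guarantees that $T$ still has an unused vertex available to extend any $F$ counted by $d_k(T)$ into a forest counted by $d_{k+1}(T)$. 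This extension map (attach one additional pendant edge at a free vertex adjacent to $F$ in $T$) gives an injection from the forests contributing to $d_k(T)$ into those contributing to $d_{k+1}(T)$, and I would convert it into the coefficient inequality by pairing each term $A_F^{(k)} N_F(T)$ with its image.

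Main obstacle. The coefficients $A_F^{(k)}$ are signed integers, so neither inequality follows from a pure count of embedded forests. The hardest step in both directions is to identify which forest types dominate the sum at the relevant value of $k$ (matchings and near-matchings for small $k$, path-like near-spanning forests for $k$ close to $n$) and to control the signs of the remaining contributions---either by direct evaluation of $A_F^{(k)}$ from \cite{GL78}, or by a sign-preserving pairing that makes cancellations transparent. Getting the constants $\tfrac{2}{3}$ and $\tfrac{1}{1+d}$ exactly (rather than some weaker $cn$ or $c/d$) will depend on matching the right extremal forest family to each regime.
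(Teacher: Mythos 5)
This theorem is only quoted in the survey from \cite{GRWC15-uni}; no proof is reproduced here, so the comparison is with the argument in that reference. Your opening reduction is sound and matches the published strategy at that level: for a positive log-concave sequence the ratios $d_{k+1}(T)/d_k(T)$ are non-increasing, so each bound follows from a one-step comparison at the right index. Everything after that diverges from the published proof and does not go through as written. The published argument never touches the Graham--Lov\'asz forest expansion: it exploits the fact that $p_{\D(T)}$ is real-rooted with exactly one positive root (Graham--Pollak), expresses the normalized coefficients through elementary symmetric functions of the absolute values of the negative eigenvalues together with the Perron root, and derives both peak bounds from Newton/Maclaurin-type inequalities combined with tree-specific spectral data (e.g.\ the determinant formula giving $d_0(T)=n-1$, and transmission/diameter bounds on the spectrum). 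That route exists precisely because the signed integers $A_F^{(k)}$ have resisted control since 1978; the obstacle you flag in your final paragraph is not a finishing detail but the entire difficulty, and your plan leaves it unresolved for both bounds.

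Two specific steps are also incorrect. For the lower bound, you infer from ``$F$ contains no path on more than $d+1$ vertices'' that every component of a contributing forest has order at most $d+1$ and hence at most $d$ edges. A tree component of order $m$ need not contain a path on $m$ vertices: in the star $K_{1,n-1}$, of diameter $2$, the whole tree is a subforest consisting of one component with $n-1$ edges. So the component count $\lceil(k+1)/d\rceil$, and with it the threshold $\tfrac{n-2}{1+d}$, do not follow; even granting your count, the condition for a spare vertex would be $k+1\le\tfrac{(n-1)d}{d+1}$, not $k<\lfloor\tfrac{n-2}{1+d}\rfloor$, so the stated constant is not produced by your own arithmetic. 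Moreover, an injection on embedded forests cannot by itself yield $d_k(T)\le d_{k+1}(T)$, because the terms carry the signed weights $A_F^{(k)}$. For the upper bound, a subforest using roughly $2n/3$ of the $n-1$ edges of $T$ is still missing about $n/3$ edges; such forests can have $\Theta(n)$ components of arbitrary tree shapes, so the claim that ``only a short list of combinatorial types remains'' is false, and no computation is offered that would single out $2/3$ as the threshold. As it stands the proposal establishes neither inequality.
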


It was also shown in \cite{GRWC15-uni} that the sequence $d_0(H),\dots,d_{12}(H)$ of normalized coefficients is not unimodal for the Heawood graph $H$.

Throughout the prior discussion, only the distance matrix has been considered.  However,  the unimodality of the coefficients of the distance Laplacian characteristic polynomial was established recently \cite{GRWC18} and next we  establish the unimodality of the coefficients of the distance signless Laplacian and normalized distance Laplacian  characteristic polynomials.
The {\em distance signless Laplacian characteristic polynomial of $G$}  is $p_{\DQ}(x)=\det(xI_n-\DQ(G))$, the {\em distance Laplacian characteristic polynomial of $G$}  is $p_{\DL}(x)=\det(xI_n-\DL(G))$, and the {\em normalized distance Laplacian characteristic polynomial of $G$}  is $p_{\nDL}(x)=\det(xI_n-\nDL(G))$. 

\begin{thm}{\rm \cite{GRWC18}} Let $G$ be a graph of order $n$, and let $p_{\DL}(x)=x^n+\delta^L_{n-1}x^{n-1}+\dots+\delta^L_{1}x$.  Then the sequence $\delta^L_{1},\dots,\delta^L_{n-1},\delta^L_{n}=1$ is log-concave and $|\delta^L_{1}|,\dots,|\delta^L_{n}|$ is unimodal.  In fact, $|\delta^L_{1}|\geq \dots \geq |\delta^L_{n}|$. 
\end{thm}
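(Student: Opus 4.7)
The plan is to exploit the structure of the $\DL$-spectrum already recorded in the excerpt: $0 = \dlev_1 < \dlev_2 \le \cdots \le \dlev_n$, so $p_{\DL}(x) = x\cdot q(x)$ where $q(x) = \prod_{i=2}^n(x-\dlev_i)$ is a polynomial with $n-1$ positive real roots. Comparing coefficients then yields $\delta^L_k = (-1)^{n-k}\, e_{n-k}(\dlev_2,\ldots,\dlev_n)$ for $1 \le k \le n$, so the signs of $\delta^L_1,\ldots,\delta^L_n$ strictly alternate and $|\delta^L_k|$ equals the elementary symmetric function $e_{n-k}$ evaluated at the $n-1$ positive numbers $\dlev_2,\ldots,\dlev_n$. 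This reduces the theorem to two statements about the $e_j$'s: $e_j^2 \ge e_{j-1}e_{j+1}$ for $1 \le j \le n-2$ and $e_j \ge e_{j-1}$ for $1 \le j \le n-1$.

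For the log-concavity of the signed sequence I would apply Newton's inequalities to $q$: since $q$ has only real roots, the normalized elementary symmetric means $p_j = e_j/\binom{n-1}{j}$ of its roots satisfy $p_j^2 \ge p_{j-1}p_{j+1}$, and a short calculation using log-concavity of binomial coefficients ($\binom{n-1}{j}^2 \ge \binom{n-1}{j-1}\binom{n-1}{j+1}$) gives $e_j^2 \ge e_{j-1}e_{j+1}$. This rewrites as $|\delta^L_k|^2 \ge |\delta^L_{k-1}|\,|\delta^L_{k+1}|$. Because the signs of the $\delta^L_k$ alternate, $\delta^L_{k-1}\delta^L_{k+1} > 0$ and $(\delta^L_k)^2 = |\delta^L_k|^2$, so the log-concavity inequality for the signed sequence follows at once. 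Log-concavity of the positive sequence $|\delta^L_k|$ already implies its unimodality.

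For the stronger monotonicity $|\delta^L_1| \ge \cdots \ge |\delta^L_n|$, I would invoke the spectral lower bound $\dlev_2 \ge n$ (hence $\dlev_i \ge n$ for every $i \ge 2$) established by Aouchiche and Hansen for every connected graph of order $n \ge 2$. Writing $y_i=\dlev_{i+1}$ for $i=1,\ldots,n-1$, the standard identities $j\,e_j(y) = \sum_i y_i\,e_{j-1}(y\setminus y_i)$ and $(n-j)\,e_{j-1}(y) = \sum_i e_{j-1}(y\setminus y_i)$ realize the quotient $\frac{j\,e_j}{(n-j)\,e_{j-1}}$ as a weighted average of the $y_i$'s with positive weights, so it is at least $\min_i y_i \ge n$. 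Hence $e_j \ge \tfrac{n(n-j)}{j}\, e_{j-1}$, and the elementary inequality $n(n-j) \ge j$ for $1 \le j \le n-1$ (equivalent to $n^2 \ge j(n+1)$) gives $e_j \ge e_{j-1}$, i.e., $|\delta^L_k| \ge |\delta^L_{k+1}|$.

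The main obstacle is the monotonicity step: it does not follow from real-rootedness alone and depends crucially on the nontrivial spectral bound $\dlev_2 \ge n$; without it, only unimodality of $|\delta^L_k|$ (not the stronger monotonicity) would be available. Log-concavity, by contrast, is essentially a formal consequence of real-rootedness via Newton's inequalities, and the sign alternation lets it pass from absolute values to the signed sequence with no extra work.
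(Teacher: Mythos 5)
Your proposal is correct, and its first two parts follow essentially the same route as the paper: the paper derives log-concavity of the coefficient sequence from the real-rootedness of the characteristic polynomial of a symmetric matrix (which is exactly Newton's inequalities, the tool you invoke directly), and it gets unimodality of the absolute values from the sign alternation of the nonzero coefficients, which in turn comes from positivity of the elementary symmetric functions of the nonnegative eigenvalues --- precisely your observation that $|\delta^L_k|=e_{n-k}(\dlev_2,\dots,\dlev_n)>0$ with strictly alternating signs. Where you genuinely diverge is the monotonicity claim $|\delta^L_1|\ge\dots\ge|\delta^L_n|$. The paper's argument (visible in its proof of the generalization to positive semidefinite matrices and the remark following it) is a shortcut: having already established unimodality, it suffices to verify the single inequality $|\delta^L_1|>|\delta^L_2|$, which follows from $e_{n-1}=\prod_{i\ge2}\dlev_i$ versus $e_{n-2}=\sum_i\prod_{j\ne i}\dlev_j\le(n-1)\prod_{j\ge3}\dlev_j<\dlev_2\prod_{j\ge3}\dlev_j$ using $\dlev_2\ge n>n-1$; unimodality then forces the peak to sit at the first nonzero coefficient. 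You instead prove every consecutive ratio exceeds $1$ directly, via the weighted-average identity $\frac{j\,e_j}{(n-j)\,e_{j-1}}\ge\min_i\dlev_{i+1}\ge n$ together with $n(n-j)\ge j$. Both arguments hinge on the same nontrivial input, the Aouchiche--Hansen bound that every nonzero distance Laplacian eigenvalue of a connected graph is at least $n$; yours is more uniform (it never needs the unimodality already proved) at the cost of the symmetric-function identities, while the paper's is shorter but leans on the unimodality step. Your closing diagnosis is also accurate: monotonicity really does require the spectral bound and is not a formal consequence of real-rootedness.
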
 

 Next we show that unimodality extends to any positive semidefinite matrix, using  the method from \cite{GRWC18}.

\begin{thm}\label{t:nonneg-unimod} Let $M$ be a positive semidefinite matrix and let $p_M(x)=x^n+m_{n-1}x^{n-1}+\dots+m_{1}x+m_0$.  Then the sequence $m_{0},\dots,m_{n-1},m_{n}=1$ is log-concave and the sequence $|m_{0}|,\dots,|m_{n}|$ is unimodal.  

Therefore, coefficients of the distance signless Laplacian characteristic polynomial (respectively, the normalized distance Laplacian characteristic polynomial) are log-concave, and the absolute values of these coefficients are unimodal.     \end{thm}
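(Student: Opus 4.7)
The plan is to reduce the statement to the classical Newton inequalities for elementary symmetric polynomials of nonnegative real numbers, which apply because any positive semidefinite $M$ has a spectrum consisting entirely of nonnegative reals. Writing the eigenvalues of $M$ as $\lam_1,\dots,\lam_n\ge 0$, we have
\[p_M(x)=\prod_{i=1}^{n}(x-\lam_i)=\sum_{k=0}^{n}(-1)^{n-k}e_{n-k}(\lam_1,\dots,\lam_n)\,x^k,\]
so $m_k=(-1)^{n-k}e_{n-k}$ and $|m_k|=e_{n-k}$. The signs of $m_{k-1}$ and $m_{k+1}$ agree, so $m_{k-1}m_{k+1}\ge 0$ and the inequality $m_k^{\,2}\ge m_{k-1}m_{k+1}$ is well posed and equivalent to $e_{n-k}^{\,2}\ge e_{n-k-1}e_{n-k+1}$.

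First I would quote Newton's inequalities in their standard form: since $p_M(x)$ has all real roots, the normalized coefficients $c_k:=m_k/\binom{n}{k}$ satisfy $c_k^{\,2}\ge c_{k-1}c_{k+1}$ for $1\le k\le n-1$. Rearranging gives
\[m_k^{\,2}\ge m_{k-1}m_{k+1}\cdot\frac{\binom{n}{k}^{2}}{\binom{n}{k-1}\binom{n}{k+1}}=m_{k-1}m_{k+1}\cdot\frac{(k+1)(n-k+1)}{k(n-k)}.\]
The binomial factor is at least $1$ for $1\le k\le n-1$, and $m_{k-1}m_{k+1}\ge 0$, so the desired unnormalized log-concavity $m_k^{\,2}\ge m_{k-1}m_{k+1}$ follows at once. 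This establishes the log-concavity half of the statement.

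For unimodality of $|m_0|,\dots,|m_n|$ I would use the standard fact that a log-concave sequence of nonnegative reals with no positive term sandwiched between zeros is unimodal; log-concavity itself prevents such a sandwich, since $0=|m_k|^{2}\ge |m_{k-1}||m_{k+1}|$ would force a neighbor to vanish as well. Concretely, if $M$ has exactly $r$ zero eigenvalues then $e_j=0$ iff $j>n-r$, so $|m_0|,\dots,|m_n|=e_n,e_{n-1},\dots,e_0$ begins with a (possibly empty) block of $r$ zeros and is strictly positive thereafter, and log-concavity on the positive tail then yields unimodality of the entire sequence. The second sentence of the theorem is then immediate by applying the general result to $M=\DQ(G)$ and $M=\nDL(G)$, both of which were noted in the introductory material to be positive semidefinite.

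There is no serious obstacle here; the only things to check carefully are the elementary-but-easy binomial inequality $\binom{n}{k}^{2}\ge \binom{n}{k-1}\binom{n}{k+1}$ used to pass from Newton's normalized inequalities to the raw coefficient inequality, and the observation that log-concavity rules out zero patterns that would violate unimodality. Both are standard and can be dispatched in a line apiece.
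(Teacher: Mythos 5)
Your proposal is correct and follows essentially the same route as the paper's proof: both identify $|m_k|$ with the elementary symmetric function $e_{n-k}$ of the nonnegative eigenvalues and exploit the resulting sign pattern (a leading block of zeros of length equal to the multiplicity of the eigenvalue $0$, followed by nonzero coefficients of alternating sign). The only difference is that you unpack the two facts the paper quotes from \cite{GRWC18} --- log-concavity of the coefficient sequence via Newton's inequalities, and unimodality of a log-concave positive tail --- rather than citing them as black boxes.
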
 
\bpf
 It is known that  the  coefficient sequence $a_0,a_1,\dots, a_{n-1},a_n=1$ of the characteristic polynomial of any real symmetric matrix is log-concave, and if  all entries of a subsequence  $a_s,\dots,a_t$  alternate in sign, then the subsequence of absolute values $|a_s|,  \dots,  |a_t|$ is unimodal \cite{GRWC18}.  Denote the eigenvalues of $M$ by $\mu_1\le\dots\le \mu_n$ and let $c$ denote the multiplicity of eigenvalue zero  for $M$ (with $c=0$ signifying zero is not an eigenvalue of $M$). Note that $m_{k}=(-1)^{n-k} S_{n-k}(\mu_1,\mu_2,\dots,\mu_n)$ where \[S_{k}(b_1,b_2,\dots,b_n)=\ds\sum_{S\subseteq [n],|S|=k}\prod_{i\in S}b_i  \] is the $k$th symmetric function of $b_1,b_2,\dots,b_n$.  Since $\mu_1=\dots=\mu_c=0<\mu_{c+1}\le \cdots\le \mu_n$, $S_{k}(\mu_1,\mu_2,\dots,\mu_n)>0$ for $k\le n-c$ and  $S_{k}(\mu_1,\mu_2,\dots,\mu_n)=0$ for $k>n-c$. Thus $|m_k|>0$ for $k\ge c$ and all the nonzero coefficients alternate in sign. This implies the sequence $\{|m_k|\}_{k=0}^{n}$ is log-concave and $\{|m_k|\}_{k=c}^{n}$ is unimodal.  Since $m_k=0$ for $k<c$, $|m_{0}|,\dots,|m_{n}|$ is unimodal.  %Thus the  coefficient sequence $m_0,m_1,\dots, m_{n-1},m_n=1$  is log-concave and the subsequence of absolute values $|m_0|,  \dots,  |m_n|$ is unimodal.   
 As noted in the introduction, the distance signless Laplacian matrix and normalized distance Laplacian matrix of a graph are positive semidefinite. 
%Now suppose $\mu_{c+1} \geq n-c$. Since $\{|m_k|\}_{k=c}^{n}$ is unimodal and $m_k=0$ for $k<c$, to show that $|m_c|\geq \dots \geq |m_n|$ it is sufficient to show that $|m_c| > |m_{c+1}|$. \[ |m_{c+1}| = \sum\limits_{i=c+1}^n \prod\limits_{j\in\{c+1,\dots,n\}, j \neq i} \mu_j \leq (n-c) \prod\limits_{j=c+2}^{n} \mu_j < \prod\limits_{j=2}^n \mu_j = |m_c|. \qedhere \]
 \epf

The result $|\delta^L_{1}|\geq \dots \geq |\delta^L_{n}|$ can be extended to show that if $|\mu_{c+1}|>n-c$ (where $\mu_i$ is the $i$th eigenvalue of $M$ and $c$ is the multiplicity of zero), 
the nonzero coefficients decrease with increasing index, so the peak of the unimodal sequence $|m_{0}|,\dots,|m_{n}|$ is at $k=c$ (and $m_k=0$ for $k<c$). However, for  the distance signless Laplacian matrix or the normalized distance Laplacian matrix (even for trees), the hypothesis $|\mu_{c+1}|>n-c$ fails and the peak need not be located at $k=c$, as the next example shows. 

\begin{ex} For $K_{1,3}$, $p_{\DQ}(x)=x^4 - 18x^3 + 105x^2 - 252x + 216$ and $p_{\nDL}(x)=x^4 - 4x^3 + 5.32x^2 - 2.352x$.  For comparison, $p_{\DL}(x)=x^4 - 18x^3 + 105x^2 - 196x$ for $K_{1,3}$.
\end{ex}

%If  $|\mu_{c+1}|>n-c$ where  $c$ is the multiplicity of eigenvalue zero  for $M$ (with $c=0$ signifying zero is not an eigenvalue of $M$), then the peak of the unimodal sequence $|m_{0}|,\dots,|m_{n}|$ is at $k=c$ (and $m_i=0$ for $0\le i \le c-1$). 

%%%%%%%%%%%%%%%%%%%%%%%%%%%
\section{Techniques for computing spectra}\label{s:method} % Leslie

In this section, we describe some techniques that have been used to compute spectra of various types of distance matrices.
Many methods use eigenvectors, which are particularly effective since every real symmetric matrix has a basis of eigenvectors. %to relate the eigenvalues of $ \DL(G), \DQ(G)$, and $\nDL(G)$ to those of $\D(G)$;  . 

%------------------------------------------------------------------
\subsection{Twins and quotient matrices}\label{ss:quot}

Let $v_1, v_2$ be vertices of a graph $G$ of order at least three  that  have the same neighbors other than $v_1$ and $v_2$.  
 If $N[v_1]=N[v_2]$ (so $v_1$ and $v_2$ are adjacent), then they are   called {\em adjacent twins}. If $N(v_1)=N(v_2)$ (so $v_1$ and $v_2$ are not adjacent), then they are called {\em independent twins}.  Both cases are referred to as {\em twins}.  Note that twins have the same transmission and are at distance one (adjacent twins) or two (independent twins) from each other.  Observe that if  $v_k$ and $v_{k+i}$ are twins for $i=1,\dots,r-1$, then for $i\ne j\in\{1,\dots,r-1\}$,  $v_{k+i}$ and $v_{k+j}$ are twins of the same type as $v_k$ and $v_{k+i}$, because $N[v_{k+i}]=N[v_k]=N[v_{k+j}]$ for adjacent twins and  $N(v_{k+i})=N(v_k)=N(v_{k+j})$ for independent twins.
 
 It is useful to partition the vertices with one or more partition sets consisting of twins  and to use the partition to create block matrices, as in the proofs of Theorems \ref{t:twin} and \ref{p:twin-quot}.  If $M=[m_{ij}]$ is an $n\x n$ matrix,  $X=(X_1,\dots,  X_p)$ is a partition of $[n]$ with each set $X_i$ consisting of consecutive integers, then the partition $X$ defines a $p\x p$ block matrix $[M_{ij}]$ where $M_{ij}=M[X_i|X_j]$ (one can define a block matrix without the assumption that each  partition set  consists of consecutive integers, but it is notationally  simpler to relabel the graph to achieve the consecutive property). 
 
\begin{thm}\label{t:twin} Let $G$ be a graph of order at least three, let $t=t(v_k)$, and suppose that $v_k$ and $v_{k+i}$ are twins for $i=1,\dots,r-1$.  Then   $[0,\dots,1 , 0,\dots,0,-1 ,0 , \dots , 0]^T$ (the $k$th coordinate is $1$ and the $k+i$th coordinate is $-1$) is an eigenvector for each matrix and eigenvalue $\lam$ listed below for $i=1,\dots,r-1$.  Thus $\lam$ has multiplicity at least $r-1$.
\ben[$(1)$]
\item\label{twin-D}  $\D(G):$  $\lam=-2$ if $v_k$ and $v_{k+i}$ are independent;   $\lam=-1$ if $v_k$ and $v_{k+i}$ are adjacent.  
\item\label{twin-DQ}  $\DQ(G):$    $\lam=t-2$ if $v_k$ and $v_{k+i}$ are independent;   $\lam=t-1$ if $v_k$ and $v_{k+i}$ are adjacent.
\item\label{twin-DL} $\DL(G):$    $\lam=t+2$ if $v_k$ and $v_{k+i}$ are independent {\rm \cite{GRWC18}};   $\lam=t+1$ if $v_k$ and $v_{k+i}$ are adjacent.
\item{\rm \cite{R20}}  $\nDL(G):$    $\lam=\frac{t+2}t$  if $v_k$ and $v_{k+i}$ are independent;   $\lam=\frac{t+1}t$ if $v_k$ and $v_{k+i}$ are adjacent. 
\een
%In each case, a set of $r$ twins generates $r-1$ independent eigenvectors and thus $r-1$ copies of the associated eigenvalue. 
 \end{thm}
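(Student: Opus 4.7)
The plan is to verify each of the four eigenvector/eigenvalue claims by direct computation using the defining twin property, and then to obtain the multiplicity statement by observing linear independence.

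First I would fix $k$ and $i$ with $1 \le i \le r-1$, set $\bx = e_k - e_{k+i}$, and let $t=t(v_k)=t(v_{k+i})$ and $s = d(v_k, v_{k+i})$, where $s=1$ in the adjacent twin case and $s=2$ in the independent case. The key observation driving the whole proof is that since $v_k$ and $v_{k+i}$ have the same neighbors apart from themselves, $d(v_j, v_k) = d(v_j, v_{k+i})$ for every $j \notin \{k, k+i\}$. I would then compute the $j$th entry of $\D(G)\bx$: for $j \notin \{k,k+i\}$ the entry is $d(v_j,v_k) - d(v_j,v_{k+i}) = 0$; for $j=k$ the entry equals $-d(v_k,v_{k+i}) = -s$; and for $j=k+i$ the entry equals $d(v_{k+i},v_k) = s$. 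This yields $\D(G)\bx = -s\,\bx$, proving item (\ref{twin-D}).

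Next I would leverage the fact that $T(G)$ is diagonal and $v_k, v_{k+i}$ share the common diagonal entry $t$, so that $T(G)\bx = t\,\bx$. Combining with the previous step, $\DQ(G)\bx = (T(G) + \D(G))\bx = (t - s)\bx$ and $\DL(G)\bx = (T(G) - \D(G))\bx = (t + s)\bx$, which gives items (\ref{twin-DQ}) and (\ref{twin-DL}). For $\nDL(G) = T(G)^{-1/2} \DL(G)\, T(G)^{-1/2}$, since $\bx$ is supported only on coordinates with diagonal entry $t$, one has $T(G)^{-1/2}\bx = t^{-1/2}\bx$; applying $\DL(G)$ then $T(G)^{-1/2}$ again multiplies by $(t+s) \cdot t^{-1/2}$, giving $\nDL(G)\bx = \tfrac{t+s}{t}\bx$, which is exactly item (4). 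Substituting $s=1$ or $s=2$ in each case produces the stated eigenvalues.

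Finally, for the multiplicity assertion, I would observe that the $r-1$ candidate eigenvectors $e_k - e_{k+i}$ for $i = 1,\dots,r-1$ all lie in the eigenspace for the same $\lambda$ (since, as noted in the paragraph preceding the theorem, the pairwise twin relation propagates across $v_k, v_{k+1},\dots,v_{k+r-1}$ and is of the same type, so the common value of $s$ is unambiguous). These vectors are obviously linearly independent, so $\lambda$ has geometric multiplicity at least $r-1$, and hence algebraic multiplicity at least $r-1$ by symmetry of the matrices involved.

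There is no real obstacle here; the argument is essentially a one-line bookkeeping exercise once the twin distance property is isolated. The only small subtlety worth flagging is the $\nDL$ case, where one must notice that $\bx$ is an eigenvector of $T(G)^{-1/2}$ (with eigenvalue $t^{-1/2}$) precisely because twins share the same transmission, which is what makes the conjugation collapse into multiplication by $1/t$.
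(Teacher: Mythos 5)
Your proof is correct and takes essentially the same approach as the paper: a direct verification that $e_k-e_{k+i}$ is an eigenvector, resting on the fact that twins are equidistant from every other vertex and share the same transmission (the paper encodes this as the block identity $D_{2,1}=[\bd\ \bd]$ and checks only the $\DQ$/adjacent case, declaring the rest similar). Your packaging via $\D(G)\bx=-s\bx$ and $T(G)\bx=t\bx$ with $s\in\{1,2\}$ is a slightly cleaner, uniform way to dispatch all four matrices at once, but it is the same underlying computation.
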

 \bpf 
 The method used in \cite{GRWC18} to prove \eqref{twin-DL} for independent twins can be used to establish the other eigenvector results. Here we show  $\bw=[1,-1 ,0 , \dots , 0]^T$ is an eigenvector for eigenvalue $\lam=t-1$ of $\DQ(G)$ where $v_1$ and $v_{2}$ are adjacent twins (the argument is the same for $v_k$ and $v_{k+i}$ but the notation is messier).  The remaining cases are similar. 
 
 Apply the partition $\{1,2\}, \{3,\dots,n\}$ to $\DQ(G)$ and $\bw$ to define block matrices and multiply:\vspace{-5pt}
\[\DQ(G)\bw=\mtx{D_{1,1}& D_{2,1}^T\\D_{2,1}&D_{2,2}}\mtx{\bw_1\\ \bw_2}=\mtx{D_{1,1}\bw_1+D_{1,2} \bw_2\\D_{2,1}\bw_1+D_{2,2} \bw_2}\vspace{-5pt}\]
Since $D_{1,1}=\mtx{t & 1 \\1 & t}$, $D_{2,1}=\mtx{\bd & \bd}$ for some vector $\bd$,  $\bw_1=\mtx{1\\-1}$, and $\bw_2=\bzero$,  \vspace{-5pt}
\[D_{1,1}\bw_1+D_{1,2} \bw_2=\mtx{t-1\\1-t}+\bzero_2=(t-1)\bw_1\mbox{ and }
D_{2,1}\bw_1+D_{1,2} \bw_2=\bzero_{n-2}+\bzero_{n-2}=(t-1)\bw_2.\vspace{-5pt} \]
Thus $\DQ(G)\bw=(t-1)\bw$. 
\epf

Quotient matrices are an important tool in the study of distance matrices (see, for example, \cite{AP15}). Let $M=[m_{ij}]$ be a symmetric $n\x n$ matrix, let $X=(X_1,\dots,  X_p)$ be a partition of $[n]$ with each set $X_i$ consisting of consecutive integers, and let $n_i=|X_i|$ for $i=1,\dots,p$.  %The partition $X$ defines a block matrix $[A_{i,j}]$ where $A_{i,j}=A[X_i|X_j]$. 
The {\em quotient matrix} $B=[b_{ij}]$ of $M$ for this partition is the $p\x p$ matrix with entry $b_{ij}$ equal to the average row sum of the submatrix $M_{ij}=M[X_i|X_j]$. The partition $X$ is {\em equitable} for $M$ if for every pair $i,j\in\{1,\dots,p\}$, the row sums of $M_{ij}$ are constant. % If $X$ is an equitable partition for $A$, then .
The {\em characteristic matrix} of $X$ is the $n\x p$ matrix $S=[s_{ij}]$  defined by  $s_{ij}=1$ if $i\in X_j$ and $s_{ij}=0$ if $i\not\in X_j$. 

\begin{lem}\label{t:quot}  
Let $M$ be a symmetric $n\x n$  matrix, let $X$ be an equitable partition $X$ of $[n]$,  let $B$ be the quotient matrix  of $M$ for  $X$, and let $\bx,\by,\bz\in \R^p$.  
\ben[$(1)$]
\item\label{l:quot-0} {\rm \cite[p. 24]{BH}} $MS=SB$.
\item\label{l:quot-1} If $i\in X_j$, then $(S\bx)_i=x_j$ where $(S\bx)_i$ denotes the $i$th coordinate of $S\bx$ and $x_j$ denotes the $j$th coordinate of $\bx$.
\item\label{l:quot-2} If $S\bx=S\by$, then $\bx=\by$.
\item\label{l:quot-3} If $S\bz$ is an eigenvector of $M$, then  $\bz$ is an eigenvector of $B$ for the same eigenvalue.
\item\label{l:quot-4} {\rm \cite[Lemmas 2.3.1]{BH}} If $\bz$ is an eigenvector of $B$, then  $S\bz$ is an eigenvector of $M$ for the same eigenvalue.  \een\end{lem}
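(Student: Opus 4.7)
The plan is to prove the three un-cited items (2), (3), (4) in order, treating (1) and the reference for (5) as black-box tools. The unifying observation is that the characteristic matrix $S$ ``lifts'' a vector $\bx \in \R^p$ to a vector in $\R^n$ that is constant on each block $X_j$, and what we need to verify is that this lifting is injective and intertwines the actions of $B$ on $\R^p$ and $M$ on $\R^n$.

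For (2), I would read off the definition of $S$ directly: for $i \in X_j$ the only nonzero entry in row $i$ of $S$ is a $1$ in column $j$, so $(S\bx)_i = \sum_{k=1}^p s_{ik} x_k = x_j$. Part (3) is then immediate --- every $i \in [n]$ lies in some $X_j$, and applying (2) to both sides of $S\bx = S\by$ gives $x_j = y_j$ for every $j$, hence $\bx = \by$. Phrased structurally, (3) is the statement that $S$ has trivial kernel, which is the only nontrivial fact I will need for (4).

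For (4), suppose $S\bz$ is an eigenvector of $M$ with eigenvalue $\lam$. I would chain the equalities
\[ SB\bz \;=\; MS\bz \;=\; \lam\, S\bz \;=\; S(\lam \bz), \]
using (1) in the first equality and the eigenvalue hypothesis in the second. Part (3), applied with $\bx = B\bz$ and $\by = \lam \bz$, then cancels the outer $S$ and yields $B\bz = \lam \bz$. It remains to confirm that $\bz \ne \bzero$ (so that it qualifies as an eigenvector); this follows because $S\bz \ne \bzero$ (an eigenvector is nonzero by convention) while (3) with $\by = \bzero$ forces $\bzero$ to be the unique preimage of $\bzero$ under $S$.

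I do not foresee any real obstacle: each step is either a direct unpacking of a definition or a one-line reduction. The only conceptual point worth flagging is that (3), phrased innocuously as an injectivity statement, is precisely what lets one ``divide by $S$'' in the eigenvalue equation and descend eigenvector relations from $M$ down to $B$, which is what makes the whole quotient-matrix technique work in the applications that follow.
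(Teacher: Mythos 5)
Your proposal is correct and follows essentially the same route as the paper: item (2) by unpacking the definition of $S$, item (3) as an immediate consequence, and item (4) via the chain $S(\lam\bz)=\lam S\bz = MS\bz = SB\bz$ followed by cancelling $S$ using (3). Your extra remark that $\bz\ne\bzero$ (so it genuinely is an eigenvector) is a small point the paper leaves implicit, but otherwise the arguments coincide.
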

\bpf It is straightforward to verify \eqref{l:quot-1}, and \eqref{l:quot-1} implies \eqref{l:quot-2}. For \eqref{l:quot-3}, assume $S\bz$ is an eigenvector of $M$ for $\mu$.  Then
$S(\mu\bz)=\mu S\bz=MS\bz=SB\bz=S(B\bz)$.  Then $\mu\bz=B\bz$ by \eqref{l:quot-2}.
\epf

Sets of twins in a graph naturally provide an equitable partition of any of the four variants of the distance matrix.  Theorem  \ref{t:twin} and Lemma \ref{t:quot} can be combined to determine the spectrum.  We use $\D^*$ to denote one of $\D(G), \DL(G), \DQ(G)$, or $\nDL(G)$. 

\begin{thm}\label{p:twin-quot} Let $X=(X_1,\dots,X_p)$ be a partition of the vertices of $G$ with $n_1\le \dots\le n_p$ and let $k$ be the least index such that $n_k\ge 2$.  Suppose that $v,u\in X_j$ implies $v=u$ or $v$ and $u$ are twins.  For $j=k,\dots,m$, let  $\lam_j$ denote the the eigenvalue $\lam$ specified in Theorem \ref{t:twin} for $\D^*$ and the type of twin in $X_j$. Let $B$ denote the quotient matrix of $\D^*$ for $X$.  Then  $\spec(D^*)=\{\lam_k^{(n_k-1)},\dots,\lam_p^{(n_p-1)}\}\cup\spec(B)$ (as multisets).
\end{thm}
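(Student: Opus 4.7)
The plan is to identify $n$ linearly independent eigenvectors of $\D^*$ by combining the twin-difference eigenvectors from Theorem~\ref{t:twin} with the lifted eigenvectors from Lemma~\ref{t:quot}, and then to read off the spectrum via an orthogonal $\D^*$-invariant decomposition of $\R^n$.

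First I would verify that the partition $X$ is equitable for $\D^*$. Because twin vertices have identical distances to every vertex outside their common part and equal pairwise distance ($1$ if adjacent, $2$ if independent) within the part, the row sum of $\D^*[X_i\mid X_j]$ is constant in the choice of row from $X_i$; this is also the case for the diagonal block $\D^*[X_j\mid X_j]$ (every row has the same transmission-type diagonal entry plus a constant off-diagonal contribution). Hence Lemma~\ref{t:quot} applies and $\D^* S = SB$.

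Next I would assemble the eigenvectors in two groups. Group one: for each $j$ with $n_j \ge 2$, Theorem~\ref{t:twin} supplies $n_j - 1$ eigenvectors of $\D^*$ for eigenvalue $\lam_j$ of the form $e_v - e_w$ with $v, w \in X_j$; these are linearly independent within each $X_j$ and have disjoint supports across different $X_j$. Group two: by Lemma~\ref{t:quot}(\ref{l:quot-4}), each eigenvector $\bz$ of $B$ lifts to an eigenvector $S\bz$ of $\D^*$ with the same eigenvalue, and $S\bz \in \operatorname{range}(S)$. The two groups live in orthogonal subspaces: for any column $s_i$ of $S$ (the indicator of $X_i$), $(e_v - e_w)^T s_i = 0$ for $v, w \in X_j$, so group-one eigenvectors lie in $\operatorname{range}(S)^\perp$. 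Since $\D^* S = SB$ makes $\operatorname{range}(S)$ a $\D^*$-invariant subspace and $\D^*$ is symmetric, $\operatorname{range}(S)^\perp$ is $\D^*$-invariant as well.

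Finally I would close with a dimension count and the characteristic-polynomial factorization. Since $\sum_{j=k}^p (n_j - 1) = n - p = \dim \operatorname{range}(S)^\perp$, the group-one eigenvectors form a basis of $\operatorname{range}(S)^\perp$, so the restriction of $\D^*$ to that subspace has characteristic polynomial $\prod_{j=k}^p (x - \lam_j)^{n_j - 1}$. Because $S : \R^p \to \operatorname{range}(S)$ is injective and intertwines $B$ with $\D^*|_{\operatorname{range}(S)}$, these maps are similar, so the restriction to $\operatorname{range}(S)$ has characteristic polynomial $p_B(x)$. The invariant decomposition then yields $p_{\D^*}(x) = p_B(x) \cdot \prod_{j=k}^p (x - \lam_j)^{n_j - 1}$, which is exactly the claimed multiset equality for $\spec(\D^*)$.

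The main obstacle I anticipate is the bookkeeping rather than any deep step: one must be careful that $B$ is not assumed symmetric or even diagonalizable, and that some eigenvalues of $B$ may coincide with certain $\lam_j$. Phrasing the conclusion at the level of the characteristic polynomial (equivalently, as an equality of multisets) sidesteps both issues cleanly, since the factorization holds regardless of diagonalizability or coincidences.
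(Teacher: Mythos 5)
Your proposal is correct, and it shares the paper's main ingredients (the twin-difference eigenvectors from Theorem \ref{t:twin}, the lifted eigenvectors $S\bz$ from Lemma \ref{t:quot}, and the orthogonality between the two families), but it finishes differently. The paper completes the argument by extending the $n-p$ twin-difference eigenvectors to a full orthogonal eigenbasis of $\D^*$, projecting each additional eigenvector off the twin eigenspaces $W_j$ to make it constant on every part $X_j$, and thereby exhibiting an explicit eigenbasis of $B$; the multiset identity then follows because $B$ is shown to be diagonalizable with the complementary eigenvalues. You instead observe that $\D^*S=SB$ makes $\operatorname{range}(S)$ invariant, that symmetry of $\D^*$ makes $\operatorname{range}(S)^\perp$ invariant, and that the dimension count $\sum_{j}(n_j-1)=n-p$ forces the twin-difference eigenvectors to span $\operatorname{range}(S)^\perp$; the conclusion then drops out of the factorization $p_{\D^*}(x)=p_B(x)\prod_{j=k}^{p}(x-\lam_j)^{n_j-1}$. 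Your route is slightly cleaner in that it never needs to argue that $B$ is diagonalizable (a fact the paper's proof establishes en route and which your characteristic-polynomial phrasing genuinely sidesteps), while the paper's route yields a bit more as a byproduct, namely an explicit eigenbasis of $B$ and hence of $\D^*$, which is what gets used in computations such as Proposition \ref{p:Sn+e}. One small point worth making explicit in your write-up: the equitability check should mention that twins have equal transmissions, since for $\DL$, $\DQ$, and $\nDL$ the diagonal blocks involve $T(G)$ and constant row sums there require constant diagonal entries within each part; you allude to this ("the same transmission-type diagonal entry") and the paper records it in the sentence introducing twins, so this is a presentational remark rather than a gap.
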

\bpf     
Apply Theorem  \ref{t:twin} to construct $n_j-1$ eigenvectors for $\lam_j$, $j=k,\dots,m$ and denote this entire collection of  eigenvectors by  $\bw_1,\dots,\bw_{n-m}$; let $W_j$ denoted the span of the subset of these vectors that are associated with $\lam_j$.  It is immediate that $\{\lam_k^{(n_k-1)},\dots,\lam_p^{(n_p-1)}\}\subset\spec(\D^*)$ (as multisets). By Lemma \ref{t:quot}, every eigenvector $\bz$ of $B$ for eigenvalue $\mu$ yields an eigenvector $S\bz$ of $\D^*$ for $\mu$.  Furthermore, $S\bz$ is orthogonal to (and thus independent of) $\bw_1,\dots,\bw_{n-m}$. Hence it suffices to show that $B$ has a basis of eigenvectors.

 Extend $\{\bw_1,\dots,\bw_{n-m}\}$ to a basis of eigenvectors $\{\bw_1,\dots,\bw_{n-m},\bw_{n-m+1},\dots,\bw_{n}\}$ of $\D^*$ (a basis of eigenvectors exists because $\D^*$ is symmetric). Consider $\bw_h$ with $h>n-m$.  If the associated eigenvalue $\mu_h$ of $\D^*$ is distinct from $\lam_j$, then $\bw_h$ is orthogonal to the eigenvectors for $\lam_j$.  If $\mu_h=\lam_j$, then let $\bw'_h=\bw_h-\proj_{W_j}(\bw_h)$ (this step can be applied more than once if needed).  Then $\bw'_h$ is an eigenvector for  $\mu_h$ and is orthogonal to  $\bw_\ell$ for   $\ell=1,\dots,n-m$.  This implies $\bw'_h$ is constant on 
the  coordinates in $X_j$ for $j=1,\dots,m$, so $\bw'_h=S\bz_h$ for some $m$-vector $\bz$.  By Lemma \ref{t:quot}, $\bz_h$ is an eigenvector for $B$ for $\mu_h$. Thus  $B$ has a basis of eigenvectors and $\spec(D^*)=\{\lam_k^{(n_k-1)},\dots,$ $\lam_p^{(n_p-1)}\}\cup \spec(B)$  (as multisets). \epf

The use of Theorem \ref{p:twin-quot} is illustrated in the proof of Proposition \ref{p:Sn+e}.

%------------------------------------------------------------------
\subsection{Graphs of diameter 2}\label{s:diam2}

 Sometimes we can relate the distance matrix $\D(G)$ to the adjacency matrix $\A(G)$, as is the case for 
 a graph $G$ of diameter at most $2$. If $\diam(G)\le 2$, then any pair of nonadjacent vertices has distance two, so $\D(G)=2(J-I)-\A(G)$.  
 This is most useful when $G$ is transmission regular.    The results described here are well known.
 
 \begin{rem}\label{r:commJ} Let $M$ be a real symmetric matrix with all row sums equal to $r$. Then $M$ commutes with the all ones matrix $J$. Furthermore, the vector $\dsone=[1,\dots,1]^T$ is a common eigenvector of $J$ and $M$. Since $M$ is symmetric, $M$ has an orthogonal basis of eigenvectors $\{\dsone,\bx_2,\dots, \bx_n\}$ where $M\bx_i=\lam_i\bx_i$ but  it is not assumed that $\lam_i\le \lam_{i+1}$. Since $\bx_i$ is orthogonal to $\dsone$ for $i=2,\dots,n$, $\bx_i$  is an eigenvector of $J$ for eigenvalue $0$.  
\end{rem}

 \begin{rem}\label{r:diam2} Suppose   $\diam(G)\le 2$. Then $G$ is  $t$-transmission regular if and only if $G$ is $k$-regular with  $k+t=2n-2$. Since $\A(G)$ is nonnegative and irreducible, $\rho(\A(G))=k$ is a simple eigenvalue (see Section \ref{ss:PF}). Since $\A(G)$ commutes with $J$, %\beq\label{eq:tr-diam2}
\[ \dspec(G)=\{t=2n-2-k\}\cup\{-\alpha_i-2:i=1,\dots,n-1\}\] where the eigenvalues of $\A(G)$ are $\alpha_1\le\dots\le\alpha_{n-1}<k$. Then from \eqref{eq:tr-equiv},\\
$\dQspec(G)=\{2t\}\cup\{t-\alpha_i-2:i=1,\dots,n-1\},$ and $\ndLspec(G)=\{0\}\cup\{1+\frac{\alpha_i-2}t:i=1,\dots,n-1\}, \ \dLspec(G)=\{0\}\cup\{t+\alpha_i+2:i=1,\dots,n-1\}.$  %A graph is {\em regular} or {\em $r$-regular} if every vertex has degree $r$.

\end{rem}

The derivation of the distance eigenvalues has also been done by other methods in  \cite{Az14} and  \cite{AP15}. % and the methods in Remark \ref{r:diam2} have been used by various authors to determine spectra of one or more of these four matrices.

%------------------------------------------------------------------
\subsection{Spectra of products of graphs}\label{s:prod}

In this section we summarize results for distance spectra of graphs, which can be easily applied to the distance signless Laplacian, distance Laplacian, and normalized distance Laplacian via equation \eqref{eq:tr-equiv} for transmission regular graphs.  Analogous results are known for adjacency spectra of regular graphs. The {\em Cartesian product} of two graphs  $G=(V,E)$ and $G'=(V',E')$ is the graph $G\cp G'$, the graph whose vertex set is the Cartesian product $V\x V'$ and where two vertices $(u,u')$ and $(v,v')$ are adjacent if ($u=v$ and $\{u',v'\}\in {E'}$) or ($u'=v'$ and $\{u,v\}\in E$). 

Matrix products play a key role is establishing results for products.  
The next theorem appears in \cite{I2009}, where it is stated for distance regular graphs, but as noted in \cite{AP15}, the proof applies to transmission regular graphs.  It is proved by showing that with a suitable ordering of vertices, \[\D(G\cp G')=\D(G)\otimes J_{n'}+J_n\otimes \D(G').\]  

\begin{thm}\label{tr-dspec-cprod}{\rm \cite{I2009}} Let $G$ and $G'$ be transmission regular graphs of orders $n$ and $n'$, respectively. Let $t=t(G)$, $t'=t(G')$, $\dspec(G)=\{ \dev_1,\dots,\dev_{n-1},\dev_n=t\}$ and $\dspec(G)=\{ \dev'_1,\dots,\dev'_{n-1},\dev'_n=t'\}$.  Then
 \[\dspec(G \cp G')=\{ n' k+n t'\}\cup\{ n' \dev_1,\dots,n'\dev_{n-1}\}\cup\{ n \dev'_1,\dots,n\dev'_{n'-1}\} \cup \{0^{((n-1)(n'-1))}\}.\] \end{thm}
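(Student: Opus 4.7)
The plan is to exploit the tensor-product identity $\D(G \cp G') = \D(G)\otimes J_{n'} + J_n\otimes \D(G')$ stated immediately before the theorem, which holds because $d_{G\cp G'}((u,u'),(v,v')) = d_G(u,v) + d_{G'}(u',v')$ under the lexicographic ordering of $V \x V'$. Given this identity, the task reduces to simultaneously diagonalizing the right-hand side on a suitable tensor basis of eigenvectors.

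First I would invoke Remark \ref{r:commJ}: since $G$ is $t$-transmission regular, $\D(G)$ has constant row sums equal to $t$, so $\D(G)$ commutes with $J_n$ and admits an orthogonal eigenbasis $\bv_1,\dots,\bv_n$ with $\bv_n$ a scalar multiple of $\dsone_n$ corresponding to $\dev_n = t$, and $\bv_1,\dots,\bv_{n-1}$ orthogonal to $\dsone_n$ corresponding to $\dev_1,\dots,\dev_{n-1}$. Consequently $J_n \bv_n = n\bv_n$ while $J_n \bv_i = \bzero$ for $i < n$. Analogously choose an orthogonal eigenbasis $\bv'_1,\dots,\bv'_{n'}$ for $\D(G')$ with $\bv'_{n'} \propto \dsone_{n'}$ and $J_{n'}\bv'_j = \bzero$ for $j < n'$.

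Next I would test each of the $nn'$ tensor products $\bv_i \otimes \bv'_j$ as a candidate eigenvector using $(A\otimes B)(\bx\otimes \by) = (A\bx)\otimes (B\by)$. The computation splits into four cases depending on whether $i = n$ and whether $j = n'$. The case $(i,j) = (n,n')$ gives eigenvalue $n't + nt'$ (both summands contribute); the case $i = n$, $j < n'$ gives $n\dev'_j$ (the first summand vanishes because $J_{n'}\bv'_j = \bzero$); the case $i < n$, $j = n'$ gives $n'\dev_i$ symmetrically; and the case $i < n$, $j < n'$ gives $0$, since both $J_{n'}\bv'_j$ and $J_n\bv_i$ vanish. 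This produces exactly the multisets appearing in the stated decomposition, including the $(n-1)(n'-1)$ zeros.

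Finally I would observe that the $nn'$ vectors $\bv_i\otimes \bv'_j$ are mutually orthogonal (tensor products of orthogonal families are orthogonal), hence linearly independent, so they form a complete eigenbasis of $\D(G \cp G')$ and the listed eigenvalues exhaust the spectrum. There is no real difficulty here beyond careful bookkeeping: one must respect the indexing convention $\dev_n = t$, $\dev'_{n'} = t'$ and confirm the multiplicities sum to $1 + (n-1) + (n'-1) + (n-1)(n'-1) = nn'$, matching the orders.
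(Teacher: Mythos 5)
Your proposal is correct and follows essentially the same route the paper indicates: the survey itself only sketches the argument by citing the identity $\D(G\cp G')=\D(G)\otimes J_{n'}+J_n\otimes \D(G')$ from \cite{I2009}, and your diagonalization via tensor products of eigenbases (using Remark \ref{r:commJ} to split off the $\dsone$-direction) is the standard way to finish from that identity. As a minor aside, your eigenvalue $n't+nt'$ for the Perron direction confirms that the ``$n'k$'' in the stated theorem is a typographical slip for $n't$.
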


Theorem \ref{tr-dspec-cprod} and equation \eqref{eq:tr-equiv} provide a method to establish the  spectra of the hypercube and other Hamming graphs for the four types of distance matrices; see Proposition \ref{t:Ham}. 

The {\em lexicographic product} of $G$ and $G'$ is the graph $G\lexp G'$, the graph whose vertex set is $V\x V'$ and where two vertices $(u,u')$ and $(v,v')$ are adjacent if $(u,v)\in E(G)$ or ($u=v$ and $\{u',v'\}\in E$). Note that in the next theorem, $G$ need not be transmission regular. However, when $G$ is $t$-transmission regular, $G \lexp G'$ is $(tn'+2n-2-r')$-transmission regular and so equation \eqref{eq:tr-equiv} applies. The result is proved by showing that with a suitable ordering of vertices, 
\[\D(G\lexp G')=\D(G)\otimes J_{n'}+I_n\otimes (\A(G')+2\A(\ol{G'})).\]  
 
\begin{thm}\label{dspec-lexprod}{\rm \cite{I2009}} Let $G$ and $G'$ be graphs of orders $n$ and $n'$, respectively, and assume $G'$ is $k$-regular.  Let $\dspec(G)=\{ \dev_1,\dots,\dev_{n-1},\dev_n\}$ and $\spec(\A(G'))=\{ \alpha'_1,\dots,\alpha'_{n-1},\alpha'_n=k'\}$.  Then 
{\rm \[\dspec(G \lexp G')=\{ n' \dev_i+2n'-k'-2:i=1,\dots,n\}\cup\{ (-\alpha'_j-2)^{(n))}:j-1,\dots,n'-1\}.\]} \end{thm}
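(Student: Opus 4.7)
The plan is to first derive the matrix identity
\[
\D(G\lexp G') = \D(G)\otimes J_{n'} + I_n \otimes \bigl(\A(G') + 2\A(\ol{G'})\bigr),
\]
which the excerpt already flags as the key structural fact, and then diagonalize each Kronecker summand on a common basis. To establish the identity, I would order the vertices of $G \lexp G'$ as $(v_1,v'_1),\dots,(v_1,v'_{n'}),(v_2,v'_1),\dots,(v_n,v'_{n'})$ and break into three cases. If $u\neq v$, then a shortest $(u,u')$-to-$(v,v')$ walk traverses the $G$-structure, so the distance equals $d_G(u,v)$ regardless of $u',v'$; this accounts for the off-diagonal $n' \times n'$ blocks $d_G(u,v)J_{n'}$, which assemble to $\D(G) \otimes J_{n'}$. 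Within a fiber ($u = v$), distinct vertices have distance $1$ when $u'v' \in E(G')$ and distance $2$ otherwise (going out to any vertex in a neighboring fiber and back), giving the diagonal blocks $\A(G')+2\A(\ol{G'})$.

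Next, using that $G'$ is $k'$-regular, I would simplify the diagonal block. Since $\A(G')+\A(\ol{G'}) = J_{n'}-I_{n'}$, we get
\[
\A(G')+2\A(\ol{G'}) = 2J_{n'} - 2I_{n'} - \A(G').
\]
Let $\{\bx_1,\dots,\bx_n\}$ be an orthogonal eigenbasis of $\D(G)$ with $\D(G)\bx_i = \dev_i \bx_i$, and let $\{\by'_1,\dots,\by'_{n'-1},\by'_{n'}=\dsone_{n'}\}$ be an orthogonal eigenbasis of $\A(G')$ with $\A(G')\by'_j = \alpha'_j \by'_j$; the all-ones vector $\dsone_{n'}$ is an eigenvector for $\alpha'_{n'} = k'$ because $G'$ is regular (Remark \ref{r:commJ}), and all other $\by'_j$ are orthogonal to $\dsone_{n'}$, hence in the kernel of $J_{n'}$.

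I would then construct two families of eigenvectors of $\D(G \lexp G')$:

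\textbf{Family 1.} For each $i=1,\dots,n$, consider $\bx_i \otimes \dsone_{n'}$. Then $(\D(G)\otimes J_{n'})(\bx_i \otimes \dsone_{n'}) = n'\dev_i (\bx_i \otimes \dsone_{n'})$, while
\[
\bigl(I_n \otimes (2J_{n'}-2I_{n'}-\A(G'))\bigr)(\bx_i\otimes\dsone_{n'}) = \bx_i \otimes (2n'-2-k')\dsone_{n'}.
\]
So $\bx_i \otimes \dsone_{n'}$ is an eigenvector with eigenvalue $n'\dev_i + 2n' - k' - 2$.

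\textbf{Family 2.} For each $j=1,\dots,n'-1$ and each standard basis vector $\be_u \in \R^n$, consider $\be_u \otimes \by'_j$. Then $J_{n'}\by'_j = 0$ kills the first Kronecker summand, and $(2J_{n'}-2I_{n'}-\A(G'))\by'_j = (-\alpha'_j - 2)\by'_j$, yielding an eigenvector with eigenvalue $-\alpha'_j - 2$, with multiplicity at least $n$ from varying $u$.

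Finally I would verify completeness. The set $\{\bx_i \otimes \by'_j : i\in[n], j\in[n']\}$ is an orthogonal basis of $\R^{nn'}$ since both factors are orthogonal bases; replacing the second family's $\be_u$'s by the $\bx_i$'s spans the same subspace (for each fixed $j<n'$), so we have produced $nn'$ linearly independent eigenvectors, giving the full spectrum as stated. The only real obstacle is getting the distance formula cleanly, and in particular ensuring the diagonal-block simplification via regularity of $G'$ is correctly invoked — everything after that is routine Kronecker-product eigenvalue bookkeeping.
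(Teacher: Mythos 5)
Your proof is correct and follows exactly the route the paper indicates: it establishes the decomposition $\D(G\lexp G')=\D(G)\otimes J_{n'}+I_n\otimes (\A(G')+2\A(\ol{G'}))$ and then reads off the spectrum via the two Kronecker eigenvector families, which is the argument of \cite{I2009} that the survey sketches. The only (implicit, and shared with the paper) caveat is that the within-fiber distance-$2$ claim needs $G$ connected of order $n\ge 2$, so that each vertex of $G$ has a neighbor to route through.
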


%------------------------------------------------------------------
\subsection{Additional linear algebraic techniques}\label{ss:PF}

In this section we briefly highlight the use of several additional results from linear algebra  in the study of spectra of various distance matrices.  \ms

 {\bf Ger\v sgorin disks}
Let $A=[a_{ij}]$ be an $n\x n$ complex matrix (symmetry is not required).   In general, the eigenvalues of $A$ may be complex (nonreal) even if $A$ is real.  The Ger\v sgorin Disk Theorem describes a region of the complex plane that contains all the eigenvalues of $A$
 \cite[Theorem 6.1.1]{HJ}): The $i$th {\em punctured absolute row sum} of $A$ is  $r'_i(A)=\sum_{j\ne i}|a_{ij}|$.  Define the $i$th {\em Ger\v sgorin disk}  of $A$  to be the set  of all complex numbers within the circle in the complex plane  of radius $r'_i(A)$ centered at $a_{ii}$.  Then the union of these disks contains the spectrum of $A$.  That is, 
\[ \spec(A)\subseteq  \cup_{i=1}^nD_i(A) \mbox{ where } D_i(A)=\{\zeta : |\zeta-a_{ii}|\le r'_i(A)\}.\]
%The union of all the Gershgorin disks of $A$ will be denoted by $D_A$.
We have an upper bound on the spectral radius as an immediate consequence of the Ger\v sgorin Disk Theorem: $\rho(A)\le  \max_{i=1}^n \sum_{j=1}^n\left|a_{ij}\right|$.

 For a symmetric real matrix, the eigenvalues are real and the  Ger\v sgorin  disks are restricted to real intervals.  
For an $n\x n$ real symmetric $M$, 
$\spec(M)\subset\cup_{i=1}^n[m_{ii}-r'_i(M),\, m_{ii}+r'_i(M)].$
Since the $i$th diagonal entry of $\DL(G)$ or $\DQ(G)$  is $t(v_i)$ and $r'_i(\DL(G))=r'_i(\DQ(G))=t(v_i)$, this implies  $\DL(G)$ and $\DQ(G)$ are positive semidefinite.\ms

{\bf Perron-Frobenius theory} There is an extensive theory of spectra of nonnegative matrices, called Perron-Frobenius theory. %, which offers many useful tools for study of spectra when applicable.  
The distance matrix and the distance signless Laplacian matrix of a graph are nonnegative, so Perron-Frobenius theory applies. Here we mention only results that have been applied to finding  the spectra of one or both of these matrices for graphs or digraphs (we do not restrict the discussion to symmetric matrices).  A more extensive treatment, that includes all the results here, can be found in \cite[Chapter 8]{HJ}.
 Let $A=[a_{ij}]$ be an $n\x n$ nonnegative matrix.  The   $i$th {\em row sum} of $A$ is  $r_i(A)=\sum_{j=1}^na_{ij}$.  
Observe that $r_i(\D(G))=t(v_i)$ and $r_i(\DQ(G))=2t(v_i)$.  One well known result is 
\[\min_{i=1}^n r_i(A)\le\rho(A)\le  \max_{i=1}^n r_i(A).\]
The strongest results are for positive matrices and irreducible matrices; $\DQ(G)$ is  positive and $\D(G)$ is irreducible.   Here we state parts of the Perron's Theorem and the Perron-Frobenius Theorem (see \cite[Theorems 8.2.8 and 8.4.4]{HJ}).  
 Suppose $A$ is an $n\x n$ irreducible nonnegative matrix with $n\ge 2$.  Then\vspace{-5pt}
\bit
\item $\rho(A)>0$ is a simple eigenvalue with a positive eigenvector.\vspace{-5pt}
\item If in addition $A$ is positive, then $\rho(A)>|\lam|$ for every eigenvalue $\lam\ne \rho(A)$.
\eit
%\ms

 {\bf Rayleigh quotients}
 Let $M$ be an $n\x n$ real symmetric matrix.  For $n$-vector $\bx$, $\frac{\bx^TM\bx}{\bx^T\bx}$ is a {\em Rayleigh quotient}.  Rayleigh quotients are used to characterize the extreme eigenvalues $\lam_1$ and $\lam_n$ of  $M$: $\lam_1=\min_{\bx\ne 0}\frac{\bx^TM\bx}{\bx^T\bx}$ and $\lam_n=\max_{\bx\ne 0}\frac{\bx^TM\bx}{\bx^T\bx}$.  Rayleigh quotients are used in \cite{AH16, AH18} to show that if $\dQspec(G)=\dQspec(G')$ and $G$ is transmission regular, then so is $G'$ (see Section \ref{s:cospec}).
 \ms

{\bf Interlacing} Recall that the eigenvalues of a symmetric matrix $M\in\Rnn$ are denoted by $\lam_1(M)\le\dots\le\lam_n(M)$.  Let $M(i)$ be the $(n-1)\x(n-1)$ matrix obtained from $M$ be deleting row and column $i$ from $M$.  Then it is well known that the eigenvalues of $M(i)$ interlace those of $M$ \cite[Theorem 4.3.17]{HJ}:\vspace{-5pt}
\[\lam_1(M)\le\lam_1(M(i))\le\lam_2(M)\le\lam_2(M(i))\le \dots\lam_{n-1}(M)\le\lam_{n-1}(M(i))\le\lam_n(M).\vspace{-5pt}\]
Interlacing was used in \cite{GRWC15} to show that a hypercube with a leaf added has at most five distance eigenvalues.  It can be problematic to use interlacing for any of the other matrices because the result of deleting a vertex affects the entire matrix for $\DQ, \DL$, and $\nDL$ (but not for $\D$).\ms

{\bf Conjugation and inertia}
Symmetric matrices $M,M'\in\Rnn$ are {\em conjugate} if there exists an invertible matrix $C$ such that $M=C^TM'C$.  For any graph $G$, it is immediate from the definition of $\nDL(G)$ that $\nDL(G)$ and $\DL(G)$ are conjugate. It is well known that conjugate matrices have the same inertia \cite[Theorem 4.5.8 (Sylvester's Law of Inertia)]{HJ}.  Thus $\nDL(G)$ is positive semidefinite because $\DL(G)$  is positive semidefinite.\ms

%%%%%%%%%%%%%%%%%%%%%%%%%%%
\section{Strongly regular graphs, distance regular graphs, and transmission regular graphs}\label{s:SRG} %Leslie
%{\red \cite{AP15, GRWC15}}

Strongly regular graphs and distance regular graphs provide useful examples, including as graphs with few distinct eigenvalues. However, we note examples of graphs that are not distance regular, and thus not strongly regular, yet have few distinct distance eigenvalues are also known  (see, %Section \ref{ss:few}). 
 for example, \cite{GRWC15}). 
There also techniques specific to strongly regular and distance regular graphs for computing the spectrum of the distance matrix, and thus the other three spectra, since all these graphs are transmission regular.  We also present some observations relevant to all transmission regular graphs in Section \ref{ss:tr}.

%------------------------------------------------------------------
\subsection{Strongly regular graphs}\label{ss:SRG}
A graph is {\em regular} (or {\em $k$-regular}) if every vertex has degree $k$. 
 A connected $k$-regular graph $G$ of order $n$ is {\em strongly regular} with parameters
$(n, k, a,c)$ if  $k<n-1$, %\footnote{What we have defined is often called {\em primitive} strongly regular graph.  The difference is whether  the complete graph ($k=n-1$) as strongly regular, but we exclude it since its complement is disconnected and it has only two distinct eigenvalues.} 
every pair of adjacent vertices has $a$ common neighbors,
and every pair of distinct nonadjacent vertices has $c$ common neighbors.  %We assume both $G$ and its complement $\Gc$ are connected, which implies $G\ne K_n$ and $c>0$.  
Throughout this section,  $G$ is a connected {strongly regular} with parameters
$(n, k, a,c)$.
It is well-known that  the three distinct eigenvalues of $G$ are  $\tau=\frac{1}{2}(a-c-\sqrt{(a-c)^2+4(k-c)})$, $\theta=\frac{1}{2}(a-c+\sqrt{(a-c)^2+4(k-c)})$, and $k$, with multiplicities  $m_\tau=\frac{1}{2}
\lp n-1+\frac{2k+(n-1)(a-c)}{\sqrt{(a-c)^2+4(k-c)}}\rp$, $m_\theta=\frac{1}{2}
\lp n-1-\frac{2k+(n-1)(a-c)}{\sqrt{(a-c)^2+4(k-c)}}\rp$, and $1$, respectively.

 Since $G$ is connected, $\diam(G)=2$.   Thus the distance eigenvalues of $G$ in increasing order are
{\scriptsize  
\[   \theta_{\D}=\frac{ -4-a+c-\sqrt{(a-c)^2+4(k-c)}}2, \
\tau_{\D}=\frac{ -4-a+c+\sqrt{(a-c)^2+4(k-c)}}2, \ \rho_{\D}=2n-2-k\vspace{-5pt}\]}
\hspace{-4pt}with multiplicities $m_\theta, m_\tau,$ and 1, respectively (see Remark \ref{r:diam2}).
In this section we 
consistently  list the eigenvalues in increasing order and use notation to associate the transformed versions of eigenvalues $\theta $ and $\tau$ with the originals.

 Since a strongly regular graph is transmission regular (with transmission $t(G)=2n-2-k$), it is easy to determine  the eigenvalues of $\DL(G)$,  $\nDL(G)$, and $\DQ(G)$ from those of $\D(G)$, with the corresponding multiplicities, i.e., $m_\theta$ is also the multiplicity of $\theta_{\DL}$,  $\theta_{\nDL}$, $\theta_{\DQ},$ and $m_\tau$ is also the multiplicity of $\tau_{\DL},$  $\tau_{\nDL}$, and $ \tau_{\DQ}$.  The eigenvalues in increasing order are:
{\scriptsize\[ \theta_{\DQ}=\frac {4 n-2 k-8-a-c-\sqrt{(a-c)^2+4(k-c)}}2,\  \tau_{\DQ}=\frac {4 n-2 k-8-a-c+\sqrt{(a-c)^2+4(k-c)}}2,\ \rho_{\DQ}=4n-4-2k.\vspace{-5pt}\]}
{\scriptsize\[0,\  \tau_{\DL}=\frac{ 4n-2k+a-c-\sqrt{(a-c)^2+4(k-c)}}2,\ \theta_{\DL}=\frac{ 4n-2k+a-c+\sqrt{(a-c)^2+4(k-c)}}2.\vspace{-5pt}\]}
{\scriptsize\[0,\  \tau_{\nDL}=\frac {4n-2k+a-c-\sqrt{(a-c)^2+4(k-c)}}{2(2n-2-k)},\ \theta_{\nDL}=\frac {4n-2k+a-c-\sqrt{(a-c)^2+4(k-c)}}{2(2n-2-k)}.\]}
 Observe that $\rho(\DQ(G))=  2t(G)$,  $\rho(\DL(G))=  \theta_{\DL}$, and $\rho(\nDL(G))=  \theta_{\nDL}$.

%There are many examples of strongly regular graphs for which the distance spectrum has been published {\red (see, for example, )}. 
The Petersen graph is a strongly regular graph with parameters $(10,3,0,1)$. The distance, distance signless Laplacian, and distance Laplacian spectra of the Petersen graph $P$ are given in  \cite{AH13}, and it is immediate that $\ndLspec(P)=\{0,1^{(5)},\frac 6 5^{(5)}\}$.

A {\em conference graph} is a strongly regular graph with parameters $k=\frac{n-1}2, a=\frac{n-5}4$ and $ c=\frac{n-1}4$, which implies $n\equiv 1\!\mod 4$ and $m_\theta=m_\tau=\frac{n-1}2$ (in fact,  a strongly regular graph with $m_\theta=m_\tau$ is a standard definition of a {conference graph}); examples of conference graphs can be found in \cite[Chapter 10]{GR01}.  Azarija  showed in  \cite{Az14} that the distance eigenvalues of a conference graph of order $n$ are $ \theta_{\D}=\frac{-3-\sqrt n}2, \tau_{\D}=\frac{-3+\sqrt n}2$, and $\rho_{\D}=\frac{3(n-1)}2$.  He used conference graphs to answer positively a question of Graham and Lov\'asz, as to the existence of a graph for which the number of positive distance eigenvalues exceeds the number of negative distance eigenvalues (counting multiplicities).  He called such graphs {\em optimistic}.

  \begin{thm}\label{t:opt}{\rm\cite{Az14}} If $G$ is a conference graph of order $n\ge 13$, then $n_+(G)>n_-(G)$.
  \end{thm}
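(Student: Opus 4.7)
The plan is to read off $n_+(G)$ and $n_-(G)$ directly from the distance spectrum of a conference graph $G$, which is recorded just above the theorem: the distinct distance eigenvalues are $\theta_{\D}=\frac{-3-\sqrt n}{2}$, $\tau_{\D}=\frac{-3+\sqrt n}{2}$, and $\rho_{\D}=\frac{3(n-1)}{2}$, with multiplicities $m_\theta=m_\tau=\frac{n-1}{2}$ and $1$ respectively. The entire argument then reduces to determining the sign of each of the three distinct eigenvalues and summing multiplicities.

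First I would observe that $\rho_{\D}>0$ and $\theta_{\D}<0$ for every admissible $n$. The sign of $\tau_{\D}$ is the only thing that varies: $\tau_{\D}>0$ precisely when $\sqrt n>3$, equivalently $n>9$. Since conference graphs require $n\equiv 1\pmod 4$, the smallest order for which $\tau_{\D}>0$ is $n=13$, which is exactly the hypothesis of the theorem. For $n\ge 13$, then, $\tau_{\D}$ contributes its full multiplicity $\frac{n-1}{2}$ to $n_+(G)$, and together with $\rho_{\D}$ we obtain
\[ n_+(G)=1+\frac{n-1}{2}=\frac{n+1}{2},\qquad n_-(G)=m_\theta=\frac{n-1}{2}, \]
so $n_+(G)-n_-(G)=1>0$, as required.

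There is essentially no obstacle; the heavy lifting has already been done in computing the conference-graph distance spectrum. The only subtle point is verifying that the threshold $n\ge 13$ is both the correct numerical threshold for $\tau_{\D}>0$ and the smallest value consistent with the congruence $n\equiv 1\pmod 4$ that exceeds $9$ (at $n=9$ one would have $\tau_{\D}=0$, which contributes to neither $n_+$ nor $n_-$, and at $n=5$ one has $\tau_{\D}<0$). One might additionally remark that a conference graph of order $n=13$ does exist (the Paley graph on $13$ vertices), so the statement is non-vacuous at the boundary.
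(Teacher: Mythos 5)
Your proposal is correct and follows exactly the route the paper sets up: the conference-graph distance spectrum $\bigl\{\theta_{\D}^{(\frac{n-1}{2})},\tau_{\D}^{(\frac{n-1}{2})},\rho_{\D}\bigr\}$ with $\tau_{\D}=\frac{-3+\sqrt n}{2}$ is stated immediately before the theorem, and the result is then just the sign count you perform, with $n\ge 13$ being the smallest order $\equiv 1\pmod 4$ exceeding $9$ so that $\tau_{\D}>0$. Your boundary checks at $n=9$ and $n=5$ and the remark that the Paley graph on $13$ vertices realizes the threshold are correct and match the intent of the cited argument.
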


\begin{obs} If $G$ is a conference graph of order $n\equiv 1\!\mod 4$, then:
\[\theta_{\DQ}=\frac{1}{2} \left(3 n-\sqrt{n}-6\right), \tau_{\DQ}=\frac{1}{2} \left(3 n+\sqrt{n}-6\right), \rho_{\DQ}={3(n-1)}.\]
\[ \tau_{\DL}=\frac{1}{2} \left(3 n-\sqrt{n}\right), \theta_{\DL}=\frac{1}{2} \left(3 n+\sqrt{n}\right).\]
\[ \tau_{\nDL}=\frac{3 n-\sqrt{n}}{3 (n-1)}, \theta_{\nDL}=\frac{3 n+\sqrt{n}}{3 (n-1)}.\]
 Note that $\lim_{n\to\infty}\tau_{\nDL}=\lim_{n\to\infty}\theta_{\nDL}=1$.
\end{obs}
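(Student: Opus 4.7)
The plan is to compute each eigenvalue directly from Azarija's formulas for the distance spectrum of a conference graph together with equation \eqref{eq:tr-equiv}, which applies because every conference graph is transmission regular. The conference graph is $k$-regular with $k=(n-1)/2$, and since its diameter is $2$, its transmission is
\[t(G)=2n-2-k=2n-2-\tfrac{n-1}{2}=\tfrac{3(n-1)}{2}.\]
This confirms $\rho_{\DQ}=2t(G)=3(n-1)$ and matches the spectral radius formula $\rho_{\D}=\tfrac{3(n-1)}{2}$ recorded before the statement.

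For the two non-Perron eigenvalues of each variant, I would simply substitute $\theta_{\D}=\tfrac{1}{2}(-3-\sqrt n)$ and $\tau_{\D}=\tfrac{1}{2}(-3+\sqrt n)$ into the three relations in \eqref{eq:tr-equiv}:
\[\dqev_i=t+\dev_i,\qquad \dlev_i=t-\dev_{n+1-i},\qquad \ndlev_i=1-\frac{\dev_{n+1-i}}{t}.\]
The distance signless Laplacian eigenvalues preserve order, so $\theta_{\DQ}=t+\theta_{\D}$ and $\tau_{\DQ}=t+\tau_{\D}$, giving the stated $\tfrac12(3n-6\mp\sqrt n)$. For $\DL$ and $\nDL$, the transformation reverses the order of the nonzero eigenvalues; thus the subscript $\tau$ (which labels the \emph{smaller} nonzero eigenvalue in increasing order) must correspond to $t-\tau_{\D}$ rather than $t-\theta_{\D}$, and similarly $\theta$ corresponds to $t-\theta_{\D}$. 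After substituting and simplifying using $t=\tfrac{3(n-1)}{2}$, the values $\tfrac12(3n\mp\sqrt n)$ and $\tfrac{3n\mp\sqrt n}{3(n-1)}$ drop out.

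The multiplicities match the conference-graph condition $m_\theta=m_\tau=(n-1)/2$, as noted just before the observation, so no additional multiplicity argument is needed. The only subtle point — and the only thing worth verifying carefully rather than mechanically — is the order reversal for $\DL$ and $\nDL$: one must check that $-\sqrt n$ produces the smaller of the two nonzero eigenvalues in each case, which it does because $\tau_{\D}>\theta_{\D}$ implies $t-\tau_{\D}<t-\theta_{\D}$. The limit statement then follows from dividing numerator and denominator by $n$.
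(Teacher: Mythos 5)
Your proposal is correct and follows exactly the route the paper intends for this Observation: substitute Azarija's distance eigenvalues $\theta_{\D}=\frac{-3-\sqrt n}{2}$, $\tau_{\D}=\frac{-3+\sqrt n}{2}$, $\rho_{\D}=\frac{3(n-1)}{2}$ of a conference graph into equation \eqref{eq:tr-equiv} with $t=\frac{3(n-1)}{2}$, and all the stated values drop out. Your explicit check that the order-reversing maps for $\DL$ and $\nDL$ send $\tau_{\D}$ to the smaller nonzero eigenvalue (hence the label $\tau_{\DL}$, $\tau_{\nDL}$) is exactly the one subtlety worth recording, and you handle it correctly.
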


Optimistic strongly regular graphs have since been characterized, and several additional families of optimistic graphs are identified in \cite{GRWC15}. See Observation \ref{o:tr-opt} for comments on the other $D^*$ spectra of optimistic transmission regular graphs.

% \begin{thm}\label{t:SRG-opt}{\rm\cite{GRWC15}} Let $G$ be a strongly regular graph  with parameters $(n,k,a,c) $.  The graph $G$ is optimistic if and only if $\tau_{\D}>0$ and $m_\tau\ge m_\theta$, i.e., $G$  is optimistic if and only if  $c-\frac {2k} {n-1}\le a < \frac{-4+c+k} 2.$  \end{thm}

%It is well-known that for a strongly regular graph $G$ with parameters $(n, k, a, c)$,  its complement $\Gc$ is also strongly regular with parameters $(n, \ol k, \ol a, \ol c)$, where $\ol k = n-1-k$, $\ol a=n-2-2k+c$, and $\ol c=n-2k+a$.  Furthermore, a conference graph is self-complementary.  

%\begin{thm}{\rm\cite{GRWC15}} Let $G$ be a strongly regular graph with parameters $(n,k,a,c)$.  Both $G$ and $\Gc$ are optimistic if and only if $G$ is a conference graph and $n\ge 13$. \end{thm}

At the other extreme from optimistic strongly regular graphs are strongly regular graphs with exactly one positive distance eigenvalue. It is observed in \cite{GRWC15} that for a  strongly regular graph $G$ with parameters $(n,k,a,c),$ 
 $G=C_5$ or $\tau\le -2$ and thus $G$ has exactly one positive distance eigenvalue if and only if 
   $G=C_5$ or $\tau=-2$. Examples include the complete multipartite graph $K_{2,2,\dots,2}$ (called a {\em cocktail party graph}) and the line graphs $L(K_m)$ and $L(K_{m,m})$; the distance spectra of these graphs are listed in \cite{GRWC15}.  See Observation \ref{o:1poseval} for the impact on the other $D^*$ spectra of transmission regular graphs with one positive distance eigenvalue.

\subsection{Distance regular graphs}\label{ss:dr}
Distance regular graphs are a generalization of strongly regular graphs. The graph $G$ is  {\em distance regular} if for any choice of vertices $u, v$ with $d(u, v) =k$, the number of vertices $w$ such that $d(u, w) =i$ and $ d(v, w) =j$ is independent of the choice of $u$ and $v$.  Distance regular graphs are transmission regular, so $\dQspec(G)$, $\dLspec(G)$, and $\ndLspec(G)$ can be easily determined from $\dspec(G)$ for a distance regular graph. 

The study of distance spectra of distance  regular graphs having exactly one positive distance eigenvalue was  initiated by Koolen and Shpectorov in \cite{KS1994}.  The distance spectra of additional such graphs were determined in \cite{AP15}, and the determination of distance spectra of all such graphs was completed  in  \cite{GRWC15}. Such graphs   are directly related to a metric hierarchy for finite connected graphs (and more generally, for finite distance spaces), which makes these graphs particularly interesting (see \cite{KS1994} for more information). A complete list of the distance spectra of distance regular graphs having exactly one positive distance eigenvalue can be found in \cite{GRWC15}.  This includes the following infinite graph families: cycles, Hamming graphs (which include the strongly regular graphs $L(K_{m,m})$; see Proposition  \ref{t:Ham} for the  spectra of all four distance matrices of a Hamming graph), cocktail party graphs (which are strongly regular), Johnson graphs (which include the strongly regular graphs $L(K_m)$), Doob graphs, halved cubes, and double odd graphs.

Distance spectra of several additional families of distance regular graphs were determined  by Atik and Parighani  in \cite{AP15}, including Hadamard graphs and Taylor graphs,  and in \cite{GRWC15}, including Kneser graphs.  Atik and Parighani also established a tight upper bound on the number of distinct distance eigenvalues of a distance regular graph.

\begin{thm}{\rm\cite{AP15}} If $G$ is distance regular, then $G$ has at most $\diam(G)+1$ distinct distance eigenvalues.
\end{thm}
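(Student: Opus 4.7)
The plan is to exploit the Bose-Mesner algebra structure that is characteristic of distance regular graphs. Let $d=\diam(G)$, and for $i=0,1,\dots,d$ define the distance-$i$ matrix $\A_i$ of $G$ by $(\A_i)_{uv}=1$ if $d(u,v)=i$ and $0$ otherwise; in particular $\A_0=I_n$ and $\A_1=\A(G)$. I would first recall the standard fact that distance regularity forces a three-term recurrence $\A_1\A_i=c_{i+1}\A_{i+1}+a_i\A_i+b_{i-1}\A_{i-1}$ (with entries coming from the intersection array), and a straightforward induction on $i$ then shows that each $\A_i$ is a polynomial $p_i(\A_1)$ of degree exactly $i$ in the adjacency matrix.

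Next I would simply observe that the distance matrix decomposes as $\D(G)=\sum_{i=1}^{d}i\,\A_i=\sum_{i=1}^{d}i\,p_i(\A_1)=q(\A_1)$ for a single polynomial $q$ of degree at most $d$. Since eigenvalues of a polynomial in a matrix are obtained by applying the polynomial to the eigenvalues, it follows that the number of distinct eigenvalues of $\D(G)$ is at most the number of distinct eigenvalues of $\A(G)$.

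To conclude, I would invoke the well-known counting fact that the adjacency matrix of a distance regular graph of diameter $d$ has exactly $d+1$ distinct eigenvalues. This is because $\{\A_0,\A_1,\dots,\A_d\}$ are linearly independent (their supports partition the index pairs) and span the algebra $\R[\A_1]$, whose dimension equals the degree of the minimal polynomial of $\A_1$; hence $\A_1$ has exactly $d+1$ distinct eigenvalues. Combined with the previous paragraph this yields the bound $|\dspec(G)|\le d+1$.

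The main obstacle is really just the preliminary step: verifying (or quoting cleanly) that each $\A_i$ is a polynomial of degree $i$ in $\A(G)$ and that the Bose-Mesner algebra has dimension $d+1$. Once these structural facts are in hand the result is immediate, so I would aim to state them as explicit lemmas (or cite the standard reference) and keep the proof itself to a few lines.
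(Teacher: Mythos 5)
Your proof is correct: the three-term recurrence from distance regularity gives $\A_i=p_i(\A_1)$ with $\deg p_i=i$, hence $\D(G)=\sum_{i=1}^d i\,\A_i$ is a polynomial of degree at most $d$ in $\A(G)$, and since $\A(G)$ has exactly $d+1$ distinct eigenvalues the bound follows. The survey states this result only with a citation to \cite{AP15}, and your argument is essentially the standard Bose--Mesner-algebra proof given there, so there is nothing further to compare.
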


\subsection{Transmission regular graphs}\label{ss:tr}

In the next two observations we state the equivalent versions for $\DQ(G)$, $\DL(G)$, and $\nDL(G)$ of the conditions that $G$ is optimistic or has exactly one positive distance eigenvalue. Observe that zero as a sorting point for eigenvalues is mapped to $t(G)$ for $\DQ(G)$ and $\DL(G)$, and to 1 for $\nDL(G)$.
\begin{obs}\label{o:tr-opt} Let $G$ be a transmission regular graph.  Then the following statements are equivalent.
\ben
\item $G$ is optimistic, i.e., $\D(G)$ has more positive than negative eigenvalues.
\item The number of  eigenvalues of $\DQ(G)$ greater than $t(G)$ is greater than the number of  eigenvalues of $\DQ(G)$ less than $t(G)$.
\item The number of  eigenvalues of $\DL(G)$ less than $t(G)$ is greater than the number of  eigenvalues of $\DL(G)$ greater than $t(G)$.
\item The number of  eigenvalues of $\nDL(G)$ less than $1$ is greater than the number of  eigenvalues of $\nDL(G)$ greater than $1$.  
\een
\end{obs}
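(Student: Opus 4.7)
The plan is to reduce the entire observation to equation \eqref{eq:tr-equiv}, which, since $G$ is $t$-transmission regular, provides explicit affine (and in the last case, order-reversing) bijections between the spectrum of $\D(G)$ and the spectra of $\DQ(G)$, $\DL(G)$, and $\nDL(G)$. Once those bijections are in hand, each of conditions (2), (3), (4) translates into a statement about how many $\dev_i$ are positive versus negative, which is precisely condition (1). So no ``real'' linear algebra is needed beyond \eqref{eq:tr-equiv} and the trivial fact that the transmission of a connected graph of order at least two satisfies $t = t(G) > 0$, so dividing by $t$ preserves the direction of inequalities.

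First I would handle the equivalence (1) $\Leftrightarrow$ (2): from $\dqev_i = t + \dev_i$ we get
\[
\dqev_i > t \iff \dev_i > 0, \qquad \dqev_i < t \iff \dev_i < 0,
\]
so the number of $\dqev_i$ exceeding $t$ equals the number of positive $\dev_i$, and similarly on the other side. Next, for (1) $\Leftrightarrow$ (3), the relation $\dlev_i = t - \dev_{n+1-i}$ yields
\[
\dlev_i < t \iff \dev_{n+1-i} > 0, \qquad \dlev_i > t \iff \dev_{n+1-i} < 0.
\]
Because $i \mapsto n+1-i$ is a bijection on $\{1,\dots,n\}$, the number of $\dlev_i$ strictly less than $t$ equals the number of positive distance eigenvalues, and the number of $\dlev_i$ strictly greater than $t$ equals the number of negative ones, which identifies (3) with (1). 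Finally, for (1) $\Leftrightarrow$ (4), divide $\dlev_i = t - \dev_{n+1-i}$ by $t>0$ to obtain $\ndlev_i = 1 - \dev_{n+1-i}/t$; since $t > 0$ the inequalities are preserved, giving the same count correspondence with $1$ playing the role of $t$.

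Chaining these three equivalences through (1) yields the observation. The only subtlety worth flagging is indexing: for $\DL$ and $\nDL$ the bijection reverses the order of $\dev$, so one must confirm that the counts, not the eigenvalues themselves, are what match. There is no real obstacle here since the map $i \mapsto n+1-i$ is a bijection of the index set, so total counts of positives and negatives are preserved. I would also note explicitly at the start that $t(G) \geq 1$ under our standing assumption that $G$ is connected of order $n \geq 2$, so that the normalization in \eqref{eq:tr-equiv} is well defined.
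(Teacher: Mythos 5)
Your proposal is correct and follows exactly the reasoning the paper intends: the observation is stated without proof precisely because it is immediate from equation \eqref{eq:tr-equiv}, with the accompanying remark that the sorting point $0$ for $\D(G)$ is mapped to $t(G)$ for $\DQ(G)$ and $\DL(G)$ and to $1$ for $\nDL(G)$. Your explicit handling of the order-reversing index bijection $i\mapsto n+1-i$ and the positivity of $t$ is a welcome bit of care, but it is the same argument.
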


\begin{obs}\label{o:1poseval} Let $G$ be a transmission regular graph.  Then the following statements are equivalent.
\ben
\item  The spectral radius $\rho_{\D}$ is the only   positive  eigenvalue of $\D(G)$.
\item  $\DQ(G)$ has only one eigenvalue greater than $t(G)$.
\item  $0$ is the only eigenvalue of $\DL(G)$ that is less than $t(G)$.
\item  $0$ is the only eigenvalue of $\nDL(G)$ that is less than $1$.
\een
\end{obs}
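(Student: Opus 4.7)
The plan is to reduce everything to equation~\eqref{eq:tr-equiv}, which already encodes a bijection between the spectra of $\D(G)$, $\DQ(G)$, $\DL(G)$, and $\nDL(G)$ when $G$ is $t$-transmission regular. So this observation is essentially a bookkeeping exercise: we just need to track what condition ``$\dev_i>0$'' translates to in each of the other three spectra.

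First I would recall that for a $t$-transmission regular graph of order $n$, equation~\eqref{eq:tr-equiv} gives $\dqev_i=t+\dev_i$, $\dlev_i=t-\dev_{n+1-i}$, and $\ndlev_i=1-\dev_{n+1-i}/t$. From these identities one reads off three equivalences at the level of individual eigenvalues: $\dev_i>0$ iff $\dqev_i>t$; $\dev_{n+1-i}>0$ iff $\dlev_i<t$; and $\dev_{n+1-i}>0$ iff $\ndlev_i<1$ (using $t>0$ for this last one, which holds since $G$ is connected of order at least two). Counting shows that the number of positive eigenvalues of $\D(G)$ equals the number of eigenvalues of $\DQ(G)$ exceeding $t$, and likewise equals the number of eigenvalues of $\DL(G)$ strictly less than $t$ and the number of eigenvalues of $\nDL(G)$ strictly less than $1$.

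Next I would observe that by Perron--Frobenius (Section~\ref{ss:PF}), $\rho_{\D}=\dev_n>0$, so at least one positive $\D$-eigenvalue always exists; saying ``$\rho_{\D}$ is the only positive eigenvalue'' is the same as saying the number of positive $\D$-eigenvalues equals $1$. Translating via the counting identity just noted, each of conditions (2)--(4) in the observation is also the assertion that exactly one of the relevant counts equals $1$. This gives (1)$\Leftrightarrow$(2), (1)$\Leftrightarrow$(3), and (1)$\Leftrightarrow$(4).

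There is no real obstacle here; the only thing worth being careful about is the reverse ordering of indices between $\D$ and its ``Laplacian'' variants (because $\DL$ and $\nDL$ are obtained by subtracting rather than adding $\D$), and the fact that the strict inequalities line up correctly. Since $t>0$ and the spectra are real, this poses no issue.
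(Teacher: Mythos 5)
Your proposal is correct and matches the paper's (implicit) justification: the paper presents this as an observation following directly from equation~\eqref{eq:tr-equiv}, noting only that the sorting point $0$ for $\D$-eigenvalues maps to $t(G)$ for $\DQ$ and $\DL$ and to $1$ for $\nDL$, which is exactly the eigenvalue-by-eigenvalue translation and counting you carry out.
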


As noted earlier, most of the initial examples of distance matrices with few distinct eigenvalues involved distance regular graphs.  This led to a search for examples of graph that are not  distance regular whose distance matrices with few distinct eigenvalues and additional properties. An example of a graph $G$  that is transmission regular but not distance regular  where the number of distinct eigenvalues is less than $\diam(G)+1$ was presented in  \cite{GRWC15}.  Since it is transmission regular,   $ \DL(G), \DQ(G)$, and $\nDL(G)$ all have the same number of distinct eigenvalues as $\D(G)$.

%%%%%%%%%%%%%%%%%%%%%%%%%%%
\section{Spectra of specific families of graphs}\label{s:fam} % Leslie
%{\red Includes distance matrix, distance signless Laplacian, distance Laplacian, normalized distance Laplacian.}

In this section we list the spectra of the distance, distance signless Laplacian, distance Laplacian, and normalized distance Laplacian matrices for some specific families of graphs (in addition to specific strongly regular and distance regular graphs discussed in Section \ref{s:SRG}). Some are transmission regular, so the spectra of the other matrices are easily computed from the spectrum of the distance matrix. %, and from the adjacency spectrum when in addition the diameter is two. 
Other computations utilize twins and quotient matrix techniques.  Even though multisets are unordered, we list the eigenvalues in increasing order except where noted.
%In this section, the eigenvalues  are not necessarily listed in increasing order (in some cases, the order depends on the parameters). %For the next three results, the eigenvalues are written in increasing order (even though the spectrum is unordered).
\begin{prop} Let $n\ge 1$. %See {\rm  \cite{GP71}} for $\D(G)$,  {\rm  \cite{AH13}} for $\DL(G)$ and $\DQ(G)$, and {\rm  \cite{R20}} for $\nDL(G)$.  
\bit
\item{\rm  \cite{GP71}} $\dspec(K_n)=\{(-1)^{(n-1)}, n-1\}$.
\item{\rm  \cite{AH13}} $\dQspec(K_n)=\{(n-2)^{(n-1)}, 2n-2\}$.
\item{\rm  \cite{AH13}} $\dLspec(K_n)=\{0,n^{(n-1)}\}$.
\item{\rm \cite{R20}} $\ndLspec(K_n)=\{0,\lp\frac n {n-1}\rp^{(n-1)}\}$.
\eit
\end{prop}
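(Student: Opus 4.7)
The plan is to observe that $K_n$ has all pairwise distances equal to $1$, so $\D(K_n)=J_n-I_n$, and that $K_n$ is transmission regular with $t(K_n)=n-1$. Then the three remaining spectra follow mechanically from the distance spectrum via equation~\eqref{eq:tr-equiv}.

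First I would diagonalize $\D(K_n)=J_n-I_n$. The all-ones matrix $J_n$ has eigenvalue $n$ on $\mathds{1}=[1,\dots,1]^T$ and eigenvalue $0$ of multiplicity $n-1$ on the orthogonal complement $\mathds{1}^{\perp}$. Subtracting $I_n$ shifts each eigenvalue by $-1$, giving $\dspec(K_n)=\{(-1)^{(n-1)},\,n-1\}$. This also verifies $\rho(\D(K_n))=n-1=t(K_n)$, consistent with the Perron eigenvalue for a transmission regular graph.

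Next, since $T(K_n)=(n-1)I_n$, equation~\eqref{eq:tr-equiv} applies with $t=n-1$. For $\DQ$, each eigenvalue of $\D$ is shifted by $t=n-1$: the shift sends $-1$ to $n-2$ (with multiplicity $n-1$) and $n-1$ to $2n-2$. For $\DL=tI_n-\D$, reflecting and shifting sends $n-1$ to $0$ and $-1$ to $n$ (with multiplicity $n-1$). For $\nDL=\frac{1}{t}\DL$, dividing by $n-1$ produces eigenvalues $0$ and $\tfrac{n}{n-1}$ (multiplicity $n-1$). These match the four claimed spectra.

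There is essentially no obstacle: the only mildly subtle step is confirming that the multiplicities are correctly transported by the affine transformation, but this is automatic because $\DQ$, $\DL$, and $\nDL$ are obtained from $\D$ by scalar multiples of the identity (and an overall scalar in the normalized case), so the eigenspaces and their dimensions are preserved. The case $n=1$ is trivially handled since all four matrices are $[0]$, and the listed spectra correctly reduce to $\{0\}$ (the eigenvalue $-1$ or $n-2$ etc.\ appears with multiplicity $n-1=0$).
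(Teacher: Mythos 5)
Your proof is correct. The paper states this proposition without proof (it is a compilation of cited results), but your argument is exactly the one the paper's own machinery supplies: $\D(K_n)=J_n-I_n$ is the diameter-one instance of the reduction to the adjacency/all-ones matrix discussed in Section 3.2, and the passage to $\DQ$, $\DL$, and $\nDL$ via transmission regularity with $t=n-1$ is precisely equation \eqref{eq:tr-equiv}. The only cosmetic caveat is the $n=1$ case, where $T(K_1)=[0]$ is singular so $\nDL(K_1)$ is not literally defined by the formula $\sqrt{T}^{-1}\DL\sqrt{T}^{-1}$; this is a degeneracy of the statement itself rather than of your argument, and for $n\ge 2$ everything you wrote goes through.
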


% Theorems \ref{t:twin} and  \ref{t:quot} can be used to establish the next result (although in some cases the cited work uses other methods).  %We illustrate this by establishing $\ndLspec(K_n-e)$. 

\begin{prop}\label{p:star} % See  {\rm  \cite{SI09}} for $\D(G)$,  {\rm  \cite{AH13}} for $\DL(G)$ and $\DQ(G)$, and {\rm  \cite{R20}} for $\nDL(G)$.  
Let $b\ge a\ge 2$.
\bit
\item  {\rm  \cite{SI09}} $\dspec(K_{a,b})=\{-2^{(n-2)}, a+b- 2  - \sqrt{a^2 - ab + b^2}, a+b- 2  + \sqrt{a^2 - ab + b^2}\}$.
\item{\rm  \cite{AH13}} $\dQspec(K_{a,b})= \left\{\frac{1}{2} \lp 5 a+5 b-8-\sqrt{9 a^2-14 a b+9 b^2}\rp\!,\frac{1}{2} \lp 5 a+5 b-8+\sqrt{9 a^2-14 a b+9 b^2}\rp\!,\right.$ $\left.(2a+b-4)^{(a-1)}, (2b+a-4)^{(b-1)}\right\}$ (eigenvalues are not  in increasing order).
\item{\rm  \cite{AH13}} $\dLspec(K_{a,b})=\{0, n,(2a+b)^{(a-1)}, (2b+a)^{(b-1)}\}$.
\item {\rm  \cite{R20}} $\ndLspec(K_{a,b})=\left\{0,\, \frac{2 \left(a^2+a (b-1)+(b-1) b\right)}{(2 a+b-2) (a+2 b-2)}, \, \lp \frac{2b+a}{2b+a-2}\rp^{(b-1)},\ \lp \frac{2a+b}{2a+b-2}\rp^{(a-1)} \right\}$.
\eit
\end{prop}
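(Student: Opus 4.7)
My plan is to exploit the fact that the vertices in each part of $K_{a,b}$ form a set of independent twins, then combine Theorem \ref{t:twin} with the quotient matrix technique of Theorem \ref{p:twin-quot}. Let $A$ denote the part of size $a$ and $B$ the part of size $b$, so the partition $X=(A,B)$ has every pair of vertices within a part being independent twins. Note that every vertex of $A$ has transmission $t_A = 2(a-1)+b = 2a+b-2$ and every vertex of $B$ has transmission $t_B = a+2b-2$, since $\operatorname{diam}(K_{a,b})=2$ and vertices in the same part are at distance $2$ while vertices in different parts are at distance $1$.

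First I would apply Theorem \ref{t:twin}\eqref{twin-D}--(4) to read off the ``twin eigenvalues'' with their multiplicities: for $\D$, eigenvalue $-2$ with multiplicity $(a-1)+(b-1)=n-2$; for $\DQ$, eigenvalues $t_A-2 = 2a+b-4$ with multiplicity $a-1$ and $t_B-2=a+2b-4$ with multiplicity $b-1$; for $\DL$, eigenvalues $t_A+2=2a+b$ and $t_B+2=a+2b$ with the corresponding multiplicities; for $\nDL$, eigenvalues $\frac{2a+b}{2a+b-2}$ and $\frac{a+2b}{a+2b-2}$ likewise. These account for $n-2$ of the $n$ eigenvalues.

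Next, since $X$ is equitable for each of the four matrices, Theorem \ref{p:twin-quot} says the remaining two eigenvalues are precisely the eigenvalues of the $2\times 2$ quotient matrix $B_{\D^*}$. Computing row sums block-by-block yields
\[
B_\D=\mtx{2(a-1) & b \\ a & 2(b-1)}, \quad
B_{\DQ}=\mtx{4a+b-4 & b \\ a & a+4b-4},
\]
\[
B_{\DL}=\mtx{b & -b \\ -a & a}, \quad
B_{\nDL}=\mtx{b/t_A & -b/\sqrt{t_A t_B} \\ -a/\sqrt{t_A t_B} & a/t_B}.
\]
For $B_\D$, a direct computation of the characteristic polynomial gives discriminant $4(a^2-ab+b^2)$ and trace $2(a+b-2)$, producing the stated pair $a+b-2\pm\sqrt{a^2-ab+b^2}$. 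For $B_{\DQ}$, the trace is $5a+5b-8$ and expanding the determinant gives discriminant $9a^2-14ab+9b^2$, matching the formula. For $B_{\DL}$, the determinant is zero and the trace is $a+b=n$, yielding eigenvalues $0$ and $n$. For $B_{\nDL}$, the determinant vanishes (an immediate consequence of $\DL$ being singular with the all-ones vector in its kernel scaled appropriately by $\sqrt{T}$), and the trace simplifies to $\frac{bt_B+at_A}{t_A t_B}=\frac{2(a^2+a(b-1)+(b-1)b)}{(2a+b-2)(a+2b-2)}$.

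There is no serious obstacle here; the only mild care needed is the algebraic simplification of the quotient-matrix characteristic polynomials, particularly for $\DQ$ where the discriminant must be simplified to the form $9a^2-14ab+9b^2$, and the verification for $\nDL$ that $bt_B+at_A$ expands to $2(a^2+a(b-1)+(b-1)b)$. Both are routine but merit being written out to confirm agreement with the stated spectra.
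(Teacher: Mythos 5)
Your proposal is correct, and every computation checks out: the twin eigenvalues and their multiplicities agree with Theorem \ref{t:twin}, the four quotient matrices are right, and the discriminants ($4(a^2-ab+b^2)$ for $\D$, $9a^2-14ab+9b^2$ for $\DQ$) and the trace identity $bt_B+at_A=2(a^2+a(b-1)+(b-1)b)$ all verify. Note that the paper itself gives no proof of Proposition \ref{p:star} — it simply cites \cite{SI09}, \cite{AH13}, and \cite{R20} — so there is no in-paper argument to compare against; however, your method is precisely the twin-plus-quotient technique the paper develops in Section \ref{ss:quot} and illustrates on $S_n^+$ in Proposition \ref{p:Sn+e}, so your write-up amounts to a self-contained derivation of the cited results in the paper's own preferred framework. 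The one place you could tighten the exposition is the claim that $\det B_{\nDL}=0$ follows from singularity of $\DL$: the cleaner justification is either the direct computation $\frac{ab}{t_At_B}-\frac{ab}{t_At_B}=0$, or the observation that $0\in\spec(\nDL)$ always and the eigenvector $\sqrt{T}\dsone$ is constant on each part, hence descends to the quotient by Lemma \ref{t:quot}\eqref{l:quot-3}.
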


\iffalse %^^^^^^^^^^^^^^^^ 
\begin{thm}\label{t:Kn-e} For $n\ge 4$,
\bit
\item{\rm  \cite{AH13}} $\dspec(K_n-e)=\left\{-2,(-1)^{(n-3)},\frac{n-1-\sqrt{n^2-2 n+9}}2,\frac{n-1+\sqrt{n^2-2 n+9}}2\right\}$ %=\{}, n-1\}$.
\item{\rm  \cite{AH13}} $\dLspec(K_n-e) =\{n+2,n^{(n-2),0}\}$.
\item{\rm  \cite{AH13}} $\dQspec(K_n-e)=\{(n-2)^{(n-2)},\frac{3n-2-\sqrt{n^2-4 n+20}}2,\frac{3n-2+\sqrt{n^2-4 n+20}}2 \}$.
\item\label{ndL-Kn-e}{\rm  \cite{R20}} $\ndLspec(K_n-e)=\{\frac {n+2} {n}, \lp\frac n{n-1}\rp^{(n-3)}, 0, \frac{n^2-n+2}{(n-1) n}\}$.
\eit
\end{thm}
\fi %^^^^^^^^^^^^^^^^
Since $C_n$ is transmission regular, the distance signless Laplacian spectrum, the distance Laplacian spectrum, and the normalized distance Laplacian spectrum can be readily determined from the distance spectrum  (in fact,   $\D(C_n)$ is a polynomial in $\A(C_n)$, so its spectrum can be determined from the adjacency spectrum), but the formulas depend on the parity of $n$. The distance spectrum (attributed to \cite{FCH01}), the distance signless Laplacian spectrum, and the distance Laplacian spectrum of the cycle are presented in \cite{AH13}, and the normalized distance Laplacian spectrum of the cycle appears in \cite{R20}.  Here we list the distance spectrum (the transmission is the first eigenvalue listed and the eigenvalues  are not  listed in increasing order).
\[\dspec(C_n)= \begin{cases}
\{p^2, 0^{(p-1)}\}\cup\{-\csc^2\lp\frac{\pi(2j-1)}{2p}\rp:j=1,\dots, p\} & \mbox{for } n=2p\ge 4 \\
\{p^2+p\}\cup\{-\frac 1 4\sec^2\lp\frac{\pi j}{2p+1}\rp,-\frac 1 4\csc^2\lp\frac{\pi(2j-1)}{2(2p+1)}\rp:j=1,\dots, p\} & \mbox{for }n=2+1\ge 5
\end{cases}. \]

\iffalse %^^^^^^^^^^^^^^^^ 
\begin{thm}  For $n=2p\ge 4$:
\bit
\item{\rm  \cite{FCH01}}  $\dspec(C_n)=\{p^2, 0^{(p-1)}\}\cup\{-\csc^2\lp\frac{\pi(2j-1)}{2p}\rp:j=1,\dots, p\}$.
\item {\rm  \cite{AH13}} $\dLspec(C_n)=\{0, (p^2)^{(p-1)}\}\cup\{p^2+\csc^2\lp\frac{\pi(2j-1)}{2p}\rp:j=1,\dots, p\}$.
\item {\rm  \cite{AH13}} $\dQspec(C_n)=\{2p^2, (p^2)^{(p-1)}\}\cup\{p^2-\csc^2\lp\frac{\pi(2j-1)}{2p}\rp:j=1,\dots, p\}$.
\item{\rm \cite{R20}} $\ndLspec(C_n)=\{0, 1^{(p-1)}\}\cup\{1+\frac 1{p^2}\csc^2\lp\frac{\pi(2j-1)}{2p}\rp:j=1,\dots, p\}$.
\eit
For $n=2+1\ge 5$,
\bit
\item{\rm  \cite{FCH01}}  $\dspec(C_n)=\{p^2+p\}\cup\{-\frac 1 4\sec^2\lp\frac{\pi j}{2p+1}\rp,-\frac 1 4\csc^2\lp\frac{\pi(2j-1)}{2(2p+1)}\rp:j=1,\dots, p\}$.
\item {\rm  \cite{AH13}} $\dLspec(C_n)=\{0\}\cup\{p^2+p+\frac 1 4\sec^2\lp\frac{\pi j}{2p+1}\rp,p^2+p+\frac 1 4\csc^2\lp\frac{\pi(2j-1)}{2(2p+1)}\rp:j=1,\dots, p\}$.
\item {\rm  \cite{AH13}} $\dQspec(C_n)=\{2(p^2+p)\}\cup\{p^2+p-\frac 1 4\sec^2\lp\frac{\pi j}{2p+1}\rp,p^2+p-\frac 1 4\csc^2\lp\frac{\pi(2j-1)}{2(2p+1)}\rp:j=1,\dots, p\}$.
\item{\rm \cite{R20}} $\ndLspec(C_n)=\{0\}\cup\{1+\frac 1 {4(p^2+p)}\sec^2\lp\frac{\pi j}{2p+1}\rp,1+\frac 1 {4(p^2+p)}\csc^2\lp\frac{\pi(2j-1)}{2(2p+1)}\rp:j=1,\dots, p\}$.
\eit
\end{thm}
\fi %^^^^^^^^^^^^^^^^ 

For $r\ge 2$ and $d\ge 1$, the Hamming graph $H(d,r)$ has vertex set consisting of all $d$-tuples of elements taken from $\{0,\dots,r-1\}$, with two vertices adjacent if and only if they differ in exactly one coordinate. Note that  
$H(d,r)$ is isomorphic to $K_r\cp \cdots\cp K_r$ with $d$ copies of $K_r$; $H(d,2)$ is also called the $d$th hypercube and denoted by $Q_d$. 
% Theorem \ref{tr-dspec-cprod} was established in  \cite{I2009} and is applied there to show that  the distance spectrum of the Hamming graph $H(d,n)$ is  \beq{\dspec(H(d,r))}=\left\{ (-r^{d-1})^{(d(r-1))}, 0^{(r^{d}-d(r-1)-1)}, dr^{d-1}(r-1)\right\}.\label{HamDspec}\eeq   
The spectra of the distance signless Laplacian, distance Laplacian, and normalized distance Laplacian matrices can be computed from equation \eqref{eq:tr-equiv} and the distance spectrum is established in  \cite{I2009} (see Section \ref{s:prod}).

\begin{prop}\label{t:Ham} For $d\ge 1,r\ge 2$,
\bit
\item{\rm  \cite{I2009}} $\dspec(H(d,r))=\left\{ (-r^{d-1})^{(d(r-1))}, 0^{(r^{d}-d(r-1)-1)}, dr^{d-1}(r-1)\right\}$.
\item $\dQspec(H(d,r))=\left\{ (dr^{d-1}(r-1)-r^{d-1})^{(d(r-1))}, (dr^{d-1}(r-1))^{(r^{d}-d(r-1)-1)}, 2dr^{d-1}(r-1)\right\} $.
\item $\dLspec(H(d,r)) =\left\{0,(dr^{d-1}(r-1))^{(r^{d}-d(r-1)-1)}, (dr^{d-1}(r-1)+r^{d-1})^{(d(r-1))}\right\}$.
\item $\ndLspec(H(d,r))=\left\{0,1^{(r^{d}-d(r-1)-1)}, \lp\frac{dr^{d-1}(r-1)+r^{d-1}}{dr^{d-1}(r-1)}\rp^{(d(r-1))}\right\}$.
\eit
\end{prop}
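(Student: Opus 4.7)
The plan is to leverage the already-stated distance spectrum together with the transmission regularity of $H(d,r)$, so that equation \eqref{eq:tr-equiv} immediately converts $\dspec$ into the other three spectra. First I would observe that $H(d,r)$ is vertex-transitive (the group $S_r^d \rtimes S_d$ acts by automorphisms), and hence transmission regular. The common transmission $t$ is the Perron eigenvalue of $\D(H(d,r))$, which the cited distance spectrum identifies as $dr^{d-1}(r-1)$; alternatively, since exactly $\binom{d}{k}(r-1)^k$ vertices lie at distance $k$ from any fixed vertex, one computes
\[t = \sum_{k=0}^d k\binom{d}{k}(r-1)^k = d(r-1)\,r^{d-1}\]
by differentiating $(1+x)^d$ and evaluating at $x = r-1$, confirming the value.

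With $t = dr^{d-1}(r-1)$ in hand, the three remaining spectra follow from $\dspec(H(d,r)) = \{(-r^{d-1})^{(d(r-1))},\, 0^{(r^d - d(r-1)-1)},\, t\}$ by applying the formulas $\dqev_i = t + \dev_i$, $\dlev_i = t - \dev_{n+1-i}$, and $\ndlev_i = 1 - \dev_{n+1-i}/t$ from \eqref{eq:tr-equiv}, with multiplicities preserved. Concretely, the distance eigenvalue $-r^{d-1}$ contributes $t - r^{d-1}$ to $\dQspec$, $t + r^{d-1}$ to $\dLspec$, and $1 + r^{d-1}/t = (t+r^{d-1})/t$ to $\ndLspec$; the eigenvalue $0$ contributes $t$, $t$, and $1$, respectively; and the Perron eigenvalue $t$ contributes $2t$, $0$, and $0$. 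A brief simplification $(t + r^{d-1})/t = (d(r-1)+1)/(d(r-1))$ matches the $\nDL$ entry as displayed.

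There is no real obstacle: the only substantive step is identifying $t$, which is already visible from the given $\dspec$, and the rest is a direct application of \eqref{eq:tr-equiv}. If desired, one could instead derive $\dspec(H(d,r))$ itself from Theorem \ref{tr-dspec-cprod} by induction on $d$ (using $H(d,r) = K_r \cp H(d-1,r)$ with $\dspec(K_r) = \{(-1)^{(r-1)}, r-1\}$), making the whole result self-contained within this survey.
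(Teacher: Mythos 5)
Your proposal is correct and follows exactly the route the paper takes: it cites the distance spectrum from \cite{I2009} (obtainable via Theorem \ref{tr-dspec-cprod} applied to $H(d,r)=K_r\cp\cdots\cp K_r$) and then converts to the other three spectra using transmission regularity and equation \eqref{eq:tr-equiv}. Your explicit verification of $t=dr^{d-1}(r-1)$ and the eigenvalue-by-eigenvalue conversion simply fill in details the paper leaves implicit.
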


The spectra of the %distance, distance signless Laplacian, distance Laplacian, and normalized distance Laplacian 
matrices of $K_n-e$ (the graph obtained from $K_n$ by deleting an edge) can be found in  \cite{AH13} (distance, distance Laplacian and distance signless Laplacian), and \cite{R20} (normalized distance Laplacian). 

The spectra of the distance, distance signless Laplacian, and distance Laplacian matrices of $S_n^+=K_{1,n-1}+e$ (the graph obtained from $K_{1,n-1}$ by adding an edge) can be found in  \cite{AH13}; we illustrate the use of twins and quotient matrices to establish the normalized distance Laplacian spectrum of $S_n^+$.

\begin{prop}\label{p:Sn+e} For $n\ge 5$, let  $ \mu_2, \mu_3$ be the roots of \[p_B(x)=x^2 + \lp\frac{-8 n^2+20 n-7}{2 (n-2) (2 n-3)}\rp x + \lp \frac{2 n^2-n}{(n-1) (2 n-3)}\rp .\]  Then
$\ndLspec(S_n^+)=\lsb \frac {2n-3} {2n-4}, \lp\frac {2n-1}{2n-3}\rp^{(n-4)}\rsb\cup\{0,\mu_2,\mu_3\}$  (eigenvalues are not  in increasing order).
\end{prop}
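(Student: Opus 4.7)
The plan is to apply Theorem \ref{p:twin-quot} to $S_n^+$ using the partition induced by its twin structure. Label the center of the original star by $v_0$, take the added edge to be $v_1v_2$, and let $v_3,\ldots,v_{n-1}$ denote the remaining leaves. Then $v_1,v_2$ are adjacent twins and $v_3,\ldots,v_{n-1}$ form a class of $n-3$ pairwise independent twins. A direct count of distances gives $t(v_0)=n-1$, $t(v_1)=t(v_2)=2n-4$, and $t(v_i)=1+2(n-2)=2n-3$ for $i\ge 3$.

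By item (4) of Theorem \ref{t:twin}, the adjacent twin pair contributes the eigenvalue $\frac{t+1}{t}=\frac{2n-3}{2n-4}$ with multiplicity at least $1$, and the $n-3$ independent twins contribute the eigenvalue $\frac{t+2}{t}=\frac{2n-1}{2n-3}$ with multiplicity at least $n-4$. These account for $n-3$ of the $n$ eigenvalues. By Theorem \ref{p:twin-quot}, the remaining three eigenvalues comprise $\spec(B)$, where $B$ is the quotient matrix of $\nDL(S_n^+)$ for the equitable partition $X=(\{v_0\},\{v_1,v_2\},\{v_3,\dots,v_{n-1}\})$.

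Next I compute the entries of $B$ directly from $(\nDL)_{ij}=-d(v_i,v_j)/\sqrt{t(v_i)t(v_j)}$ for $i\ne j$ and $(\nDL)_{ii}=1$, averaging the appropriate row sums. The identity $\nDL(G)\cdot T(G)^{1/2}\dsone=\bzero$ together with the fact that $T(G)^{1/2}\dsone$ is constant on each block of $X$ gives, via Lemma \ref{t:quot}(3), that $0\in\spec(B)$ with eigenvector $(\sqrt{n-1},\sqrt{2n-4},\sqrt{2n-3})^T$. Hence the characteristic polynomial of $B$ factors as $x\cdot p_B(x)$, and it remains to identify the coefficient $\alpha=-\tr(B)$ and the constant term $\beta$ (which equals $\mu_2\mu_3$, or equivalently the sum of the three $2\x 2$ principal minors of $B$) with the rational functions of $n$ stated in the proposition.

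The trace calculation is routine: $\tr(B)=3-\tfrac{1}{2(n-2)}-\tfrac{2(n-4)}{2n-3}$, which combines over a common denominator to give $\tfrac{8n^2-20n+7}{2(n-2)(2n-3)}$, so $\alpha=\tfrac{-8n^2+20n-7}{2(n-2)(2n-3)}$. The main obstacle is the constant term $\beta$: each of the three $2\x 2$ principal minors of $B$ involves nested radicals such as $1/\sqrt{(n-1)(2n-4)(2n-3)}$, but these square away and each minor turns out to carry a factor of $2n-1$. Summing over the common denominator $(n-1)(2n-4)(2n-3)$ reduces the numerator to $2n(n-2)(2n-1)$; cancelling the factor $2(n-2)$ hidden in $2n-4$ yields $\beta=\tfrac{n(2n-1)}{(n-1)(2n-3)}=\tfrac{2n^2-n}{(n-1)(2n-3)}$, matching the stated polynomial and completing the proof.
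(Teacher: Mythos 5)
Your proposal is correct and follows essentially the same route as the paper: identify the adjacent-twin pair and the class of independent twins, read off their eigenvalues from Theorem \ref{t:twin}, and obtain the remaining three eigenvalues as the spectrum of the quotient matrix $B$ via Theorem \ref{p:twin-quot}. The only difference is organizational — the paper states the full cubic $p_B$ directly, whereas you extract the zero eigenvalue structurally from $T^{1/2}\dsone$ and recover the quadratic factor from $\tr(B)$ and the sum of $2\times 2$ principal minors; both computations check out.
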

\bpf  Without loss of generality, let $\deg(v_1)=n-1$ and let $e=\{v_2,v_3\}$. Then $v_2$ and $v_3$ are adjacent twins with transmission $t(v_2)=2n-4$ and $v_4,\dots,v_n$  are independent twins with transmission $t(v_4)=2n-3$.   Thus $\frac {2n-3} {2n-4}\in\ndLspec(S_n^+) $ and $\lp\frac {2n-1}{2n-3}\rp^{(n-4)}\in\ndLspec(S_n^+) $ by Theorem \ref{t:twin}. 
The quotient matrix of $\nDL(S_n^+)$ is \[B=\mtx{ 1 & -\frac{2}{\sqrt{(n-1) (2 n-4)}} & \frac{3-n}{\sqrt{(n-1) (2 n-3)}} \\
 -\frac{1}{\sqrt{(n-1) (2 n-4)}} & 1-\frac{1}{2 n-4} & -\frac{2 (n-3)}{\sqrt{(2 n-4) (2 n-3)}} \\
 -\frac{1}{\sqrt{(n-1) (2 n-3)}} & -\frac{4}{\sqrt{(2 n-4) (2 n-3)}} & 1-\frac{2 (n-4)}{2 n-3}}\!.\]
and the characteristic polynomial of $B$ is 
\[p_B(X)=x^3 + \lp\frac{-8 n^2+20 n-7}{2 (n-2) (2 n-3)}\rp x^2 + \lp \frac{2 n^2-n}{(n-1) (2 n-3)}\rp x,\]
so $\spec(B)=\{0,\mu_2,\mu_3\}$.
Thus $\ndLspec(S_n^+)=\lsb \frac {2n-3} {2n-4}, \lp\frac {2n-1}{2n-3}\rp^{(n-4)}\rsb\cup\{\mu_1=0,\mu_2,\mu_3\}$ by Theorem \ref{p:twin-quot}. 
\epf

%\begin{rem}\label{r:twin-quot} It can be challenging to compute the eigenvalues of the quotient matrix for $\nDL$ in Proposition \ref{p:Sn+e}.  Recall that $\nDL(G)$ is similar to $T(G)^{-1}\DL(G)$ \cite{R20}, and $B$ is similar to the quotient matrix of $T(G)^{-1}\DL(G)$.  This was the strategy used to find the characteristic polynomial in {p:Sn+e}\end{rem}

%%%%%%%%%%%%%%%%%%%%%%%%%%%
\section{Spectral radii}\label{s:rho} %Carolyn
% {\red Includes distance matrix, distance Laplacian, distance signless Laplacian, normalized distance Laplacian.\\
% There is a lot of literature on this but I know very little.  However, Derek did a lit search for his GRWC 2017 problem.}
% \cite{H17}

The distance,  distance signless Laplacian, and distance Laplacian matrices  have some similar properties for the spectral radius that the normalized distance Laplacian does not share. For example, the spectral radius of $\D(G)$,  $\DQ(G)$, and $\DL(G)$ is edge addition monotonically decreasing, whereas $\rho\left(\nDL(G)\right)$ is not edge addition monotonically increasing (see the graph $KPK(n_1,n_2,n_3)$ defined below).

\begin{thm}\label{EdgeRemoval}
Let $G$ be a connected graph on $n$ vertices with $u,v\in V(\dig)$ and $uv\not\in E(\dig)$.  Then
\begin{itemize}
    \item {\rm\cite{PR90}} $\dev_i(G)\geq \dev_i(G+(u,v))$ for all $1\leq i\leq n$. % and specifically, $\rho\left(\D(G)\right)\geq \rho\left(\D(G+uv)\right)$,
    \item {\rm\cite{AH13}} $\dqev_i(G')\geq \dqev_i(G+(u,v))$ for all $1\leq i\leq n$. % and specifically, $\rho\left(\DQ(G)\right)\geq \rho\left(\DQ(G+uv)\right)$.
    \item {\rm\cite{AH13}} $\dlev_i(G')\geq \dlev_i(G)+(u,v)$ for all $1\leq i\leq n$. % and specifically, $\rho\left(\DL(G)\right)\geq \rho\left(\DL(G+uv)\right)$.
\end{itemize} 
Furthermore, $\rho\left(\D(G)\right)> \rho\left(\D(G+uv)\right)$ and $\rho\left(\DQ(G)\right)> \rho\left(\DQ(G+uv)\right)$. \end{thm}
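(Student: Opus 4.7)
The plan is to reduce both strict inequalities to the Perron--Frobenius strict monotonicity principle for the spectral radius of irreducible nonnegative matrices (see Section \ref{ss:PF} and \cite[Chapter 8]{HJ}): if $0 \le B \le A$ entrywise, $A$ is irreducible, and $A \ne B$, then $\rho(B) < \rho(A)$.

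First I would compare the matrices entrywise. Since adding an edge can only shorten or preserve shortest-path distances, $d_{G+uv}(x,y) \le d_G(x,y)$ for every pair $x,y \in V(G)$, giving $\D(G+uv) \le \D(G)$ entrywise. Because $uv \notin E(G)$ we have $d_G(u,v) \ge 2$ while $d_{G+uv}(u,v) = 1$, so the comparison is strict at the $(u,v)$ and $(v,u)$ positions; in particular $\D(G+uv) \ne \D(G)$. The same componentwise distance bounds force $t_{G+uv}(v_i) \le t_G(v_i)$ for each $i$, and thus $T(G+uv) \le T(G)$, so $\DQ(G+uv) = T(G+uv)+\D(G+uv) \le T(G)+\D(G) = \DQ(G)$ entrywise, again with strict inequality at the $(u,v)$ entry.

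Second, the irreducibility hypothesis holds for both of the larger matrices: since $G$ is connected on $n \ge 2$ vertices, every off-diagonal entry of $\D(G)$ is a positive integer, so $\D(G)$ is irreducible; and $\DQ(G)$ is strictly positive (its diagonal holds the positive transmissions $t_G(v_i)$ and its off-diagonal entries equal $d_G(v_i,v_j) > 0$), hence irreducible as well. Applying the strict Perron--Frobenius monotonicity once with $(A,B) = (\D(G),\D(G+uv))$ and once with $(A,B) = (\DQ(G),\DQ(G+uv))$ then delivers the two desired strict inequalities.

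The main obstacle is the strict form of the monotonicity itself; the weak inequality $\rho(B) \le \rho(A)$ follows from standard comparison principles for nonnegative matrices and is essentially what the componentwise eigenvalue bounds in the first half of the theorem already give for the largest eigenvalue. Strictness is obtained by taking the positive Perron eigenvector $x > 0$ of $A$ (which exists because $A$ is irreducible) and observing that $(A-B)x$ is a nonzero nonnegative vector, since $A-B$ has at least one positive entry and no coordinate of $x$ vanishes; any Perron eigenvector $v$ of $B$ corresponding to $\rho(B) = \rho(A)$ would satisfy $Av \ge Bv = \rho(A)v$, and a standard argument using irreducibility of $A$ forces $v > 0$, which together with $A \ne B$ yields a contradiction. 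No further graph-theoretic input beyond the entrywise comparisons from the first step is required.
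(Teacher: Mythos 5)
Your argument is correct, but it routes the strictness through a different mechanism than the paper does. The paper also decomposes $\D^*(G)=\D^*(G+uv)+M$ with $M\gneq 0$, but then exploits symmetry: it takes the positive Perron eigenvector $\bx$ of the \emph{smaller} matrix $\D^*(G+uv)$ and evaluates the Rayleigh quotient of the larger matrix at $\bx$, getting $\rho(\D^*(G))\ge \rho(\D^*(G+uv))+\bx^TM\bx/\bx^T\bx$, where the added term is strictly positive because $\bx>0$ and $M\gneq 0$. That is a one-line, self-contained computation, but it depends on the variational characterization $\rho=\max_{\bx\ne 0}\bx^TM\bx/\bx^T\bx$, hence on the matrices being real symmetric. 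You instead invoke the general strict monotonicity theorem for irreducible nonnegative matrices ($0\le B\le A$, $A$ irreducible, $A\ne B$ implies $\rho(B)<\rho(A)$), anchored at the Perron vector of the \emph{larger} matrix, and your sketch of why equality of spectral radii forces $A=B$ is the standard Wielandt-type argument. What your route buys is independence from symmetry, so the identical proof covers the digraph analogue (Proposition \ref{p:edge-mon-dig}), which the paper only cites; what the paper's route buys is brevity and the avoidance of any appeal to the deeper uniqueness part of Perron--Frobenius theory. Your entrywise comparisons (strict decrease at the $(u,v)$ entry of $\D$, and the induced decrease of the transmissions for $\DQ$) match what the paper needs for $M\gneq 0$, and your irreducibility checks are correct, so there is no gap.
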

\bpf Observe that the cited statements imply $\rho\left(\D^*(G)\right)\ge \rho\left(\D^*(G+uv)\right)$ for $\D^*$ one of $\D,\DQ$, or $\DL$.  Here we prove the last statement. % using the method used to establish analogous results for the spectral radius of  the distance and  distance signless Laplacian matrices of digraphs (see Proposition \ref{p:edge-mon-dig}).  
Let $\D^*$ be one of $\D$ or $\DQ$.  Then  $\D^*(G+uv)\ge 0$ and has a positive eigenvector $\bx$ for $\rho(\D^*)$.  Since $\D^*(G)=\D^*(G+uv)+M$ with $M\gneq 0$, 
\[\rho(\D^*(G))\ge  \frac{\bx^T(\D^*(G+uv)+M)\bx}{\bx^T\bx}=\frac{\bx^T\D^*(G+uv)\bx}{\bx^T\bx}+\frac{\bx^TM\bx}{\bx^T\bx}>\frac{\bx^T\D^*(G+uv)\bx}{\bx^T\bx}.\qedhere\] \epf

Although $\D(G)$ and  $\DQ(G)$ are edge addition monotonically {\em strictly} decreasing, $\DL(G)$ is not, as seen in the next example.

\begin{ex}\label{DL-edgeadd-notstrict} Recall that $S_n^+$ is obtained by adding an edge $e$ to the star $K_{1,n-1}$. As shown in \cite{AH13}, 
$\dLspec(K_{1,n-1})=\{0, n,(2n-1)^{(n-2)}\}$ and $\dLspec(S_n^+)=\{0, n,(2n-3),(2n-1)^{(n-3)}\}$.  Thus for $n\ge 4$, $\rho\left(\DL(K_{1,n-1})\right)= \rho\left(\DL(K_{1,n-1}+e)\right)$.
\end{ex}

Much of the study of spectral radii for the matrices $\D(G),\DQ(G), \DL(G)$, and $\nDL(G)$ has been focused on finding extremal values among connected graphs on $n$ vertices and families of graphs which achieve these values. 
Theorem \ref{EdgeRemoval} has as an immediate consequence that the graph with maximum spectral radius for $\D(G)$, $\DQ(G)$, and $\DL(G)$ must be a tree. In fact, it is known that this maximum is achieved uniquely by the path graph $P_n$.

\begin{thm}
For all connected graphs $G$ on $n$ vertices, the graph $P_n$ is the unique graph which maximizes
\[  \rho\left(\D(G)\right) {\rm\cite{PR90}}; \  \rho\left(\DQ(G)\right) \mbox{and }  \rho\left(\DL(G)\right) {\rm\cite{DN15}}.\]
\end{thm}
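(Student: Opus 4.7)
The plan is a two-step argument: reduce to trees, then show $P_n$ uniquely maximizes among trees. For $\rho(\D)$ and $\rho(\DQ)$ the reduction is immediate from Theorem \ref{EdgeRemoval}: if $G$ is not a tree, pick any edge $e$ on a cycle; then $G-e$ is connected and strict edge monotonicity gives $\rho(\D(G-e)) > \rho(\D(G))$ and $\rho(\DQ(G-e)) > \rho(\DQ(G))$, so any maximizer must be a tree. For $\rho(\DL)$, Theorem \ref{EdgeRemoval} only provides weak monotonicity, and Example \ref{DL-edgeadd-notstrict} shows it can be tight, so the non-tree case for $\DL$ must be handled separately at the end.

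For the tree step, I would employ a standard branch-grafting transformation. Let $T \ne P_n$ be a tree on $n$ vertices and fix a longest path $P = u_0 u_1 \cdots u_\ell$ of $T$; since $T$ is not a path, some internal vertex $u_i$ of $P$ has a neighbor $w \notin V(P)$. Let $S$ be the subtree hanging off $u_i$ through $w$, and form $T'$ by removing the edge $u_i w$ and reattaching $S$ at the path endpoint $u_0$ via the edge $u_0 w$. I would then prove $\rho(\D^*(T')) > \rho(\D^*(T))$ for each $\D^* \in \{\D, \DQ, \DL\}$ by a Perron-vector argument: take a positive eigenvector $\bx$ for $\rho(\D^*(T))$ (positivity following from Perron–Frobenius on the irreducible nonnegative $\D$ and the positive $\DQ$; for $\DL$ the top eigenvector has the appropriate sign structure after subtracting the all-ones kernel vector), and compare Rayleigh quotients via $\rho(\D^*(T')) \ge \bx^T \D^*(T')\bx / \bx^T \bx$. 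The pairwise bookkeeping shows that distances within $S$ are preserved, distances from $S$ to the far end of $P$ strictly increase, and in the $\DQ$ and $\DL$ cases the transmissions of the vertices of $S$ and of the $u_\ell$-side of $P$ also increase. Iterating this grafting step turns any tree into $P_n$ along a strictly increasing chain of spectral radii, establishing uniqueness of $P_n$ among trees.

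The main obstacle is controlling the Rayleigh-quotient increment strictly positively, particularly for $\DL = T - \D$, where the off-diagonal $-\D$ contribution and the diagonal transmission contribution push $\bx^T \DL(T')\bx$ in opposite directions: one must verify that the transmission increase on vertices of $S$ and on the far end of $P$ dominates the algebraic cost of shrinking the $-\D$ entries inside $S$. This is the delicate part and requires using the specific large-on-endpoints structure of the Perron-type eigenvector of $\DL(T)$; it is the crux of the argument of \cite{DN15}. Finally, for the $\DL$ non-tree case, take any spanning tree $T$ of $G$; weak monotonicity gives $\rho(\DL(G)) \le \rho(\DL(T))$, and if $T \ne P_n$ the tree step yields $\rho(\DL(T)) < \rho(\DL(P_n))$. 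If $T = P_n$ one argues directly by a Rayleigh-quotient comparison that, since the top eigenvector of $\DL(P_n)$ is strictly non-constant, adding any chord to $P_n$ strictly reduces the Rayleigh quotient of the resulting matrix, giving $\rho(\DL(G)) < \rho(\DL(P_n))$ and completing the proof.
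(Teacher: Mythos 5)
First, a point of reference: the survey does not prove this theorem at all --- it is stated with citations to \cite{PR90} (for $\D$) and \cite{DN15} (for $\DQ$ and $\DL$), so there is no in-paper argument to compare against. Your reduction to trees for $\D$ and $\DQ$ via the strict edge monotonicity of Theorem \ref{EdgeRemoval} is fine, and your observation that $\DL$ needs separate treatment because its monotonicity is only weak (Example \ref{DL-edgeadd-notstrict}) is exactly the right caution.

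The genuine gap is in the grafting step. When you detach the subtree $S$ from the internal vertex $u_i$ and reattach it at the endpoint $u_0$, the distance matrix does \emph{not} change monotonically: for a vertex $s\in S$ at distance $a$ from $u_i$, the old distance to $u_j$ is $a+|i-j|$ and the new one is $a+j$, so distances from $S$ to every $u_j$ with $j<i/2$ (and to anything hanging off those vertices) strictly \emph{decrease}. Hence $\D(T')-\D(T)$ has negative entries, the inequality $\bx^T\D(T')\bx\ge\bx^T\D(T)\bx$ is not automatic, and your claimed ``pairwise bookkeeping'' (distances within $S$ preserved, distances to the far end increased) omits precisely the terms that push the Rayleigh quotient the wrong way. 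To salvage this one must weigh the increases against the decreases using quantitative information about the Perron vector (e.g.\ that its entries at the far end of a longest path dominate those near $u_0$), or replace the wholesale subtree relocation by a more controlled operation such as sliding a single pendant edge between two pendant paths at a common vertex, where the sign of the increment can be pinned down. Separately, for $\DL$ you explicitly defer the crux (the competition between the diagonal transmission gain and the off-diagonal $-\D$ loss, argued with a sign-changing top eigenvector orthogonal to $\bone$) to \cite{DN15}; as written that part is a statement of the difficulty rather than a proof, and the actual argument of \cite{DN15} proceeds through a different, axiomatic framework for ``graph functions maximized on a path'' rather than through branch grafting. So the skeleton is reasonable, but the two load-bearing steps --- strict increase under grafting and the $\DL$ tree case --- are not established.
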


% The graphs with largest $\nDL$-spectral radius for $n\leq 10$ are found in \cite{R20}.  
 Define $KPK_{n_1,n_2,n_3}$ for $n_1,n_3\geq 1$, $n_2\geq 2$ to be the graph formed by taking the vertex sum of a vertex in $K_{n_1}$ with one end of the path $P_{n_2}$ and the vertex sum of a vertex in $K_{n_3}$ with the other end of $P_{n_2}$ (see Figure \ref{Fig:KPK}). Note the number of vertices is $n=n_1+n_2+n_3-2$ and $KPK_{1,n,1}=KPK_{2,n-1,1}=KPK_{2,n-2,2}=P_{n}$. It is shown in \cite{R20} that graphs with largest $\nDL$-spectral radius for $n\leq 10$ are of the form $KPK_{n_1,n_2,n_3}$,  and the graph maximizing  $\rho\left(\nDL(G)\right)$ is not a tree for $n\geq 6$. %The graph that achieves maximum spectral radius for $\nDL(G)$
 This shows that   $\rho\left(\nDL(G)\right)$ is not edge addition monotonically decreasing.  The graph that achieves maximum spectral radius for $\nDL(G)$ is not known, but it was conjectured in \cite{R20}. 

\begin{figure}
\begin{center}
\includegraphics[scale=.6]{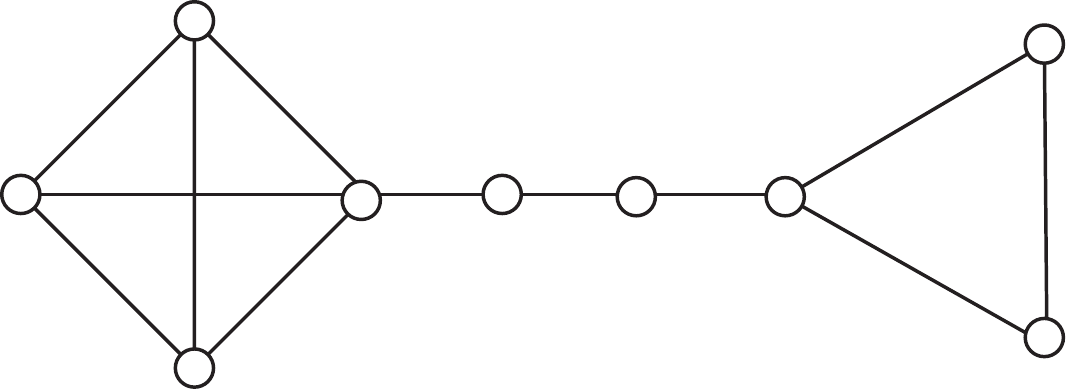}\vspace{-4pt}
\end{center}
\caption{$KPK_{4,4,3}$\label{Fig:KPK}}\vspace{-8pt}
\end{figure}

\begin{conj}{\rm\cite{R20}}
The maximum $\nDL$ spectral radius achieved by a graph on $n$ vertices tends to $2$ as $n\to\infty$ and is achieved by $KPK_{n_1,n_2,n_3}$ for some $n_1+n_2+n_3=n+2$.%\vspace{-8pt}
\end{conj}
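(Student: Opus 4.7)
The plan is to split the conjecture into three tasks: a uniform upper bound $\rho(\nDL(G))\le 2$ for every connected graph, a construction of a sequence of $KPK$ graphs whose spectral radii approach $2$, and the extremality claim that for each fixed $n$ the maximum is attained by a $KPK$ graph.

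For the upper bound, I would write $\nDL(G)=I-N$ with $N:=\sqrt{T(G)}^{-1}\D(G)\sqrt{T(G)}^{-1}$. The matrix $N$ is symmetric with zero diagonal and strictly positive off-diagonal entries, and a direct computation gives $N\lp\sqrt{T(G)}\bone\rp=\sqrt{T(G)}^{-1}\D(G)\bone=\sqrt{T(G)}^{-1}T(G)\bone=\sqrt{T(G)}\bone$, so $\sqrt{T(G)}\bone>\bzero$ is a Perron eigenvector of $N$ with eigenvalue $1$. For $n\ge 3$ one checks that $N^2>0$, so $N$ is primitive; the Perron-Frobenius theory of Section \ref{ss:PF} then yields $|\mu|<1$ for every eigenvalue $\mu\ne 1$ of $N$. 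Consequently the smallest eigenvalue $\lam_1(N)$ lies strictly above $-1$ and $\rho(\nDL(G))=1-\lam_1(N)<2$.

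For the asymptotic approach to $2$, I would exhibit a specific sequence of $KPK$ graphs whose spectral radii tend to $2$. Given $KPK_{n_1,n_2,n_3}$ with $n_1+n_2+n_3=n+2$, Theorem \ref{t:twin} and Theorem \ref{p:twin-quot} reduce the spectrum of $\nDL$ to the known twin eigenvalues together with the spectrum of a quotient matrix of size $n_2+2$. One can then estimate $\lam_1(N)$ through a carefully chosen test vector---for instance, signed $+1$ on the vertices of one clique class and $-1$ on the other, weighted by $\sqrt{T(G)}$---whose Rayleigh quotient in $N$ should approach $-1$ as $n_1,n_2,n_3$ are tuned together. The main work is to find the right scaling (heuristically, $n_1$ and $n_3$ growing with $n$ while $n_2$ stays short, so that the graph mimics a near-bipartite structure through a short bridge) and to verify the Rayleigh-quotient limit rigorously through explicit quotient-matrix entries, which are rational functions of $n_1,n_2,n_3$.

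The hardest step is to prove that for each fixed $n$ the maximum is \emph{attained} by some $KPK$ graph rather than merely approached from within the $KPK$ family. Since $\rho(\nDL)$ is not edge-monotone (unlike $\D,\DL,\DQ$ in Theorem \ref{EdgeRemoval}), one cannot reduce to trees or to any prescribed class by simple edge manipulations. The proposed strategy is to take an extremal $G^*$ with eigenvector $\bx$ realizing $\lam_1(N(G^*))$ and apply a sequence of local modifications---identification of twin vertices, symmetrization of candidate clique regions, and contraction of extraneous path segments---each justified by a Rayleigh-quotient comparison using $\bx$, to strictly increase $\rho(\nDL)$ until $G^*$ takes $KPK$ form. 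The principal obstacle is identifying a family of structural moves that provably increase $\rho(\nDL)$ in the absence of global edge monotonicity; this likely requires a delicate analysis of how $N$ responds to local perturbations, and may need new Perron-type tools beyond those developed for $\D,\DL,\DQ$.
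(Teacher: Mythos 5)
This statement is a \emph{conjecture} attributed to \cite{R20}; the paper records it as open and offers no proof, so there is no argument of the paper's to compare yours against. Your proposal is also not a proof --- it is a plan whose two substantive components are, by your own account, unfinished, and those two components are exactly the content of the conjecture.

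Concretely: your first step (writing $\nDL(G)=I-N$ with $N=\sqrt{T(G)}^{-1}\D(G)\sqrt{T(G)}^{-1}$, observing that $\sqrt{T(G)}\bone$ is a positive eigenvector of $N$ for the eigenvalue $1$, and invoking primitivity to get $\lam_1(N)>-1$, hence $\rho(\nDL(G))<2$ for $n\ge 3$) is correct, but it only reproves Theorem \ref{t:nDL-rho-ub}, which is already known and is the easy direction. The second step --- producing a sequence of $KPK$ graphs with $\rho(\nDL)\to 2$ --- is left at the level of a heuristic: you name a candidate test vector and say ``the main work is to find the right scaling and to verify the Rayleigh-quotient limit rigorously.'' That verification is the theorem; without it nothing is proved, and it is not even clear that your proposed scaling (cliques growing, bridge short) is the right one, since what is needed is that the \emph{normalized distance} structure become asymptotically bipartite-like, a much stronger condition than near-bipartiteness of the adjacency structure. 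The third step (extremality of $KPK$ for each fixed $n$) is the genuinely hard part, and your proposed local-modification scheme has no supporting lemma: as you note, $\rho(\nDL)$ is not edge-monotone, so none of the perturbation tools that work for $\D$, $\DQ$, $\DL$ (Theorem \ref{EdgeRemoval}) are available, and you do not exhibit a single structural move together with a proof that it increases $\rho(\nDL)$. Until those two gaps are filled, the statement remains a conjecture.
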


For the distance,  distance signless Laplacian, and distance Laplacian matrices, the minimum spectral radius value is known to be  achieved only by the complete graph $K_n$.  This is immediate for $\D(G)$ and $\DQ(G)$ from Theorem \ref{EdgeRemoval} as is the fact that $K_n$ achieves the minimum $\rho\left(\DL(G)\right)$.

\begin{thm}
Let $G$ be a connected graph on $n\geq 2$ vertices. Then,
\begin{itemize}
    \item {\rm\cite{PR90}} $\rho\left(\D(G)\right)\geq \rho\left(\D(K_n)\right)= n-1$,
    \item {\rm\cite{AH16}} $\rho\left(\DQ(G)\right)\geq \rho\left(\DQ(K_n)\right)= 2n-2$,
    \item {\rm\cite{AH13}} $\rho\left(\DL(G)\right)\geq \rho\left(\DL(K_n)\right)= n$,
\end{itemize}
and in each case, equality holds if and only if $G=K_n$. 
\end{thm}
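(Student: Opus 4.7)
The plan is to treat the three matrices separately, starting with an explicit computation at $K_n$ and then establishing the minimality and uniqueness statements in each case.

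First I would compute the spectra at $K_n$ directly. Since any two distinct vertices of $K_n$ are at distance one, $\D(K_n)=J_n-I_n$, so $\rho(\D(K_n))=n-1$ with eigenvector $\dsone$. Likewise $\DQ(K_n)=(n-1)I_n+(J_n-I_n)=J_n+(n-2)I_n$, giving $\rho(\DQ(K_n))=2n-2$, and $\DL(K_n)=(n-1)I_n-(J_n-I_n)=nI_n-J_n$, giving $\rho(\DL(K_n))=n$ (as all eigenvectors orthogonal to $\dsone$ yield eigenvalue $n$).

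Next, for $\D$ and $\DQ$ the lower bound with uniqueness is essentially immediate from Theorem \ref{EdgeRemoval}: if $G\ne K_n$, then $G$ is missing at least one edge, and adding edges one at a time produces a sequence of connected graphs terminating at $K_n$, with $\rho(\D)$ and $\rho(\DQ)$ strictly decreasing at each step by the last sentence of Theorem \ref{EdgeRemoval}. Hence $\rho(\D(G))>\rho(\D(K_n))=n-1$ and $\rho(\DQ(G))>\rho(\DQ(K_n))=2n-2$ whenever $G\ne K_n$, and of course equality holds when $G=K_n$.

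The remaining (and main) obstacle is the distance Laplacian: Theorem \ref{EdgeRemoval} gives only non-strict edge monotonicity for $\DL$, and indeed Example \ref{DL-edgeadd-notstrict} shows that $\rho(\DL)$ can be preserved by an edge addition, so one cannot hope to argue uniqueness by iterated edge addition alone. I would instead argue via the trace. Since $\DL(G)$ has eigenvalues $0=\dlev_1<\dlev_2\le\dots\le\dlev_n=\rho(\DL(G))$, we have
\[
\rho(\DL(G))\;\ge\;\frac{1}{n-1}\sum_{i=2}^{n}\dlev_i\;=\;\frac{\tr \DL(G)}{n-1}\;=\;\frac{2W(G)}{n-1}.
\]
Because $d(v_i,v_j)\ge 1$ for every pair of distinct vertices, $W(G)\ge\binom{n}{2}$, with equality exactly when every distance equals $1$, i.e.\ $G=K_n$. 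Combining these, $\rho(\DL(G))\ge\frac{2\binom{n}{2}}{n-1}=n$, and if equality holds then $W(G)=\binom{n}{2}$, forcing $G=K_n$; conversely $\rho(\DL(K_n))=n$ from the first step. This completes the characterization, and the trace-averaging step is the only genuinely new input needed beyond Theorem \ref{EdgeRemoval}.
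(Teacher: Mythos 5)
Your proposal is correct. Note, however, that the paper does not actually prove this theorem: it cites \cite{PR90}, \cite{AH16}, and \cite{AH13} for the three bullets, and in the surrounding text merely observes that the $\D$ and $\DQ$ statements (including uniqueness) are immediate from Theorem \ref{EdgeRemoval}, as is the non-strict bound $\rho(\DL(G))\ge n$; the uniqueness of $K_n$ for $\DL$ is left entirely to the citation. Your treatment of $\D$ and $\DQ$ coincides with the paper's remark (iterated strict edge monotonicity down to $K_n$), and your computations of the spectra of $\D(K_n)$, $\DQ(K_n)$, and $\DL(K_n)$ are right. Where you genuinely add something is the $\DL$ case: you correctly identify that Example \ref{DL-edgeadd-notstrict} blocks the monotonicity route to uniqueness, and your trace-averaging argument
\[
\rho(\DL(G))\;\ge\;\frac{1}{n-1}\sum_{i=2}^{n}\dlev_i\;=\;\frac{2W(G)}{n-1}\;\ge\;\frac{2\binom{n}{2}}{n-1}\;=\;n
\]
is valid: $\dlev_1=0$ because $\DL(G)\dsone=\bzero$ and $\DL(G)$ is positive semidefinite, so the remaining $n-1$ eigenvalues sum to $\tr\DL(G)=2W(G)$, the maximum is at least the average, and equality forces $W(G)=\binom{n}{2}$, i.e.\ every distance equals $1$ and $G=K_n$. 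This buys a short, self-contained, and elementary proof of the $\DL$ uniqueness that the paper only references; it also dovetails nicely with the paper's emphasis on trace identities such as $W(G)=\frac12\tr\DL(G)$. The only cosmetic point worth tightening is to say explicitly that the intermediate graphs in the edge-addition chain remain connected (immediate, since adding edges preserves connectivity), so that Theorem \ref{EdgeRemoval} applies at every step.
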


For the normalized Laplacian, the complete graph still achieves the minimum spectral radius. Uniqueness has not been shown; however, it is known that any graph achieving minimum normalized distance Laplacian spectral radius would be $\nDL$-cospectral to $K_n$.

\begin{thm}{\rm\cite{R20}}\label{minRadNormal}
Let $G$ be a connected graph on $n\geq 2$ vertices. Then,\vspace{-5pt}
\[\rho\left(\nDL(G)\right)\geq \rho\left(\nDL(K_n)\right)= \frac{n}{n-1}.\vspace{-5pt}\] Furthermore, if $\rho\left(\nDL(G)\right)=\frac{n}{n-1}$, then  $\spec_{\nDL}(G)=\left\{0,\frac{n}{n-1}^{(n-1)}\right\}$.
\end{thm}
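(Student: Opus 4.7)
The plan is to use the trace of $\nDL(G)$ together with the fact that $0$ is always an eigenvalue. First, I would observe that the diagonal entries of $\nDL(G)$ are all equal to $1$: the $(i,i)$ entry of $\sqrt{T(G)}^{-1}\DL(G)\sqrt{T(G)}^{-1}$ is $\frac{1}{\sqrt{t(v_i)}}\cdot(t(v_i)-0)\cdot\frac{1}{\sqrt{t(v_i)}}=1$, since the diagonal of $\DL(G)=T(G)-\D(G)$ is simply $t(v_i)$. Hence
\[\tr(\nDL(G))=\sum_{i=1}^{n}\ndlev_i(G)=n.\]

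Next, I would use that $\nDL(G)$ is positive semidefinite (as noted in the introduction, via Sylvester's law of inertia applied to $\DL(G)$, which is positive semidefinite) and that $\ndlev_1(G)=0$, with eigenvector $\sqrt{T(G)}\,\dsone$, since $\DL(G)\dsone=\bzero$. Therefore the remaining $n-1$ eigenvalues are nonnegative and sum to $n$, so their average is $\frac{n}{n-1}$. Because the maximum of $n-1$ nonnegative reals is at least their average,
\[\rho(\nDL(G))=\ndlev_n(G)\ge \frac{n}{n-1}=\rho(\nDL(K_n)),\]
with the last equality coming from the formula for $\ndLspec(K_n)$ already established in the paper.

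For the equality case, if $\rho(\nDL(G))=\frac{n}{n-1}$ then the maximum of the $n-1$ nonnegative numbers $\ndlev_2(G),\dots,\ndlev_n(G)$ equals their average, which forces them all to equal $\frac{n}{n-1}$. Thus $\spec_{\nDL}(G)=\left\{0,\left(\tfrac{n}{n-1}\right)^{(n-1)}\right\}$, matching $\spec_{\nDL}(K_n)$.

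There is no real obstacle here: the argument is essentially a one-line trace-and-average observation. The only items to check carefully are (i) that the diagonal of $\nDL(G)$ is the all-ones vector (a direct computation) and (ii) that $\nDL(G)$ is positive semidefinite with $0$ as an eigenvalue, both of which are already recorded earlier in the paper. This proof, unlike the analogous statements for $\D$, $\DQ$, $\DL$ in the preceding theorem, does not yield uniqueness of $G=K_n$, which is why the statement only concludes cospectrality with $K_n$.
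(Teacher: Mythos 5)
Your proof is correct: the diagonal of $\nDL(G)=\sqrt{T(G)}^{-1}\DL(G)\sqrt{T(G)}^{-1}$ is indeed the all-ones vector, so $\tr(\nDL(G))=n$, and combining positive semidefiniteness with the eigenvalue $0$ (eigenvector $\sqrt{T(G)}\,\dsone$) gives $\rho(\nDL(G))\ge n/(n-1)$ because the maximum of the remaining $n-1$ nonnegative eigenvalues is at least their average, with equality forcing all of them to equal that average. The survey states this theorem only as a citation to \cite{R20} and contains no proof of its own to compare against, but your trace-and-average argument is the standard one for bounds of this type (it mirrors the classical inequality $\lambda_{\max}(\nL(G))\ge n/(n-1)$ for the normalized Laplacian) and is essentially the argument given in the cited source.
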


%The fact that the $K_n$ is the unique graph achieving minimum spectral radius for $\D(G), \DQ(G), \DL(G),$ and $\nDL(G)$, together with Theorem \ref{minRadNormal}, leads to the following conjecture.

\begin{conj}{\rm\cite{R20}}\label{conj:nDL-Kn}
Let $G$ be a connected graph on $n\geq 2$ vertices. Then, $\rho\left(\nDL(G)\right)=\frac{n}{n-1}$ if and only if $G$ if the complete graph $K_n$.
\end{conj}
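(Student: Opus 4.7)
By Theorem \ref{minRadNormal}, if $\rho(\nDL(G))=\frac{n}{n-1}$ we may assume $\ndLspec(G)=\{0,(\tfrac{n}{n-1})^{(n-1)}\}$, and the problem reduces to showing that this spectrum forces $G=K_n$. The plan is to exploit the fact that $\nDL(G)$ is a real symmetric matrix with only two distinct eigenvalues, one of multiplicity one, so it admits the spectral decomposition
\[\nDL(G)=\tfrac{n}{n-1}(I_n-P_0),\]
where $P_0$ is the orthogonal projector onto the one-dimensional null space of $\nDL(G)$.

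First I would identify the null space explicitly. Because $\DL(G)\dsone=\bzero$ and $\nDL(G)=\sqrt{T(G)}^{-1}\DL(G)\sqrt{T(G)}^{-1}$, the vector $\sqrt{T(G)}\dsone$ (with $i$th coordinate $\sqrt{t(v_i)}$) spans the null space of $\nDL(G)$. Using $\|\sqrt{T(G)}\dsone\|^2=\sum_i t(v_i)=2W(G)$, the $(i,j)$ entry of $P_0$ equals $\sqrt{t(v_i)t(v_j)}/(2W(G))$. Now I would compare entries on both sides of the spectral decomposition. The diagonal relation $\nDL(G)_{ii}=1$ becomes $1=\tfrac{n}{n-1}\bigl(1-t(v_i)/(2W(G))\bigr)$, which forces $t(v_i)=2W(G)/n$ for every $i$; in particular $G$ is transmission regular, with $t(G)=2W(G)/n$. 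The off-diagonal relation $\nDL(G)_{ij}=-d(v_i,v_j)/\sqrt{t(v_i)t(v_j)}$ then yields
\[d(v_i,v_j)=\frac{n\,t(v_i)t(v_j)}{2(n-1)W(G)}=\frac{t(G)}{n-1}\]
for every pair $i\ne j$; that is, every pair of distinct vertices is at the same distance.

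Finally, since $G$ is connected with $n\ge 2$, at least one pair of vertices is at distance exactly $1$, so the common distance $t(G)/(n-1)$ must equal $1$. Hence every pair of distinct vertices is adjacent, and $G=K_n$. The crux of the argument is the algebraic recovery of the entire distance matrix from the spectral data via the explicit projection formula; once that is set up, transmission regularity and constant distance follow by direct entry comparison, with no delicate combinatorial input required. The main potential obstacle is thus not conceptual but simply the bookkeeping in the entry comparison step, which must be carried out carefully to make sure the identities $t(v_i)=2W(G)/n$ and $d(v_i,v_j)=t(G)/(n-1)$ are obtained cleanly from the spectral decomposition.
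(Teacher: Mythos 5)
The paper offers no proof of this statement to compare against: it appears only as Conjecture \ref{conj:nDL-Kn}, attributed to \cite{R20}, and the text immediately preceding Theorem \ref{minRadNormal} states explicitly that uniqueness of the extremal graph has not been shown. Your argument is therefore not an alternative route but, as far as I can verify, a complete and correct resolution of the conjecture. Each step checks out. Theorem \ref{minRadNormal} reduces the problem to showing that $\ndLspec(G)=\{0,(\tfrac{n}{n-1})^{(n-1)}\}$ forces $G=K_n$. Since $\nDL(G)$ is real symmetric with exactly two distinct eigenvalues, $\nDL(G)=\tfrac{n}{n-1}(I_n-P_0)$ with $P_0$ the orthogonal projector onto the one-dimensional null space; that null space is spanned by $\sqrt{T(G)}\dsone$ because $\DL(G)\dsone=\bzero$, and $\|\sqrt{T(G)}\dsone\|^2=\sum_i t(v_i)=2W(G)$, so $(P_0)_{ij}=\sqrt{t(v_i)t(v_j)}/(2W(G))$ as you claim. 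The diagonal of $\nDL(G)$ is identically $1$ (the diagonal of $\D(G)$ is zero), and equating $1=\tfrac{n}{n-1}\bigl(1-t(v_i)/(2W(G))\bigr)$ does give $t(v_i)=2W(G)/n$ for every $i$; the off-diagonal comparison then yields $d(v_i,v_j)=2W(G)/(n(n-1))$ for all $i\ne j$, a constant, which must equal $1$ since a connected graph on $n\ge 2$ vertices contains an adjacent pair. Hence $\D(G)=J_n-I_n$ and $G=K_n$.

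Two small remarks. First, the conjecture is stated as an equivalence, so you should record the converse direction; it is immediate from $\ndLspec(K_n)=\{0,(\tfrac{n}{n-1})^{(n-1)}\}$ as listed in Section \ref{s:fam}. Second, your argument actually proves the stronger statement that $K_n$ is determined by its $\nDL$ spectrum, whereas the paper remarks that no graph is currently known to be determined by its $\nDL$ spectrum; given that, it is worth also verifying the cited ``furthermore'' clause of Theorem \ref{minRadNormal} on which you rely. That clause is itself easy: $\tr\nDL(G)=n$, and the $n-1$ nonzero-indexed eigenvalues are each at most $\tfrac{n}{n-1}$ and sum to $n=(n-1)\cdot\tfrac{n}{n-1}$, so they must all equal $\tfrac{n}{n-1}$. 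With that in place, your proof is self-contained and should be written up.
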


Theorem \ref{minRadNormal} shows that the spectral radius of $\D(G)$,  $\DQ(G)$, and $\DL(G)$ grows with $n$. Unlike the other three matrices, the normalized distance Laplacian has a fixed upper bound on its spectral radius (independent of $n$). 

\begin{thm}{\rm\cite{R20}}\label{t:nDL-rho-ub}
For all connected graphs $G$ on $n$ vertices, $\rho\left(\nDL(G)\right)\leq 2$ and for $n\geq 3$, $\rho\left(\nDL(G)\right)< 2$.
\end{thm}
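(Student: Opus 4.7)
The plan is to reduce the inequality $\rho(\nDL(G)) \le 2$ to a statement about the distance signless Laplacian $\DQ(G)$, whose positive semidefiniteness was already recorded in Section \ref{ss:PF}. Since $T(G)$ is positive definite whenever $G$ is a connected graph of order at least $2$, the conjugation identity $\nDL(G) = \sqrt{T(G)}^{-1}\DL(G)\sqrt{T(G)}^{-1}$ converts the Rayleigh quotient for $\nDL(G)$ into a generalized Rayleigh quotient. Setting $\bx = \sqrt{T(G)}^{-1}\by$, I obtain
\[
\rho(\nDL(G)) \;=\; \max_{\bx\ne\bzero}\frac{\bx^T \DL(G)\bx}{\bx^T T(G)\bx} \;=\; 1-\min_{\bx\ne\bzero}\frac{\bx^T\D(G)\bx}{\bx^T T(G)\bx}.
\]
Hence $\rho(\nDL(G)) \le 2$ is equivalent to $\bx^T \bigl(T(G)+\D(G)\bigr)\bx = \bx^T\DQ(G)\bx \ge 0$ for all $\bx$, which is exactly $\DQ(G)\succeq 0$; likewise $\rho(\nDL(G)) < 2$ is equivalent to $\DQ(G)$ being positive definite. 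This establishes the first part immediately from the material in Section \ref{ss:PF}.

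For the strict inequality when $n\ge 3$, the key step is to derive the Laplacian-style quadratic form identity
\[
\bx^T\DQ(G)\bx \;=\; \sum_{1\le i<j\le n} d(v_i,v_j)\,(x_i+x_j)^2.
\]
This follows by writing $t(v_i)=\sum_{j\ne i} d(v_i,v_j)$ on the diagonal of $T(G)$, splitting the contribution equally between the two endpoints, and recognizing the resulting sum of ``signless'' pairwise terms. Since $G$ is connected, $d(v_i,v_j)\ge 1$ for every $i\ne j$, so the sum vanishes only if $x_i+x_j=0$ for every unordered pair $\{i,j\}$.

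The final step is a short combinatorial argument. Choosing any three distinct indices $i,j,k$, the three equations $x_i+x_j=x_j+x_k=x_i+x_k=0$ force $x_i=x_j=x_k=0$, and then pairing every remaining coordinate with one of these three shows $\bx=\bzero$. Thus $\DQ(G)\succ 0$ for $n\ge 3$, and by the reduction above, $\rho(\nDL(G))<2$.

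I do not expect a genuine obstacle here: the proof is essentially a clean bookkeeping exercise once one sees that the right way to package the calculation is through the weighted signless Laplacian quadratic form. The only subtle point to be careful about is verifying that the Rayleigh-quotient substitution is justified (i.e., that $T(G)$ is invertible, which uses $n\ge 2$ and connectedness), and noting that the argument fails precisely at $n=2$, consistent with $\rho(\nDL(K_2))=2$ as seen from $\nDL(K_2)=2I-J$.
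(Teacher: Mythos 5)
Your proposal is correct. Note that the survey does not actually prove Theorem \ref{t:nDL-rho-ub}; it is quoted from \cite{R20}, so there is no in-paper proof to compare against. Your argument is sound at every step: the substitution $\by=\sqrt{T(G)}\bx$ legitimately turns the Rayleigh quotient of the symmetric positive semidefinite matrix $\nDL(G)$ into the generalized quotient $\bx^T\DL(G)\bx/\bx^T T(G)\bx$ (using that $T(G)$ is positive definite for connected $G$ of order at least $2$), the reduction of $\rho(\nDL(G))\le 2$ to $\DQ(G)\succeq 0$ and of the strict inequality to $\DQ(G)\succ 0$ is an honest equivalence because the extremum is attained, and the identity $\bx^T\DQ(G)\bx=\sum_{i<j}d(v_i,v_j)(x_i+x_j)^2$ follows exactly as you describe from $t(v_i)=\sum_{j\ne i}d(v_i,v_j)$. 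The concluding combinatorial step for $n\ge 3$ is also right, and your remark that the argument degenerates precisely at $n=2$, where $\rho(\nDL(K_2))=2$, confirms the sharpness. This is the natural analogue of the classical proof that the normalized Laplacian has spectral radius at most $2$ via the signless quadratic form, and it is consistent with the survey's observation in Section \ref{ss:PF} that $\DQ(G)$ is positive (indeed, positivity of $\DQ(G)$ for $n\ge 3$ could be invoked directly via Perron--Frobenius or your quadratic-form identity). No gaps.
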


Recall that the $i$th row sum of $\D(G)$ is the transmission of the $i$th vertex, $r_i(\DQ(G))=2t(v_i)=$ the $i$th absolute row sum of $\DL(G)$.
By Perron-Frobenius theory $t_{\min}\le \rho(\D(G))\le t_{\max}$ and $2t_{\min}\le \rho(\DQ(G))\le 2t_{\max}$, where $t_{\min}$ and $t_{\max}$ are  the minimum and  maximum transmission among vertices of $G$. The spectral radius of each of the four matrices is  bounded by the maximum absolute row sum, so  $\rho\lp\DL(G)\rp\le 2t_{\max}$.
%\begin{obs} For a connected graph $G$, let  $t_{\max}$ be the maximum transmission.\[  \rho\lp\D(G)\rp \le t_{\max},\  \rho\lp\DQ(G)\rp\le 2t_{\max}, \mbox{  and }  \rho\lp\DL(G)\rp\le 2t_{\max}.\]\end{obs}
Rayleigh quotients  (see Section \ref{ss:PF}) can be applied to the distance and distance signless Laplacian matrices to obtain a bound that is tight only when the graph $G$ is transmission regular. We prove the result for $\D$ here; the proof is analogous to the proof for $\DQ$ in \cite{AH16}. 
\begin{thm}\label{thm:RadTransReg}
For a connected graph $G$, let $t_{\min}$ be the minimum transmission, $t_{\max}$ be the maximum transmission, and $\overline{t}$ be the average transmission. 
\begin{itemize}
\item $t_{\min}\leq \overline{t}\leq \rho\left(\D(G)\right) \leq t_{\max}$ and $\rho\lp\D(G)\rp=\ol t$  if and only if $G$ is transmission regular.
\item {\rm\cite{AH16}} $2t_{min}\leq 2\overline{t}\leq \rho\left(\DQ(G)\right) \leq 2t_{\max}$ and $\rho\lp\DQ(G)\rp=2\ol t$  if and only if $G$ is transmission regular.
\end{itemize}
%Furthermore 
\end{thm}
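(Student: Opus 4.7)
The plan is to prove both items via Rayleigh quotients applied to the all-ones vector $\dsone=[1,\dots,1]^T$. Since $\D(G)$ is real symmetric, the Rayleigh characterization gives
\[
\rho(\D(G))=\max_{\bx\neq\bzero}\frac{\bx^T\D(G)\bx}{\bx^T\bx}.
\]
Evaluating at $\bx=\dsone$ and using $\sum_{i,j}d(v_i,v_j)=\sum_i t(v_i)=n\,\overline t$, I get $\dsone^T\D(G)\dsone=n\overline t$ and $\dsone^T\dsone=n$, so $\rho(\D(G))\ge\overline t$. The inequality $t_{\min}\le\overline t$ is automatic since the average of the transmissions is at least the minimum. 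For the upper bound, I invoke the nonnegative-matrix row-sum bound (Perron--Frobenius, as summarized in Section~\ref{ss:PF}): since $r_i(\D(G))=t(v_i)$, we have $\rho(\D(G))\le\max_i r_i(\D(G))=t_{\max}$.

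For the characterization of $\rho(\D(G))=\overline t$: if $G$ is $t$-transmission regular, then $\D(G)\dsone=t\dsone$, so $\rho(\D(G))=t=\overline t$ by Perron--Frobenius applied to the positive eigenvector $\dsone$. Conversely, if equality holds, then $\dsone$ attains the maximum of the Rayleigh quotient, so by the standard variational principle for real symmetric matrices, $\dsone$ must be an eigenvector of $\D(G)$ for $\rho(\D(G))$. Thus every row sum of $\D(G)$ equals $\rho(\D(G))$, meaning $t(v_i)=\rho(\D(G))=\overline t$ for all $i$, i.e., $G$ is transmission regular.

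For $\DQ(G)$ the argument is entirely parallel: $\dsone^T\DQ(G)\dsone=2n\overline t$ yields $\rho(\DQ(G))\ge 2\overline t$; the row sums $r_i(\DQ(G))=2t(v_i)$ give the upper bound $\rho(\DQ(G))\le 2t_{\max}$; and the equality case $\rho(\DQ(G))=2\overline t$ again forces $\dsone$ to be an eigenvector (of $\DQ(G)$), so all $2t(v_i)$ coincide and $G$ is transmission regular. Conversely, transmission regularity gives $\DQ(G)\dsone=2t\dsone$ directly.

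There is no substantive obstacle here beyond invoking the right standard facts. The only point requiring a touch of care is the ``if'' direction of the equality characterization: one must note that for a real symmetric matrix, the Rayleigh quotient attains its maximum \emph{only} on eigenvectors for the largest eigenvalue, so equality $\dsone^T\D(G)\dsone/(\dsone^T\dsone)=\rho(\D(G))$ really does force $\D(G)\dsone$ to be a scalar multiple of $\dsone$.
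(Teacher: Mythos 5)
Your proof is correct and follows essentially the same route as the paper's: the Rayleigh quotient at $\dsone$ for the lower bound, the nonnegative row-sum bound for the upper bound, and the fact that equality in the Rayleigh quotient forces $\dsone$ to be an eigenvector for the equality characterization. The only cosmetic difference is that you write out the $\DQ$ case in parallel, whereas the paper proves only the $\D$ case and cites \cite{AH16} for the analogous $\DQ$ argument.
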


\begin{proof}
 Observe that $t_{\max}$ is the maximum row sum of $\D(G)$. Applying the row sum bound and the Rayleigh quotient with the vector $\bone$ of all 1s, we have
\[t_{\max}\ge \rho\left(\D(G)\right)=\max_{\bx\ne 0}\frac{\bx^T\D(G)\bx}{\bx^T\bx}\geq\frac{\bone^T\D(G) \bone}{\bone^T\bone}=\frac{1}{n}\sum_{i=1}^{n}t(v_i)=\ol t\ge t_{\min}.\]
It $\rho\lp\D(G)\rp=\ol t$, then $\bone$ is an eigenvector for $\rho(\D(G))$ and $G$ is transmission regular. If $G$ is transmission regular, then $\bone$ is an eigenvector for the eigenvalue $t(G)={\ol t}=\rho\left(\D(G)\right)$.
\end{proof}

A related bound has been established for digraphs (see Theorem \ref{t:dig-rho-bds}), and applies to graphs because a graph can be viewed as a doubly directed digraph (see Section \ref{s:dig}).

\begin{thm}
Let $G$ be a strongly connected digraph with vertices $v_1,\dots,v_n$ and transmissions $t(v_1)\leq \dots\leq t(v_n)$. Then
\begin{itemize}
    \item {\rm\cite{LS13}} $\sqrt{t(v_1)t(v_2)}\leq \rho(\D(G))\leq \sqrt{t(v_{n-1})t(v_n)}$ and
    \item {\rm\cite{LMW17}} $t(v_1)+t(v_2)\leq \rho(\DQ(G))\leq t(v_{n-1})+t(v_{n})$
\end{itemize}
and in each case, one of the inequalities holds if and only if $G$ is transmission regular.
\end{thm}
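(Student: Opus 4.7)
The plan is to use the Perron--Frobenius theorem to produce a positive eigenvector for each of $\D(G)$ and $\DQ(G)$, then read off inequalities from the eigenvalue equation at the two vertices carrying the extremal Perron weights, and finally combine those two inequalities multiplicatively. Since $G$ is strongly connected, $\D(G)$ is nonnegative and irreducible, and $\DQ(G) = T(G) + \D(G)$ is positive, so each has a strictly positive right eigenvector $\bx$ for its spectral radius $\rho$. For the upper bounds I would choose distinct indices $p, q$ with $x_p = \max_i x_i$ and $x_q = \max_{i \neq p} x_i$; for the lower bounds $p, q$ would instead be the indices of the two smallest Perron entries.

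For $\D(G)$, the $i$th coordinate of $\D(G)\bx = \rho \bx$ reads $\rho x_i = \sum_{j \neq i} d(v_i, v_j) x_j$, using that the diagonal of $\D(G)$ is zero. At $i = p$ we have $x_j \leq x_q$ for every $j \neq p$, which yields $\rho x_p \leq x_q \, t(v_p)$; at $i = q$ we have $x_j \leq x_p$ for every $j \neq q$, giving $\rho x_q \leq x_p \, t(v_q)$. Multiplying these and cancelling $x_p x_q > 0$ gives $\rho^2 \leq t(v_p) t(v_q)$, and since $\{p,q\}$ is a two-element subset of $\{1,\dots,n\}$, $t(v_p) t(v_q) \leq t(v_{n-1}) t(v_n)$. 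The lower bound is obtained by reversing the inequalities using the two smallest Perron entries, producing $\rho^2 \geq t(v_p) t(v_q) \geq t(v_1) t(v_2)$.

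For $\DQ(G)$, the corresponding equation is $(\rho - t(v_i)) x_i = \sum_{j \neq i} d(v_i, v_j) x_j$. The right-hand side is strictly positive because strong connectedness forces some $d(v_i, v_j) > 0$, so $\rho > t(v_i)$ for every $i$; in particular both $\rho - t(v_p)$ and $\rho - t(v_q)$ are positive, which licenses the same multiplicative step. One obtains $(\rho - t(v_p))(\rho - t(v_q)) \leq t(v_p) t(v_q)$, which expands to $\rho^2 \leq \rho (t(v_p) + t(v_q))$; dividing by $\rho > 0$ yields $\rho \leq t(v_p) + t(v_q) \leq t(v_{n-1}) + t(v_n)$. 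The lower bound is symmetric and produces $\rho \geq t(v_p) + t(v_q) \geq t(v_1) + t(v_2)$.

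For the equality characterization, if $G$ is transmission regular with common transmission $t$ then $\bone$ is an eigenvector of $\D(G)$ for $t$ and of $\DQ(G)$ for $2t$, so all four bounds are tight. Conversely, equality in any one of them forces $\sum_{j \neq p} d(v_p, v_j) x_j = x_q \sum_{j \neq p} d(v_p, v_j)$; since $d(v_p, v_j) > 0$ for every $j \neq p$ by strong connectedness, this forces $x_j = x_q$ for all $j \neq p$, and a symmetric argument gives $x_j = x_p$ for all $j \neq q$, so $\bx$ is a constant vector. The eigenvalue equation then forces all row sums of $\D(G)$ (resp.\ $\DQ(G)$) to be equal, i.e., $G$ is transmission regular. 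The main delicacy I anticipate is in the $\DQ(G)$ step: one must first justify $\rho > t(v_p)$ from irreducibility and strong connectedness before multiplying the two inequalities, since otherwise a sign flip could reverse the conclusion.
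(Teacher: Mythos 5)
Your proof is correct, and the paper itself states this theorem only by citation to \cite{LS13} and \cite{LMW17} without reproducing a proof; your argument is essentially the standard one from those sources (Perron vector, eigenvalue equation at the two extremal entries, multiply the two resulting inequalities). The one point worth making explicit is the $n=2$ edge case in the equality characterization, where there is no third index $j\neq p,q$ to force $x_p=x_q$ --- but a strongly connected digraph on two vertices is necessarily $\overleftrightarrow{K_2}$ and hence transmission regular, so nothing is lost.
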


%%%%%%%%%%%%%%%%%%%%%%%%%%%
\section{Cospectrality}\label{s:cospec} % Carolyn
%{\red Can we add distance signless Laplacian?}

For a matrix $\D^*$, two graphs $G$ and $H$ are {\em $\D^*$-cospectral} if $\spec\left(\D^*(G)\right)=\spec\left(\D^*(H)\right)$. If $G$ and $H$ are $\D^*$-cospectral, they are called {\em $\D^*$-cospectral mates} (or just cospectral mates if the choice of $\D^*$ is clear). The number of connected graphs with such a mate has been computed for 10 and fewer vertices for the distance, distance signless Laplacian, distance  Laplacian, and normalized distance Laplacian matrices; see Table \ref{tab:CospecNumber}.%\vspace{-8pt}

\begin{table}[h!]
    \centering
    \caption{Number of connected graphs with a $\D^*$-cospectral mate with respect to each matrix. Counts for $\D,\DQ,\DL$ from \cite{AH18}, counts for $\nDL$ from \cite{R20}.   \label{tab:CospecNumber}}\vspace{10pt}
    
\begin{tabular}{|c|c|c|c|c|c|c|c|c|c|c}
\hline
    &$\#$ connected&  && &    \\
    $n$ &graphs& $\D$ & $\DQ$& $\DL$ & $\nDL$   \\
    \hline
    3&2&0&0&0&0\\
    4&6&0&0&0&0\\
    5&21&0&2&0&0\\
    6&112&0&6&0&0\\
    7&853&22&38&43&0\\
    8&11,117&658&453&745&2\\
    9&261,080&25,058&8,168&19,778&8\\
    10&11,716,571&1,389,984&319,324&787,851&7538\\
\hline\end{tabular}
 
\end{table}

A graph $G$ is {\em determined by its $\D^*$ spectrum} if it has no $\D^*$-cospectral mate. Several such graphs have been found for the distance matrix, distance signless Laplacian, and distance Laplacian. No graphs are currently known to be determined by their $\nDL$ spectrum (although Conjecture \ref{conj:nDL-Kn} is equivalent to conjecturing $K_n$ is determined by its $\nDL$ spectrum).
\begin{thm}
The following graphs are determined by their $\D$ spectrum:
\[   K_n,  P_n\ {\rm\cite{PR90}}; K_{n_1,n_2,\dots,n_k} {\rm\cite{JZ14}};  C_n \mbox{ for $n$ odd}, \ol{P_n}, \ol{C_n}\ {\rm\cite{DL18}}; Q_d \ {\rm\cite{HIK16}}. \]
%\begin{itemize}
  %  \item {\rm\cite{PR90}} The complete graph $K_n$.
%    \item {\rm\cite{PR90}} The path $P_n$.
 %   \item {\rm\cite{JZ14}} The complete k-partite graph $K_{n_1,n_2,\dots,n_k}$.
%    \item {\rm\cite{DL18}} The cycle $C_n$ for odd $n$>
%    \item {\rm\cite{DL18}} The complement of $P_n$ and the complement of $C_n$.
%    \item {\rm\cite{HIK16}} The hypercube $Q_d$.
\end{thm}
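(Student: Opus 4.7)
The plan is to handle the listed families separately, since no single argument covers all of them; but they share a common toolkit, extracting from $\spec_{\D}(G)$ the order $n$, the sum $\sum_i \dev_i^2 = \sum_{i\ne j} d(v_i,v_j)^2$, and $\rho(\D(G))$, which already constrain the distance distribution of any cospectral mate.

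For $K_n$ the proof is immediate: if $G$ is $\D$-cospectral with $K_n$, then $\sum_{i\ne j} d(v_i,v_j)^2 = (n-1)^2 + (n-1) = n(n-1)$, and since there are $n(n-1)$ ordered pairs each contributing at least $1$, every distance must equal one. For $P_n$, I would invoke the result from Section \ref{s:rho} that $P_n$ is the unique maximizer of $\rho(\D)$ on connected graphs of order $n$, so $\D$-cospectrality with $P_n$ already forces isomorphism. For $K_{n_1,\dots,n_k}$ and for $\ol{P_n}, \ol{C_n}$, the strategy is first to show that any $\D$-cospectral mate has diameter at most two; Section \ref{s:diam2} then gives $\D(G) = 2(J-I)-\A(G)$, and $\D$-cospectrality reduces to $\A$-cospectrality, whereupon the classical theorems identifying these graphs from their adjacency spectrum close the argument. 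The delicate step is the diameter bound, typically extracted from a large-multiplicity eigenvalue of $-2$ in $\dspec$ forcing many pairs at distance exactly two.

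The remaining cases, $C_n$ for $n$ odd and $Q_d$, both exploit transmission regularity. A Rayleigh-quotient argument (Section \ref{ss:PF}) shows that $\D$-cospectrality with a transmission regular graph forces the mate to be transmission regular with the same $t(G)$, so the distance matrix is a polynomial in the adjacency matrix and standard spectral-graph techniques apply. For $C_n$ with $n$ odd, the explicit eigenvalue formulas of Section \ref{s:fam} combined with the known adjacency determination of odd cycles suffice. The main obstacle, and the technical core of the theorem, is $Q_d$: by Proposition \ref{t:Ham}, $Q_d$ has only three distinct distance eigenvalues, and the task is to rule out non-hypercube realizations of this three-point spectrum with the prescribed multiplicities. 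This requires a structural characterization of regular graphs whose distance matrix has the prescribed Perron value, multiplicity pattern, and row sums, and this combinatorial classification is where \cite{HIK16} does the substantive work.
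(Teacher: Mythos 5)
The paper states this theorem without proof---it is a compilation of results cited to \cite{PR90}, \cite{JZ14}, \cite{DL18}, and \cite{HIK16}---so the only question is whether your sketch would actually constitute an argument. Your $K_n$ and $P_n$ cases are correct and clean: $\tr(\D(G)^2)=\sum_{i\ne j}d(v_i,v_j)^2=n(n-1)$ over $n(n-1)$ ordered pairs each contributing at least $1$ forces all distances to equal $1$, and cospectrality with $P_n$ gives $\rho(\D(G))=\rho(\D(P_n))$, which pins down $P_n$ by uniqueness of the maximizer. The remaining cases, however, contain two genuine gaps.

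First, you claim a Rayleigh-quotient argument shows that $\D$-cospectrality with a transmission regular graph forces the mate to be transmission regular. That argument works for $\DQ$ precisely because $\tr(\DQ(G))=\sum_i t(v_i)$ is spectrally determined, so $\overline{t}$ is known and $\rho(\DQ(G'))\ge 2\overline{t}$ with equality if and only if $G'$ is transmission regular. For $\D$ the trace is zero, the Wiener index is \emph{not} preserved by $\D$-cospectrality, and whether transmission regularity is preserved by $\D$-cospectrality is recorded as open in Table \ref{tab:PreservedCospec} (the ``?'' entry); the Remark in Section \ref{s:cospec} only applies when \emph{both} graphs are already known to be transmission regular. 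So your route to $C_n$ ($n$ odd) and $Q_d$ rests on an unproved, possibly false, assertion. Second, for the diameter-two families you assert that once both graphs have diameter at most $2$, the identity $\D=2(J-I)-\A$ reduces $\D$-cospectrality to $\A$-cospectrality. The spectrum of $2(J-I)-\A$ is a function of the spectrum of $\A$ only when $\A$ commutes with $J$, i.e., for regular graphs; $K_{n_1,\dots,n_k}$ and $\ol{P_n}$ are not regular in general, so the reduction fails. Moreover, since diameter is not preserved by $\D$-cospectrality in general (same table), even the preliminary claim that a cospectral mate has diameter at most two requires a family-specific argument; \cite{JZ14} instead works through the inertia and the eigenvalue $-2$ together with a structural characterization of complete multipartite graphs. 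Deferring the hypercube classification to \cite{HIK16} is consistent with how the survey itself treats the statement, but the two affirmative reductions above would not survive scrutiny as written.
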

\begin{thm}
The following graphs are determined by their $\DQ$ spectrum:
\[   K_n\ {\rm \cite{AH16}}; P_n \ {\rm\cite{DN15}};  C_n, K_n-e, Co_{n,3}\ {\rm\cite{AH18}} \]
where $Co_{n,3}$ is   $P_{n-1}$ with an additional leaf appended to one of the penultimate vertices (called a comet).
%\begin{itemize}
%    \item {\rm \cite{AH16}} The complete graph $K_n$.
 %   \item {\rm\cite{DN15}} The path $P_n$.
 %   \item {\rm\cite{AH18}} The cycle $C_n$.
%   \item {\rm\cite{AH18}} The graph $K_n-e$ obtained from $K_n$ by the deletion of an edge $e$.
  %  \item {\rm\cite{AH18}} The comet $Co_{n,3}$, i.e., a path $P_{n-1}$ with an additional leaf appended to one of the penultimate vertices.\end{itemize}
\end{thm}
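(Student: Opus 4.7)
The plan is to handle each of the five graph families separately, exploiting different spectral invariants. The common starting point: if $H$ is $\DQ$-cospectral with $G$, then $|V(H)|=n$, the Wiener index $W(H)=\frac{1}{2}\tr\DQ(H)=W(G)$, and the spectral radius $\rho(\DQ(H))=\rho(\DQ(G))$ are all pinned down; beyond these, higher power-sums $\sum_i \dqev_i^k = \tr(\DQ^k)$ translate into combinatorial sums in the $t(v_i)$'s and the $d(v_i,v_j)^k$'s.

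For $K_n$, I would use that $\dQspec(K_n)=\{(n-2)^{(n-1)},2n-2\}$ gives $W(G)=\binom{n}{2}$, hence $\overline{t}=n-1$, and $\rho(\DQ(G))=2n-2=2\overline{t}$. By Theorem \ref{thm:RadTransReg}, equality in $\rho(\DQ(G))\geq 2\overline{t}$ forces transmission regularity with $t(G)=n-1$; the only such graph is $K_n$, since any pair of vertices at distance $\geq 2$ would push some transmission above $n-1$. For $P_n$, the result is immediate from the result of \cite{DN15} that $P_n$ uniquely attains $\max\rho(\DQ)$ among connected graphs on $n$ vertices: any $\DQ$-cospectral mate has the same (extremal) spectral radius and must therefore coincide with $P_n$. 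For $C_n$, one again obtains transmission regularity from $\rho(\DQ(C_n))=2t(C_n)=2\overline{t}$ via Theorem \ref{thm:RadTransReg}, and then combines this with the exact eigenvalue pattern (in particular, the multiplicities inherited from the cyclic symmetry and the known values of $\sum_i\dqev_i^2$ and $\sum_i\dqev_i^3$) to force the distance multiset, and hence the graph, to match that of $C_n$.

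For $K_n-e$ and the comet $Co_{n,3}$, the strategy is to mimic the approach in \cite{AH18}: compute $W(G)$ and $\sum_i\dqev_i^2 = \sum_i t(v_i)^2+2\sum_{i<j}d(v_i,v_j)^2$, and combine these with $\rho(\DQ)$ and the least eigenvalue to constrain the diameter, degree sequence, and transmission sequence enough that only one graph is consistent. For $K_n-e$, the structure is close enough to $K_n$ that the Wiener increment over $W(K_n)$ is so small that deleting one edge is the only way to realize it, and cospectrality then forces equality. The main obstacle will be the comet case: $Co_{n,3}$ is neither transmission regular nor extremal for the spectral radius, and lacks strong automorphisms, so narrowing down the possibilities from spectral moments alone requires a careful structural case analysis based on the diameter and maximum degree extracted from the first few power-sums of the spectrum, together with a Perron-Frobenius argument on the positive eigenvector of $\DQ$ to rule out competing candidates — this is where the bulk of the work in \cite{AH18} lies.
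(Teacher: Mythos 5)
First, note that the paper itself offers no proof of this theorem: it is a survey item, stated with citations to \cite{AH16}, \cite{DN15}, and \cite{AH18}, so the only meaningful comparison is with the arguments in those references. Your sketches for three of the five families are essentially the standard (and correct) ones: for $K_n$, cospectrality pins down $W$ and hence $\overline{t}=n-1$, equality $\rho(\DQ)=2\overline{t}$ in Theorem \ref{thm:RadTransReg} forces transmission regularity with $t=n-1$, and $t(v)=n-1$ for every $v$ forces $K_n$; for $P_n$, uniqueness of the maximizer of $\rho(\DQ)$ from \cite{DN15} immediately gives the result; for $K_n-e$, the observation that $W(H)=\binom{n}{2}+1$ forces exactly one pair at distance two and all other pairs adjacent is a complete argument on its own.

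The gap is in the $C_n$ and $Co_{n,3}$ cases, which are the substantive content of the \cite{AH18} citation and which your proposal describes rather than proves. For $C_n$, transmission regularity of a cospectral mate $H$ does follow as you say, but ``the exact eigenvalue pattern and the multiplicities inherited from the cyclic symmetry'' is not an argument: a priori there could be another transmission-regular graph realizing the same multiset of eigenvalues, and power sums $\tr(\DQ^k)$ do not by themselves recover the distance multiset. The missing ingredient is an extremality statement: among transmission-regular connected graphs of order $n$, the common transmission satisfies $t(G)\le t(C_n)=\lfloor n^2/4\rfloor$ with equality only for $C_n$, so a transmission-regular mate with $\rho(\DQ(H))=2t(C_n)$ must be $C_n$ itself. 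For $Co_{n,3}$ you explicitly defer ``the bulk of the work'' to an unspecified case analysis; the actual route is again an ordering result, namely that $Co_{n,3}$ is the unique graph attaining the second-largest value of $\rho(\DQ)$ among connected graphs of order $n$ (with a strict gap below it), after which the argument is identical in form to the $P_n$ case. Without these two extremality/ordering lemmas, the proposal does not establish the theorem for $C_n$ or for the comet; spectral moments plus Perron--Frobenius generalities will not substitute for them.
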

\begin{thm}
The following graphs are determined by their $\DL$ spectrum:
\[   K_n\ {\rm \cite{AH13}}; P_n \ {\rm\cite{DN15}};  K_{n_1,n_2,\dots,n_k}, K_n-e, Co_{n,3} \ {\rm\cite{AH18}}.  \]
\end{thm}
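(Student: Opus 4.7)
The plan is to verify each case by extracting enough invariants from the $\DL$ spectrum to force the graph up to isomorphism. Three basic invariants are immediate: the order $n$ (the matrix size), the Wiener index $W(G) = \tfrac12\sum_i \dlev_i$ via the trace, and the number of connected components of the complement via the multiplicity of the eigenvalue $n$ (for certain structured graphs). I would also use Theorem \ref{EdgeRemoval} (edge monotonicity of $\rho(\DL)$) together with known extremal characterizations.

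The two easiest cases, $K_n$ and $K_n-e$, fall to a direct Wiener index argument. For $K_n$, one computes $\dLspec(K_n)=\{0,n^{(n-1)}\}$, so any $\DL$-cospectral mate $G$ satisfies $W(G) = \tfrac12(n-1)n = \binom{n}{2}$; since every pairwise distance in a connected graph is $\ge 1$, every distance must equal $1$, forcing $G = K_n$. For $K_n-e$ one has $\dLspec(K_n-e) = \{0, n^{(n-2)}, n+2\}$, giving $W(G) = \binom{n}{2}+1$; combined with the observation that $G$ must have diameter at most $2$ (otherwise some row sum of $\DL$, and hence $\rho(\DL)$, would exceed $n+2$ when $n$ is moderately large — this needs a small case check), this forces exactly one missing edge.

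For $P_n$, the cleanest route is to combine cospectrality with the extremal characterization already cited: $P_n$ is the unique maximizer of $\rho(\DL(G))$ over connected graphs on $n$ vertices. Since $\rho(\DL)$ is determined by the spectrum, any cospectral mate is an extremizer and thus equal to $P_n$. For the complete multipartite graph $K_{n_1,\dots,n_k}$ and the comet $Co_{n,3}$, I would use a twin-quotient strategy: from Theorem \ref{t:twin}, twin vertices contribute eigenvalues of the form $t+1$ (adjacent twins) or $t+2$ (independent twins), so the high-multiplicity eigenvalues in the target spectrum encode the twin partition. For $K_{n_1,\dots,n_k}$, the transmissions are $t_i = n + n_i - 2$, producing characteristic blocks of eigenvalues $n + n_i$ with multiplicity $n_i - 1$; reading off these pairs $(n_i, n_i-1)$ from the spectrum recovers the partition. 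For $Co_{n,3}$, analogous bookkeeping — together with matching the spectral radius and Wiener index — pins down the comet structure.

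The main obstacle is the multipartite case $K_{n_1,\dots,n_k}$ and $Co_{n,3}$: one must rule out graphs that have the same twin-induced high-multiplicity eigenvalues but a genuinely different structure reflected only in the remaining ``quotient'' eigenvalues. The proof must show that the quotient matrix $B$ from Theorem \ref{p:twin-quot} has a spectrum that is characteristic enough — for instance, by showing that if $G$ shares the target spectrum, then $G$ has the prescribed twin partition $(n_1,\dots,n_k)$ (from the multiplicities), the right diameter, and the correct transmissions, which together leave no room for a non-isomorphic cospectral mate. Establishing that no ``exotic'' quotient structure can reproduce the same spectrum is where the heaviest computation is required, and is the reason the published proofs in \cite{AH18} rely on a careful case analysis of how the remaining eigenvalues constrain distances and transmissions.
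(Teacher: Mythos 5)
This statement is a survey item: the paper gives no proof of its own and simply cites \cite{AH13}, \cite{DN15}, and \cite{AH18}, so the only comparison available is against those sources. Your arguments for three of the five families are correct and essentially the standard ones. For $K_n$, the trace gives $W(G)=\binom{n}{2}$, and since all $\binom{n}{2}$ distances are at least $1$ they must all equal $1$; for $K_n-e$, the same computation gives $W(G)=\binom{n}{2}+1$, which by itself already forces exactly one pair at distance $2$ and the rest at distance $1$ (your extra diameter observation is redundant). For $P_n$, invoking the uniqueness of $P_n$ as the maximizer of $\rho(\DL)$ among connected graphs of order $n$ is exactly how the DS property follows from \cite{DN15}, and it is a complete argument.

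The genuine gap is in the $K_{n_1,\dots,n_k}$ and $Co_{n,3}$ cases, and you have in effect conceded it yourself. The twin-quotient bookkeeping computes the spectrum of the \emph{target} graph, but the DS question runs in the other direction: one must show that an \emph{arbitrary} connected graph $G$ with that spectrum has the prescribed structure. Your step of ``reading off the pairs $(n_i, n_i-1)$ from the spectrum'' tacitly assumes that a high-multiplicity eigenvalue of the unknown mate arises from twins, but Theorem \ref{t:twin} only gives the implication twins $\Rightarrow$ multiplicity; nothing in the spectrum alone certifies that the eigenvectors for an eigenvalue of multiplicity $n_i-1$ are supported on a twin class, so the partition cannot simply be extracted. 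The actual proofs in \cite{AH18} proceed differently, using invariants that \emph{are} provably spectral — e.g.\ the multiplicity of $n$ as a $\DL$-eigenvalue determining the number of components of $\overline{G}$, together with the preserved Wiener index and second-largest eigenvalue — and then a case analysis on the possible distance and transmission sequences. As written, your proposal establishes $K_n$, $K_n-e$, and $P_n$ but leaves the multipartite and comet cases as an unproven strategy, so it does not constitute a proof of the full statement.
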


Since $P_n$ is determined by its $\D^*$ spectrum for $\D^*=\D, \DQ$ and $\DL$, we ask the following question.

\begin{quest} Is $P_n$ determined by its $\nDL$ spectrum?  It is for $n\le 10$ \cite{SageR21}.
\end{quest}

A graph parameter is {\em preserved by $\D^*$-cospectrality} if two graphs that are $\D^*$-cospectral must share the same value for that parameter (it can be numeric or true/false). A great many parameters have been shown to be preserved or not preserved by $\D^*$-cospectrality. It is obvious that two graphs $G$ and $H$ must have the same order to be $\D^*$-cospectral for any matrix $\D^*$. %; therefore graph order is preserved by $\D^*$-cospectrality for $M=\D,\DQ,\DL,\nDL$. 
Similarly, the trace of a matrix $\D^*$, $\tr(\D^*)$, must be preserved by $\D^*$-cospectrality since it is equal to the sum of the eigenvalues.
Some known results are summarized in Table \ref{tab:PreservedCospec}; in this table, a question mark indicates that it has been verified that no example of non-preservation exists on ten or fewer vertices. This verification was performed using {\em Sage} \cite{SageR21}. Next, we  list a source or example for each definitive  answer.

\begin{table}[ht!]
    \centering
   \caption{Some parameters that are known to preserved or not preserved   by $\D^*$-cospectrality. \label{tab:PreservedCospec}}\vspace{5pt}
  
\begin{tabular}{|r|c|c|c|c|c|c|c|c|c|c}
    \hline
   Property  & $\D$ & $\DQ$& $\DL$ & $\nDL$   \\
    \hline
 $\#$ Edges &No&?&No&No\\ % \hline
    Diameter &No&?&No&? \\  %\hline
    Girth &No&?&No&No\\  %\hline
    Planarity &No&No&No&No\\  %\hline
     Wiener index &No&Yes&Yes&No\\  %\hline
     Degree sequence &No&No&No&No\\ % \hline
     Transmission sequence &No&No&No&No\\  %\hline
    Transmission regularity& ? & Yes & No & ?\\ % \hline
    $\#$ connected components in $\overline{G}$ & No & No & Yes & No\\
        \hline
\end{tabular}\vspace{-5pt}
 \end{table}

The number of edges in a graph is not preserved by cospectrality for $\D$ \cite{H17}, $\DL$ \cite{GRWC18}, or $\nDL$ \cite{R20} and the diameter of a graph has been shown not to be preserved by  $\D$-cospectrality \cite{GRWC16} and $\DL$-cospectrality \cite{GRWC18}. The girth of a graph is the length of the shortest cycle in the graph. Girth was shown not to be preserved by  $\DL$-cospectrality \cite{GRWC18} and $\nDL$-cospectrality \cite{R20}; we show now in Example \ref{ex:DGirth} that girth is not preserved by $\D$-cospectrality.

\begin{ex}\label{ex:DGirth}
The graphs $G_1$ and $G_2$ in Figure \ref{fig:DGirth} are $\D$-cospectral with distance characteristic polynomial $p_{\D}(x)=x^9 - 112x^7 - 758x^6 - 1994x^5 - 2010x^4 + 184x^3 + 1262x^2 +
193x - 222$. The girth of $G_1$ is 4 and the girth of $G_2$ is 3. \vspace{-8pt}
\end{ex}

\begin{figure}[h!]
    \centering
    \includegraphics[scale=.7]{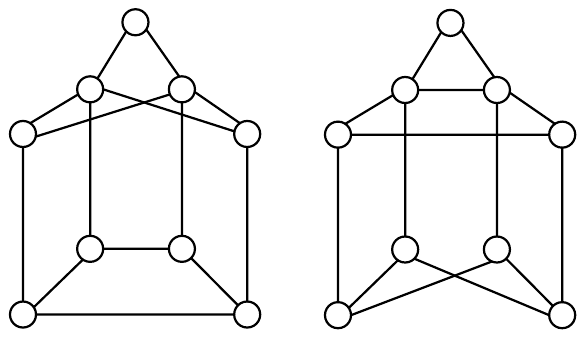}\\
    $G_1$\qquad\qquad \ \ \ $G_2$\vspace{-4pt}
    \caption{A pair of $\D$-cospectral graphs, which show that the girth of $G$ is not preserved by $\D$-cospectrality.}\vspace{-8pt}
    \label{fig:DGirth}
\end{figure}

A graph is planar if it can be drawn in a way such that no edges intersect each other. Planarity was shown not to be preserved by $\DL$-cospectrality in \cite{GRWC18} and $\nDL$-cospectrality in \cite{R20}. We show it is not preserved by $\D$-cospectrality in Example \ref{ex:D} and by $\DQ$-cospectrality in Example \ref{ex:DQPlanar}.

\begin{figure}[h!]
    \centering
    \includegraphics[scale=.7]{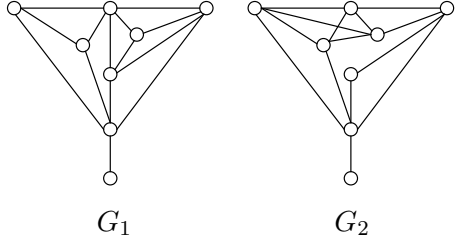}\\
    $G_1$\qquad\qquad\qquad \ \ \ $G_2$\vspace{-4pt}
    \caption{A pair of $\DQ$-cospectral graphs, which show that  planarity is not preserved by $\DQ$-cospectrality.}
    \label{fig:DQPlanar}\vspace{-8pt}
\end{figure}

\begin{ex}\label{ex:DQPlanar}
The graphs $G_1$ and $G_2$ in Figure \ref{fig:DQPlanar} are $\DQ$-cospectral with distance signless Laplacian characteristic polynomial $p_{\DQ}(x)=x^8 - 88x^7 + 3296x^6 - 69002x^5 + 886299x^4 - 7169822x^3 +
35735188x^2 - 100453184x + 122045040$. Observe that $G_1$ is planar and $G_2$ is not planar.
\end{ex}

Recall the Wiener index of a graph is the sum of all pairs of distances in $G$ and $\W(G)=\frac{1}{2}\tr(\DL(G))=\frac{1}{2}\tr(\DQ(G))$. Since trace is preserved by cospectrality for all matrices, this implies the Wiener index is preserved by $\DL$- and $\DQ$-cospectrality. However, it was shown in \cite{GRWC16} that the Wiener index is not preserved by $\D$-cospectrality and it was shown in \cite{R20} that it is not preserved by $\nDL$-cospectrality.  

The degree sequence of a graph is the list of degrees of vertices in the graph in increasing order and the transmission sequence of a graph is the list of transmissions of vertices in the graph in increasing order. The degree sequence and transmission sequence were shown not to be preserved by  $\DL$-cospectrality in \cite{GRWC18} and $\nDL$-cospectrality in \cite{R20}. In Examples \ref{ex:D} and \ref{ex:DQ}, respectively, we show the degree sequence and transmission sequence of a graph are not preserved by $\D$- and $\DQ$-cospectrality. 

In \cite{AH18}, the number of connected components of the graph complement $\overline{G}$ is shown to be preserved by $\DL$-cospectrality in \cite{AH18}. In Examples \ref{ex:D}, \ref{ex:DQ}, and \ref{ex:NDL}, we show the number of connected components of $\overline{G}$ is not preserved by cospectrality for $\D$, $\DQ$, or $\nDL$. 

\begin{ex}\label{ex:D}
The graphs $G_1$ and $G_2$ in Figure \ref{fig:DCompNotPreser} are $\D$-cospectral with distance characteristic polynomial $p_{\D}(x)=x^7 - 39x^5 - 142x^4 - 180x^3 - 72x^2$. Observe that $G_1$ is planar and $G_2$ is not planar. The degree sequences of $G_1$ and $G_2$, respectively, are $[3,4,4,4,5,5,5]$ and $[4,4,4,4,4,4,6]$ and the transmission sequences are $[7, 7, 7, 8, 8, 8, 9]$ and $[6, 8, 8, 8, 8, 8, 8]$. The complement of $G_1$ has one connected component and the complement of $G_2$ has two connected components.%\vspace{-4pt}
\end{ex}

\begin{figure}[h!]
    \centering
    \includegraphics[scale=.45]{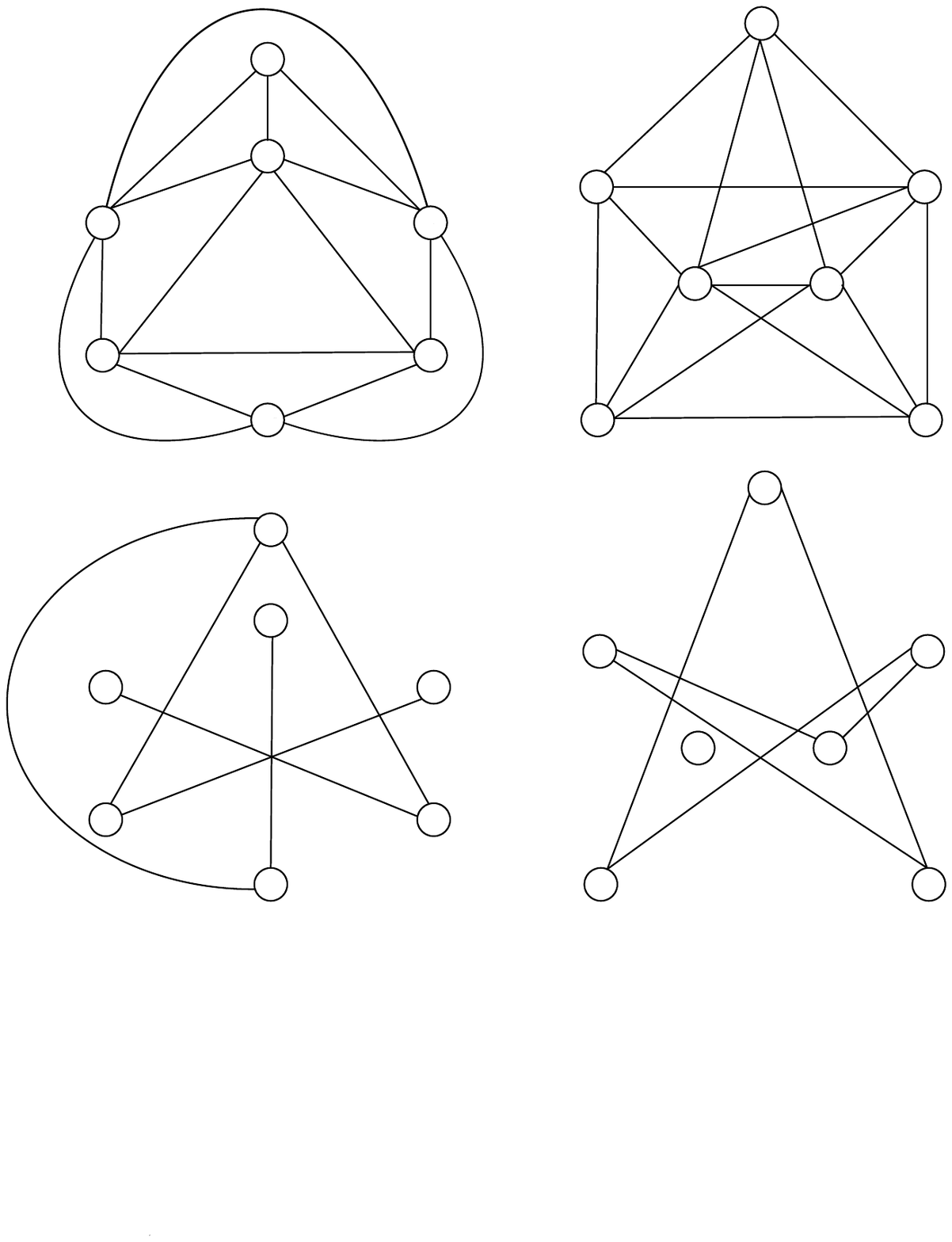}\includegraphics[scale=.45]{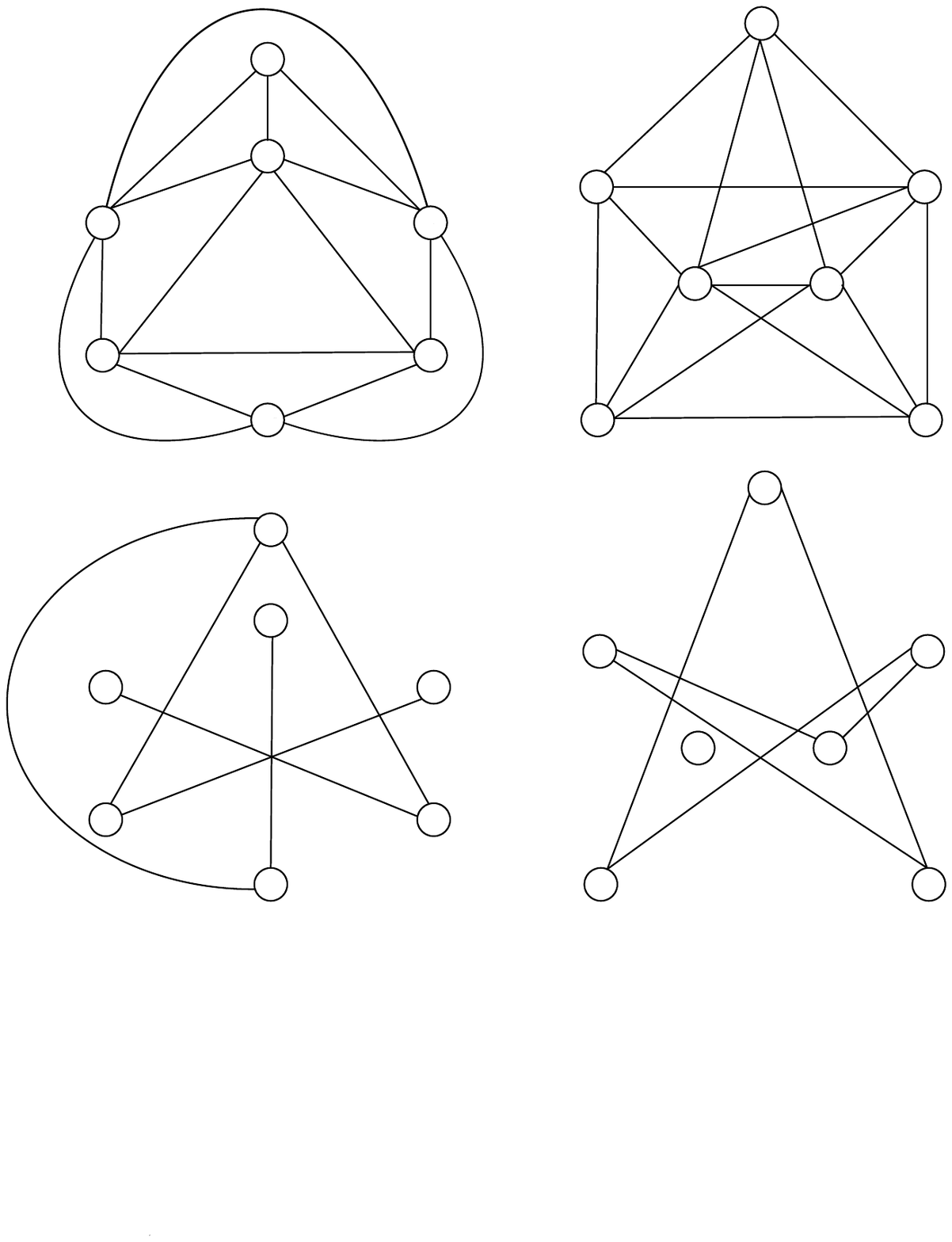}\qquad\includegraphics[scale=.45]{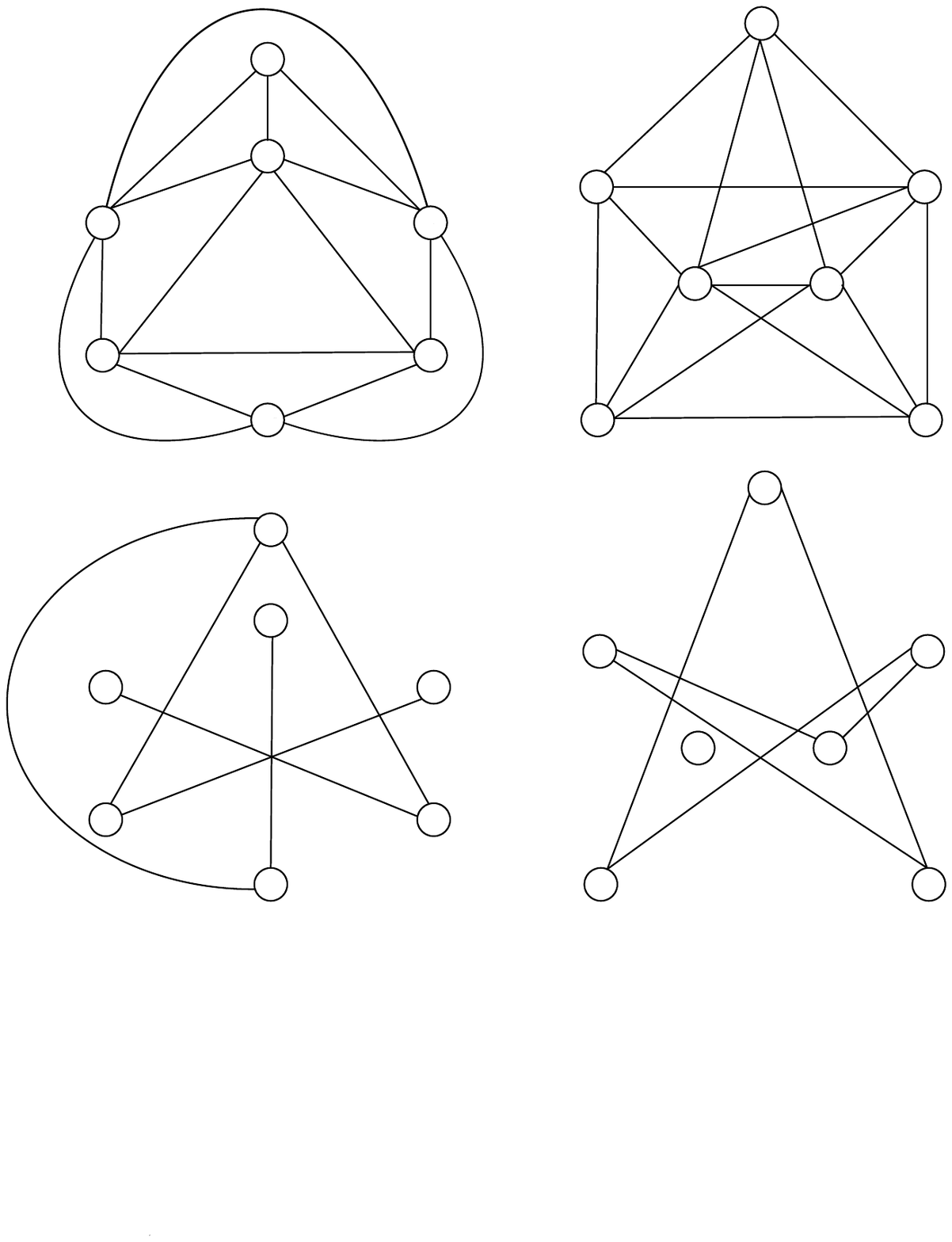}\includegraphics[scale=.45]{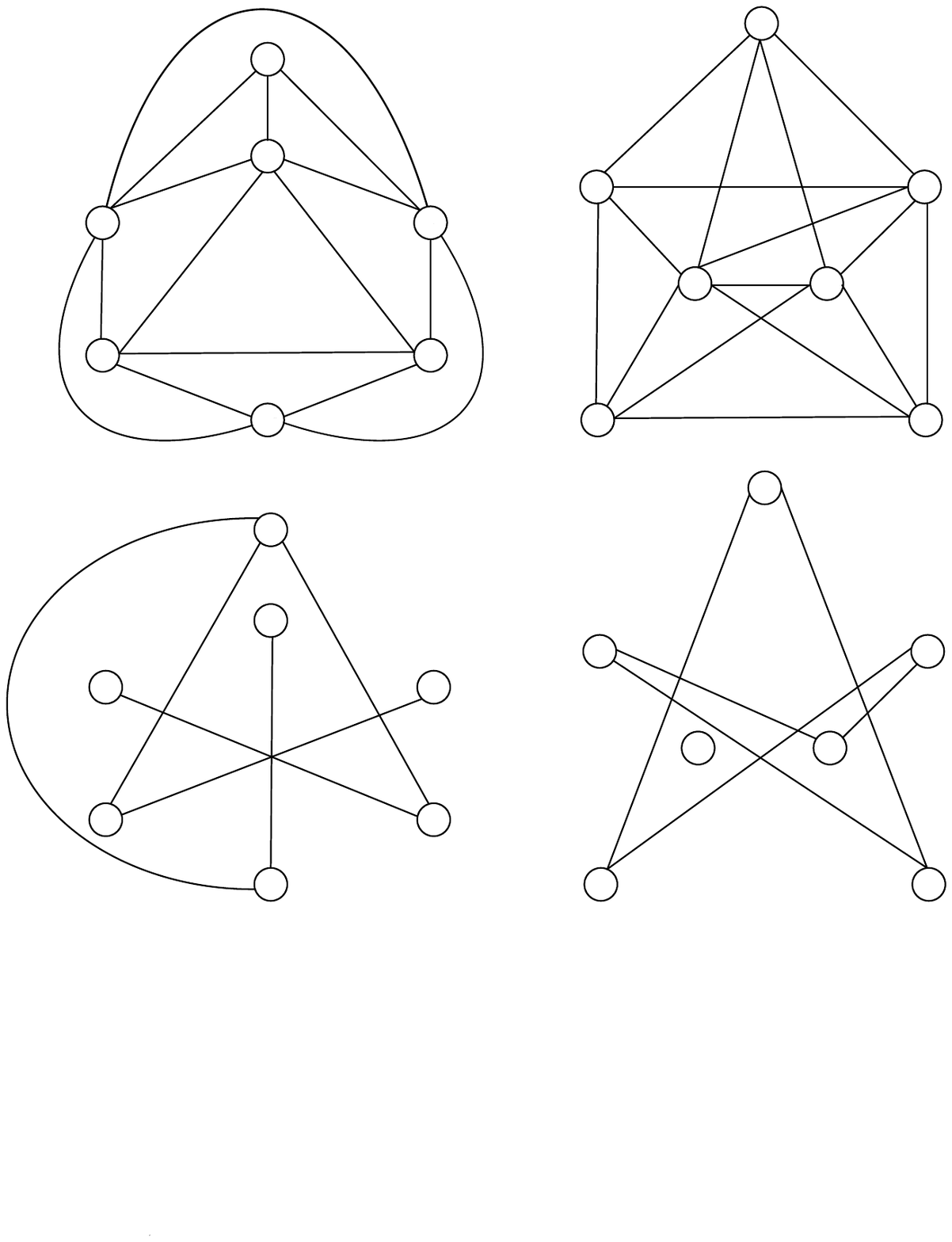}\\
    $G_1$\qquad\qquad\qquad \ \ \ \ ${\ol G_1}$\qquad\qquad\qquad\qquad $G_2$\qquad\qquad\qquad${\ol G_2}$\vspace{-4pt}
    \caption{A pair of $\D$-cospectral graphs $G_1$ and $G_2$ and their complements ${\ol G_1}$ and ${\ol G_2}$, which show that planarity, the degree sequence,  the transmission sequence, and the number of connected components of $\overline{G}$ are not preserved by $\D$-cospectrality.}
    \label{fig:DCompNotPreser}\vspace{-8pt}
\end{figure}

\begin{ex}\label{ex:DQ}
The graphs $G_1$ and $G_2$ in Figure \ref{fig:DQCompNotPreser} are $\DQ$-cospectral with distance signless Laplacian characteristic polynomial $p_{\DQ}(x)=x^5 - 26x^4 + 249x^3 - 1132x^2 + 2480x - 2112$.  The degree sequences of $G_1$ and $G_2$, respectively, are $[1,3,3,3,4]$ and $[2,2,2,4,4]$ and the transmission sequences are $[4, 5, 5, 5, 7]$ and $[4, 4, 6, 6, 6]$. The complement of $G_1$ has two connected components and the complement of $G_2$ has three connected components.
\end{ex}

\begin{figure}[h!]
    \centering
    \includegraphics[scale=.4]{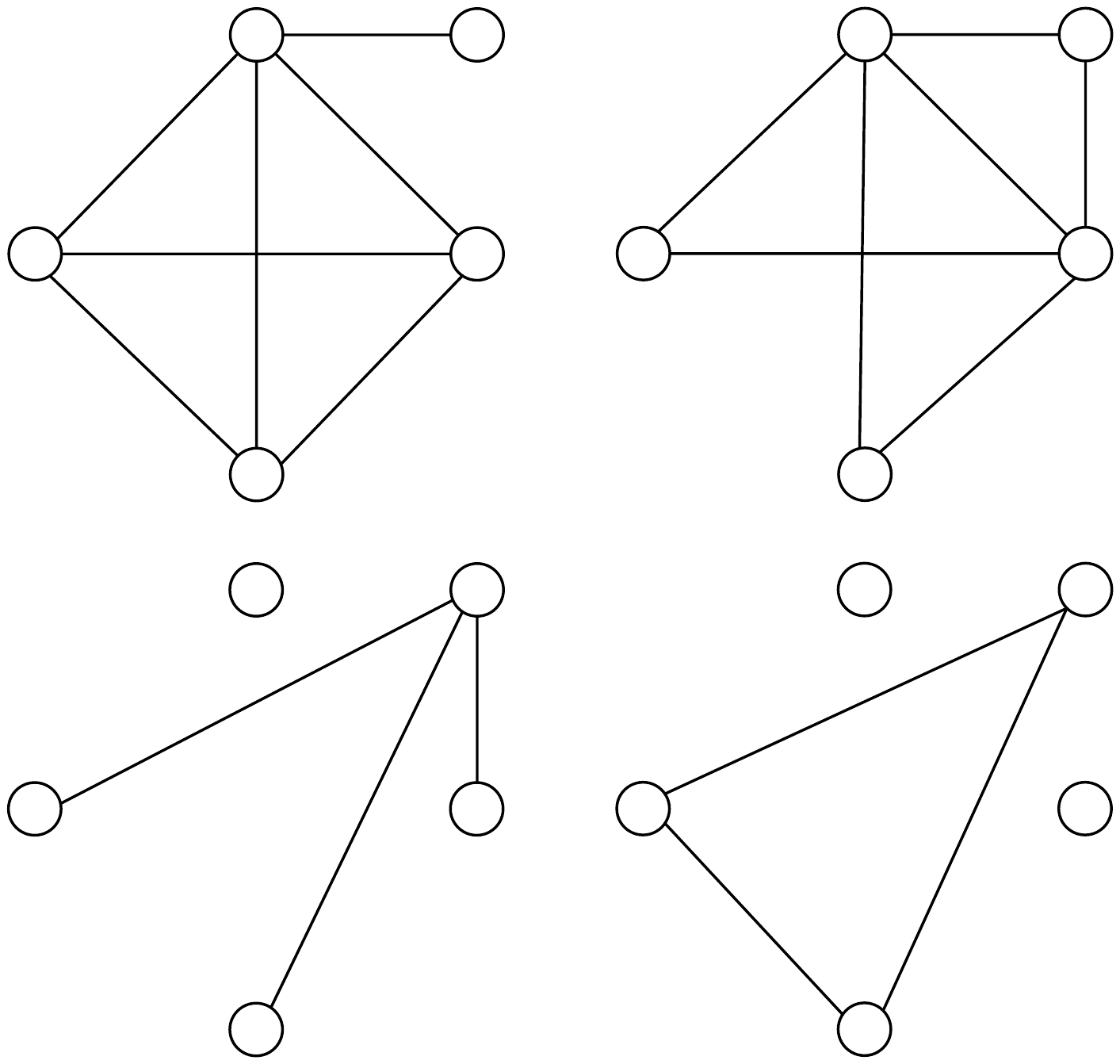}\ \includegraphics[scale=.4]{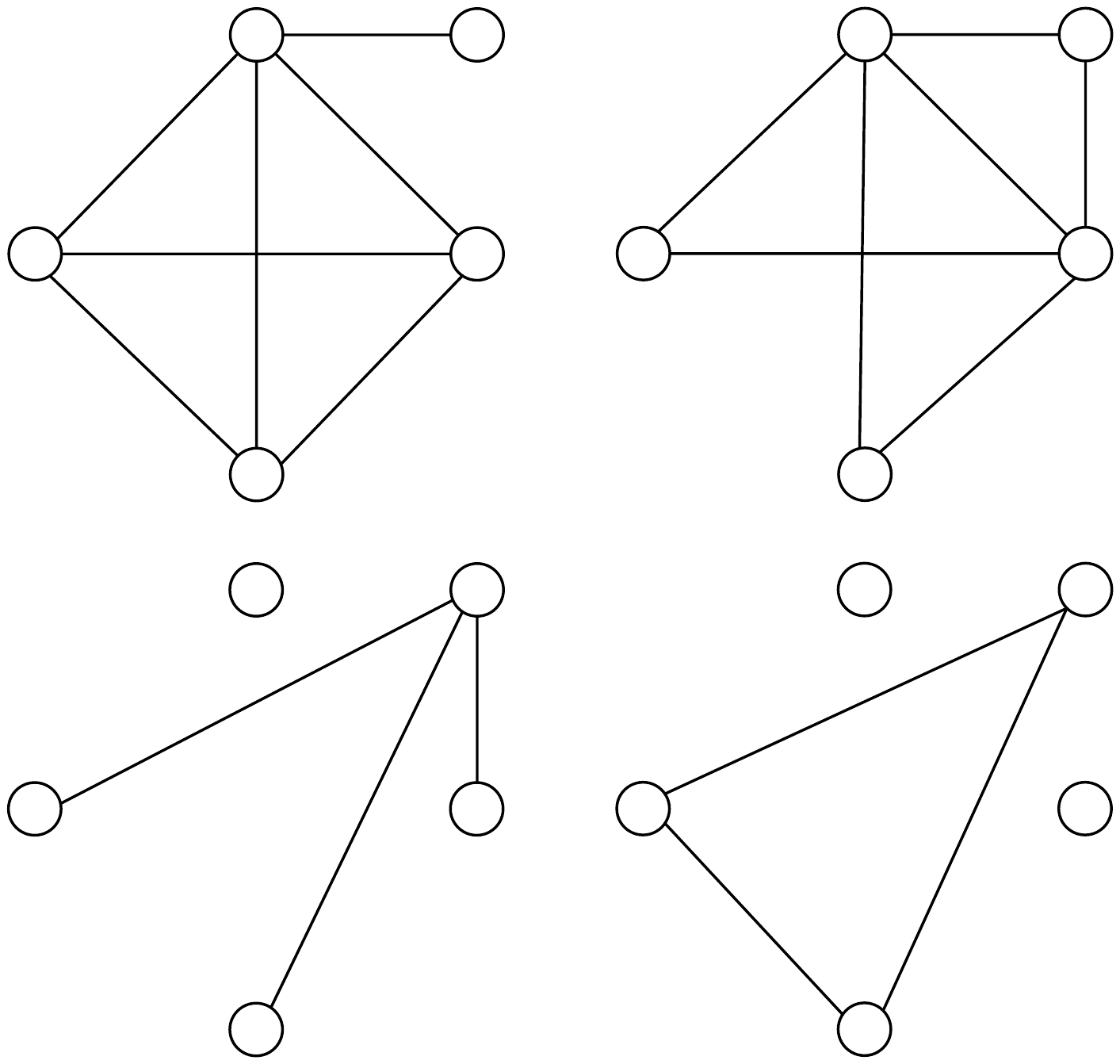}\qquad\includegraphics[scale=.4]{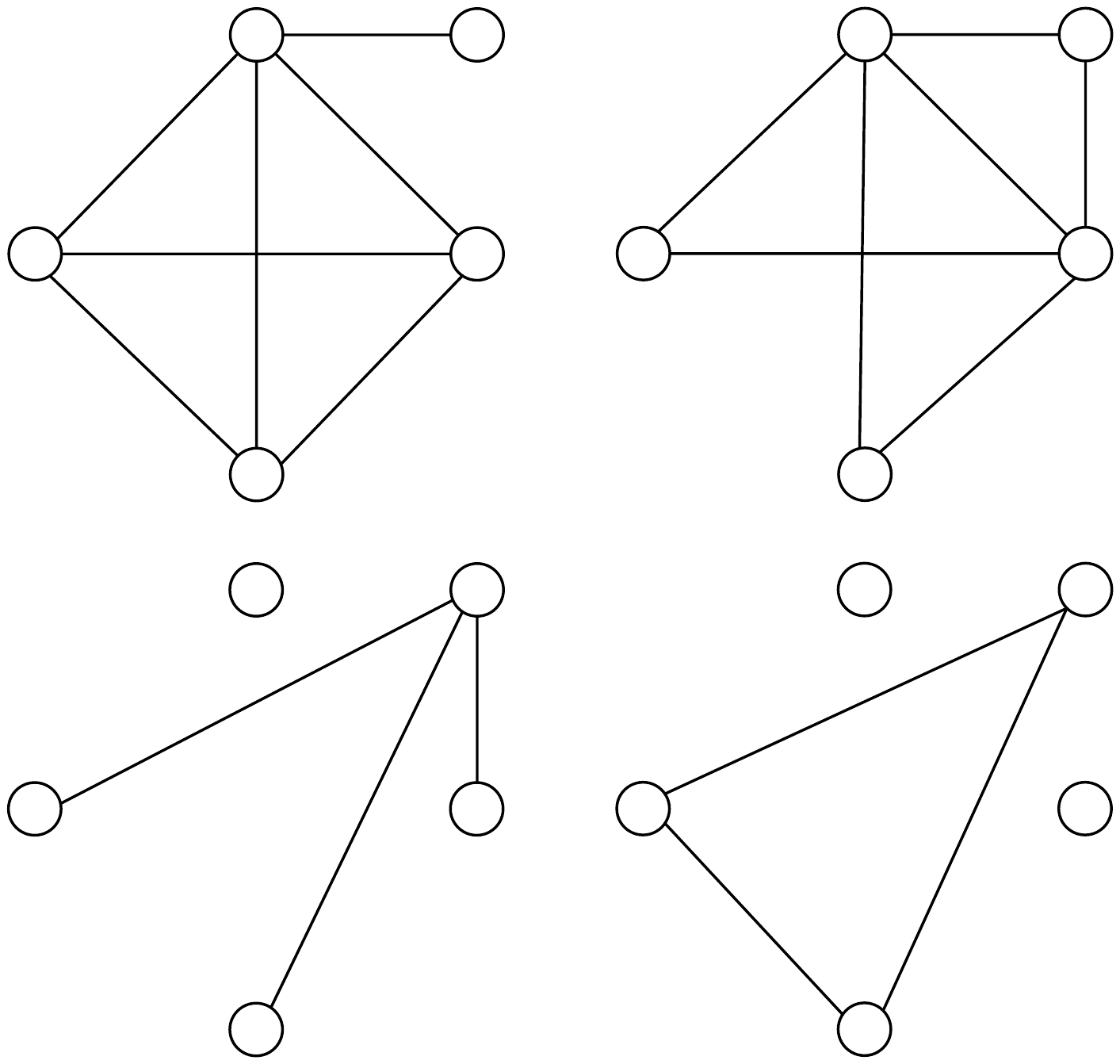}\includegraphics[scale=.4]{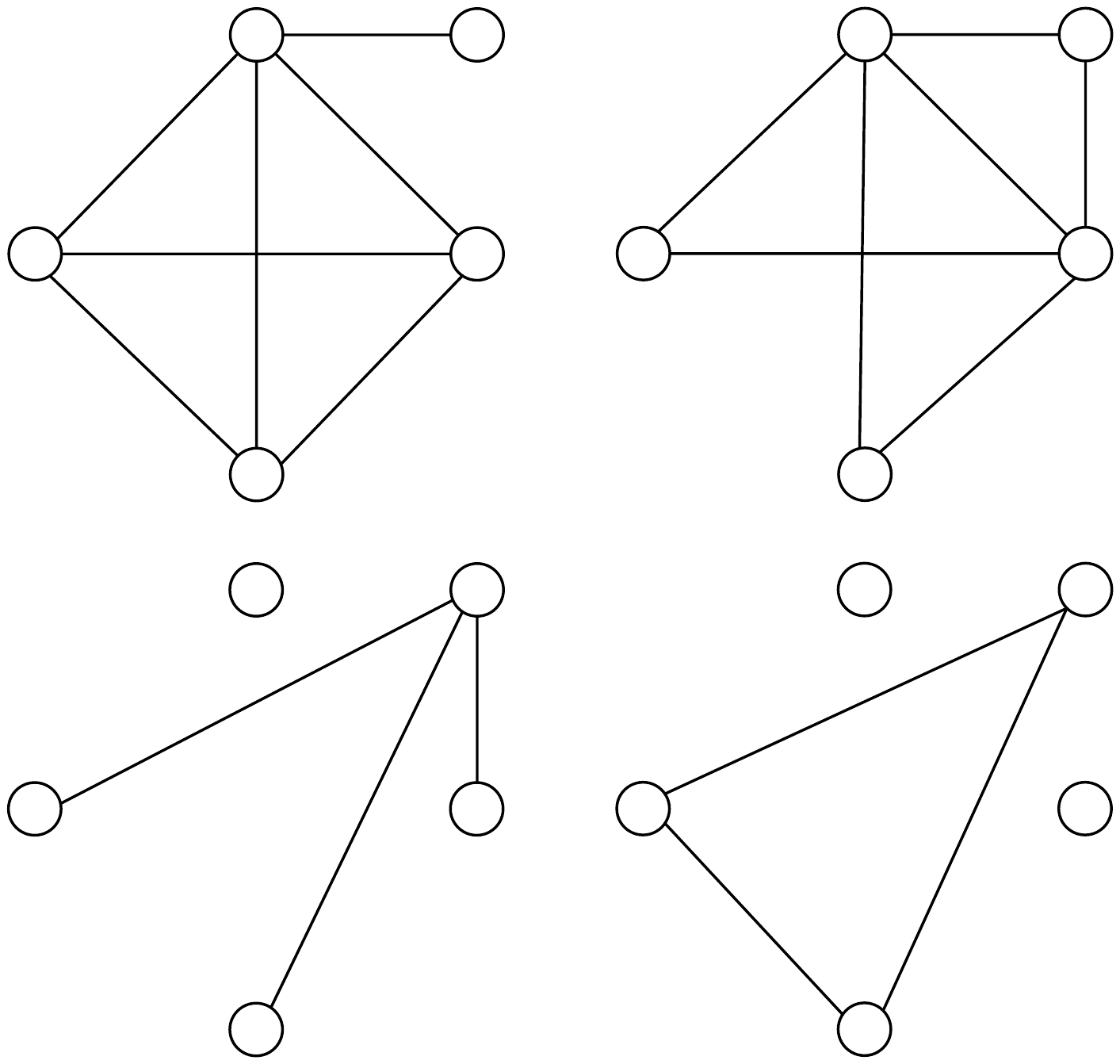}\\
    $G_1$\qquad\qquad\qquad${\ol G_1}$\qquad\qquad\qquad\qquad$G_2$\qquad\qquad\qquad${\ol G_2}$
    \caption{A pair of $\DQ$-cospectral graphs $G_1$ and $G_2$ and their complements ${\ol G_1}$ and ${\ol G_2}$, which show that the degree sequence,  the transmission sequence, and the number of connected components of $\overline{G}$ are not preserved by $\DQ$-cospectrality.}%\vspace{-8pt}
    \label{fig:DQCompNotPreser}
\end{figure}

%\newpage

\begin{ex}\label{ex:NDL}
The graphs $G_1$ and $G_2$ in Figure \ref{fig:NDLCompNotPreser} are $\nDL$-cospectral with normalized distance Laplacian characteristic polynomial $p_{\nDL}(x)=x^{10} - 10x^9 + 222/5x^8 - 2872/25x^7 + 23861/125x^6 -
660126/3125x^5 + 486504/3125x^4 - 230256/3125x^3 + 63504/3125x^2 -
7776/3125x$. The complement of $G_1$ has one connected components and the complement of $G_2$ has five connected components.
\end{ex}

\begin{figure}[h!]
    \centering
    \includegraphics[scale=.55]{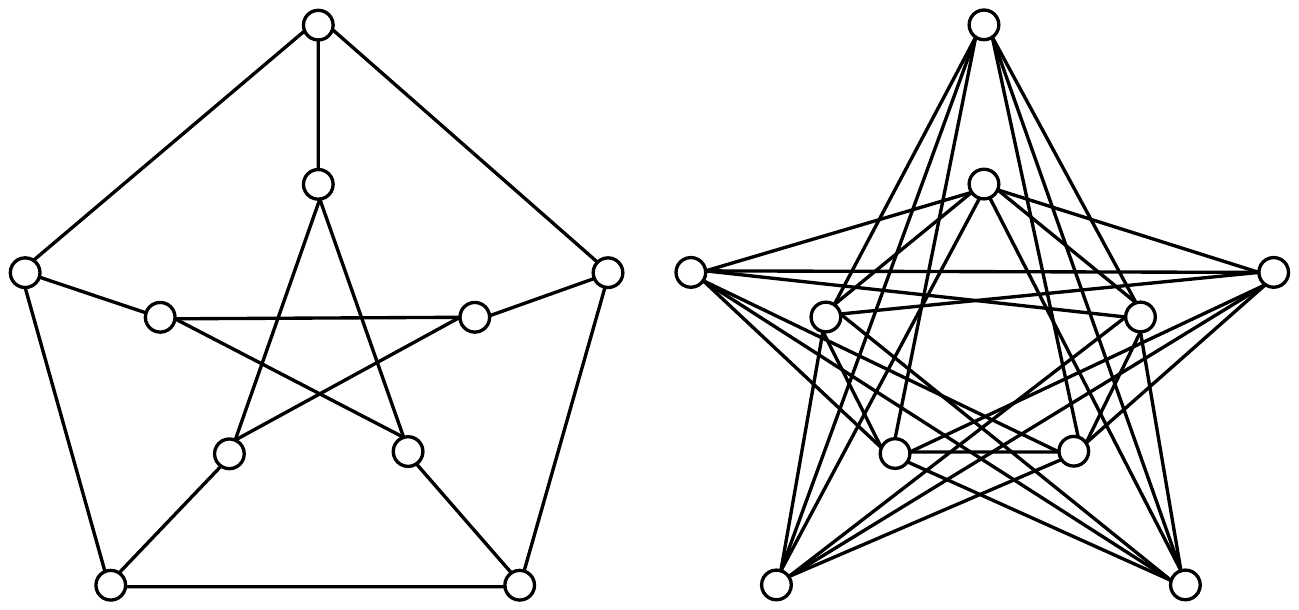}\  \includegraphics[scale=.55]{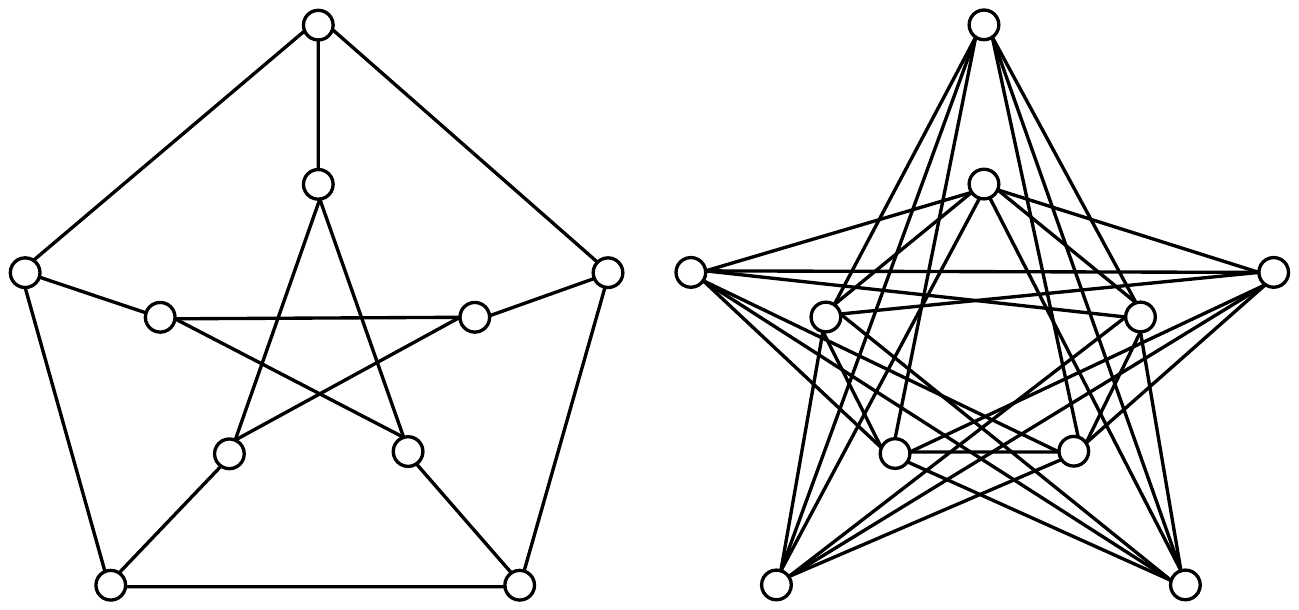}\
    \includegraphics[scale=.25]{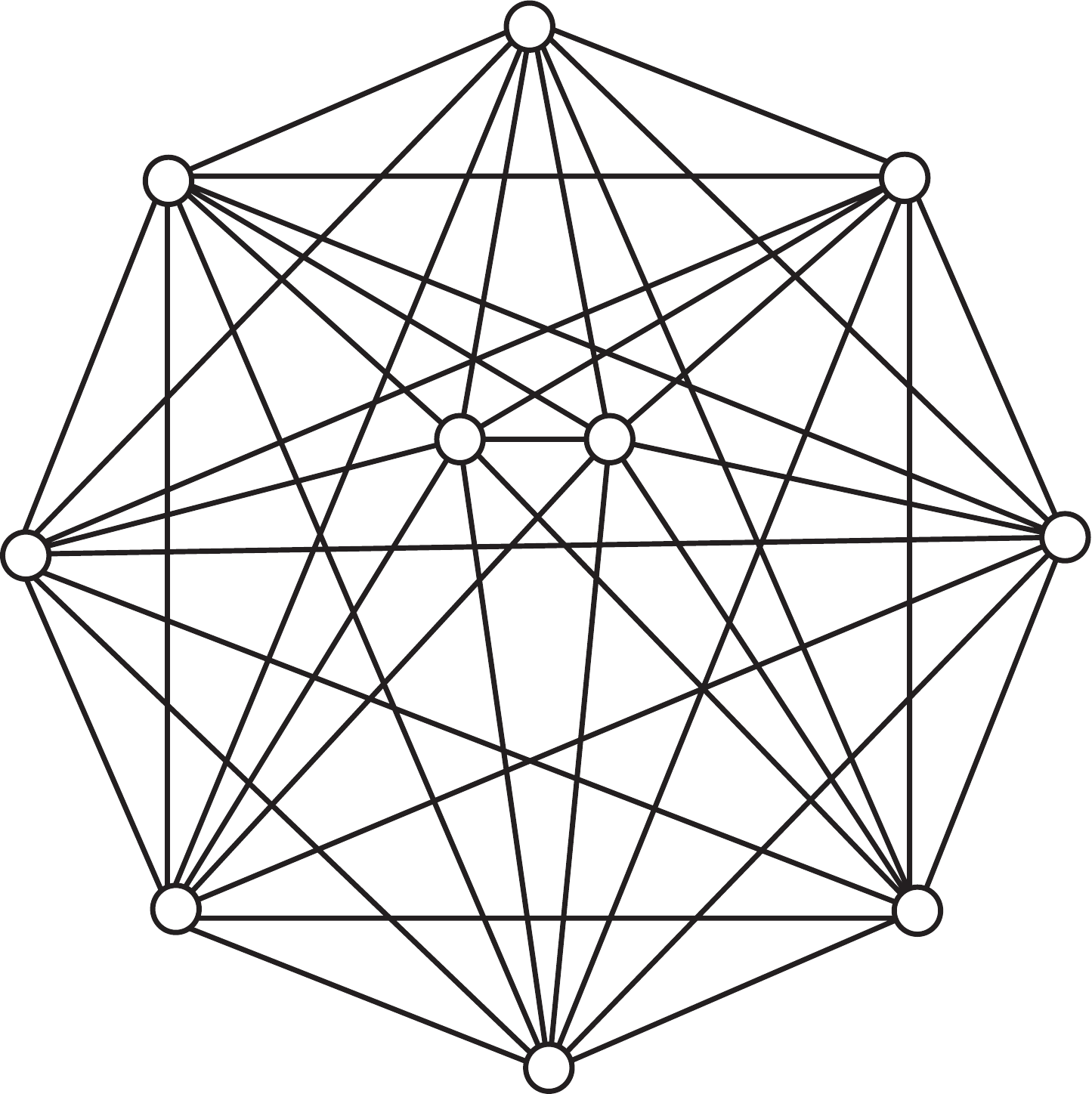}    \includegraphics[scale=.3]{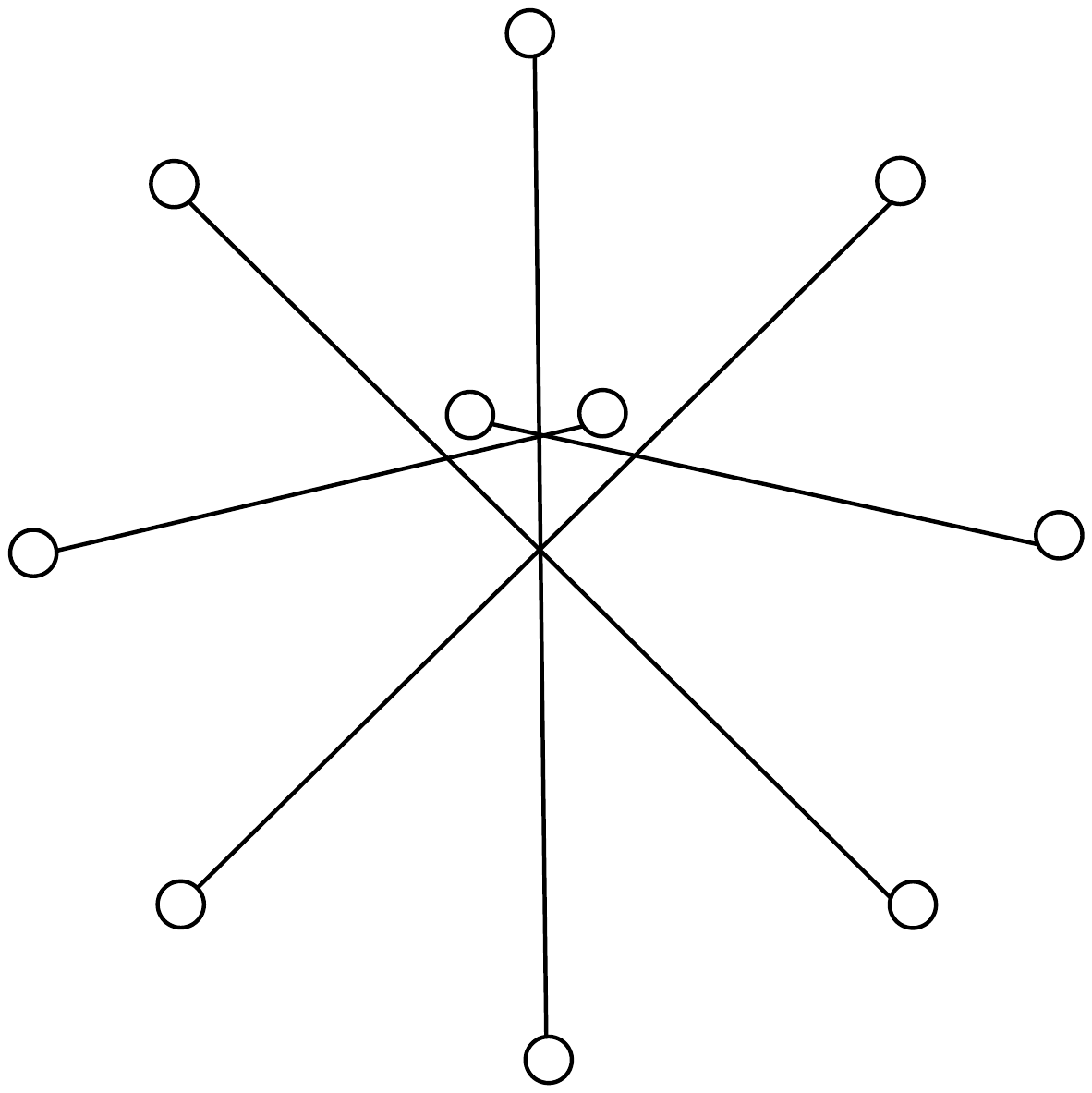}\\
       $G_1$\qquad\qquad\qquad\qquad\qquad${\ol G_1}$\qquad\qquad\qquad\qquad\ \ $G_2$\qquad\qquad\qquad\qquad\qquad${\ol G_2}$
    \caption{A pair of $\nDL$-cospectral graphs  $G_1$ and $G_2$, the Petersen graph and cocktail party graph, respectively, and their complements ${\ol G_1}$ and ${\ol G_2}$, which show that the number of connected components of $\overline{G}$ is not preserved by $\nDL$-cospectrality.}
    \label{fig:NDLCompNotPreser}\vspace{-8pt}
\end{figure}

In \cite{AH18}, it is shown that the property of being transmission regular is preserved by $\DQ$-cospectrality and in \cite{W21}, it is shown that it is not preserved by $\DL$-cospectrality. The proof of preservation by $\DQ$ utilizes Theorem \ref{thm:RadTransReg} and the fact that the trace of $\DQ(G)$ is equal to the sum of the transmissions. A weaker result holds for the distance matrix. In \cite{GRWC16}, it was shown that if two $k$-transmission regular graphs are $\D$-cospectral, then they have the same Wiener index. We observe that this can be improved, since it is immediate that $t(G)=\rho(\D(G))$ for any transmission regular graph $G$.

\begin{rem}
Let $G_1$ and $G_2$ be transmission regular with transmissions $t_1$ and $t_2$ respectively. If $G_1$ and $G_2$ are $\D$-cospectral, then $t_1=\rho\left(\D(G_1)\right)=\rho\left(\D(G_2)\right)=t_2$ and thus $W(G_1)=\frac n 2 t_1=\frac n 2 t_2=W(G_2)$.
\end{rem}

%\begin{proof}Applying Theorem \ref{thm:RadTransReg}, we see $\rho\left(\D(G_1)\right)=t_1$ and $\rho\left(\D(G_2)\right)=t_2$. By $\D$-cospectrality, $\rho\left(\D(G_1)\right)=\rho\left(\D(G_2)\right)$. Finally, $W(G_1)=\frac{n}{2} t_1=\frac{n}{2} t_2=W(G_2)$.\end{proof}

A {\em $\D^*$-cospectral construction} is a process by which $\D^*$-cospectral graphs can be produced. The first such construction was produced for the adjacency matrix by Godsil and McKay, and there has  been much interest in producing such constructions for other matrices.
The first distance cospectral construction was produced by McKay in \cite{M77}. He defined a cospectral construction for trees by identifying the root of one of two particular trees with the root of any rooted tree. For transmission regular graphs, Another distance cospectral construction was given in \cite{GRWC16}, using a lexicographic product of $\D$-cospectral graphs with an independent set or a clique (of course, if $G_1$ and $G_2$ are $\D$-cospectral and $G'$ is regular, then $G_1\lexp G'$ and $G_2\lexp G'$ are $\D$-cospectral by Theorem \ref{dspec-lexprod}). The graphs $G_q=G\lexp {\ol K_q}$ and $G_q^+=G\lexp {K_q}$ are defined in \cite{GRWC16} as obtained from $G$ by replacing vertices in $G$ with cliques and independent sets. In \cite{H17}, Heysse provides two constructions for distance cospectral graphs. One construction uses one of two special graphs and identifies one of their vertices with a vertex in some other graph. The other construction relies on the switching of subgraphs within a larger graph.

In \cite{GRWC18}, a cospectral construction for the distance Laplacian was defined that relied on special properties of a subset of vertices in a graph called {\em cousins}. Let $G$ be a graph of order at least five with $v_1, v_2,v_3, v_4\in V(G)$. Let $C=\{\{v_1,v_2\},$ $\{v_3,v_4\}\}$ and $U(C)=V(G) \setminus \{v_1, v_2, v_3, v_4\}$. Then
$C$ is a  {\em set of cousins} in $G$ if the following conditions are satisfied:
\begin{enumerate}
\item For all $u \in U(C)$, $d_G(u, v_1)=d_G(u, v_2)$ and $d_G(u, v_3)=d_G(u, v_4)$.
\item $\sum_{u \in U(C)} d_G(u, v_1)=\sum_{u \in U(C)} d_G(u, v_3)$.\vspace{-8pt}
\end{enumerate}

\begin{figure}[h!]
    \centering
    \includegraphics{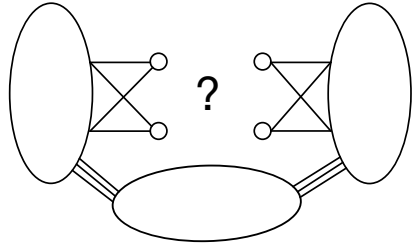}\vspace{-6pt}
    \caption{For vertices that are cousins, the subgraph containing the four vertices could contain any configuration of edges.}
    \label{fig:cousins}\vspace{-4pt}
\end{figure}

Cousins can be considered to be a generalization of twin vertices. Outside of the subset containing vertices $\{v_1, v_2, v_3, v_4\}$, each of the pairs $\{v_1,v_2\}$ and $\{v_3,v_4\}$ is like a pair of twins. However, it is not specified which edges are included in the subgraph on vertices $\{v_1, v_2, v_3, v_4\}$, as visualized in Figure \ref{fig:cousins}. In \cite{GRWC18}, cousins were applied to produce the following $\DL$-cospectral constructions (where $G+e$ denotes $G$ with edge $e$ added).

\begin{thm}{\rm\cite{GRWC18}} \label{thm:CospecCousins}
Let $G$ be a graph with a set of cousins $C=\{\{v_1,v_2\},\{v_3,v_4\}\}$ satisfying the following conditions:
\bit
\item Vertices $v_1, v_2$ are not adjacent and $v_3, v_4$ are not adjacent.
\item The subgraph of $G+v_1v_2$ induced by  $\{v_1,v_2,v_3,v_4\}$ is isomorphic to the subgraph of $G+v_3v_4$  induced by $\{v_1,v_2,v_3,v_4\}$.  
\eit
If $G_1=G+v_1v_2$ and $G_2=G+v_3v_4$  are not isomorphic, then they are  $\DL$-cospectral.
\end{thm}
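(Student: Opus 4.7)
The plan is to construct an orthogonal matrix $P$ with $P^{T}\DL(G_1)P=\DL(G_2)$, giving $\DL$-cospectrality at once. Ordering vertices so that $v_1,v_2,v_3,v_4$ come first, take
\[
P = R \oplus I_{|U(C)|}, \qquad R = \frac{1}{2}\begin{bmatrix} 1 & 1 & 1 & -1 \\ 1 & 1 & -1 & 1 \\ 1 & -1 & 1 & 1 \\ -1 & 1 & 1 & 1 \end{bmatrix}.
\]
The matrix $R$ is a symmetric orthogonal involution which fixes the subspace $W=\operatorname{span}\{(1,1,0,0)^{T},(0,0,1,1)^{T}\}$ pointwise and acts on $W^{\perp}=\operatorname{span}\{(1,-1,0,0)^{T},(0,0,1,-1)^{T}\}$ by swapping the two basis vectors. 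Writing
\[
\DL(G_i) = \begin{bmatrix} A_i & B_i \\ B_i^{T} & C_i \end{bmatrix}
\]
with the $4\times 4$ cousin block first, the required identity decomposes as (i) $C_1=C_2$, (ii) $R^{T}B_1=B_2$, and (iii) $R^{T}A_1R=A_2$.

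The first step is to verify (i) and (ii) using the cousin conditions. For $u,w\in U(C)$, any path in $G_1$ using the new edge $v_1v_2$ has length at least $a_u+1+a_w$, where $a_u=d_G(u,v_1)=d_G(u,v_2)$, while the triangle inequality through $v_1$ gives $d_G(u,w)\le a_u+a_w$. Hence the new edge provides no shortcut and $d_{G_1}(u,w)=d_G(u,w)$. A parallel triangle-inequality argument through $v_1$ (bounding any shortcut path from $u$ to $v_3$ by $a_u+1+d_G(v_2,v_3)$ and comparing to $d_G(u,v_3)\le a_u+d_G(v_1,v_3)$) shows that $d_{G_1}(u,v_j)=d_G(u,v_j)$ for all $u\in U(C)$ and $j\in\{1,2,3,4\}$. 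The same holds in $G_2$, so $B_1=B_2$ and the transmissions of vertices in $U(C)$ all equal $t_G(u)$ in either $G_1$ or $G_2$, yielding $C_1=C_2$. Each column of $B_1$ has the form $(-a_u,-a_u,-b_u,-b_u)^{T}$ with $b_u=d_G(u,v_3)=d_G(u,v_4)$, and hence lies in $W$; since $R$ fixes $W$ pointwise, (ii) holds.

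The core step is (iii). In the orthonormal basis adapted to $W\oplus W^\perp$, the action of $R$ is block-diagonal $I_W\oplus Q$ with $Q$ the $2\times 2$ swap matrix, so (iii) decomposes into: the $W$-blocks of $A_1$ and $A_2$ are equal; the $W$-to-$W^\perp$ block of $A_1$ is sent by $Q$ to that of $A_2$; and $Q$ conjugates the $W^\perp$-block of $A_1$ to that of $A_2$. Each of these reduces to a scalar identity among the transmissions $t_{G_i}(v_j)$ and the within-cousin distances $d_{G_i}(v_j,v_k)$, the latter being minima of the form $\min(d_G(v_j,v_k),\,d_G(v_j,v_1)+1+d_G(v_2,v_k),\,d_G(v_j,v_2)+1+d_G(v_1,v_k))$ in $G_1$ and the analogous minima with the roles of the two pairs swapped in $G_2$. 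The induced-subgraph isomorphism, which must swap $\{v_1,v_2\}\leftrightarrow\{v_3,v_4\}$ while preserving the common edge set of $G[\{v_1,v_2,v_3,v_4\}]$, together with the cousin sum identity $\sum_u a_u=\sum_u b_u$, supplies exactly the symmetries needed for these identities to hold.

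The main obstacle is this last verification. Each within-cousin distance $d_{G_i}(v_j,v_k)$ is the minimum of several competing paths, and whether the added edge actually provides a shortcut depends on the detailed distance structure of $G$, so the argument will require a modest case analysis over which minima are attained. I expect the induced-subgraph isomorphism to handle all adjacency-level symmetries within $\{v_1,v_2,v_3,v_4\}$, while the cousin sum condition is the single global identity needed to reconcile the transmissions across the two graphs; the remaining identities should then follow by direct checking in each case.
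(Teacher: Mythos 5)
Your reduction to (i) $C_1=C_2$, (ii) $R^{T}B_1=B_2$, (iii) $R^{T}A_1R=A_2$ is sound, and your verification of (i) and (ii) — that adding $v_1v_2$ (resp.\ $v_3v_4$) changes no distance involving a vertex of $U(C)$, so the columns of $B_i$ lie in $W$ and are fixed by $R$ — is correct. The gap is in (iii), which you never actually verify and which is false for your fixed choice of $R$. Concretely, let $V(G)=\{v_1,v_2,v_3,v_4,u_1,u_2,u_3\}$ with edges $v_1v_3$, $u_1v_1$, $u_1v_2$, $u_2v_3$, $u_2v_4$, $u_1u_2$, and $u_3$ adjacent to all of $v_1,v_2,v_3,v_4$. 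Then $\{\{v_1,v_2\},\{v_3,v_4\}\}$ is a set of cousins (each $u_i$ is equidistant from $v_1,v_2$ and from $v_3,v_4$, and $\sum_u d(u,v_1)=1+2+1=4=2+1+1=\sum_u d(u,v_3)$), the pairs $v_1,v_2$ and $v_3,v_4$ are non-adjacent, and the induced subgraphs of $G_1$ and $G_2$ on $\{v_1,\dots,v_4\}$ are both $P_3$ plus an isolated vertex, hence isomorphic. All distances in $G$ are at most $2$, so the only distance changed by adding an edge is the one between its endpoints, giving $A_1=\operatorname{diag}(8,9,9,10)-D_1$ and $A_2=\operatorname{diag}(9,10,8,9)-D_2$ with the evident $4\times4$ distance blocks; a direct computation then yields $(R^{T}A_1R)_{11}=10\neq 9=(A_2)_{11}$.

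The source of the failure is your assertion that the induced-subgraph isomorphism ``must swap $\{v_1,v_2\}\leftrightarrow\{v_3,v_4\}$'': in the example every isomorphism sends the isolated vertex $v_4$ to the isolated vertex $v_2$ and the path-center $v_1$ to the path-center $v_3$, so no isomorphism carries the pair $\{v_1,v_2\}$ onto the pair $\{v_3,v_4\}$. The strategy is salvageable — here the variant $R'$ obtained from $R$ by exchanging rows and columns $3$ and $4$ (equivalently, acting on $W^{\perp}$ by $g_1\mapsto -g_2$, $g_2\mapsto -g_1$) still fixes $W$ pointwise and does satisfy $R'^{T}A_1R'=A_2$, so $R'\oplus I$ gives the orthogonal similarity for this instance — but a complete proof must (a) specify how the correct pairing and signs on $W^{\perp}$ are determined from the data, and (b) actually establish the resulting scalar identities among the $t_{G_i}(v_j)$ and $d_{G_i}(v_j,v_k)$, including the case analysis over which terms attain the minima you wrote down. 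That verification is the substance of the theorem, and it is exactly the part your proposal defers.
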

\begin{thm}{\rm\cite{GRWC18}} \label{thm:CospecCousins2}
Let $G$ be a graph with a set of cousins $C=\{\{v_1, v_2\}, \{v_3, v_4\}\}$ satisfying the following conditions:\vspace{-5pt}
\bit
\item Vertices $v_1, v_3$ are not adjacent and $v_2, v_4$ are not adjacent.
\item The subgraph of $G+v_1v_3$ induced by $\{v_1,v_2,v_3,v_4\}$  is isomorphic to the subgraph of $G+v_2v_4$ induced by $\{v_1,v_2,v_3,v_4\}$  via the permutation $\sigma_1=(14)(23)$.
\item For every  $x \in N(v_1) \cap N(v_2)$ there exists $y\in N(v_3) \cap N(v_4)$ such that $xy \in E(G)$, and for every  $y \in N(v_3) \cap N(v_4)$ there exists $x\in N(v_1) \cap N(v_2)$ such that $xy \in E(G)$.
\eit
% via the permutation $\sigma_1=(14)(23)$ 
If $G_1=G+v_1v_3$ and $G_2=G+v_2v_4$  are not isomorphic, then they are  $\DL$-cospectral.
\end{thm}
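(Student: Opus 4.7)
The plan is to prove $\DL$-cospectrality by exhibiting an orthogonal similarity $Q^{T}\DL(G_1)Q = \DL(G_2)$ for an orthogonal matrix $Q$ acting as the identity on the coordinates indexed by $U(C)$ and as some orthogonal map on the four-dimensional subspace indexed by $\{v_1,v_2,v_3,v_4\}$. Since the hypothesis forces $G_1\not\cong G_2$, the block of $Q$ on $\{v_1,v_2,v_3,v_4\}$ cannot be a permutation matrix: it must be a genuine orthogonal transformation, and the three cousins conditions are precisely what permit such a nontrivial conjugation.

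First, set $a_u = d_G(u,v_1) = d_G(u,v_2)$ and $b_u = d_G(u,v_3) = d_G(u,v_4)$ for $u\in U(C)$, which is legitimate by the first cousins condition. A direct case analysis then shows $d_{G_1}(u,w)=d_{G_2}(u,w)$ for all $u,w\in U(C)$, since any shortcut in $G_1$ through the new edge $v_1v_3$ has length $\min(a_u+1+b_w,\,b_u+1+a_w)$, while the analogous shortcut in $G_2$ through $v_2v_4$ has the same length. Next, compute the $U(C)\times\{v_1,v_2,v_3,v_4\}$ block: in $G_1$, $d_{G_1}(u,v_1)=\min(a_u,b_u+1)$ and $d_{G_1}(u,v_3)=\min(b_u,a_u+1)$, while $d_{G_1}(u,v_2)$ and $d_{G_1}(u,v_4)$ are determined by $a_u$, $b_u$, and detours through the new edge; the corresponding formulas in $G_2$ are obtained by interchanging $\{v_1,v_3\}$ with $\{v_2,v_4\}$. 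Finally, analyze the $4\times 4$ block on $\{v_1,v_2,v_3,v_4\}$, using Condition 2 (subgraph isomorphism via $\sigma_1=(14)(23)$) to control the distances within the block and Condition 3 (matched common neighbors $x\in N(v_1)\cap N(v_2)$, $y\in N(v_3)\cap N(v_4)$ with $xy\in E(G)$) to bound the cross-pair distances by the length-$3$ path $v_i\to x\to y\to v_j$ that is available in both $G_1$ and $G_2$.

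The main obstacle is synthesizing these three block computations into the required orthogonal $Q$. The naive attempt uses the $4\times 4$ permutation matrix for $\sigma_1$, which would succeed only when the neighborhoods of $\{v_1,v_2,v_3,v_4\}$ in $U(C)$ match under $\sigma_1$; this is exactly the situation excluded by the non-isomorphism hypothesis. The resolution is to replace the permutation by an orthogonal reflection that reproduces the column symmetry of the $U(C)\times\{v_1,v_2,v_3,v_4\}$ block identified above while still implementing $\sigma_1$ on the $4\times 4$ block, with the diagonal transmissions $t_{G_1}(v_i)$ and $t_{G_2}(v_{\sigma_1(i)})$ aligning as a consequence of the second cousins condition applied to total distances. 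A concluding block multiplication then verifies $Q^{T}\DL(G_1)Q = \DL(G_2)$ and yields the stated cospectrality.
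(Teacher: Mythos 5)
Your overall strategy --- an orthogonal similarity $Q^{T}\DL(G_1)Q=\DL(G_2)$ with $Q$ equal to the identity on the coordinates of $U(C)$ and a nontrivial orthogonal block on $\{v_1,v_2,v_3,v_4\}$ --- is the right one (and is the approach of \cite{GRWC18}, to which this survey defers for the proof; no proof is reproduced here). But as written your argument has a genuine gap: the matrix $Q$ is never actually exhibited. The entire content of the theorem lives in the choice of the $4\times 4$ orthogonal block $R$ and in the verification that conjugation by $I\oplus R$ simultaneously (i) fixes the $U(C)\times U(C)$ block, (ii) carries each row of the $U(C)\times\{v_1,v_2,v_3,v_4\}$ block of $\DL(G_1)$ to the corresponding row for $\DL(G_2)$, and (iii) carries the $4\times 4$ corner of $\DL(G_1)$, including the diagonal transmissions, to that of $\DL(G_2)$. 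Describing $R$ as ``an orthogonal reflection that reproduces the column symmetry \dots\ while still implementing $\sigma_1$ on the $4\times 4$ block'' is a specification of what $R$ must accomplish, not a construction of $R$; whether a single orthogonal matrix can do all three jobs at once is precisely the theorem, and it is exactly where the second and third bulleted hypotheses must be used quantitatively.

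The gap is compounded by the fact that the ``column symmetry'' you invoke is not established. In $G$ the rows of the $U(C)\times\{v_1,v_2,v_3,v_4\}$ block have the form $(a_u,a_u,b_u,b_u)$ by the first cousin condition, but after adding $v_1v_3$ this pairwise constancy can break: $d_{G_1}(u,v_1)$ may drop to $b_u+1<a_u$ while $d_{G_1}(u,v_2)$ does not, since a detour to $v_2$ through the new edge costs an additional $d_{G_1}(v_1,v_2)$. So the precise form of these rows --- and hence the linear constraints that $R$ must satisfy for all $u$ simultaneously --- requires a case analysis you have not carried out; the third bulleted hypothesis (matched common neighbors joined by an edge) is what bounds the relevant cross distances and tames these cases, but you only gesture at it. Likewise, the assertion that the diagonal entries align ``as a consequence of the second cousins condition'' needs an actual computation: that condition says $\sum_{u\in U(C)}d_G(u,v_1)=\sum_{u\in U(C)}d_G(u,v_3)$, and one must track how each such sum changes under the edge addition before concluding $t_{G_1}(v_i)=t_{G_2}(v_{\sigma_1(i)})$. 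Until $R$ is written down and the three block identities are verified, this is a plausible plan rather than a proof.
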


It is shown in $\cite{R20}$ that these cousin constructions do not directly extend to the normalized distance Laplacian. However, many $\nDL$-cospectral pairs do contain cousins, so it may be possible to produce a $\nDL$-cospectral construction using cousins given the appropriate additional conditions. In \cite{L20}, Lorenzen extended the application of cousins  by the relaxing the definition. In her paper, using this relaxed definition and given certain special conditions, cospectral constructions are found for the adjacency matrix, combinatorial Laplacian matrix, signless Laplacian matrix, normalized Laplacian matrix, and distance matrix.

%%%%%%%%%%%%%%%%%%%%%%%%%%%
\section{Digraphs}\label{s:dig} %Carolyn

A {\em digraph} $\dig = (V(\dig),E(\dig))$ consists of a  set  $V(\dig) = \{v_1, \dots, v_n\}$ of vertices and a set  $E(\dig)$ of ordered pairs of distinct vertices $(v_i, v_j)$ called {\em arcs} (what we define as  a digraph is sometimes called a {\em simple} digraph because it does not allow {\em loops}, i.e., arcs of the form $(v_i,v_i)$). An arc $(v_i,v_j)$ is called {\em doubly directed} if both $(v_i,v_j)$ and $(v_j,v_i)$ are in $E(\dig)$ and a digraph $\dig$ is {\em doubly directed} if every arc of $\dig$ is   doubly directed.  For a graph $G$,  $\overleftrightarrow{G}$ is the doubly directed digraph obtained from $G$ by replacing every edge $uv$ by the two arcs $(u,v)$ and $(v,u)$;   $\overleftrightarrow{K_n}$ is a {\em complete digraph}.    For $n\ge 1$, a dipath $\overrightarrow{P_n}$ is a digraph with vertex set $V(\overrightarrow{P_n}) = \{v_1, \dots, v_n\}$ and arc set $E(\overrightarrow{P_n}) = \{(v_1,v_2), (v_2,v_3),\dots,(v_{n-1},v_n)\}$. For $n\ge 2$, a dicycle $\overrightarrow{C_n}$ is a digraph with vertex set $V(\overrightarrow{C_n}) = \{v_1, \dots, v_n\}$ and arc set $E(\overrightarrow{C_n}) = \{(v_1,v_2), (v_2,v_3),\dots,(v_{n-1},v_n),(v_n,v_1)\}$.
A digraph is {\em $k$-out-regular} if every vertex $u$ has {\em out-degree} $k$, i.e., there are $k$ arcs of the form $(u,v_j)$;  {\em $k$-in-regular} is defined by replacing the arc $(u,v_j)$ by the arc $(v_j,u)$. A digraph that is both $k$-out-regular and $k$-in-regular is said to be {\em $k$-regular}. 

A digraph $\dig$ is {\em strongly connected} if for every ordered pair of vertices $v_i,v_j$, there is a dipath from $v_i$ to $v_j$  in $\dig$.  In a strongly connected digraph $\dig$ with $v_i,v_j\in V(\dig)$, the \emph{distance} from $v_i$ to $v_j$, denoted by  $d(v_i,v_j)$, is the minimum length (number of arcs) in a dipath starting at  $v_i$ and ending at $v_j$. Unless otherwise stated, all digraphs considered here are assumed to be strongly connected, so that all distances are defined. Observe that it is possible that $d(v_i,v_j)\ne d(v_j,v_i)$ in a digraph. The {\em diameter} of $\dig$, denoted by $\diam(\dig)$, is the maximum distance from one vertex to another in $\dig$ and the {\em girth} $g(\dig)$ is the length (number of arcs) of the shortest dicycle in $\dig$. 

The {\em transmission} of a vertex $u\in V(\dig)$, denoted by $t(u)$, is the sum of the distances from $u$ to all  vertices, i.e. $t(u)=\sum_{v_i\in V(\dig)} d(u,v_i)$. This value is also sometimes called the {\em out-transmission} of $u$; the {\em in-transmission} of  $u$ is $\sum_{v_i\in V(\dig)} d(v_i,u)$. 
A digraph $\dig$ is  {\em transmission regular} or {\em $t$-transmission regular}  if every vertex has transmission $t$. In this case, the common value of the transmission of a vertex is denoted by $t(\dig)$.
A transmission regular digraph is also called {\em out-transmission regular};   $\dig$ is {\em in-transmission regular} or {\em $t$-in-transmission regular}  if every vertex has in-transmission $t$. It is not necessary for a digraph  to be in-transmission regular to be considered transmission regular (this differs from the definition of regular digraphs). There exist digraphs that are out-transmission regular but not in-transmission regular (see Figure \ref{fig:out-not-in} and Example \ref{ex:out-not-in}).  
 \vspace{-5pt}

\begin{figure}[h!]
    \centering
\scalebox{.6}{\includegraphics{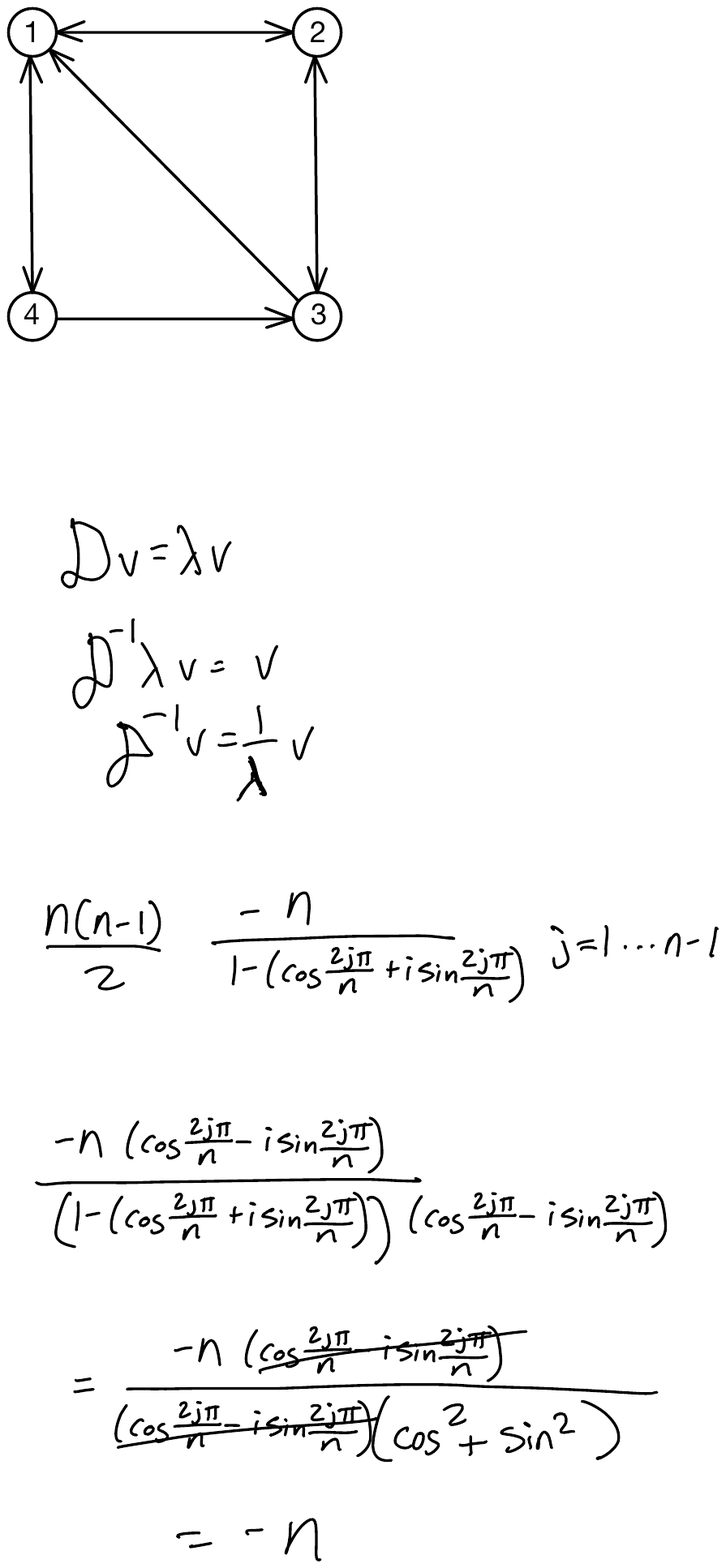}    }\vspace{-5pt}
    \caption{A digraph $\dig_4$ that is out-transmission regular but not in-transmission regular \cite{CCHR20}.
    \label{fig:out-not-in}}\vspace{-5pt}
\end{figure}

The {\em distance matrix} of a strongly connected digraph $\dig$ is  $\D(\dig)=[d(v_i,v_j)]$, i.e.,  the $n \times n$ matrix with $(i,j)$ entry equal to $d(v_i,v_j)$ \cite{GHH77}. %\marginpar{\red need cites}
The distance signless Laplacian, distance Laplacian, and normalized distance Laplacian can be defined analogously to graphs using the distance matrix, i.e., if $T(\dig)$  is the diagonal matrix with $t(v_i)$  as the $i$-th diagonal entry, then   $\DQ(\dig) = T(\dig) + \D(\dig)$ \cite{LMW17}, $\DL(\dig) = T(\dig)  - \D(\dig)$ \cite{CCHR20}, and  $\nDL(G)=\sqrt{T(G)}^{-1}\DL(G)\sqrt{T(G)}^{-1}$.   Equation \eqref{eq:tr-equiv} holds for transmission regular digraphs. Note that unlike for graphs, the distance matrix of a digraph and its variants need not be symmetric, and the eigenvalues can be complex (nonreal); we do not order the eigenvalues of the various distance matrices of a digraph but do use the same notation as for graphs.  

\begin{obs} If $\D^*$ is one of $\D, \DQ, \DL$, or $\nDL$, then $\D^*(G)=\D^*(\overleftrightarrow{G})$.  Thus any graph may be viewed as a doubly directed digraph.

\begin{rem} As is the case for graphs, $0$ is an eigenvalue of $\DL(\dig)$ and $\nDL(\dig)$.  This is immediate for $\DL(\dig)$ because the all ones vector $\bone$ is an eigenvector for $0$.
It is well known that $\spec(AB)=\spec(BA)$ for $A, B\in\Rnn$ \cite[Theorem 2.8]{Zhang}, and thus $\spec(\nDL(\dig))=\spec(T(\dig)^{-1}\DL(\dig))$. Since $\bone$ is an eigenvector of $T(\dig)^{-1}\DL(\dig)$ for $0$, we see that $0$ is an eigenvalue of $\nDL$. %Applying Ger\v sgorin's Disk Theorem to $\spec(T(\dig)\DL(\dig))$ shows $\rho(\nDL(\dig))\le 2$.
\end{rem}

\begin{rem} Recall that for graphs, the eigenvalues are all are nonnegative for the distance signless Laplacian, the distance Laplacian matrix and the normalized distance Laplacian.  By Ger\v sgorin's Disk Theorem, $\re(\dqev_i)\ge 0$ and $\re(\dlev_i)\ge 0$, where $\re(c)$ denotes the real part of a complex number $c$.  Since $\spec(\nDL(\dig))=\spec(T(\dig)\DL(\dig))$, applying Ger\v sgorin's Disk Theorem to $\spec(T(\dig)\DL(\dig))$ shows   that $\re(\ndlev_i)\ge 0$.
\end{rem} 

\end{obs}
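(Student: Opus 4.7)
The plan is to apply Ger\v sgorin's Disk Theorem directly to each of the three matrices, using the key observation that for each of $\DQ(\dig)$, $\DL(\dig)$, and $T(\dig)^{-1}\DL(\dig)$, the $i$th diagonal entry is exactly equal to the $i$th punctured absolute row sum. That common value forces every Ger\v sgorin disk to be tangent to the imaginary axis and otherwise lie in the closed right half-plane, from which the nonnegativity of real parts follows.

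First I would handle $\DQ(\dig)$. Its $(i,i)$ entry is $t(v_i)\ge 0$, and the off-diagonal entries in row $i$ are the distances $d(v_i,v_j)\ge 0$, whose absolute sum is $r'_i(\DQ(\dig))=\sum_{j\ne i}d(v_i,v_j)=t(v_i)$. Hence the $i$th Ger\v sgorin disk is
\[
D_i(\DQ(\dig))=\{\zeta\in\C:|\zeta-t(v_i)|\le t(v_i)\},
\]
which is contained in $\{\zeta:\re(\zeta)\ge 0\}$. Since the spectrum of $\DQ(\dig)$ lies in the union of these disks, $\re(\dqev_i)\ge 0$ for each eigenvalue. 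The argument for $\DL(\dig)=T(\dig)-\D(\dig)$ is identical: the diagonal entry in row $i$ is $t(v_i)$ and the punctured absolute row sum is again $\sum_{j\ne i}d(v_i,v_j)=t(v_i)$, so the same disk argument yields $\re(\dlev_i)\ge 0$.

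For $\nDL(\dig)$, I would use the identity $\spec(\nDL(\dig))=\spec(T(\dig)^{-1}\DL(\dig))$ (which follows from $\spec(AB)=\spec(BA)$ applied to $A=\sqrt{T(\dig)}^{-1}$ and $B=\DL(\dig)\sqrt{T(\dig)}^{-1}$, as noted in the preceding remark) and then apply Ger\v sgorin to $T(\dig)^{-1}\DL(\dig)$. The $(i,j)$ entry of this matrix is $t(v_i)^{-1}(\DL(\dig))_{ij}$, so the $i$th diagonal entry is $t(v_i)^{-1}\cdot t(v_i)=1$ and the $i$th punctured absolute row sum is $t(v_i)^{-1}\sum_{j\ne i}d(v_i,v_j)=1$. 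Thus every eigenvalue of $T(\dig)^{-1}\DL(\dig)$, and hence of $\nDL(\dig)$, lies in the disk $\{\zeta:|\zeta-1|\le 1\}\subseteq\{\zeta:\re(\zeta)\ge 0\}$, giving $\re(\ndlev_i)\ge 0$.

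The only genuine subtlety is the $\nDL$ case, where one must be careful that $\nDL(\dig)=\sqrt{T(\dig)}^{-1}\DL(\dig)\sqrt{T(\dig)}^{-1}$ is not a similarity transformation in the digraph setting and that $T(\dig)$ need not commute with $\DL(\dig)$; the spectral identity via $\spec(AB)=\spec(BA)$ bypasses this cleanly. Everything else is a direct computation of Ger\v sgorin disks, so I anticipate no real obstacle beyond recording that the transmissions $t(v_i)$ are strictly positive (which uses strong connectedness of $\dig$, ensuring $T(\dig)^{-1}$ exists).
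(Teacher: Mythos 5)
Your proposal is correct and follows exactly the route the paper intends: Ger\v sgorin disks for $\DQ(\dig)$, $\DL(\dig)$, and the row-scaled matrix $T(\dig)^{-1}\DL(\dig)$, each of whose $i$th disk is centered at its $i$th diagonal entry with radius equal to that same value, hence contained in the closed right half-plane; you have simply filled in the computations the remark leaves implicit, and you correctly read $T(\dig)^{-1}\DL(\dig)$ where the paper's second remark has an apparent typo ($T(\dig)\DL(\dig)$). The only microscopic quibble is that your choice $A=\sqrt{T(\dig)}^{-1}$, $B=\DL(\dig)\sqrt{T(\dig)}^{-1}$ yields $BA=\DL(\dig)T(\dig)^{-1}$ rather than $T(\dig)^{-1}\DL(\dig)$, but these two products are themselves cospectral by the same identity (and the paper's first remark already records the needed equality), so nothing is lost.
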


Many of the results in the remainder of this section mirror similar results for graphs. However, the techniques used to prove them are frequently different.

\subsection{Techniques}\label{ss:DigTech}

 Since  the real matrices $\D(G)$, $\DL(G)$, $\DQ(G)$, and $\nDL(G)$ are symmetric for graphs, each has real eigenvalues and a basis of orthonormal eigenvectors. These facts are often used in proof techniques. However, the matrices $\D(\dig)$, $\DL(\dig)$, $\DQ(\dig)$, and $\nDL(\dig)$,  are not necessarily symmetric, may have nonreal eigenvalues, and do not always have a basis of eigenvectors.
Instead, proofs often use the JCF. 
The {\em Jordan Canonical Form (JCF)} of a square matrix $A$, denoted by $JCF(A)$, is an upper triangular matrix with non-zero entries only on the diagonal and superdiagonal, made up of Jordan blocks. Each {\em Jordan block} has every entry on the superdiagonal equal to 1 and diagonal entries equal to an eigenvalue $\lambda_i$ of $A$. The number of Jordan blocks corresponding to $\lambda_i$ is  its geometric multiplicity and the sum of the sizes of the Jordan blocks corresponding to $\lambda_i$ is its algebraic multiplicity. For every square matrix $A$, there exists an invertible matrix $C$ such that $C^{-1}AC=JCF(A)$. The Jordan canonical form is a valuable tool for working with the eigenvalues of a digraph since it does not require the algebraic and geometric multiplicity of the eigenvalues to be equal. For more information, see \cite{HJ}. This tool is used extensively in \cite{CCHR20} to prove results on the spectra of digraph products and all of the results in Section \ref{ss:DigProd} employ the JCF of the distance matrix of a digraph to obtain the result.

As described in Section \ref{ss:PF},  Perron-Frobenius theory applies to positive and irreducible nonnegative matrices. Since the distance matrix of a digraph is irreducible and nonnegative and the distance signless Laplacian matrix of a digraph is positive (assuming  order at least two), proofs of results for graphs using  Perron-Frobenius theory can often be adapted to digraphs. For example,  Perron-Frobenius theory shows that $\rho(\D(G)$ and $\rho(\DQ(\dig))$ are simple eigenvalues. The Ger\v sgorin Disk Theorem applies to all complex square matrices, so can be applied to all matrices of digraphs (and is applied, for example, to prove Proposition \ref{t:rho-dig-ub2}).  % (for example, the results in Section \ref{ss:DigRho} 

\subsection{Spectra of products}\label{ss:DigProd}
% \cite{CCHR20}
Many results about products analogous to those for graphs (see Section \ref{s:prod}) hold for digraphs. However, the proofs of the results in this section use the Jordan Canonical Form (see Section \ref{ss:DigTech}). The {\em Cartesian product} of two digraphs $\dig$ and $\dig'$, denoted by $\dig \cp \dig'$, is defined to be the digraph with vertex set $V(\dig) \times V(\dig')$ and arc set \[E(\dig \cp \dig') = \{ ((x,x'),(y,y') )\ | \  x'=y' \mbox{ and } (x,y) \in E(\dig), \mbox{ \bf or }  x=y \mbox{ and  } (x',y')  \in E(\dig')\}. \]

The distance spectrum of $\dig{\cp}\dig'$ for transmission regular graphs is as follows and is analogous to Theorem \ref{tr-dspec-cprod}, the result for graphs.

\begin{thm}\label{thm:TRcartprod-dig_new_new}{\rm\cite{CCHR20}} Let $\dig$ and $\dig'$ be transmission regular digraphs of orders $n$ and $n'$ with transmissions $t$ and $t'$, and let $\spec_{\D}(\dig)=\{t,\dev_2,\dots,\dev_n\}$, $\spec_{\D}(\dig')=\{t',\dev'_2,\dots,\dev'_{n'}\}$.  
Then
\[\spec_{\D}(\dig{\cp}\dig')=\{nt'+n't, n'\dev_2,\dots,n'\dev_n,n\dev'_2,\dots,n\dev'_{n'},0^{(n-1)(n'-1)}\}.\]
\end{thm}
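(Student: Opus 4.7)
The plan is to combine the distance additivity formula for the Cartesian product with the Jordan canonical form of each factor's distance matrix. First I would verify distance additivity: for vertices $(x,x'),(y,y')\in V(\dig\cp\dig')$, any dipath projects in each coordinate to a walk whose combined length equals the dipath length, and conversely the concatenation of a shortest $x\to y$ dipath (with second coordinate held at $x'$) with a shortest $x'\to y'$ dipath (with first coordinate held at $y$) is a dipath of length $d_\dig(x,y)+d_{\dig'}(x',y')$, so $d_{\dig\cp\dig'}((x,x'),(y,y'))=d_\dig(x,y)+d_{\dig'}(x',y')$. With the lexicographic vertex ordering this yields
\[\D(\dig\cp\dig')=\D(\dig)\otimes J_{n'}+J_n\otimes\D(\dig').\]

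Next I would triangularize the right-hand side by conjugation. Because $\dig$ and $\dig'$ are strongly connected, $\D(\dig)$ and $\D(\dig')$ are nonnegative and irreducible, and by Perron--Frobenius the eigenvalues $t=\rho(\D(\dig))$ and $t'=\rho(\D(\dig'))$ are simple; transmission regularity identifies $\bone_n$ and $\bone_{n'}$ as corresponding right eigenvectors. Choose an invertible $C$ with $\bone_n$ as its first column so that $C^{-1}\D(\dig)C=J$ is in Jordan canonical form; since $\D(\dig)\bone_n=t\bone_n$ the first column of $J$ is $te_1$, and simplicity of $t$ forces $J=\mathrm{diag}(t,J_1)$, still upper triangular. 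Choose $C'$ analogously for $\D(\dig')$. Conjugation by $C\otimes C'$ carries $\D(\dig\cp\dig')$ to $J\otimes B'+B\otimes J'$, where $B=C^{-1}J_n C$ and $B'=(C')^{-1}J_{n'}C'$. Since $J_n=\bone_n\bone_n^T$ and $C^{-1}\bone_n=e_1$, we have $B=e_1 r^T$ for some $r\in\R^n$; the identity $r_1=\mathrm{tr}(B)=\mathrm{tr}(J_n)=n$ shows that $B$ is supported on its first row with $(1,1)$-entry equal to $n$, and analogously for $B'$.

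The main technical step will be verifying upper triangularity of $J\otimes B'+B\otimes J'$ in the $n\times n$ block form with $n'\times n'$ blocks and reading off the diagonal. The $(i,j)$-block is $J_{ij}B'+B_{ij}J'$; for $i>j$ both $J_{ij}=0$ (Jordan form) and $B_{ij}=0$ (all rows of $B$ below the first vanish), so the block is zero. The diagonal blocks are $tB'+nJ'$ when $i=1$ and $\dev_i B'$ when $i\ge 2$; each is upper triangular because $B'$ and $J'$ are, while blocks with $i<j$ sit entirely above the main diagonal of the full matrix and therefore do not disturb triangularity. Finally I would collect the diagonal entries: the $(1,1)$-block contributes $(tn'+nt',\,n\dev'_2,\dots,n\dev'_{n'})$ (using that $B'$ has diagonal $(n',0,\dots,0)$ and $J'$ has diagonal $(t',\dev'_2,\dots,\dev'_{n'})$), and for $i\ge 2$ the block $\dev_i B'$ contributes $(n'\dev_i,0,\dots,0)$. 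Aggregating over the $n$ block rows produces the claimed nonzero eigenvalues $nt'+n't,\ n'\dev_2,\dots,n'\dev_n,\ n\dev'_2,\dots,n\dev'_{n'}$ together with exactly $(n-1)(n'-1)$ zero eigenvalues, completing the proof.
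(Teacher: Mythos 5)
Your proposal is correct and follows essentially the same route the paper attributes to \cite{CCHR20}: the Kronecker decomposition $\D(\dig\cp\dig')=\D(\dig)\otimes J_{n'}+J_n\otimes\D(\dig')$ (the digraph analogue of the identity stated before Theorem \ref{tr-dspec-cprod}) combined with conjugation to the Jordan canonical form of each factor, which Section \ref{ss:DigTech} identifies as the tool used for all the product results in Section \ref{ss:DigProd}. The key details are handled properly, in particular using Perron--Frobenius plus transmission regularity to place $\bone$ as the first Jordan basis vector so that $B=C^{-1}J_nC$ is supported on its first row, which is exactly what makes the conjugated sum block upper triangular.
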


With the hypotheses of Theorem \ref{thm:TRcartprod-dig_new_new}, $\dig\cp\dig'$ is a $(nt'+n't)$-transmission regular digraph and thus  % where $\dig$ and $\dig'$ have transmission $t$ and $t'$ and order $n$ and $n'$ respectively. 
equation \eqref{eq:tr-equiv} can be used  to extend Theorem  \ref{thm:TRcartprod-dig_new_new} to  the distance signless Laplacian, distance Laplacian, and normalized Laplacian matrices.
%Because of this, analogous results to Theorem \ref{thm:TRcartprod-dig_new_new} can easily be obtained for the distance signless Laplacian, distance Laplacian, and normalized Laplacian matrices using equation \eqref{eq:tr-equiv}. 
For transmission regular digraphs for which $\D(\dig)$ and $\D(\dig')$ have a full set of linearly independent eigenvectors, information about the eigenvectors of $\D(\dig{\cp}\dig')$ is also known and can be found in \cite{CCHR20}.

The {\em lexicographic product} of two digraphs $\dig$ and $\dig'$, denoted $\dig \lexp \dig'$, is defined to be the digraph with vertex set $V(\dig) \times V(\dig')$ and arc set \[ E(\dig{{\lexp}}\dig')  = \{ ((x,x'),(y,y') )\ | \  (x,y) \in E(\dig), \mbox{ \bf or }  x=y \mbox{ and  } (x',y')  \in E(\dig')  \}.\]

The distance spectrum of the lexicographic product of two digraphs has been computed for digraphs $\dig$ and $\dig'$ with certain properties (Theorem \ref{DIG-dspec-lexprod} is analogous to Theorem \ref{dspec-lexprod} for graphs). 

\begin{thm}\label{DIG-dspec-lexprod}{\rm \cite{CCHR20}} Let $\dig$ and $\dig'$ be strongly connected digraphs of orders $n$ and $n'$, respectively, such that every vertex of $\dig$ is incident with a doubly directed arc, and $\dig'$ is $k$-out-regular.  Let $\dspec(\dig)=\{ \dev_1,\dots,\dev_{n-1},\dev_n\}$ and $\spec(\A(\dig'))=\{ \alpha'_1,\dots,\alpha'_{n-1},\alpha'_n=k'\}$.  Then 
{\rm \[\dspec(\dig \lexp \dig')=\{ n' \dev_i+2n'-k'-2:i=1,\dots,n\}\cup\{ (-\alpha_j'-2)^{(n))}:j-1,\dots,n'-1\}.\]} \end{thm}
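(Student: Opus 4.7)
The plan is to first derive an explicit matrix identity for $\D(\dig \lexp \dig')$ in terms of $\D(\dig)$ and $\A(\dig')$, and then read off its spectrum via a block triangularization based on the Jordan canonical form of $\A(\dig')$.

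First, I would verify that for any two vertices $(x,x'), (y,y') \in V(\dig \lexp \dig')$,
\[
d_{\dig \lexp \dig'}\bigl((x,x'),(y,y')\bigr) = \begin{cases} d_\dig(x,y) & \text{if } x \ne y,\\ 0 & \text{if } (x,x')=(y,y'),\\ 1 & \text{if } x=y \text{ and } (x',y') \in E(\dig'),\\ 2 & \text{if } x=y, \ x' \ne y', \ (x',y') \notin E(\dig'). \end{cases}
\]
The case $x \ne y$ holds because any dipath in $\dig$ from $x$ to $y$ lifts to a dipath of the same length in $\dig \lexp \dig'$ (by choosing intermediate second coordinates freely), while any dipath in $\dig \lexp \dig'$ projects to a walk in $\dig$. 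The only case that uses the doubly-directed-arc hypothesis is $x=y$ with $(x',y')\notin E(\dig')$: picking $z$ with $(x,z),(z,x)\in E(\dig)$ gives the length-$2$ dipath $(x,x')\to(z,y')\to(x,y')$. Ordering vertices so that the $n'$ copies associated with each vertex of $\dig$ are consecutive, this distance formula yields
\[
\D(\dig \lexp \dig') = \D(\dig) \otimes J_{n'} + I_n \otimes B, \qquad B := 2J_{n'} - 2I_{n'} - \A(\dig').
\]

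Second, since $\dig'$ is strongly connected, $\A(\dig')$ is irreducible and nonnegative, so by Perron--Frobenius theory its spectral radius $k'$ is a simple eigenvalue with right eigenvector $\bone_{n'}$. I would choose an invertible $C' \in \C^{n' \times n'}$ whose first column is $\bone_{n'}$ and satisfies
\[
(C')^{-1} \A(\dig') C' = \begin{pmatrix} k' & \mathbf{0} \\ \mathbf{0} & \widetilde{A}' \end{pmatrix}
\]
in JCF, where the $(n'-1) \times (n'-1)$ block $\widetilde{A}'$ has diagonal entries $\alpha'_1, \dots, \alpha'_{n'-1}$. Since the range of $J_{n'}$ equals $\mathrm{span}(\bone_{n'})$, a direct column-by-column computation gives $(C')^{-1} J_{n'} C' = \left(\begin{smallmatrix} n' & \mathbf{q}^T \\ \mathbf{0} & \mathbf{0} \end{smallmatrix}\right)$ for some $\mathbf{q} \in \C^{n'-1}$, and hence
\[
(C')^{-1} B C' = \begin{pmatrix} 2n' - k' - 2 & 2\mathbf{q}^T \\ \mathbf{0} & -2 I - \widetilde{A}' \end{pmatrix}.
\]
Conjugating $\D(\dig \lexp \dig')$ by $I_n \otimes C'$ and then permuting coordinates so that the size-$1$ top-left pieces from each of the $n$ copies are collected together, the matrix becomes block upper triangular with diagonal blocks
\[
n'\,\D(\dig) + (2n' - k' - 2)\, I_n \quad \text{(size } n\text{)} \qquad \text{and} \qquad I_n \otimes (-2 I - \widetilde{A}') \quad \text{(size } n(n'-1)\text{)}.
\]
Since similarity preserves the spectrum and the spectrum of a block triangular matrix is the union (as multisets) of the spectra of its diagonal blocks, $\spec(\D(\dig \lexp \dig'))$ equals $\{\, n'\dev_i + 2n' - k' - 2 : i = 1, \dots, n\,\} \cup \{\, (-\alpha'_j - 2)^{(n)} : j = 1, \dots, n'-1\,\}$, as claimed.

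The main obstacle, compared with the graph version (Theorem \ref{dspec-lexprod}), is that neither $\A(\dig')$ nor $\D(\dig)$ need be diagonalizable, so the orthogonal-eigenbasis argument used there is unavailable. The Jordan canonical form circumvents this; the crucial structural fact is that $k'$ is a \emph{simple} eigenvalue of the irreducible nonnegative matrix $\A(\dig')$, which is precisely what allows the $\bone_{n'}$-direction to split off as a size-$1$ Jordan block, producing the clean block-triangular form from which the spectrum can be read off regardless of any non-diagonalizability in the remaining blocks.
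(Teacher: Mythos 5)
Your proposal is correct and follows essentially the same route as the cited proof: the decomposition $\D(\dig\lexp\dig')=\D(\dig)\otimes J_{n'}+I_n\otimes(2J_{n'}-2I_{n'}-\A(\dig'))$ is exactly the digraph analogue of the identity the paper records for the graph case, and the passage to a block-triangular form via the Jordan canonical form (exploiting that $k'$ is a simple Perron eigenvalue with eigenvector $\bone_{n'}$) is precisely the JCF technique the survey attributes to \cite{CCHR20} for all the digraph product results.
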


\begin{thm}\label{cor_spectrum_distance_lexp_long_cycle}{\rm\cite{CCHR20}} Let $\dig$ and $\dig'$ be strongly connected digraphs of orders $n$ and $n'$, respectively,  such that  $\dig'$ is $t'$-transmission regular, and $\diam \dig' \leq \ds  g(\dig)$.
Let $\spec_{\D}(\dig)=\{\dev_1,\dev_2,\dots,\dev_n\}$ and $\spec_{\D}(\dig')=\{t',\dev'_2,\dots,\dev'_{n'}\}$.  
Then
{\rm \[\dspec(\dig \lexp \dig')=\left\{n'\dev_i + t', \ i = 1, \dots, n \right\} \cup \left\{{\dev'_j}^{(n)}, \ j = 2, \dots, n' \right\}.\]}
% Given $z\in \mathbb{C}$, define $\tilde z=\frac{z-t'}{n'}$,  %$a=\mult_{\D(\dig)}(\tilde z),\;
% $ g=\gmult_{\D(\dig)}(\tilde z)$, %,\; a'=\mult_{\D(\dig')}(z),\; 
% and $g'=\gmult_{\D(\dig')}(z)$. Then
% \[
% \gmult_{\D(\dig\lexp\dig')}(z)=
% \begin{array}{l}
% \left\{\begin{aligned}
% &g &\mbox{if }\;z\not\in\spec_{\D}(\dig')\setminus\{t'\},\;\tilde z\in\spec_{\D}(\dig);\\
% &ng' &\mbox{if }\; z\in\spec_{\D}(\dig')\setminus\{t'\},\;\tilde z\not\in\spec_{\D}(\dig);\\
% &ng'+g &\mbox{if }\;z\in\spec_{\D}(\dig')\setminus\{t'\},\;\tilde z\in\spec_{\D}(\dig),\;ES_{\D(\dig')}(z)\perp \bone_{n'} ;\\
% &ng' &\mbox{if }\;z\in\spec_{\D}(\dig')\setminus\{t'\},\;\tilde z\in\spec_{\D}(\dig),\;ES_{\D(\dig')}(z)\not\perp \bone_{n'} ;\\
% &0 &\mbox{otherwise}.
% \end{aligned}\right.
% \end{array}
% \]
\end{thm}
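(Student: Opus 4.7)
My plan is to first establish a Kronecker-sum decomposition of $\D(\dig \lexp \dig')$ and then apply a Jordan-form conjugation that exploits the transmission-regularity of $\dig'$. Specifically, with vertices of $\dig \lexp \dig'$ ordered so that all copies of $\dig'$ are contiguous, I want to prove
\[
\D(\dig \lexp \dig') = \D(\dig) \otimes J_{n'} + I_n \otimes \D(\dig'),
\]
which amounts to verifying the distance formula $d_{\dig\lexp\dig'}((x,x'),(y,y')) = d_\dig(x,y)$ when $x \ne y$ and $d_{\dig'}(x',y')$ when $x = y$.

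The case $x \ne y$ is quick: any dipath projects via first coordinates to a walk from $x$ to $y$ in $\dig$ and so uses at least $d_\dig(x,y)$ $\dig$-arcs, and the bound is realized by lifting a shortest $\dig$-path $x = z_0, z_1, \dots, z_k = y$ to the dipath $(x,x') \to (z_1,y') \to \dots \to (z_k,y')$. The case $x = y$ is where the girth hypothesis is essential: any dipath using at least one $\dig$-arc projects to a nontrivial closed walk at $x$ in $\dig$, and any such closed walk must contain a dicycle, hence has length at least $g(\dig) \ge \diam(\dig') \ge d_{\dig'}(x',y')$; so staying inside the $x$-copy is optimal. This distance analysis is the chief obstacle, since the girth condition must be used exactly to rule out shortcuts via detours through other copies.

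For the spectral computation, since $\dig'$ is $t'$-transmission regular, $\dsone_{n'}$ is an eigenvector of $\D(\dig')$ for $t'$; by Perron–Frobenius applied to the irreducible nonnegative matrix $\D(\dig')$, this eigenvalue is simple. I can therefore choose an invertible $C$ whose first column is $\dsone_{n'}$ with $C^{-1}\D(\dig')C$ block-diagonal of the form $[t'] \oplus J''$, where $J''$ is a Jordan form on a complementary invariant subspace, so $\spec(J'') = \{\dev'_2, \dots, \dev'_{n'}\}$. Since $C^{-1}\dsone_{n'} = e_1$, a direct computation gives $C^{-1} J_{n'} C = e_1 (\dsone_{n'}^T C) =: N$, a matrix whose only nonzero entries lie in its first row and whose $(1,1)$-entry equals $n'$.

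Conjugating $\D(\dig \lexp \dig')$ by $I_n \otimes C$ yields $\D(\dig) \otimes N + I_n \otimes ([t'] \oplus J'')$, and after re-grouping the basis by the $\dig'$-coordinate (swapping the roles of the two tensor factors) this becomes block upper triangular with diagonal blocks $n'\D(\dig) + t' I_n$ (of size $n$) and $J'' \otimes I_n$ (of size $n(n'-1)$). Reading off the eigenvalues from the triangular form then produces exactly the claimed multiset $\{n'\dev_i + t' : i = 1, \dots, n\} \cup \{(\dev'_j)^{(n)} : j = 2, \dots, n'\}$.
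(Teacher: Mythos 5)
Your proof is correct: the distance analysis (using $\diam\dig'\le g(\dig)$ to force shortest paths between vertices in the same copy to stay inside that copy) yields the decomposition $\D(\dig\lexp\dig')=\D(\dig)\otimes J_{n'}+I_n\otimes\D(\dig')$, and the conjugation by $I_n\otimes C$ with $C$ a Jordan basis whose first column is $\dsone_{n'}$ (legitimate since Perron--Frobenius makes $t'$ simple) produces the claimed block-triangular form. This is essentially the approach of the cited source \cite{CCHR20}, which likewise combines the Kronecker-sum decomposition with the Jordan canonical form of $\D(\dig')$.
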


  Note that if   $\dig$ and $\dig'$ are transmission regular, or $\dig$ is transmission regular, in addition to the hypotheses in Theorems \ref{DIG-dspec-lexprod} or \ref{cor_spectrum_distance_lexp_long_cycle}, respectively, then $\dig \lexp \dig'$ is transmission regular. Under these conditions, equation \eqref{eq:tr-equiv} can be used  to extend Theorems \ref{DIG-dspec-lexprod} and \ref{cor_spectrum_distance_lexp_long_cycle} to  the distance signless Laplacian, distance Laplacian, and normalized Laplacian matrices. Additionally, the geometric multiplicities of the eigenvalues in Theorem \ref{DIG-dspec-lexprod} and \ref{cor_spectrum_distance_lexp_long_cycle} are known (see \cite{CCHR20}).  If $\D(\dig)$ and $\D(\dig')$ also have a full set of linearly independent eigenvectors, information about the eigenvectors of $\D(\dig \lexp \dig')$ is also known and can be found in \cite{CCHR20}.

% \begin{thm}
% \label{cor_e_vec_distance_lex_bigcycle}{\rm\cite{CCHR20}}
% Let $\dig$ and $\dig'$ be strongly connected digraphs of orders $n$ and $n'$ %, respectively, 
% such that $\dig'$ is $t'$-transmission regular and $\diam \dig' \leq \ds  g(\dig)$. Let $\{\bv_1, \dots, \bv_k\}$ be a linearly independent set of eigenvectors %of $\mathcal{D}(\dig)$ 
% with $\mathcal{D}(\dig) \bv_i = \dev_i \bv_i,$ % \ \dev_i \in \spec_{\mathcal{D}}(\dig)$, 
% and let $\{\bone_{n'},\bv_2', \dots, \bv_{k'}'\}$ be a linearly independent set of eigenvectors %of $\mathcal{D}(\dig')$ 
% with $\mathcal{D}(\dig') \bv_{j}' = \dev_j' \bv_j'.$ % \ \dev_j' \in \spec_{\mathcal{D}}(\dig')$. 
% Then 
% \ben[(1)]
% \item \label{lexevec_1_dist_lex_bigcycle} For $i=1, \dots, k$, \, $\bv_i \otimes \bone_{n'}$ is  an eigenvector of $\mathcal{D}(\dig\lexp\dig')$ corresponding to the eigenvalue  $n'\dev_i + t'$.
% \item \label{lexevec_2_dist_lex_bigcycle} For $j=2, \dots, k'$, \ for $i=1, \dots, k$, define $\gamma_{ij} = \frac{-\dev_i{\bv_j'}^T\bone_{n'}}{t' + n'\dev_i - \dev_j'}$ when $\dev_j' \neq n'\dev_i + t'$.  Then 
% $\bv_i \otimes \bv_j' +\gamma_{ij}\bv_i \otimes \bone_{n'}$   is an eigenvector of $\mathcal{D}(\dig\lexp\dig')$ for the eigenvalue  $\dev_j'$.\vspace{-5pt}\een
% Furthermore, the set of eigenvectors of $\mathcal{D}(\dig\lexp\dig')$ described in \eqref{lexevec_1_dist_lex_bigcycle} and \eqref{lexevec_2_dist_lex_bigcycle} is linearly independent.
% \end{thm}

\subsection{Directed strongly regular graphs and few distinct eigenvalues}\label{ss:digSRG}
% \cite{D88}, \cite{CCHR20}

As is the case with graphs (see Section \ref{ss:SRG}), directed strongly regular graphs are a well structured family of digraphs that are of particular interest since they have only 3 distinct eigenvalues. Duval \cite{D88} defined a {\em directed strongly regular graph}, here denoted by $\dig(n,k,s,a,c)$, to be a digraph $\dig$ of order $n$ such that \vspace{-3pt}
\[\A(\dig)^2=sI_n+a \A(\dig) + c (J_n-I_n-\A(\dig)) \text{  and  } \A(\dig)J_n=J_n\A(\dig)=kJ_n. \vspace{-3pt}\] 

Duval's definition necessitates that directed strongly regular graphs are $k$-regular and and each vertex is incident with $s$ doubly directed arcs. For vertices $v$ and $u$ such that $(v,u)$ is an arc in $E(\dig)$, the number of directed paths of length two from $v$ to $u$ is $a$. For vertices $v$ and $u$ such that $(v,u)$ is not an arc in $E(\dig)$, the number of directed paths of length two from $v$ to $u$ is $c$. Duval originally used the notation $\dig(n,k,\mu,\lambda,t)$, but the notation $\dig(n,k,s,a,c)$ is used in $\cite{CCHR20}$ and will be used here, where $\lam=a$, $\mu=c$, and $t=s$.

Directed strongly regular graphs are transmission regular with transmission $2n-2-k$. Since the spectra of $\DQ$, $\DL$, and $\nDL$ can easily be computed from the spectrum of $\D$ for a transmission regular digraph using equation \eqref{eq:tr-equiv}, the results in this section will focus on the distance matrix.

Duval computed the next formula for the eigenvalues of $\A(\dig(n,k,s,a,c))$. 

\begin{thm}{\rm \cite{D88}}\label{t:DSRG-specA}
%The three   eigenvalues of $\A(\dig)$ for  $\dig=\dig(n,k,s,\lambda,\mu)$ are
Let $\dig=\dig(n,k,s,a,c)$. The spectrum of $\A(\dig)$ consists of the three   eigenvalues 
{\scriptsize \[\tau=\frac{1}{2}\left( a -c - \sqrt{(c-a)^2+4(s-c)} \right),\, \theta=\frac{1}{2}\left(a -c + \sqrt{(c-a)^2+4(s-c)} \right), \text{ and } k,\]} with multiplicities  {\scriptsize \[m_\tau=\frac{1}{2}
\lp n-1+\frac{2k+(n-1)(a-c)}{\sqrt{(a-c)^2+4(s-c)}}\rp,\, m_\theta=\frac{1}{2}
\lp n-1-\frac{2k+(n-1)(a-c)}{\sqrt{(a-c)^2+4(s-c)}}\rp, \text{ and } 1,\]} respectively.
\end{thm}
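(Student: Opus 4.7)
The plan is to exploit the two defining identities of $\dig=\dig(n,k,s,a,c)$ to pin down the minimal polynomial of $A:=\A(\dig)$, then extract the multiplicities from elementary trace information. Writing $\Delta=\sqrt{(a-c)^2+4(s-c)}$, the claimed eigenvalues $\tau,\theta$ are the two roots of the quadratic $p(x)=x^2-(a-c)x-(s-c)$. Rearranging $A^2=sI_n+aA+c(J_n-I_n-A)$ gives the key identity
\[A^2-(a-c)A-(s-c)I_n=c\,J_n,\]
which differs from $p(A)=0$ only by a rank-one correction supported on $\langle\bone\rangle$.

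The main step is to decompose $\R^n=\langle\bone\rangle\oplus V_0$, where $V_0=\{v:\bone^T v=0\}$, and show both summands are $A$-invariant. Invariance of $\langle\bone\rangle$ follows from $A\bone=k\bone$, read off the column sum identity $AJ_n=kJ_n$; invariance of $V_0$ follows from $\bone^T A=k\bone^T$, read off the row sum identity $J_nA=kJ_n$. Restricted to $V_0$ the all-ones matrix $J_n$ vanishes, so the key identity reduces to $p(A|_{V_0})=0$. Since $\tau\neq\theta$, the restriction $A|_{V_0}$ is diagonalizable with eigenvalues drawn from $\{\tau,\theta\}$; combined with the eigenvalue $k$ on $\langle\bone\rangle$, this accounts for all $n$ eigenvalues of $A$ and shows $\spec(A)=\{k,\tau^{(m_\tau)},\theta^{(m_\theta)}\}$.

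To pin down the multiplicities, I would combine the dimension count $m_\tau+m_\theta=n-1$ with the trace relation $k+m_\tau\tau+m_\theta\theta=\tr(A)=0$ (the diagonal of $A$ is zero because $\dig$ has no loops). Using $\tau+\theta=a-c$ and $\theta-\tau=\Delta$, this $2\times 2$ linear system in $(m_\tau,m_\theta)$ has a unique solution matching the claimed closed-form expressions.

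The main obstacle is verifying the $A$-invariance of the hyperplane $V_0$ in the nonsymmetric setting; this is precisely where Duval's two-sided requirement $AJ_n=J_nA=kJ_n$ becomes essential (a one-sided condition would not suffice). A secondary subtlety is the tacit hypothesis $\tau\neq\theta$, i.e.\ $(a-c)^2+4(s-c)>0$, which is needed both for diagonalizability of $A|_{V_0}$ and for the multiplicity formulas to be well defined; the degenerate case requires a separate argument via the Jordan form, as described in Section \ref{ss:DigTech}.
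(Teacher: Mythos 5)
Your argument is correct, and it is essentially the classical proof: the paper itself states this result only as a citation to Duval \cite{D88} without proof, and Duval's own derivation proceeds exactly as you describe, restricting $A$ to the $A$-invariant hyperplane $\bone^\perp$ (where $J_n$ annihilates everything, so the quadratic $x^2-(a-c)x-(s-c)$ kills $A$ there) and then solving for the multiplicities from the dimension and trace equations. Your identification of where the two-sided condition $AJ_n=J_nA=kJ_n$ is used, and of the tacit nondegeneracy hypothesis $(a-c)^2+4(s-c)>0$, is accurate; the only point you leave implicit is that $k$ itself does not occur as an eigenvalue of $A|_{V_0}$ (so that its multiplicity is exactly $1$), which follows from Perron--Frobenius since $A$ is an irreducible nonnegative matrix with row sums $k$.
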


For $k$-regular digraphs of diameter at most 2, the eigenvalues of $\D(\dig)$ can be written in terms of the eigenvalues of $\A(\dig)$. The following result is analogous to the result for graphs (see Section \ref{s:diam2}) and was used to determine the eigenvalues of $\D(\dig)$ for direct strongly regular graphs.

\begin{prop}\label{Diam2Prop}{\rm\cite{CCHR20}}
Let $\dig$ be a $k$-regular digraph  of order $n$ and diameter at most $2$ with $\spec_\A(\dig)=\{k,\alpha_2,\dots,\alpha_{n}\}$.  Then $\spec_\D(\dig)=\{2n-2-k,-(\alpha_2+2),\dots,-(\alpha_{n}+2)\}$. \end{prop}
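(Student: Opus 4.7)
The plan is to mimic the graph argument from Remark \ref{r:diam2} (where $\D(G) = 2(J-I) - \A(G)$ for $\diam(G)\le 2$) but, since $\A(\dig)$ need not be symmetric, to replace ``orthogonal basis of eigenvectors'' with an invariant-subspace decomposition. First I would observe that $\diam(\dig)\le 2$ forces every off-diagonal $(i,j)$-entry of $\D(\dig)$ to equal $2-a_{ij}$, where $a_{ij}$ is the $(i,j)$-entry of $\A(\dig)$, so
\[
\D(\dig) \;=\; 2(J_n - I_n) - \A(\dig).
\]
This reduces the problem to understanding the spectrum of $2(J_n-I_n)-\A(\dig)$.

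Next I would exploit $k$-regularity. Since $\dig$ is $k$-out-regular, $\A(\dig)\bone = k\bone$, and since $\dig$ is $k$-in-regular, $\bone^{T}\A(\dig) = k\bone^{T}$. Direct computation then gives $\D(\dig)\bone = 2(n-1)\bone - k\bone = (2n-2-k)\bone$, producing the Perron-type eigenvalue. Let $W=\{\bx\in\C^n : \bone^T\bx=0\}$, so $\C^n = \operatorname{span}(\bone)\oplus W$. The left-eigenvector condition shows that $W$ is $\A(\dig)$-invariant: $\bx\in W \Rightarrow \bone^T(\A(\dig)\bx)=k\bone^T\bx=0$. Because $J_n\bx = \bone(\bone^T\bx)=\bzero$ for $\bx\in W$, the same subspace is $\D(\dig)$-invariant, and on $W$ we have
\[
\D(\dig)\bx \;=\; -2\bx - \A(\dig)\bx \;=\; -\bigl(\A(\dig)+2I_n\bigr)\bx .
\]

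Finally I would assemble the spectrum from the block-diagonal decomposition with respect to $\operatorname{span}(\bone)\oplus W$. Both $\A(\dig)$ and $\D(\dig)$ split into blocks: the $1\times 1$ block on $\operatorname{span}(\bone)$ contributes eigenvalue $k$ to $\A(\dig)$ and eigenvalue $2n-2-k$ to $\D(\dig)$; the block on $W$ is $\A(\dig)|_W$ and $-(\A(\dig)|_W + 2I)$, respectively. Matching multiplicities as multisets gives $\spec(\A(\dig)|_W)=\{\alpha_2,\dots,\alpha_n\}$ and therefore
\[
\spec_{\D}(\dig) \;=\; \{2n-2-k\}\cup\{-(\alpha_i+2) : i=2,\dots,n\},
\]
which is the claimed equality.

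The one subtlety worth flagging (rather than a real obstacle) is that we cannot appeal to an orthonormal eigenbasis as in the symmetric/graph case: $\A(\dig)$ may fail to be diagonalizable. The argument above sidesteps this by using only invariance of $\operatorname{span}(\bone)$ and $W$, which is a purely algebraic statement and holds regardless of Jordan structure. Consequently eigenvalues are preserved with multiplicities even when Jordan blocks are present, so no JCF bookkeeping is required.
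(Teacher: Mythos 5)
Your proof is correct. Every step checks out: $\diam(\dig)\le 2$ does give $\D(\dig)=2(J_n-I_n)-\A(\dig)$; $k$-regularity in the paper's sense means both $k$-out-regular and $k$-in-regular, so you are entitled to both $\A(\dig)\bone=k\bone$ and $\bone^T\A(\dig)=k\bone^T$; the latter is exactly what makes $W=\ker(\bone^T)$ an $\A(\dig)$-invariant complement of $\operatorname{span}(\bone)$; and the resulting block-diagonal form yields the characteristic polynomial as a product, so algebraic multiplicities match as multisets without any diagonalizability assumption.

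The route is genuinely different from the source's. The survey only cites this proposition from \cite{CCHR20} and, as it notes in Section \ref{ss:DigTech}, the digraph arguments there lean on the Jordan canonical form (choosing a similarity whose first column is $\bone$ and tracking how $J_n$ transforms), precisely because the graph argument of Remark \ref{r:diam2} — an orthogonal eigenbasis containing $\dsone$ — is unavailable for nonsymmetric matrices. Your invariant-subspace decomposition $\C^n=\operatorname{span}(\bone)\oplus\ker(\bone^T)$ sidesteps both: it needs neither symmetry nor any JCF bookkeeping, only the elementary fact that the characteristic polynomial of a block-diagonal operator factors over the blocks. What the JCF approach buys in \cite{CCHR20} is uniformity — the same machinery handles their product theorems, where eigenvector and geometric-multiplicity information is also extracted — whereas your argument is leaner for this particular statement and makes transparent exactly where in-regularity is used (invariance of $W$) versus out-regularity (the Perron-type eigenvalue $2n-2-k$). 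One could quibble that you should state explicitly that $\bone\notin W$ so the sum is direct, but that is immediate from $\bone^T\bone=n\ne 0$.
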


Since directed strongly regular graphs are $k$-regular and have diameter at most 2, the eigenvalues of $\D(\dig(n,k,s,a,c))$ can be computed using the eigenvalues of $\A(\dig(n,k,s,a,c))$, as follows.

\begin{prop}\label{cor:SRG_dist_spec}{\rm\cite{CCHR20}}
Let $\dig=\dig(n,k,s,a,c)$. The spectrum of $\D(\dig)$ consists of the three eigenvalues
{\scriptsize \[\tau_{\D}=-2-\frac{1}{2}\left( a -c - \sqrt{(c-a)^2+4(s-c)} \right),\, \theta_{\D}=-2-\frac{1}{2}\left(a -c + \sqrt{(c-a)^2+4(s-c)} \right), \text{ and } \rho(\D(\dig))=2n-2-k,\]} with multiplicities  $m_\tau$, $m_\theta$, and 1, respectively.
\end{prop}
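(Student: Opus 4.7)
The plan is to combine Theorem \ref{t:DSRG-specA} (which gives the adjacency spectrum of $\dig(n,k,s,a,c)$) with Proposition \ref{Diam2Prop} (which converts the adjacency spectrum of a $k$-regular digraph of diameter at most $2$ into its distance spectrum). First I would verify the two hypotheses needed to apply Proposition \ref{Diam2Prop}: by Duval's definition, the equation $\A(\dig)J_n = J_nA(\dig) = kJ_n$ forces both the in-degree and out-degree of every vertex to equal $k$, so $\dig$ is $k$-regular; and the equation $\A(\dig)^2 = sI_n + a\A(\dig) + c(J_n - I_n - \A(\dig))$ shows that for any two vertices $u,v$ with $(u,v) \notin E(\dig)$ and $u \neq v$, the $(u,v)$-entry of $\A(\dig)^2$ equals $c$, so provided $c > 0$, there is a directed path of length $2$ from $u$ to $v$; for the degenerate case $c = 0$ the graph is a complete digraph and the claim is trivial. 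Hence $\diam(\dig) \leq 2$.

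Next, I would apply Proposition \ref{Diam2Prop} to write $\spec_\D(\dig) = \{2n-2-k\} \cup \{-(\alpha+2) : \alpha \in \spec_\A(\dig) \setminus \{k\}\}$ (with multiplicities preserved by the linear relation $\D(\dig) = 2(J_n - I_n) - \A(\dig)$ restricted to the orthogonal complement of the Perron eigenvector). Substituting the values of $\tau$ and $\theta$ from Theorem \ref{t:DSRG-specA} into $-(\alpha+2)$ yields
\[
-(\tau + 2) = -2 - \tfrac{1}{2}\bigl(a - c - \sqrt{(c-a)^2 + 4(s-c)}\bigr) = \tau_\D,
\]
\[
-(\theta + 2) = -2 - \tfrac{1}{2}\bigl(a - c + \sqrt{(c-a)^2 + 4(s-c)}\bigr) = \theta_\D,
\]
which are exactly the claimed formulas. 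The multiplicities $m_\tau$ and $m_\theta$ from Theorem \ref{t:DSRG-specA} transfer directly, because the linear transformation $\alpha \mapsto -(\alpha+2)$ is a bijection on the eigenvalues lying in the orthogonal complement of $\bone$, and it preserves generalized eigenspaces since $\D(\dig)$ and $\A(\dig)$ differ only by scalar multiples of $I_n$ and $J_n$ which commute with the common invariant subspaces.

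Finally, I would confirm that $2n-2-k$ is the spectral radius. Since $\dig$ is $k$-regular and of diameter at most $2$, it is transmission regular with transmission $t(\dig) = 1 \cdot k + 2 \cdot (n-1-k) = 2n-2-k$, so $\bone$ is an eigenvector with eigenvalue $2n-2-k$. Because $\D(\dig)$ is nonnegative and irreducible (as $\dig$ is strongly connected), Perron-Frobenius theory guarantees that the unique positive eigenvector corresponds to the spectral radius, so $\rho(\D(\dig)) = 2n-2-k$ and this eigenvalue is simple. The only real obstacle is the bookkeeping of confirming diameter at most $2$ and that multiplicities match; both are routine once the defining identities of $\dig(n,k,s,a,c)$ are unpacked, so this is essentially a direct corollary of the two prior results.
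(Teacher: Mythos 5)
Your proposal is correct and follows exactly the route the paper indicates: combine the adjacency spectrum from Theorem \ref{t:DSRG-specA} with the diameter-two conversion of Proposition \ref{Diam2Prop}, noting that $\dig(n,k,s,a,c)$ is $k$-regular and has diameter at most $2$. Your extra care in verifying those hypotheses and in tracking multiplicities is sound but does not change the argument in any essential way.
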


Theorem \ref{thm:TRcartprod-dig_new_new} was applied in \cite{CCHR20} to construct an infinite family of graphs with few eigenvalues. The following result produces a digraph of order $n^{\ell}$ that has exactly 3 eigenvalues. 

\begin{prop}\label{p:cp-DSRG}{\rm\cite{CCHR20}} Suppose $\dig$ is a transmission regular digraph of order $n$ with $\spec_\D(\dig)=\{t=\partial_1, \partial_2^{(m)}, 0^{(n-1-m)}\}$.
 Define $\dig_{\ell}=\dig\cp\dots\cp\dig$, the Cartesian product of $\ell$ copies of $\dig$. Then the order of $\dig_\ell$ is $n^\ell$ and  $\spec_{\D}(\dig_{\ell})=\{\ell t\,n^{\ell-1},\left(\partial_2\,n^{\ell-1}\right)^{(m\ell)},0^{(n^\ell-1-m\ell)}\}$.
\end{prop}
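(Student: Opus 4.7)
The natural approach is induction on $\ell$, using Theorem \ref{thm:TRcartprod-dig_new_new} (the formula for the distance spectrum of a Cartesian product of transmission regular digraphs) at the inductive step. The base case $\ell=1$ is just the hypothesis on $\dig$, so the real content is the induction step.

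For the inductive step, assume $\dig_{\ell-1}$ has order $n^{\ell-1}$ and spectrum $\{(\ell-1)tn^{\ell-2},\ (\partial_2 n^{\ell-2})^{(m(\ell-1))},\ 0^{(n^{\ell-1}-1-m(\ell-1))}\}$. Writing $\dig_\ell=\dig_{\ell-1}\cp\dig$, the top eigenvalue of $\dig_{\ell-1}$ is its transmission, so $\dig_{\ell-1}$ is $(\ell-1)tn^{\ell-2}$-transmission regular and Theorem \ref{thm:TRcartprod-dig_new_new} applies with $n'=n^{\ell-1}$, $t'=(\ell-1)tn^{\ell-2}$. The top eigenvalue of $\dig_\ell$ becomes
\[ n^{\ell-1}\cdot t+n\cdot (\ell-1)tn^{\ell-2}=tn^{\ell-1}+(\ell-1)tn^{\ell-1}=\ell t n^{\ell-1}, \]
matching the claim (and incidentally confirming that $\dig_\ell$ remains transmission regular, which is what allows the induction to continue).

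The remaining step is a bookkeeping check on the non-leading eigenvalues. Theorem \ref{thm:TRcartprod-dig_new_new} contributes: $n$ times each non-leading eigenvalue of $\dig_{\ell-1}$ (giving $(\partial_2 n^{\ell-1})^{(m(\ell-1))}$ and $0^{(n^{\ell-1}-1-m(\ell-1))}$); $n^{\ell-1}$ times each non-leading eigenvalue of $\dig$ (giving $(\partial_2 n^{\ell-1})^{(m)}$ and $0^{(n-1-m)}$); and an additional $0^{((n-1)(n^{\ell-1}-1))}$. Combining gives $\partial_2 n^{\ell-1}$ with total multiplicity $m(\ell-1)+m=m\ell$, and $0$ with total multiplicity
\[ (n^{\ell-1}-1-m(\ell-1))+(n-1-m)+(n-1)(n^{\ell-1}-1)=n^\ell-1-m\ell, \]
which matches the claim.

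There is no real obstacle here — the only subtlety is ensuring that the inductive hypothesis produces a transmission regular digraph whose transmission is the claimed leading eigenvalue, so that Theorem \ref{thm:TRcartprod-dig_new_new} may be invoked at the next step; this is automatic from the shape of the claimed spectrum.
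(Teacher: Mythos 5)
Your induction via Theorem \ref{thm:TRcartprod-dig_new_new} is correct and is exactly the route the paper indicates (the proposition is quoted from \cite{CCHR20} and is obtained by iterating that product formula); the eigenvalue and multiplicity bookkeeping all checks out. One small caveat: transmission regularity of $\dig_{\ell-1}$ is not ``automatic from the shape of the claimed spectrum'' --- for a digraph, having $t'$ as the largest distance eigenvalue does not by itself imply $t'$-transmission regularity --- so you should instead carry transmission regularity along as part of the inductive hypothesis, justified by the remark immediately following Theorem \ref{thm:TRcartprod-dig_new_new} that the Cartesian product of transmission regular digraphs is $(nt'+n't)$-transmission regular.
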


The directed strongly regular graph $\dig=\dig(8,4,3,1,3)$, as shown in Figure \ref{fig:DSRG}, has spectrum $\spec_{\D}(\dig)=\{10,-2^{(5)},0^{(2)}\}$. Applying Proposition \ref{p:cp-DSRG}, $\dig_\ell$ has order $8^\ell$ and   $\spec_{\D}(\dig_{\ell})=\{10\ell(8^{\ell-1}),\left(-2(8^{\ell-1})\right)^{(5\ell)},0^{(8^{\ell}-1-5\ell)}\}$.

\begin{figure}[h!]
    \centering
\scalebox{1}{\includegraphics{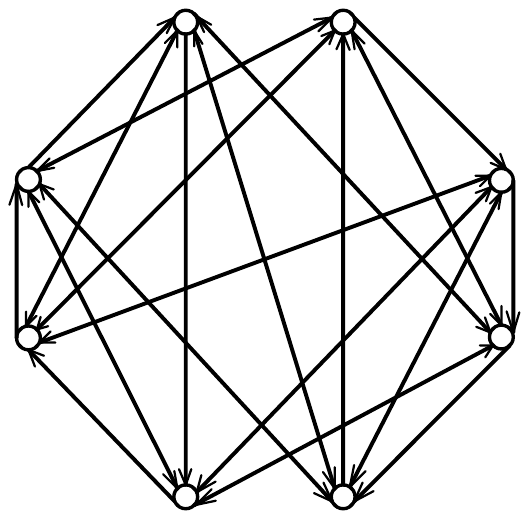}    }
    \caption{$\dig(8,4,3,1,3)$}
    \label{fig:DSRG}
\end{figure}

%\subsection{Spectra of families of digraphs}\label{ss:DigFam} {\red add this?}

\subsection{Spectral radii of digraphs}\label{ss:DigRho}
In the distance  literature, the study of the spectral radius of a digraph has focused on the distance and distance signless Laplacian matrices. Here we summarize these results and provide upper bounds for the spectral radii of the  distance Laplacian and normalized  distance Laplacian matrices.  Recall that since $\D(\dig)$ and $\DQ(\dig)$ are irreducible nonnegative and positive respectively, Perron-Frobenius theory applies. % (see Section \ref{ss:DigTech}).

As is the case with graphs, $\rho\left(\D(\dig)\right)$ and $\rho\left(\DQ(\dig)\right)$ are edge addition monotonically strictly decreasing. 
\begin{prop}\label{p:edge-mon-dig}
Let $\dig$ be a strongly connected digraph with $u,v\in V(\dig)$ and $(u,v)\not\in E(\dig)$. Then
\begin{itemize}
    \item {\rm\cite{LSYZ12}} $\rho\left(\D(\dig)\right)>\rho\left(\D(\dig+(u,v))\right)$ and
    \item {\rm\cite{LMW17}} $\rho\left(\DQ(\dig)\right)>\rho\left(\DQ(\dig+(u,v))\right)$.
\end{itemize}
\end{prop}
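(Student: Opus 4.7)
The strategy is to adapt the proof of Theorem \ref{EdgeRemoval} to the digraph setting. That proof used a Rayleigh quotient argument, which is not available here because $\D(\dig)$ and $\DQ(\dig)$ need not be symmetric and may have nonreal eigenvalues. Instead I would invoke the strict monotonicity of the Perron root for irreducible nonnegative matrices (a standard consequence of Perron-Frobenius theory, see \cite[Chapter 8]{HJ}): if $A$ is an irreducible nonnegative matrix and $0 \le B \lneq A$ entrywise, then $\rho(B) < \rho(A)$.

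First I would establish the entrywise inequality $\D(\dig+(u,v)) \lneq \D(\dig)$. Any dipath in $\dig$ remains a dipath in $\dig+(u,v)$, so $d_{\dig+(u,v)}(v_i,v_j) \le d_{\dig}(v_i,v_j)$ for every pair $i,j$. Because $(u,v)\notin E(\dig)$, we have $d_{\dig}(u,v)\ge 2$, whereas the added arc yields $d_{\dig+(u,v)}(u,v)=1$, so the $(u,v)$ entry strictly decreases. For the distance signless Laplacian matrix, the transmission of each vertex is the corresponding row sum of $\D$, so $t_{\dig+(u,v)}(v_i)\le t_{\dig}(v_i)$ with strict inequality at $v_i=u$. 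Consequently $\DQ(\dig+(u,v)) \lneq \DQ(\dig)$ entrywise as well.

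Second, I would verify irreducibility. Adding an arc to a strongly connected digraph keeps it strongly connected, and the distance matrix of a strongly connected digraph of order at least two has positive off-diagonal entries and is irreducible nonnegative; the distance signless Laplacian is everywhere positive, hence also irreducible. Applying the Perron-Frobenius monotonicity statement above, first with $A = \D(\dig)$ and $B = \D(\dig+(u,v))$, and then with $A = \DQ(\dig)$ and $B = \DQ(\dig+(u,v))$, yields the two claimed strict inequalities.

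The main obstacle is conceptual rather than computational: one must recognize that the symmetry-dependent Rayleigh quotient argument in the proof of Theorem \ref{EdgeRemoval} has to be replaced. Once this replacement is identified, the proof reduces to a straightforward entrywise comparison combined with the strict Perron-Frobenius monotonicity lemma. A minor bookkeeping point to double-check is that the strict decrease in the $(u,v)$ entry of $\D$ (and the $(u,u)$ entry of $T$ for $\DQ$) is genuinely strict, which is immediate from $(u,v)\notin E(\dig)$.
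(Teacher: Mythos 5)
Your proof is correct. One thing to note up front: the paper does not actually prove Proposition \ref{p:edge-mon-dig}; both bullet points are cited to the literature (\cite{LSYZ12} and \cite{LMW17}), so there is no in-paper proof to compare against line by line. The closest in-paper argument is the proof of Theorem \ref{EdgeRemoval} for graphs, which, as you correctly observe, leans on the Rayleigh quotient inequality $\rho(\D^*(G))\ge \bx^T\D^*(G)\bx/\bx^T\bx$ and therefore genuinely requires symmetry. Your replacement --- establish the entrywise domination $0\le \D^*(\dig+(u,v))\lneq \D^*(\dig)$ and then invoke the strict monotonicity of the Perron root for an irreducible nonnegative dominating matrix (\cite[Theorem 8.4.5]{HJ} or its corollaries) --- is the standard and correct route in the nonsymmetric setting. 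All the supporting details check out: adding an arc preserves strong connectivity (so the new distance matrix is defined), distances are nonincreasing under arc addition with $d_{\dig}(u,v)\ge 2$ versus $d_{\dig+(u,v)}(u,v)=1$ giving strictness, the transmission of $u$ strictly decreases (so the diagonal of $\DQ$ also strictly decreases at $u$), and both $\D(\dig)$ and $\DQ(\dig)$ are irreducible for order at least two since their off-diagonal entries are positive. The only cosmetic remark is that you do not really need strictness in the diagonal entry of $\DQ$ at $u$ --- the strict decrease in the single $(u,v)$ off-diagonal entry already makes $\DQ(\dig+(u,v))\ne\DQ(\dig)$, which is all the monotonicity lemma requires --- but including it does no harm.
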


Since every graph can be viewed as a doubly directed graph and $\nDL(G)$ is not edge addition monotonically decreasing for graphs, $\nDL(\dig)$ is not edge addition monotonically decreasing for digraphs.
For a graph $G$, $\rho\left(\DL(G)\right)\ge \rho\left(\DL(G+(u,v))\right)$. This raises the analogous question for digraphs.
\begin{quest} Is $\rho\left(\DL(\dig)\right)\ge \rho\left(\DL(\dig+(u,v))\right)$ for every digraph $\dig$?
\end{quest}
The answer to this question is yes  for digraphs of order at most five  \cite{SageR21}. 

Again, analogously to graphs, the minimum value of the spectral radius of $\D(\dig)$ and $\DQ(\dig)$ is achieved uniquely by the complete digraph, and this follows from Proposition \ref{p:edge-mon-dig}.
\begin{thm}\label{thm:DigMin}
For all strongly connected digraphs $\dig$, 
\begin{itemize}
    \item {\rm\cite{LS13}} $\rho(\D(\dig))\geq n-1$
    \item {\rm\cite{LMW17}} $\rho(\DQ(\dig))\geq 2(n-1)$
\end{itemize}
and equality holds if and only if $\dig$ is the complete digraph $\overleftrightarrow{K_n}$.
\end{thm}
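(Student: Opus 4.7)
The plan is to reduce both inequalities, together with the equality case, to the strict edge monotonicity result Proposition \ref{p:edge-mon-dig}. The key observation is that $\overleftrightarrow{K_n}$ is obtained from any strongly connected digraph on $n$ vertices by adding (possibly zero) arcs, so iterated application of strict monotonicity gives a sharp lower bound.

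First I would compute the two spectral radii for the complete digraph explicitly. Since every pair of distinct vertices in $\overleftrightarrow{K_n}$ is connected by an arc in each direction, $\D(\overleftrightarrow{K_n})=J_n-I_n$ and $T(\overleftrightarrow{K_n})=(n-1)I_n$. The eigenvalues of $J_n-I_n$ are $n-1$ (simple) and $-1$ (with multiplicity $n-1$), so $\rho(\D(\overleftrightarrow{K_n}))=n-1$. Likewise,
\[
\DQ(\overleftrightarrow{K_n})=(n-1)I_n+(J_n-I_n)=J_n+(n-2)I_n,
\]
which has eigenvalues $2(n-1)$ (simple) and $n-2$ (with multiplicity $n-1$), so $\rho(\DQ(\overleftrightarrow{K_n}))=2(n-1)$.

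Next I would argue the inequality. Given a strongly connected digraph $\dig$ of order $n$, let $(u_1,v_1),\dots,(u_m,v_m)$ enumerate the arcs of $\overleftrightarrow{K_n}$ that are missing from $\dig$, and set $\dig_0=\dig$ and $\dig_i=\dig_{i-1}+(u_i,v_i)$ for $i=1,\dots,m$, so that $\dig_m=\overleftrightarrow{K_n}$. Each intermediate digraph $\dig_i$ is strongly connected (adding arcs preserves strong connectivity), so Proposition \ref{p:edge-mon-dig} applies at each step, giving
\[
\rho(\D(\dig))=\rho(\D(\dig_0))\ge\rho(\D(\dig_1))\ge\cdots\ge\rho(\D(\dig_m))=n-1,
\]
and analogously $\rho(\DQ(\dig))\ge 2(n-1)$.

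For the equality case, if $\dig\ne\overleftrightarrow{K_n}$ then $m\ge 1$, and the first application of Proposition \ref{p:edge-mon-dig} is strict, yielding $\rho(\D(\dig))>\rho(\D(\dig_1))\ge n-1$ and likewise for $\DQ$; the contrapositive gives the ``only if'' direction. The ``if'' direction is exactly the computation above. I do not foresee a genuine obstacle: the only point to handle carefully is checking that each $\dig_i$ remains strongly connected so that Proposition \ref{p:edge-mon-dig} is applicable, but this is immediate since adding an arc to a strongly connected digraph cannot destroy strong connectivity.
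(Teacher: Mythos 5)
Your proposal is correct and follows essentially the same route as the paper, which states explicitly that the theorem ``follows from Proposition \ref{p:edge-mon-dig}'': you compute the spectral radii for $\overleftrightarrow{K_n}$ directly and then iterate the strict edge-addition monotonicity of $\rho(\D)$ and $\rho(\DQ)$, noting that adding arcs preserves strong connectivity. Your write-up simply fills in the details of exactly the argument the paper sketches, so there is nothing to correct.
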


While $\rho(\D(G))$ and $\rho(\DQ(G))$ are maximized by the path $P_n$, the dipath $\overrightarrow{P_n}$ is not strongly connected and thus $\D(\overrightarrow{P_n})$ and $\DQ(\overrightarrow{P_n})$ are not defined. Rather, $\rho(\D(\dig))$ and $\rho(\DQ(\dig))$ are maximized by the dicycle $\overrightarrow{C_n}$. %, the digraph with arc set $E=\{v_1v_2,v_2v_3,\dots,v_{n-1}v_n,v_nv_1\}$.

\begin{thm}\label{thm:DigMax}
For all strongly connected digraphs $\dig$, 
\begin{itemize}
    \item {\rm\cite{LS13}} $\rho(\D(\dig))\leq \frac{n(n-1)}{2}$
    \item {\rm\cite{LMW17}} $\rho(\DQ(\dig))\leq n(n-1)$
\end{itemize} 
and equality holds if and only if $\dig$ is the dicycle $\overrightarrow{C_n}$.
\end{thm}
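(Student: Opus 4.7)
The plan is to derive both spectral radius bounds from a single upper bound on the transmission of a vertex. The key lemma I would first establish is: for any vertex $u$ of a strongly connected digraph $\dig$ of order $n$, $t(u) \le \frac{n(n-1)}{2}$, with equality if and only if $u$ has exactly one vertex at each distance $1, 2, \ldots, n-1$. To see this, let $B_k(u) = \{v : d(u,v) \le k\}$ be the ball of radius $k$ around $u$. Whenever $B_k(u) \neq V(\dig)$, strong connectivity provides a dipath from $u$ to some vertex outside $B_k(u)$, and the vertex on this dipath at distance $k+1$ from $u$ lies in $B_{k+1}(u) \setminus B_k(u)$; hence $|B_k(u)|$ strictly increases in $k$ until it reaches $n$. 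Consequently $|B_k(u)| \ge \min(k+1, n)$, so if $d_1 \le d_2 \le \cdots \le d_{n-1}$ are the distances from $u$ to the remaining vertices in nondecreasing order, then $d_i \le i$ for each $i$, yielding $t(u) = \sum_{i=1}^{n-1} d_i \le \frac{n(n-1)}{2}$, with equality iff $d_i = i$ for every $i$.

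Next I would apply Perron--Frobenius theory. Because $\dig$ is strongly connected, $\D(\dig)$ is irreducible and nonnegative, and its $i$th row sum is $t(v_i)$; the row-sum upper bound therefore gives $\rho(\D(\dig)) \le \max_i t(v_i) \le \frac{n(n-1)}{2}$. Similarly, since $\DQ(\dig)$ is positive (for $n \ge 2$) with $i$th row sum $2t(v_i)$, the same argument gives $\rho(\DQ(\dig)) \le 2\max_i t(v_i) \le n(n-1)$.

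For the equality case, suppose $\rho(\D(\dig)) = \frac{n(n-1)}{2}$. Then both inequalities in $\rho(\D(\dig)) \le \max_i t(v_i) \le \frac{n(n-1)}{2}$ are equalities. Because $\D(\dig)$ is irreducible, equality in the Perron--Frobenius row-sum bound forces all row sums to be equal, so $t(v_i) = \frac{n(n-1)}{2}$ for every $i$. By the lemma, every vertex has a unique vertex at each distance $1, 2, \ldots, n-1$; in particular, every vertex has out-degree exactly $1$. A strongly connected digraph in which every vertex has out-degree $1$ must be a dicycle: iterating the unique out-arc from any starting vertex produces a closed walk, and if this walk did not include every vertex, strong connectivity would fail because no out-arc from a vertex on the walk leaves it. Hence $\dig = \overrightarrow{C_n}$. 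The equality argument for $\DQ(\dig)$ is identical. Conversely, $\overrightarrow{C_n}$ is transmission regular with $t(\overrightarrow{C_n}) = \frac{n(n-1)}{2}$, so $\bone$ is an eigenvector of $\D(\overrightarrow{C_n})$ with eigenvalue $\frac{n(n-1)}{2}$, which equals $\rho(\D(\overrightarrow{C_n}))$ by Perron--Frobenius, and then $\rho(\DQ(\overrightarrow{C_n})) = n(n-1)$ follows from \eqref{eq:tr-equiv}.

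The main obstacle is the ball-growth lemma and verifying its equality case carefully; once that is in hand, the rest is a routine application of the row-sum upper bound together with its well-known equality characterization for irreducible nonnegative matrices, combined with the elementary structural observation that a strongly connected out-regular digraph of out-degree one is a dicycle.
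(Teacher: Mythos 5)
Your proof is correct, and it is worth noting that the paper itself offers no proof of this theorem: it simply cites \cite{LS13} for the distance bound and \cite{LMW17} for the distance signless Laplacian bound. Your argument is a clean, self-contained derivation that matches the spirit of the tools the survey highlights in its Perron--Frobenius discussion. The ball-growth lemma is the right key step: strong connectivity forces $|B_{k+1}(u)|>|B_k(u)|$ until the ball exhausts $V(\dig)$, so the sorted distances from $u$ satisfy $d_i\le i$ and hence $t(u)\le \frac{n(n-1)}{2}$, with equality exactly when there is one vertex at each distance $1,\dots,n-1$. Combining this with the row-sum bound $\rho(A)\le\max_i r_i(A)$ and the standard equality characterization for irreducible nonnegative matrices (equality forces all row sums equal, via the positive left Perron vector) correctly reduces the equality case to a transmission-regular digraph of transmission $\frac{n(n-1)}{2}$, and your observation that out-degree one plus strong connectivity forces a dicycle closes the structural argument; the converse computation for $\overrightarrow{C_n}$, using that $\bone$ is a positive eigenvector, is also right. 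The only point worth making explicit in a polished write-up is the citation or short proof of the row-sum equality characterization, since the survey states only the two-sided row-sum bound without its equality condition; everything else is airtight.
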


For most digraphs, the bounds in Theorems \ref{thm:DigMin} and \ref{thm:DigMax} are not very tight. By Perron-Frobenius theory $t_{\min}\le \rho(\D(\dig))\le t_{\max}$ and $2t_{\min}\le \rho(\DQ(\dig))\le 2t_{\max}$, where $t_{\min}$ and $t_{\max}$ are  the minimum and  maximum transmission among vertices of $\dig$.  The next result provides slightly tighter bounds in terms of the two smallest and two largest transmissions of vertices in the digraph. 

\begin{thm}\label{t:dig-rho-bds}
Let $\dig$ be a strongly connected digraph with vertices $v_1,\dots,v_n$ and transmissions $t(v_1)\leq \dots\leq t(v_n)$. Then
\begin{itemize}
    \item {\rm\cite{LS13}} $\sqrt{t(v_1)t(v_2)}\leq \rho(\D(\dig))\leq \sqrt{t(v_{n-1})t(v_n)}$ and
    \item {\rm\cite{LMW17}} $t(v_1)+t(v_2)\leq \rho(\DQ(\dig))\leq t(v_{n-1})+t(v_{n})$
\end{itemize}
and in each case, one of the inequalities holds if and only if $\dig$ is transmission regular.
\end{thm}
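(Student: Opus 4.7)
The plan is to apply Perron--Frobenius theory, using that $\D(\dig)$ is irreducible nonnegative and $\DQ(\dig)$ is positive (hence irreducible), so each has a simple positive spectral radius with a positive right eigenvector. Two identities drive everything: the $i$th row sum of $\D^2$ equals $\sum_j d(v_i,v_j)\,t(v_j)$ (since $\D(\dig)\bone$ is the transmission vector), while the $i$th row sum of $\DQ(\dig)$ equals $2t(v_i)$.

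For the $\D$-bounds, I would apply the standard row-sum bound for irreducible nonnegative matrices to $\D^2$: $\rho(\D)^2=\rho(\D^2)\le\max_i\sum_j d(v_i,v_j)\,t(v_j)$. Using $d(v_i,v_i)=0$, the sum $\sum_{j\ne i}d(v_i,v_j)\,t(v_j)$ can be bounded by $t(v_{n-1})t(v_n)$ via two cases: when $i=n$ every weight in the sum satisfies $t(v_j)\le t(v_{n-1})$ while the unweighted sum is $t(v_n)$; when $i\ne n$ the weights satisfy $t(v_j)\le t(v_n)$ and the unweighted sum is $t(v_i)\le t(v_{n-1})$. The lower bound follows symmetrically, using the minimum-row-sum bound for $\D^2$. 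For $\DQ$, I would take the positive Perron eigenvector $\bx$, write $\DQ(\dig)\bx=\rho\bx$ coordinatewise as $(\rho-t(v_i))x_i=\sum_{j\ne i}d(v_i,v_j)\,x_j$, and let $k_1,k_2$ index the two largest entries of $\bx$. Bounding $x_j\le x_{k_2}$ for $j\ne k_1$, and $x_j\le x_{k_1}$ for all $j$, yields
\[
(\rho-t(v_{k_1}))x_{k_1}\le x_{k_2}\,t(v_{k_1}),\qquad (\rho-t(v_{k_2}))x_{k_2}\le x_{k_1}\,t(v_{k_2}).
\]
Multiplying and cancelling $x_{k_1}x_{k_2}>0$ gives $(\rho-t(v_{k_1}))(\rho-t(v_{k_2}))\le t(v_{k_1})t(v_{k_2})$, which (using the elementary bound $\rho\ge 2t_{\min}$ to keep both factors nonnegative) collapses to $\rho\le t(v_{k_1})+t(v_{k_2})\le t(v_{n-1})+t(v_n)$. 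The lower bound uses the two smallest entries of $\bx$ with all inequalities reversed.

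For the equality characterization, one direction is immediate: if $\dig$ is $t$-transmission regular, then $\bone$ is the Perron eigenvector and $\rho(\D)=t=\sqrt{t(v_{n-1})t(v_n)}$, $\rho(\DQ)=2t=t(v_{n-1})+t(v_n)$. Conversely, equality in the $\D$-upper bound forces both equal row sums in $\D^2$ (the Perron--Frobenius equality condition applied to the irreducible matrix $\D^2$) and equality in my case-split estimate; chasing these through using strong connectivity (so $d(v_i,v_j)>0$ whenever $i\ne j$) forces $t(v_j)=t(v_{n-1})$ for all $j\ne n$, and a short separate computation at the $i=1$ row equation then rules out $t(v_n)>t(v_{n-1})$ (for $n\ge 3$; the $n=2$ case is trivial). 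For $\DQ$, equality in the two displayed inequalities forces $x_j=x_{k_2}$ for every $j\ne k_1$ and $x_j=x_{k_1}$ for every $j\ne k_2$, which for $n\ge 3$ makes $\bx$ a constant vector, so $\DQ(\dig)\bone\propto\bone$ and all transmissions coincide. The main obstacle I anticipate is this equality analysis, specifically tracking the chain of forced equalities when several transmissions happen to agree and ensuring no degenerate configuration escapes; strong connectivity is the decisive tool throughout, since it guarantees $d(v_i,v_j)>0$ for all distinct $i,j$.
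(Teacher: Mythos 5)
The paper does not actually prove this theorem: it is quoted verbatim from \cite{LS13} and \cite{LMW17}, so there is no in-paper proof to compare against. Judged on its own, your argument is essentially correct and is in the spirit of the original proofs. The row-sum identity for $\D(\dig)^2$ together with the min/max row-sum bounds, the two-case estimate, and the Perron-vector two-largest-coordinates argument for $\DQ(\dig)$ all check out, as does the equality analysis (for $n\ge 3$ the matrix $\D(\dig)^2$ is positive, hence irreducible, so attaining the maximal row sum forces all row sums equal; and for $\DQ$ equality in the product forces equality in each factor, making $\bx$ constant).

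One justification is off, though the conclusion it supports is fine: you invoke ``$\rho\ge 2t_{\min}$'' to ensure $\rho-t(v_{k_1})$ and $\rho-t(v_{k_2})$ are nonnegative before multiplying, but $2t_{\min}$ need not dominate $t(v_{k_1})$, so that bound does not do the job. The correct (and simpler) observation is that the eigenvalue equation itself gives $(\rho-t(v_{k_i}))x_{k_i}=\sum_{j\ne k_i}d(v_{k_i},v_j)x_j>0$, since strong connectivity makes every off-diagonal distance at least $1$ and the Perron vector is positive; hence both factors are automatically positive and the multiplication is legitimate. With that one-line repair, the proof is complete. A small stylistic note: in the equality case for $\D$, rather than the special computation at the $i=1$ row, you can take two distinct indices $i\ne n$ (possible for $n\ge 3$); each forces $t(v_j)=t(v_n)$ for all $j\ne i$, and together they cover every vertex, which shortens the chase.
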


Finally, we provide bounds on the spectral radii of distance Laplacians and normalized distance Laplacians.

\begin{prop}\label{t:rho-dig-ub2} Let $\dig$ be a strongly connected digraph. % of order $n$.  
Then 
\bit 
\item $\ds \rho(\DL(\dig))\le 2\max_{v\in V(\dig)} t(v)$. % and $\re(\ndlev_i)\ge 0$ for $i=1,\dots,n$.
\item  $\rho(\nDL(\dig))\le 2$. % and $\re(\ndlev_i)\ge 0$ for $i=1,\dots,n$.
\eit
\end{prop}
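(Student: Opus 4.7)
The plan is to apply the Ger\v sgorin Disk Theorem (recalled in Section~\ref{ss:PF}), which applies to arbitrary complex square matrices and so is available here even though $\DL(\dig)$ and $\nDL(\dig)$ need not be symmetric. The two bounds reduce to computing, in each case, the common value of the diagonal entries and the punctured absolute row sums.

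For the first bound, I would apply Ger\v sgorin directly to $M=\DL(\dig)=T(\dig)-\D(\dig)$. The $i$th diagonal entry is $m_{ii}=t(v_i)$, and since all off-diagonal entries of $\D(\dig)$ are nonnegative, the $i$th punctured absolute row sum is
\[
r'_i(M)=\sum_{j\ne i}|{-d(v_i,v_j)}|=\sum_{j\ne i}d(v_i,v_j)=t(v_i).
\]
Hence every eigenvalue $\lambda$ of $\DL(\dig)$ lies in some disk $D_i(M)=\{\zeta:|\zeta-t(v_i)|\le t(v_i)\}$, giving $|\lambda|\le |\lambda-t(v_i)|+t(v_i)\le 2t(v_i)\le 2\max_{v\in V(\dig)}t(v)$, from which $\rho(\DL(\dig))\le 2\max_{v\in V(\dig)}t(v)$ follows.

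For the second bound, the key observation is the identity (noted in the remarks preceding this proposition) $\spec(\nDL(\dig))=\spec(T(\dig)^{-1}\DL(\dig))$, which lets us bypass the nonsymmetry of $\nDL(\dig)$. I would therefore apply Ger\v sgorin to $N=T(\dig)^{-1}\DL(\dig)$. Its $i$th diagonal entry is $t(v_i)/t(v_i)=1$, and its $i$th punctured absolute row sum is
\[
r'_i(N)=\frac{1}{t(v_i)}\sum_{j\ne i}d(v_i,v_j)=\frac{t(v_i)}{t(v_i)}=1.
\]
Thus every eigenvalue $\lambda$ of $N$, and hence of $\nDL(\dig)$, satisfies $|\lambda-1|\le 1$, so $|\lambda|\le 2$ and $\rho(\nDL(\dig))\le 2$.

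There is no real obstacle here beyond being careful about the nonsymmetric setting: both proofs are one-line applications of Ger\v sgorin once one recognizes that the relevant punctured row sum equals the diagonal entry in each case (this is a consequence of the definition of transmission). The only subtlety is for $\nDL(\dig)$, where one must pass to the similar matrix $T(\dig)^{-1}\DL(\dig)$ so that the diagonal entries are constant equal to $1$; the proposition then follows immediately.
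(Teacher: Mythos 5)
Your proposal is correct and follows the paper's proof essentially verbatim: the paper also applies Ger\v sgorin directly to $\DL(\dig)$ for the first bound and, for the second, passes to the similar matrix $T(\dig)^{-1}\DL(\dig)$ (which the paper writes, with an apparent typo, as $T(\dig)\DL(\dig)$) before applying Ger\v sgorin. You have simply written out the diagonal-entry and punctured-row-sum computations that the paper dismisses as immediate.
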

\bpf The first statement is immediate by Ger\v sgorin's Disk Theorem.
 Recall that $\nDL(\dig)=\sqrt{T(\dig)}^{-1}\DL(\dig)\sqrt{T(\dig)}^{-1}$.  %It is well known that $\spec(AB)=\spec(BA)$ for $A, B\in\Rnn$ \cite[Theorem 2.8]{Zhang}, and thus $\spec(\nDL(\dig))=\spec(T(\dig)\DL(\dig))$. 
 Finally, applying Ger\v sgorin's Disk Theorem to $\spec(T(\dig)\DL(\dig))$ shows $\rho(\nDL(\dig))\le 2$. % and $\re(\ndlev_i)\ge 0$ for $i=1,\dots,n$. \epf
 \epf

\subsection{Cospectral digraphs}\label{ss:DigCospec}

Given a digraph $\dig$, the {\em arc reversal} of $\dig$, denoted by $\dig^T$, is the digraph with  $V(\dig^T)=V(\dig)$ and $E(\dig^T)=\{(v,u): (u,v)\in E(\dig)\}$.  Since $\D(\dig^T)=\D(\dig)^T$, it is immediate that $\dig$ and $\dig^T$ have the same distance spectrum (and thus are $\D$-cospectral if they are not isomorphic).  Note that $\dig$ and $\dig^T$ need not be $\DQ$-cospectral because it is possible that $\DQ(\dig^T)\ne\DQ(\dig)^T$, and similarly for $\DL$ and $\nDL$.  This is illustrated in the next example.

\begin{ex}\label{ex:out-not-in} Let $\dig_4$ be the digraph shown in Figure \ref{fig:out-not-in}.  Then  $\dig_4$ is transmission regular but $\dig_4^T$ is not   \cite{CCHR20}. Various matrices and their spectra are listed in Table \ref{tab:arc-rev}.  Observe that $\dQspec(\dig_4^T)\ne\dQspec(\dig_4)$,$\dLspec(\dig_4^T)\ne\dLspec(\dig_4)$,  and $\ndLspec(\dig_4^T)\ne\ndLspec(\dig_4)$. \vspace{-5pt}

\begin{table}[h!]
    \centering
    \caption{Various distance matrices and spectra for $\dig_4$ and $\dig_4^T$.   \label{tab:arc-rev}}\vspace{5pt}
    
\begin{tabular}{|c|c|c|c|}
\hline
    $\D(\dig_4)$ & $\DQ(\dig_4)$ & $\DL(\dig_4)$ & $\nDL(\dig_4)$   \\
    \hline
       $\mtx{0 & 1 & 2 & 1 \\
 1 & 0 & 1 & 2 \\
 1 & 1 & 0 & 2 \\
 1 & 2 & 1 & 0 }$ & 
 $\mtx{4 & 1 & 2 & 1 \\
 1 & 4 & 1 & 2 \\
 1 & 1 & 4 & 2 \\
 1 & 2 & 1 & 4 }$ & 
 $\mtx{ \phantom{-}4 & -1 & -2 & -1 \\
 -1 & \phantom{-}4 & -1 & -2 \\
 -1 & -1 & \phantom{-}4 & -2 \\
 -1 & -2 & -1 & \phantom{-}4 }$ & 
 $\mtx{\phantom{-}1 & -\frac{1}{4} & -\frac{1}{2} & -\frac{1}{4} \\
 -\frac{1}{4} & \phantom{-}1 & -\frac{1}{4} & -\frac{1}{2} \\
 -\frac{1}{4} & -\frac{1}{4} & \phantom{-}1 & -\frac{1}{2} \\
 -\frac{1}{4} & -\frac{1}{2} & -\frac{1}{4} & \phantom{-}1  }$   \\
 \hline
    $\dspec(\dig_4)$&  $\dQspec(\dig_4)$&  $\dLspec(\dig_4)$&  $\ndLspec(\dig_4)$\\
    \hline
      $\{-2, -1, -1,4\}$& $\{2,3,3,8\}$& $\{0,5,5,6\}$&$\{0,\frac{5}{4},\frac{5}{4},\frac{3}{2}\}$\\
\hline\hline
%%% start transpose
    $\D(\dig_4^T)$ & $\DQ(\dig_4^T)$ & $\DL(\dig_4^T)$ & $\nDL(\dig_4^T)$   \\
    \hline
       $\mtx{0 & 1 & 1 & 1 \\
 1 & 0 & 1 & 2 \\
 2 & 1 & 0 & 1 \\
 1 & 2 & 2 & 0 }$ & 
 $\mtx{3 & 1 & 1 & 1 \\
 1 & 4 & 1 & 2 \\
 2 & 1 & 4 & 1 \\
 1 & 2 & 2 & 5}$ & 
 $\mtx{ \phantom{-}3 & -1 & -1 & -1 \\
 -1 & \phantom{-}4 & -1 & -2 \\
 -2 & -1 & \phantom{-}4 & -1 \\
 -1 & -2 & -2 & \phantom{-}5 }$ & 
 $\mtx{\phantom{-}1 & -\frac{1}{2 \sqrt{3}} & -\frac{1}{2 \sqrt{3}} & -\frac{1}{\sqrt{15}} \\
 -\frac{1}{2 \sqrt{3}} & \phantom{-}1 & -\frac{1}{4} & -\frac{1}{\sqrt{5}} \\
 -\frac{1}{\sqrt{3}} & -\frac{1}{4} & \phantom{-}1 & -\frac{1}{2 \sqrt{5}} \\
 -\frac{1}{\sqrt{15}} & -\frac{1}{\sqrt{5}} & -\frac{1}{\sqrt{5}} & \phantom{-}1 }$   \\
 \hline
    $\dspec(\dig_4^T)$&  $\dQspec(\dig_4^T)$&  $\dLspec(\dig_4^T)$&  $\ndLspec(\dig_4^T)$\\
    \hline
      $\{-2, -1, -1,4\}$& 
      $\{2,\frac{ \left(11-\sqrt{29}\right)}2,3,\frac{ \left(11+\sqrt{29}\right)}2\}$& 
      $\{0,\frac{ \left(11-\sqrt{5}\right)}2,5,\frac{ \left(11+\sqrt{5}\right)}2\}$&
      $\{0,\frac{5}{4},\frac{ \left(165-\sqrt{105}\right)}{120},\frac{ \left(165+\sqrt{105}\right)}{120}\}$\\

\hline\end{tabular}
 
\end{table}
\end{ex}

There is  work being done on distance cospectral digraphs \cite{R21}.  A construction for $\D$-cospectral digraphs is presented there and it is observed for digraphs of small order, a much higher percentage of digraphs have a $\D$-cospectral mate even after excluding pairs of digraphs related by arc reversal.

%\noindent{\red Bibliography copied from various sources as a start}
%%%%%%%%%%%%%%%%%%%%%%%%%%%%%%%%%%%%%%%%%%%

\end{document}